\documentclass[11pt, twoside]{amsart}

\title[Dihedral pointed fusion categories: diagrammatics, \(3\)-cocycle representatives, and planar algebras]
{Dihedral pointed fusion categories: diagrammatics, \(3\)-cocycle representatives, and planar algebras}

\author[A. Czenky]{Agustina Czenky}
\address{Czenky: Department of Mathematics, University of Southern California,
3620 S. Vermont Ave., KAP 104. Los Angeles, CA 90089, USA}
\email{czenky@usc.edu}

\author[M. Molander]{Melody Molander}
\address{Molander: Department of Mathematics, The Ohio State University, 100 Math Tower, 231 W 18th Ave, Columbus, OH 43210}
\email{molander.3@osu.edu}

\usepackage{amsmath}
\usepackage{amsthm}
\usepackage{dsfont}
\usepackage{stmaryrd}
\usepackage{MnSymbol}
\usepackage[bb=ams]{mathalpha}
\usepackage{bbm}
\usepackage{appendix}
\usepackage[english]{babel}
\usepackage{url}
\usepackage{fancyhdr}
\usepackage{graphicx}
\usepackage{verbatim}
\usepackage[normalem]{ulem}
\usepackage[shortlabels]{enumitem}
\newcommand{\stkout}[1]{\ifmmode\text{\sout{\ensuremath{#1}}}\else\sout{#1}\fi}
\usepackage[
colorlinks=true,
linkcolor=black, 
anchorcolor=black,
citecolor=black,
urlcolor=black, 
]{hyperref}

\usepackage[all,cmtip]{xy}
\usepackage{geometry}
\usepackage[dvipsnames]{xcolor}
\usepackage{lscape} 
\usepackage{multicol}
\usepackage{shortcuts}
\usepackage[utf8]{inputenc}
\usepackage{tikz}
\usepackage{tikz-cd}
\usepackage{mathrsfs}
\usepackage{adjustbox}
\usepackage{subcaption}

\usetikzlibrary{positioning}
\usetikzlibrary{decorations.pathreplacing}
\allowdisplaybreaks

\usepackage{adjustbox}
\usepackage{overpic}

\usepackage{enumitem}

\usepackage[notcite,notref,final]{showkeys}

\usepackage{calrsfs}
\DeclareMathAlphabet{\cal}{OMS}{zplm}{m}{n}
\DeclareMathAlphabet{\mathsf}{OT1}{cmss}{m}{n} 

\newcommand{\id}{\textnormal{id}}

\newcommand{\Hom}{\textnormal{Hom}}

\newcommand{\KFGR}{\mathcal K(  \mathcal F(\mathsf G)/\langle \mathsf R\rangle )}
\newcommand{\FGR}{ \mathcal F(\mathsf G)/\langle \mathsf R\rangle }

\newcounter{savedequation}

\usepackage{relsize}

\newcommand{\End}{\mathsf{End}}

\newcommand{\Dn}{\mathrm{D}_{2n}}
\newcommand{\VecDn}{\mathrm{Vec}_{\mathrm{D}_{2n}}^{\omega}}
\newcommand{\VecDnx}{\mathrm{Vec}_{\mathrm{D}_{2n}}^{\epsilon_r,\epsilon_b,\xi}}
\newcommand{\Dx}{\mathscr{D}^{\epsilon_r,\epsilon_b,\xi}}

\newcommand{\VecDnpi}{(\VecDn,\pi)}

\newcommand\mtiny[1]{\mbox{\tiny\ensuremath{#1}}}

\numberwithin{equation}{section}

\newtheorem{theorem}{Theorem}[section]

\newtheorem{proposition}[theorem]{Proposition}

\newtheorem{corollary}[theorem]{Corollary}
\newtheorem{lemma}[theorem]{Lemma}

\newtheorem{theorem*}{Theorem}
\newtheorem{algorithm}[theorem]{Algorithm}

\theoremstyle{definition}
\newtheorem{definition}[theorem]{Definition}

\newtheorem{remark}[theorem]{Remark}

\makeatletter              
\let\c@equation\c@theorem  
\makeatother
\numberwithin{equation}{section}

\definecolor{r}{RGB}{208, 2, 27}
\definecolor{b}{RGB}{74, 144, 226}
\definecolor{g}{RGB}{126, 211, 33}
\definecolor{p}{RGB}{144, 19, 254}

\makeatletter
\@namedef{subjclassname@2020}{%
  \textup{2020} Mathematics Subject Classification}
\makeatother

\subjclass[2020]{18M20, 18M30, 20J06}

\keywords{fusion categories, pointed fusion categories, graphical calculus, cocycles, planar algebras}

\tikzcdset{scale cd/.style={every label/.append style={scale=#1},
    cells={nodes={scale=#1}}}}

\usetikzlibrary{intersections}

\usetikzlibrary{decorations.markings}
\usetikzlibrary{shapes.geometric}

\begin{document}
	\begin{abstract}
We construct a diagrammatic presentation by generators and relations for all dihedral pointed fusion categories $\VecDn$. Applying a jellyfish algorithm on our diagrammatics, we obtain an explicit closed formula for a complete set of  normalized 3-cocycle representatives of $\Dn$. 
 As applications of our diagrammatics, we prove a universal property and show that in some cases we can recover affine $A$ unshaded subfactor planar algebras. We then extend our presentation to obtain diagrammatics for all pivotal structures on $\VecDn$. 
\end{abstract}
\maketitle
\thispagestyle{plain}
\pagestyle{fancy}

\section{Introduction}
 Just as groups provide a mathematical language for classical symmetries, tensor categories provide one for quantum symmetries, such as those arising in topological phases of matter. 
Fusion categories form a widely studied class of tensor categories which appear naturally in areas such as representation theory \cite{EGNO}, topological quantum field theory \cite{Moore, Turaev},  quantum groups \cite{BK, Kassel}, subfactor theory \cite{EvansKawahigashi} and vertex operator algebras \cite{Huang05}, making their study relevant beyond their intrinsic mathematical interest. A classification of fusion categories is currently out of reach: it would include a classification of finite groups, as their representation categories are fusion categories. Nevertheless, progress can be made by restricting attention to particular classes of fusion categories. 

A fusion category is called \emph{pointed} if every simple object is invertible. Pointed fusion categories admit a complete description in terms of finite group data: they are determined by a finite group $G$ and     $\omega \in H^3(G, \mathsf k^{\times})$; we denote the corresponding fusion category by $\operatorname{Vec}_G^{\omega}$. The group $G$ encodes the fusion rules, while the cocycle $\omega$ describes the associator. Tensor functors between pointed fusion categories can likewise be described in terms of group homomorphisms and cohomological data.  Although this gives a complete classification in terms of groups, it remains abstract and does not always provide a convenient way to carry out explicit calculations.

A presentation by generators and relations describes a tensor category in terms of generating objects and morphisms, together with a complete set of relations. When expressed diagrammatically, such a presentation gives a graphical calculus to  carry out computations explicitly. The first author developed a presentation of this type for cyclic pointed fusion categories in \cite{Czenky}. In the present paper, we extend this program to dihedral groups $\Dn$. Our main result, Theorem~\ref{thm diag category}, proves a diagrammatic presentation for $\VecDn$; the proof uses a jellyfish elimination algorithm. The presentation can be found in Definition \ref{tensor presentaion}. Related diagrammatics for dihedral pointed fusion categories appear in \cite{IMP}. Their construction corresponds to a restricted family, whereas ours applies to all dihedral pointed fusion categories.

One application of our diagrammatics  is the construction of explicit cocycle representatives for the dihedral groups. Cohomology rings of the dihedral groups have been known for a long time: $H^3(\Dn, \mathsf k^{\times})\cong \mathbb Z_{2}\times \mathbb Z_n$ when $n$ is odd, while $H^3(\Dn, \mathsf k^{\times})\cong \mathbb Z_2 \times \mathbb Z_{2}\times \mathbb Z_{n}$ when $n$ is even. However, explicit closed formulas for cocycle representatives have proved harder to obtain. De Wild Propitius gave a closed formula when $n$ is odd and left the even case open \cite{deWildPropitius1995}. Recently, Teo and Hu gave a complete family of representatives in their study of dihedral gauge theories \cite[Equations (212) and (221)]{TeoHu2023}. Their computations treat the odd and even cases separately. 
 In this work, we derive the cocycles directly from our graphical calculus for both odd and even $n$. The formulas can be found in Theorem \ref{thm: cocycle formula}. 

 A second application connects $\VecDn$  with affine $A$ unshaded subfactor planar algebras. The second author developed diagrammatics for affine $A$ unshaded subfactor planar algebras of which she proved that there are two types. One called ``the arrow case" and the other ``the color case". From these planar algebras, one can obtain a fusion category. In \cite{Mol24} she proves that the category corresponding to the arrow case is monoidally equivalent to a cyclic pointed fusion category using the presentation given by \cite{Czenky}. In this paper, Theorem \ref{thm: spa-is-VecD2n} proves, using the constructed diagrammatic presentations, that the categories corresponding to the color case are monoidally equivalent to a dihedral pointed fusion category. Our results together with \cite{Mol24} complete the categorical interpretation of the diagrammatics for affine $A$ unshaded subfactor planar algebras.

 As a further application, Theorem~\ref{thm: univ prop} describes tensor functors from $\VecDn$ to an arbitrary tensor category $\mathcal C$ in terms of objects and morphisms  satisfying explicit relations, and Theorem~\ref{thm: pivotal presentation}  extends our diagrammatic presentation to incorporate pivotal structures. 

 We organize the paper as follows. In Section~\ref{sec: prelim}, we fix our notation.
 In Section~\ref{sec: presentation}, we choose a specific presentation for $\VecDn$. In Section~\ref{sec: cocycles}, we 
 derive the  cocycle formulas. In Section~\ref{sec: equiv}, we prove the main diagrammatic presentation, which is given in Definition~\ref{tensor presentaion}. In Section~\ref{sec: applications}, we look at pivotal structures, the universal property,  and the connection with affine $A$ unshaded subfactor planar algebras. For readers who choose to print this article in black-and-white, we ask that you print using grayscale so that the colors red and blue appear in different shadings of gray.

\section{Acknowledgments}
The authors would like to thank Stephen Bigelow for an initial question that led to this collaboration. The authors would also like to thank David Penneys for numerous helpful conversations. This project began while A.C. and M.M. were participating in the workshop ``Skew Braces, Braids and the Yang-Baxter Equation" at the Banff International Research Station, whose hospitality and support are gratefully acknowledged. A.C. was partially supported by Simons Collaboration Grant No. 999367. M.M. was supported in part by an AMS-Simons Travel Grant. 

Chumeng Di declined to be listed as a coauthor, but conversations with her were incredibly influential on this paper. In particular, Chumeng Di helped construct the pivotal presentation in section \ref{sec: pivotal}, proved Lemma \ref{lemma: twooppositefacingjellysreduce}, verified calculations, created a few diagrams, and provided feedback on an earlier draft of this manuscript. We thank her tremendously for her help.  

\subsection{AI Declaration} AI was used to help create a select few diagrams, which were verified to be correct by the authors.

	\section{Preliminaries}\label{sec: prelim}

	We work over an algebraically closed field $\mathsf k$ of characteristic zero. 
We assume familiarity with tensor categories, and refer to \cite{ENO1,EGNO} for the background. We fix our conventions below.

\subsection{Monoidal notation}

In a monoidal category $\mathcal C$, we use the associator convention
$a_{X,Y,Z}:(X\otimes Y)\otimes Z
\xrightarrow{\sim}
X\otimes(Y\otimes Z)$ for all $X,Y,Z\in \mathcal C$. For a monoidal functor $F:\mathcal C \to \mathcal D$, we write 
$J_{X,Y}:F(X)\otimes F(Y)\xrightarrow{\sim}F(X\otimes Y)$ 
for its monoidal structure. 
Thus
\begin{align}\label{eq:monoidal-functor-coherence}
F(a_{X,Y,Z})\circ J_{X\otimes Y,Z}
\circ(J_{X,Y}\otimes\operatorname{id}_{F(Z)})
={}&
J_{X,Y\otimes Z}
\circ(\operatorname{id}_{F(X)}\otimes J_{Y,Z})
\circ a_{F(X),F(Y),F(Z)}
\end{align}
for all $X,Y,Z\in \mathcal C$.
We identify the
	tensor units; with this convention,
	$F(1_{\mathcal C})= 1_{\mathcal D}$ and
	$J_{1_{\mathcal C},X}=J_{X,1_{\mathcal C}}=\operatorname{id}_{F(X)}$; see
	\cite[Remark~2.4.6]{EGNO}. 

When it exists, we denote the left dual for an object $X\in \mathcal C$ by $X^{\vee}\in \mathcal C$, with evaluation and coevaluation conventions
$\operatorname{ev}_X\colon X^\vee\otimes X\to 1_{\mathcal C}$ and $\operatorname{coev}_X\colon 1_{\mathcal C}\to X\otimes X^\vee.$

	\subsection{Pointed fusion categories}
	
	Let $G$ be a finite group. 
	Given $\omega\in H^3(G,\mathsf k^\times)$, we denote by $\operatorname{Vec}_G^\omega$ the
	category of finite-dimensional $G$-graded vector spaces. 
	For $g\in G$, let $\delta_g$ denote the one-dimensional vector space
	concentrated in degree $g$. The associator is determined by
	\begin{equation}\label{eq:associator-vec-g-omega}
		a_{\delta_g,\delta_h,\delta_k}
		=\omega(g,h,k)\operatorname{id}_{\delta_{ghk}}.
	\end{equation}
	Every
	pointed fusion category is tensor equivalent to
	$\operatorname{Vec}_G^\omega$ for a finite group $G$ and a normalized
	$3$-cocycle $\omega$. Cohomologous cocycles yield equivalent pointed  fusion categories; we will assume without loss of generality that cocycles $\omega$ are normalized. 
	We will often identify the object $\delta_g$ with the group element $g\in G$ and	use the skeletal realization of
	$\operatorname{Vec}_G^\omega$.
	
	\subsection{Presentations by generators and relations}\label{sec: prelim presentation}
	
	Let $\mathcal A$ be a $\mathsf k$-linear category.
 We write $\mathcal K(\mathcal A)
	:=\operatorname{Kar}(\operatorname{Add}(\mathcal A))$ 
	for the additive Karoubi envelope, also called the \textit{Cauchy completion}.
	
	For a monoidal category $\mathcal C$, let $\mathsf G$ be a collection of generating objects and generating
	morphisms between words in those objects. We denote by
	$\mathcal F(\mathsf G)$ the free strict $\mathsf k$-linear monoidal category generated
	by $\mathsf G$. Given a set $\mathsf R$ of relations, we
	write $\langle\mathsf R\rangle$ for the  tensor ideal generated by the 
	differences of the two sides of the relations in $\mathsf R$.
A  \emph{presentation} of a fusion category $\mathcal C$ is a monoidal equivalence
$ 		\mathcal C\cong\mathcal K(  \mathcal F(\mathsf G)/\langle \mathsf R\rangle ), $ 
see \cite[Definition XII.1.3]{Kassel}. Every strict fusion category admits a presentation. 
	
	\subsection{Dihedral notation}
	
	Let $\Dn=\langle r,b\mid r^2=b^2=(rb)^n=e\rangle$
	be the dihedral group of order $2n$. Every element of $\Dn$
	has a unique \emph{standard form}
	\[
	g=(rb)^m r^\ell,
	\qquad 0\leq m<n,\quad \ell\in\{0,1\}. 
	\]
	We write $	|g|:=2m+\ell$
	for the length of $g$ in standard form. 

\subsection{Pivotal categories}\label{sec: pivotal prelim}
Since $\VecDn$ is pointed, it admits a canonical pseudounitary structure, we denote it by $\varphi_0$. The set of pivotal structures on $\VecDn$ is identified with
$\operatorname{Hom}_{\mathrm{Grp}}(\Dn,\mathsf k^\times)$.
 For every group homomorphism 
$\pi:\Dn\to \mathsf k^\times$, we denote the
resulting pivotal category by $\VecDnpi$.

A group homomorphism $\pi: \Dn \to \mathsf k^\times$ is determined by $\pi(r), \pi(b),$ and each can take values $\pm 1.$ When $n$ is odd, $(rb)^n \mapsto 1$, thus $\pi(r) = \pi(b).$  Thus there are 2 possible pivotal structures on $\VecDn$ when $n$ is odd and 4 when $n$ is even.

\subsection{Planar Algebras}  For more detailed definitions and discussion, see \cite{MPS10, Pet10}. A \textit{planar tangle}, $T$, consists of an outside box, along with some finite collection of interior boxes and nonintersecting strands connecting those boxes. Each box is required to have the same number of points on its top and bottom boundary. 

If $\mathcal{P}=\{\mathcal{P}_\ell\}_{\ell\geq 0}$ is a collection of vector spaces over the field $\mathsf k$, then each box in each tangle can be assigned a vector space of $\mathcal{P}$ indexed by the number of strands on its top boundary. When we coherently assign to every tangle $T$ a multilinear map $Z_T$ from the tensor product of the vector spaces of the interior squares to the vector space of the outside square, then $\mathcal{P}$ along with all maps $Z_T$ form a \textit{planar algebra}. 

Let $\mathsf{G}$ be a set of diagrams and $\mathsf{R}$ be a set of relations on those diagrams. Let $\mathcal{P}_\ell$ be the vector space spanned by diagrams with $\ell$ boundary points on top and bottom that are obtained by combining diagrams in $\mathsf{G}$  and strands in a planarly fashion, modulo the relations. Then $\mathcal{P}=\{\mathcal{P}_\ell\}_{\ell \geq 0}$ is the planar algebra \textit{generated} by $\mathsf{G}$ with relations $\mathsf{R}$.

If a planar algebra is checkerboard shaded, evaluable, spherical, and positive definite it is a \textit{subfactor planar algebra}. If we can drop the shading, we obtain an \textit{unshaded subfactor planar algebra}. We commonly refer to both shaded and unshaded subfactor planar algebras as `subfactor planar algebras'. In a subfactor planar algebra there is an anti-linear \textit{adjoint}, $*:\mathcal{P}_\ell \to \mathcal{P}_\ell$, for every $\ell\geq 0$. This defines an inner product on each vector space by $\langle f, g\rangle :=\text{tr}(f^*g)$, where $\text{tr}$ is the map corresponding to the trace tangle, i.e., the tangle of a singular interior box with all of the strands of that box connected to itself on the right. Any subfactor planar algebra has an invariant called the \textit{index}. When the index is finite and greater than or equal to 4, the subfactor planar algebra contains the Temperley-Lieb planar algebra $\mathcal{TL}(\delta)$, where $\delta^2$ is the index.

\section{\texorpdfstring{Choice of presentation for $\VecDn$}{Choice of presentation for $\VecDn$}}\label{sec: presentation}

We start by fixing a presentation for $\VecDn$
\begin{equation}\label{eq: monoidal equiv}
	(F,J) :\VecDn
	\longrightarrow
	\KFGR.
\end{equation}
 In the rest of this section, we will choose convenient object and morphism generators, explicit representatives for the monoidal structure J, and record relations they must satisfy.

\subsection{Generating objects} We begin by choosing the object generators.

\begin{lemma}
    We may assume  $\mathsf  G$ has object generators
$ 	\reddot$ and
	 $\bluedot$,
that \(F(e)\) is the empty word and that \(F(g)\) is the sequence of red and blue dots corresponding to the standard form of \(g\) for  \(g \ne e\).
\end{lemma}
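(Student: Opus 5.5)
We prove the lemma by starting from an arbitrary presentation $(F,J)$ of $\VecDn$, as in \eqref{eq: monoidal equiv}, and replacing the generating set $\mathsf G$ and the functor $F$ by equivalent data with the stated form. Four facts make this possible. $\VecDn$ is generated as a monoidal category by the invertible objects $\delta_r$ and $\delta_b$. Every simple object is $\delta_g$ for $g\in \Dn$. A presentation only matters up to monoidal equivalence. Monoidal functors may be modified by natural isomorphisms.

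The first step is to build a new presentation directly. Take the free strict $\mathsf k$-linear monoidal category on two object generators, written $\reddot$ and $\bluedot$. For its morphism generators, take preimages of enough morphisms of $\VecDn$ to generate it: for example, the isomorphisms $\delta_r\otimes\delta_r\cong 1$ and $\delta_b\otimes\delta_b\cong 1$, their inverses, and an isomorphism $(\delta_r\otimes\delta_b)^{\otimes n}\cong 1$. For the relations $\mathsf R$, take the kernel of the induced strict monoidal functor $\mathcal F(\mathsf G)\to \VecDn$, where $\VecDn$ is replaced by a strictification. The result is a presentation with object generators $\reddot$ and $\bluedot$.

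Alternatively, we can start from any given presentation. Every object of $\KFGR$ is a direct summand of a sum of words. Since $F(\delta_r)$ and $F(\delta_b)$ are simple invertible objects, we may add generators isomorphic to them, together with the isomorphisms to them and the inverse isomorphisms as generating morphisms. The original generators are then redundant up to isomorphism, so they can be discarded after the corresponding relations are recorded. Either way, $\mathsf G$ has object generators $\reddot$ and $\bluedot$ with $F(\delta_r)\cong\reddot$ and $F(\delta_b)\cong\bluedot$.

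The second step is to redefine $F$ on objects. Set $F'(e)=\emptyset$, and for $g\ne e$ let $F'(g)$ be the word in red and blue dots given by the standard form $g=(rb)^m r^\ell$. This word lies in $\KFGR$. It is isomorphic to $F(g)$ because
\[
F(g)\cong F(r)^{\otimes}\cdots\cong \reddot\,\bluedot\cdots,
\]
obtained by composing the $J$'s with the chosen isomorphisms; the word on the right is the standard form of $g$. Fix these isomorphisms $\phi_g\colon F(g)\to F'(g)$, with $\phi_e$ given by the unit identification. Transporting along them, set $F'(f)=\phi\circ F(f)\circ\phi^{-1}$ and $J'_{g,h}=\phi_{gh}\circ J_{g,h}\circ(\phi_g^{-1}\otimes\phi_h^{-1})$. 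Then $(F',J')$ is again a monoidal functor, and it is monoidally isomorphic to $(F,J)$, hence still an equivalence. Because we work skeletally, it suffices to specify $F'$ on the simple objects $\delta_g$ and extend additively.

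The one point that needs care is the unit. We require $F'(e)$ to be the empty word, and we keep the conventions $J'_{1,X}=J'_{X,1}=\id$. This holds provided $\phi_e$ is the identity, which the identification of units in the preliminaries allows. Everything else is routine transport of structure. The remaining issue is checking that a reducible or nonsimple word is never forced as the image of a simple object. This is immediate, since each $F'(g)$ is isomorphic to the simple object $F(g)$.
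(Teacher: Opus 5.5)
Your proposal is correct and takes essentially the same route as the paper. You take $F(r)$ and $F(b)$ as the two generating objects (the paper simply ``relabels'' them), and then replace $F$ by an isomorphic monoidal functor whose value on each $g$ is its standard-form word, using the isomorphisms $F(g)\cong F(r)\otimes F(b)\otimes\cdots$ built from $J$. What you add is detail the paper leaves implicit: an explicit construction of a presentation with exactly two object generators, and the transport-of-structure formula for $J'$, including the check that $\phi_e=\mathrm{id}$ preserves the unit conventions.
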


\begin{proof}
Since $F$ is a monoidal equivalence and simple objects in $\VecDn$ are generated by $r$ and $b$, the objects $F(r)$ and $F(b)$ generate the target category.
 Hence, setting $F(r)=\reddot$ and $F(b)=\bluedot$, up to relabeling we may assume that $\mathsf G$ has object generators $\{\reddot \ , \bluedot\}.$

Since $F$ is monoidal, it follows that for any $g \in \Dn$, the object $F(g)$ is isomorphic to the tensor product corresponding to any expression of $g$ in terms of $r$ and $b$.
Hence, up to replacing \(F\) with an isomorphic functor, we may assume that $F$ is as in the statement.
\end{proof}

Thus from now on we assume that objects in $\mathcal F(\mathsf  G)$ are horizontally aligned sequences $\reddot $ and $\bluedot$. The words corresponding to elements in standard form are alternating and begin in red.

\subsection{Generating morphisms.}
We will now choose the generating morphisms for our graphical calculus. We read diagrams from bottom to top. Tensor product is horizontal juxtaposition, composition is vertical stacking, and the unit is the empty boundary. A straight strand denotes the identity morphism of the corresponding object. 

In $\VecDn$, $r^2=e,$ and so 
 $F(r) \otimes F(r) \simeq F(r^2) = F(e)$ 
in $\mathcal K(  \mathcal F(\mathsf G)/\langle \mathsf R\rangle )$.
Choose such an isomorphism and denote it by $\redcap$, and let $\redcup$ denote its inverse. 
We also have that $r(rbr)^2r=e$, and so there exists an isomorphism 
  $F(r) \otimes F(rbr)\otimes F(rbr) \otimes F(r) \to F(e).$ 
 Precomposing with
 $\redcup \bluestrand \redcup \bluestrand \redcup$ 
 we obtain an isomorphism
$\bluedot \bluedot \to \text{empty word}.$ We choose $\bluecap$ to represent this isomorphism and $\bluecup$ for its inverse.
Lastly, from $(rb)^n=e$, we get $F(rb) \otimes F((rb)^{n-1}) \simeq F(e);$ we choose such an isomorphism and represent it by $\rbjellydown$ and its inverse by $\rbjellyup$.\\

The diagrams defined below will be used throughout. 
\begin{definition}\label{def: blue jelly}
    We define 
$     \brjellydown \coloneq \defnofbrjellydown, \brjellyup \coloneq \defnofbrjellyup.$
\end{definition}

\begin{remark}\label{rk:normalization} Up to rescaling, we normalize $\rbjellydown$ so that 
$     \mujelly = \text{id}_1.$ 
\end{remark}

Let $\mathsf S$ denote the collection of morphisms 
\[
	\redstrand,\;
	\bluestrand,\;
	\redcap,\;
	\bluecap,\;
	\redcup,\;
	\bluecup,\;
	\rbjellyup,\;
	\rbjellydown.
	\]

\begin{lemma}\label{lemma: red blue sequence}
	For any finite word $\vec{x}$ in $\{\reddot \ , \bluedot  \ \}$, let $\overline{\vec{x}}$ denote the standard-form word representing the same element,  i.e. $\overline{\vec{x}}$ is   of the form
$ 	(\reddot\bluedot \ )^m \reddot^{\,\ell}$, for $0\le m<n,\ \ell\in\{0,1\}$.
    Then there exists an isomorphism
	$u_{\vec x}:\vec x \xrightarrow{\sim} \overline{\vec x}$ built from morphisms in $\mathsf S$.
\end{lemma}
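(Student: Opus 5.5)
We construct $u_{\vec x}$ by induction on the length of $\vec x$, applying local rewriting moves. Each move is an invertible morphism built from $\mathsf S$ (caps, cups, jellyfish, identity strands) tensored with identity strands, so any composite of moves is again built from $\mathsf S$. The isomorphisms available in $\mathsf S$ are:
\begin{itemize}
\item $\redcap$ with inverse $\redcup$, giving $\reddot\reddot \cong$ empty word;
\item $\bluecap$ with inverse $\bluecup$, giving $\bluedot\bluedot \cong$ empty word;
\item $\rbjellydown$ with inverse $\rbjellyup$, giving $(\reddot\bluedot)^n \cong$ empty word, since $F(rb)\otimes F((rb)^{n-1})$ is the word $(\reddot\bluedot)^n$.
\end{itemize}
Inserting a cup creates an adjacent pair $\reddot\reddot$ or $\bluedot\bluedot$ anywhere in the word, and a cap deletes one. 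Likewise $\rbjellyup$ and $\rbjellydown$ insert or delete a block $(\reddot\bluedot)^n$.

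First we reduce to an alternating word. While $\vec x$ contains two adjacent dots of the same colour, apply $\redcap$ or $\bluecap$ tensored with identities to delete them. This strictly shortens the word, so after finitely many steps we reach an alternating word $\vec y$.

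Next we make the word begin with red. If $\vec y$ is empty or begins with $\reddot$, there is nothing to do. If $\vec y$ begins with $\bluedot$, insert $\redcup$ at the far left to get $\reddot\reddot\bluedot\cdots$. This is not alternating, so instead we use a cleaner route. Tensor $\rbjellyup$ on the left to get $(\reddot\bluedot)^n\vec y$. Its last dot $\bluedot$ sits next to the first dot $\bluedot$ of $\vec y$, so a $\bluecap$ cancels them. The result is $(\reddot\bluedot)^{n-1}\reddot\,\vec y'$, where $\vec y'$ is $\vec y$ with its first dot removed. Since $\vec y'$ begins with $\reddot$ (or is empty), the new word is again reduced by cancelling adjacent equal dots. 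Every step is built from $\mathsf S$, and we end with an alternating word beginning with red, of the form $(\reddot\bluedot)^m\reddot^{\,\ell}$ with $m$ arbitrary.

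Finally we bound $m$. While $m\ge n$, apply $\rbjellydown$ to the leftmost block $(\reddot\bluedot)^n$, reducing $m$ by $n$. The resulting word is $(\reddot\bluedot)^{m \bmod n}\reddot^{\,\ell}$. It represents the same group element as $\vec x$, because every move respects the relations $r^2=b^2=(rb)^n=e$. By uniqueness of standard form it therefore equals $\overline{\vec x}$. The composite of all moves is the required $u_{\vec x}$.

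The main subtlety is the blue-first case, which is the only step where a generator must be inserted rather than deleted. The jellyfish insertion above handles it: it is the only place where a non-cancelling generator enters. Everything else is routine bookkeeping. Note that $u_{\vec x}$ is not canonical, but the statement only requires existence.
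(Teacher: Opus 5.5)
Your strategy is the same as the paper's: cancel adjacent equal pairs with caps, handle a blue initial dot by inserting a jellyfish with $\rbjellyup$ and cancelling, and delete blocks $(\reddot\bluedot\ )^n$ with $\rbjellydown$. The paper packages the blue step as a map $\beta\colon\bluedot\reddot\to(\reddot\bluedot\ )^{n-1}$, with the jellyfish inserted to the right of $\bluedot\reddot$. You insert it to the left of the whole word instead, which is a cosmetic difference. However, one claim in your second step is false: a single jellyfish insertion does not always produce a word beginning with red.

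Consider the word $(\reddot\bluedot\ )^n\vec y$. Read from the right, $(\reddot\bluedot\ )^n$ alternates $\bluedot,\reddot,\bluedot,\ldots$, and so does $\vec y$ read from the left. So the cascade of cancellations runs for $\min(2n,|\vec y|)$ steps.
\begin{itemize}
\item If $|\vec y|<2n$, what survives is the first $2n-|\vec y|$ dots of $(\reddot\bluedot\ )^n$. This word begins with red and has $m<n$, as you want.
\item If $|\vec y|>2n$, the whole inserted block is consumed. What survives is $\vec y$ with its first $2n$ dots removed, and this again begins with $\bluedot$. For example, $\vec y=\bluedot(\reddot\bluedot\ )^n$ comes back to $\bluedot$.
\end{itemize}
So your assertion that ``we end with an alternating word beginning with red'' fails for long words.

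The repair is easy. Either repeat the blue step, which terminates because each pass with $|\vec y|>2n$ shortens the word by exactly $2n$. Or first apply $\rbjellydown$ to the block $(\reddot\bluedot\ )^n$ that begins at the second dot of $\vec y$, until $|\vec y|\le 2n$. This is the case the paper covers with ``Repeating this procedure yields the standard form.'' With this change your proof is complete. You should also drop the abandoned sentence about inserting $\redcup$ at the far left.
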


\begin{proof}
	We first construct an isomorphism
$	\beta:\bluedot\reddot \xrightarrow{\sim} (\reddot\bluedot \ )^{n-1}$ using only morphisms in $\mathsf{S}$.
	Indeed, tensor $\rbjellyup$ on the left by $\bluestrand\redstrand$, and then apply the  maps as shown below:
	\[
	\bluedot\reddot
	\xrightarrow{\ \id\otimes \rbjellyup\ }
	\bluedot\reddot \ (\reddot\bluedot \ )^n
	\xrightarrow{\ \id\otimes \redcap\otimes \id\ }
	\bluedot\bluedot \ (\reddot\bluedot \ )^{n-1}
	\xrightarrow{\ \bluecap\otimes \id\ }
	(\reddot\bluedot \ )^{n-1}.
	\]

    Now let \(\vec x\) be any word  in $\{\reddot \ , \bluedot  \ \}$. 
	If \(\vec x\) contains an adjacent pair \(\reddot\reddot\) or \(\bluedot\bluedot\), apply \(\redcap\) or \(\bluecap\) to remove that pair. 
	Thus we may assume \(\vec x\) is alternating. If it begins with \(\reddot\), then it has the form \((\reddot\bluedot \ )^m\reddot^\ell\). If \(m\ge n\), apply \(\rbjellydown\) to remove one copy of $(\reddot\bluedot \ )^n$; repeating this finitely many times produces the standard form. 
    
     If \(\vec x\) begins with \(\bluedot\), then either \(\vec x=\bluedot\), or \(\vec x\) begins with \(\bluedot\reddot\). In the first case, insert \(\reddot\reddot\) using \(\redcup\) and then apply $\beta$, obtaining
	\[
	\bluedot \xrightarrow{\ \id\otimes \redcup\ } \bluedot\reddot\reddot\xrightarrow{\ \beta\otimes \id\ } (\reddot\bluedot \ )^{n-1}\reddot,
	\]
	which is in standard form.  In the second case, apply \(\beta\) to replace the initial \(\bluedot\reddot\) by \((\reddot\bluedot \ )^{n-1}\); any adjacent equal pair created this way can then be removed by \(\redcap\) or \(\bluecap\). Repeating this procedure  yields the standard form.
    \end{proof}

\begin{lemma}
    We may assume  $\mathsf  G$ has morphism-generators $\mathsf S.$
\end{lemma}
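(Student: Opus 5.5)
The strategy is to show that every morphism of $\KFGR$ is already generated by $\mathsf S$. Then we can replace $\mathsf G$ by the object generators together with $\mathsf S$. We also replace $\mathsf R$ by the induced relations on the subcategory generated by $\mathsf S$. The resulting presentation is equivalent, which is exactly the meaning of ``we may assume.''

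\emph{Step 1: identify the morphisms.} Since $\VecDn$ is fusion and pointed, the equivalence $F$ gives
\[
\Hom(F(g),F(h)) \cong \Hom_{\VecDn}(g,h),
\]
which is $\mathsf k$ when $g=h$ and $0$ otherwise. By the object lemma, every word $\vec x$ is isomorphic in $\KFGR$ to $F$ of the group element it represents. Lemma \ref{lemma: red blue sequence} gives such an isomorphism $u_{\vec x}\colon \vec x \to \overline{\vec x}$ built from $\mathsf S$, where $\overline{\vec x}=F(g)$ for the corresponding $g$. Consequently, for words $\vec x,\vec y$ we have $\Hom(\vec x,\vec y)=0$ unless they represent the same element $g$. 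When they do, the Hom space is one-dimensional and spanned by the isomorphism $u_{\vec y}^{-1}\circ u_{\vec x}$.

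\emph{Step 2: express this spanning morphism in $\mathsf S$.} The map $u_{\vec x}$ is built from $\mathsf S$ by the lemma. Its inverse is also built from $\mathsf S$: $u_{\vec x}$ is a composite of tensor products of identities with the mutually inverse pairs $\redcap/\redcup$, $\bluecap/\bluecup$ and $\rbjellydown/\rbjellyup$. Reversing the composite and swapping each generator with its inverse therefore gives $u_{\vec x}^{-1}$. Note that the map $\beta$ from the lemma is made of such generators as well. Hence every morphism between words in $\reddot,\bluedot$ is a scalar multiple of a composite of morphisms in $\mathsf S$.

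\emph{Step 3: pass to the Karoubi envelope.} Morphisms in $\mathcal K(\cdot)$ are matrices of morphisms between words, composed with idempotents. The category generated by $\mathsf S$ modulo the induced relations has the same Hom spaces between words. Its Karoubi envelope is therefore equivalent to $\KFGR$, compatibly with $F$, and we may take $\mathsf S$ as the morphism generators.

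The main obstacle is Step 2. Lemma \ref{lemma: red blue sequence} constructs $u_{\vec x}$ for all words, but we must check that the inverse really lies in the $\mathsf S$-generated subcategory. We must also check that a nonzero morphism in the one-dimensional Hom space is hit, rather than $u_{\vec x}$ accidentally producing a zero map. Both points follow because each generator in $\mathsf S$ is invertible, with its inverse also in $\mathsf S$.
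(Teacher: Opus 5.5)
Your proposal is correct and follows essentially the same route as the paper. Both arguments conjugate by the isomorphisms $u_{\vec x}$ from Lemma~\ref{lemma: red blue sequence}, use pointedness to see that every morphism between words is zero or a scalar multiple of $u_{\vec y}^{-1}\circ u_{\vec x}$, and conclude that the Karoubi envelope is unchanged. You also make explicit something the paper leaves implicit: that $u_{\vec y}^{-1}$ is again built from $\mathsf S$, because every generator in $\mathsf S$ has its inverse in $\mathsf S$.
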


\begin{proof}

	Let $\vec{x}$ be any sequence of red and blue dots in $\KFGR$.  By the lemma above, 
	there is an isomorphism $u_{\vec x}:\vec x \xrightarrow{\sim} \overline{\vec x}$
 built from morphisms in $\mathsf S$.
	Also note that $\overline{\vec{x}}=F(g)$ for some $g\in \Dn$. 

Let $f:\vec{x}\to\vec{y}$ be any morphism, where $\vec{x}, \vec{y}$ are sequences in $\{\reddot \ ,\bluedot \ \}$. Conjugating, we get a morphism 
$u_{\vec y}\circ f\circ u_{\vec x}^{-1}:\overline{\vec x}\to \overline{\vec y}$. 
Since $\VecDn \cong \KFGR$ is pointed and $\overline{\vec x}, \overline{\vec y}$ are images of invertibles, this Hom space is $0$ unless $\overline{\vec x}=\overline{\vec y}$, and in the nonzero case it is one-dimensional. Therefore $f$ is either zero or a scalar multiple of $u_{\vec y}^{-1}\circ u_{\vec x}$. It follows that every morphism between word-objects in $\KFGR$ lies in the $\mathsf k$-linear tensor subcategory generated by $\mathsf S$. Hence we may replace the original generating set of morphisms by $\mathsf S$ without changing the resulting Karoubian envelope. 
\end{proof}

\subsection{\texorpdfstring{Choice of monoidal structure $J$}{Choice of monoidal structure $J$}}\label{Choices for J}

In the following diagrams, purple and green dots represent a dot that could either be red or blue, depending on the parity of the element being considered, and these dots are always opposites in color.

For every $g,h \in \Dn$, we now choose an isomorphism 
 $f_{g,h}: F(g)\otimes F(h) \xrightarrow{\simeq} F(gh)$ 
as follows. 

\noindent	$\diamond$ Case 1: If $| g|$ is even and $|g|+| h|\geq 2n$: 
		$  \rbjellydown\redstrand  \bluestrand \  \mtiny{\ldots}\greenstrand$,
   with  $|g|+|h|-2n$ strands on the right.
   
\noindent	$\diamond$ Case 2: If $|g|$ is even and $| g|+| h| <2n$:
		$  \redstrand \bluestrand \  \mtiny{\ldots}\greenstrand$,
	a total of $|g|+|h|$ strands.
	
\noindent	$\diamond$ Case 3:  If $|g|$ is odd and $| g|\geq | h|$: 
	$	\redstrand \bluestrand \  \mtiny{\ldots} \ \greenstrand$  $ \pmanycaps,$  
	with $|g|-|h|$ strands on the left. 
    
\noindent	$\diamond$ Case 4: If $| g|$ is odd and $| g|<|h|$:
      $  \rmanycaps$ \raisebox{0.2cm}{$\brjellyswitchg$},$ $ 
    with  $2n+|g|-|h|$ dots on the top.

    Since Hom spaces are either 0 or 1-dimensional, the tensor isomorphisms
	\[
	J_{g,h}:F(g)\otimes F(h)\xrightarrow{\ \simeq\ } F(gh), \qquad g,h\in \Dn 
	\]
are scalar multiples of the $f_{g,h}$. 
We show in the following lemmas that $J$ recovers the cocycle $\omega$, and that without loss of generality, we may assume $	J_{g,h}:=f_{g,h}$.

\begin{lemma}\label{lemma: omega} 
 For every triple \(g,h,k\in \Dn\), we have that 
	\[
\omega(g,h,k)	(J_{gh,k}\circ \big(J_{g,h}\otimes\mathrm{id}_{F(k)})\big)
	\;=\;
	\;J_{g,hk}\circ(\mathrm{id}_{F(g)}\otimes J_{h,k}).
	\] 
\end{lemma}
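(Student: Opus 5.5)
This lemma is the monoidal coherence axiom \eqref{eq:monoidal-functor-coherence} for $(F,J)$, rewritten with the skeletal associator of $\VecDn$ and the strict associator of $\KFGR$. The plan is to specialize that axiom to the triple $X=\delta_g$, $Y=\delta_h$, $Z=\delta_k$ and simplify each factor.

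First, the right-hand category. Since $\mathcal F(\mathsf G)$ is a free \emph{strict} monoidal category, and the quotient and Karoubi envelope inherit strictness on word objects, the associator $a_{F(g),F(h),F(k)}$ is the identity of the concatenated word $F(g)F(h)F(k)$. Hence the right side of \eqref{eq:monoidal-functor-coherence} becomes $J_{g,hk}\circ(\mathrm{id}_{F(g)}\otimes J_{h,k})$.

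Second, the left-hand category. By \eqref{eq:associator-vec-g-omega}, $a_{\delta_g,\delta_h,\delta_k}=\omega(g,h,k)\,\mathrm{id}_{\delta_{ghk}}$. Since $F$ is $\mathsf k$-linear and $F(\mathrm{id})=\mathrm{id}$, we get $F(a_{\delta_g,\delta_h,\delta_k})=\omega(g,h,k)\,\mathrm{id}_{F(ghk)}$. Here we use the skeletal realization, so that $(gh)k$ and $g(hk)$ are literally the same object $\delta_{ghk}$. The left side then becomes $\omega(g,h,k)\,J_{gh,k}\circ(J_{g,h}\otimes\mathrm{id}_{F(k)})$. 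Equating the two sides gives the claimed identity.

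The only point needing care is the bookkeeping of conventions. One must check that $J_{X\otimes Y,Z}$ with $X\otimes Y=\delta_{gh}$ is exactly $J_{gh,k}$ under the identification of $\delta_g$ with $g$. One must also check that the tensor product of morphisms in $\KFGR$ is horizontal juxtaposition, so that $J_{g,h}\otimes\mathrm{id}_{F(k)}$ is the diagram $f_{g,h}$ placed next to $|k|$ straight strands. This is a direct unwinding of definitions and involves no diagrammatic computation. I expect no real obstacle: the substantive work, namely identifying the scalars via the explicit $f_{g,h}$ and the jellyfish relations, belongs to the subsequent lemmas rather than to this one.
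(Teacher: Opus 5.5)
Your proposal is correct and follows the same approach as the paper: specialize the coherence axiom \eqref{eq:monoidal-functor-coherence} to $(\delta_g,\delta_h,\delta_k)$. On the target side the associator between word objects is trivial, and on the source side $F(a_{\delta_g,\delta_h,\delta_k})=\omega(g,h,k)\,\mathrm{id}_{F(ghk)}$, which gives the identity. The paper states this in a single line; you have correctly spelled out the bookkeeping it leaves implicit.
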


\begin{proof}
	Since $(F,J)$ 
	is monoidal, this follows directly from the coherence condition.
\end{proof}

	\begin{lemma}\label{choice of J}
	Without loss of generality, we may assume that
$ 		J_{g,h}:=f_{g,h}$ 
		 for all $g,h\in \Dn$.
	\end{lemma}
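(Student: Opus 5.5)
Since every Hom space between images of simple objects is at most one-dimensional and $J_{g,h}$ and $f_{g,h}$ are both isomorphisms $F(g)\otimes F(h)\to F(gh)$, we can write $J_{g,h}=\lambda(g,h)\,f_{g,h}$ for a unique scalar $\lambda(g,h)\in\mathsf k^\times$. We will show that replacing $J$ by $f$ changes $\omega$ only by a coboundary. It will therefore still give a presentation of a category equivalent to $\VecDn$, with a cohomologous cocycle. This is enough, since we work with $\omega$ only up to cohomology and may take it normalized.

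First we check normalization. If $g=e$ or $h=e$, the recipe gives $f_{e,h}$ and $f_{g,e}$ as identity strands: Case 2 for $g=e$, and Case 2 or Case 3 with no caps for $h=e$. Our convention $J_{1,X}=J_{X,1}=\id$ then forces $\lambda(e,h)=\lambda(g,e)=1$, so $\lambda$ is a normalized $2$-cochain. Next, substitute $J=\lambda f$ into Lemma~\ref{lemma: omega}. Define $\omega'$ by
\[
\omega'(g,h,k)\,\big(f_{gh,k}\circ(f_{g,h}\otimes\id)\big)=f_{g,hk}\circ(\id\otimes f_{h,k}).
\]
It is well defined because both sides are nonzero morphisms in a one-dimensional Hom space. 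Comparing with Lemma~\ref{lemma: omega} gives
\[
\omega'(g,h,k)=\omega(g,h,k)\,\frac{\lambda(g,h)\lambda(gh,k)}{\lambda(h,k)\lambda(g,hk)}=\omega(g,h,k)\,(d\lambda)(g,h,k)^{\pm1},
\]
with the sign depending on the coboundary convention. So $\omega'$ is a normalized $3$-cocycle cohomologous to $\omega$.

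Finally, $(F,f)$ is itself a monoidal functor $\mathrm{Vec}_{\Dn}^{\omega'}\to\KFGR$, because the display above is exactly the coherence condition \eqref{eq:monoidal-functor-coherence} with associator $\omega'$. Concretely, the pair $(\id,\lambda)$ gives a monoidal equivalence $\mathrm{Vec}_{\Dn}^{\omega'}\simeq\VecDn$, and composing it with $(F,J)$ gives $(F,f)$. This composite is still an equivalence. Renaming $\omega'$ as $\omega$, we may assume $J_{g,h}=f_{g,h}$.

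The only step needing care is the composition of monoidal functors. We must check that the structure maps of the composite are exactly $\lambda^{-1}J=f$ rather than some twisted version. This is a one-line check: the identity functor $\VecDn\to\mathrm{Vec}_{\Dn}^{\omega'}$ with structure maps $\lambda(g,h)^{-1}\id$ satisfies coherence precisely because $\omega'=\omega\cdot d\lambda$. Otherwise the argument is bookkeeping, using that each $f_{g,h}$ is an isomorphism. That fact follows from the construction, since each $f_{g,h}$ is built from the invertible generators in $\mathsf S$.
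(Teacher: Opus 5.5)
Your proof is correct and is essentially the paper's argument: you write $J$ as a scalar multiple of $f$, substitute into the coherence identity of Lemma~\ref{lemma: omega}, and observe that the recovered cocycle differs from $\omega$ by the coboundary of a normalized $2$-cochain. Your check that $f_{e,h}$ and $f_{g,e}$ are identities also supplies the normalization that the paper simply asserts.

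One bookkeeping slip in your last paragraph. With the paper's convention $J_{X,Y}\colon F(X)\otimes F(Y)\to F(X\otimes Y)$ and your $\omega'$, the identity functor with structure maps $\lambda(g,h)^{-1}$ is a monoidal functor $\mathrm{Vec}_{\Dn}^{\omega'}\to\VecDn$. That is the one you precompose $(F,J)$ with; it does not go $\VecDn\to\mathrm{Vec}_{\Dn}^{\omega'}$. The detour is unnecessary anyway: $(F,f)$ already satisfies coherence for $\omega'$, and its underlying functor $F$ is an equivalence, so it is a monoidal equivalence.
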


\begin{proof}
   Write \(f_{g,h}:=b(g,h)\,J_{g,h}\) for a normalized 2-cochain $b$. 

  Let $\omega_f$ denote the cocycle recovered from the coherence
condition for the family $\{f_{g,h}\}$. The coherence equation gives
		\[\omega_f(g,h,k)=
		\omega(g,h,k)\,
		\frac{b(h,k)\,b(g,hk)}
		{b(gh,k)\,b(g,h)}= \omega(g,h,k)\cdot (db)(g,h,k).
		\]
Thus $\omega_f$ is cohomologous to $\omega$, and  replacing $\omega$ by $\omega_f$ we may take $J_{g,h}=f_{g,h}$ for all $g,h\in \Dn$.
\end{proof}
    
\subsection{Relations} Having fixed our generating objects and morphisms, and the monoidal structure $J$, we now record relations among these morphisms that follow from their construction.

\begin{proposition}\label{prop:rels}
  The following relations hold in $\mathcal K(  \mathcal F(\mathsf G)/\langle \mathsf R\rangle ):$ 
\begingroup
\renewcommand{\theequation}{(\textnormal{N}\arabic{equation})}
\setcounter{equation}{0}

\noindent
\refstepcounter{equation}\label{rel: redstrandcap}%
\begin{minipage}[t]{0.48\linewidth}
\centering
$\theequation\quad \redstrand \redcap = \omega(r,r,r)\redcap \redstrand$
\end{minipage}%
\hfill
\refstepcounter{equation}\label{rel: bluestrandcap}%
\begin{minipage}[t]{0.48\linewidth}
\centering
$\theequation\quad \bluestrand \bluecap = \omega(rbr,rbr,rbr)\bluecap \bluestrand$
\end{minipage}

\vspace{0.8em}

\noindent
\refstepcounter{equation}\label{rel: redbluejelly}%
\begin{center}
$\theequation\quad \redstrand \bluestrand \rbjellydown
=
\omega(rb,(rb)^{n-1},rb)\rbjellydown \redstrand \bluestrand$
\end{center}

\vspace{0.8em}

\noindent
\refstepcounter{equation}\label{rel: redcircle}%
\begin{minipage}[t]{0.31\linewidth}
\centering
$\theequation\quad \redcircle = \mathrm{id}_{\mathbbm 1}$
\end{minipage}%
\hfill
\refstepcounter{equation}\label{rel: bluecircle}%
\begin{minipage}[t]{0.31\linewidth}
\centering
$\theequation\quad \bluecircle = \mathrm{id}_{\mathbbm 1}$
\end{minipage}%
\hfill
\refstepcounter{equation}\label{rel: twojellys}%
\begin{minipage}[t]{0.31\linewidth}
\centering
$\theequation\quad \rbtwojellys = \mathrm{id}_{\mathbbm 1}$
\end{minipage}

\vspace{0.8em}

\noindent
\refstepcounter{equation}\label{rel: rcupcap}%
\begin{minipage}[t]{0.31\linewidth}
\centering
$\theequation\quad \rcupcap = \redstrand\;\redstrand$
\end{minipage}%
\hfill
\refstepcounter{equation}\label{rel: bcupcap}%
\begin{minipage}[t]{0.31\linewidth}
\centering
$\theequation\quad \bcupcap = \bluestrand\;\bluestrand$
\end{minipage}%
\hfill
\refstepcounter{equation}\label{rel: jellysmultiplied}%
\begin{minipage}[t]{0.31\linewidth}
\centering
$\theequation\quad \rbtwojellysout \ = \redstrand\;\bluestrand\ \cdots\redstrand\;\bluestrand$
\end{minipage}

\endgroup
\end{proposition}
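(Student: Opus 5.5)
Every relation in the statement is an identity between morphisms in a one-dimensional (or zero-dimensional) Hom space of $\KFGR$. The plan is to pin down the scalar in each case using two tools: the fact that the generators were chosen as inverses of one another, and the coherence identity of Lemma~\ref{lemma: omega} together with the choice $J_{g,h}=f_{g,h}$ from Lemma~\ref{choice of J}. I will treat the relations in three groups: the inverse relations, the normalization relations, and the associator relations.

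\textbf{Inverse relations.} The relations \ref{rel: redcircle}, \ref{rel: bluecircle}, \ref{rel: twojellys}, \ref{rel: rcupcap}, \ref{rel: bcupcap} and \ref{rel: jellysmultiplied} follow from the construction of the generators. By definition $\redcup$ is the inverse of the isomorphism $\redcap$, and similarly for the blue cup and cap and for the two $rb$-jellyfish. Composing in one order gives the identity on the unit; this yields the circle relations and \ref{rel: twojellys}. Composing in the other order gives the identity on the source word; this yields \ref{rel: rcupcap}, \ref{rel: bcupcap} and \ref{rel: jellysmultiplied}. One small point needs care. Up to planar reading, the circle is cap after cup, and ``cup then cap'' must be read as the composite of the isomorphism with its inverse. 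If $\bluecap$ is defined only as a chosen isomorphism whose inverse is named $\bluecup$, this is immediate. Remark~\ref{rk:normalization} is only used to fix the normalization of the jellyfish so that \ref{rel: twojellys} holds as stated; it does not conflict with \ref{rel: jellysmultiplied}, since that relation is again just ``inverse after map.''

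\textbf{Associator relations.} For \ref{rel: redstrandcap}, I apply Lemma~\ref{lemma: omega} with $g=h=k=r$. By the case list, $J_{r,r}=f_{r,r}$ is Case~3 with $|g|=|h|=1$, so it is a single red cap with no strands on the left. Also $J_{e,r}=J_{r,e}=\id$ by normalization. The left side of the lemma is then $\omega(r,r,r)\,(\redcap\,\redstrand)$ and the right side is $\redstrand\,\redcap$. For \ref{rel: redbluejelly}, take $g=rb$, $h=(rb)^{n-1}$, $k=rb$. The map $J_{g,h}$ is Case~1, the jellyfish $\rbjellydown$ with $0$ strands to its right. The map $J_{h,k}$ is Case~1 with $|h|+|k|-2n=2$, namely the jellyfish followed by $\redstrand\bluestrand$. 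The outer maps $J_{e,rb}$ and $J_{rb,e}$ are identities. Substituting these into Lemma~\ref{lemma: omega} yields exactly \ref{rel: redbluejelly}.

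\textbf{The blue relation \ref{rel: bluestrandcap}, the main obstacle.} The blue dot is not $F$ of a group element in standard form, since $F(b)$ is the word $(\reddot\bluedot)^{n-1}\reddot$, i.e.\ the standard form of $rbr$. So I would not use the labels naively. Instead I would transport the identity through the isomorphism $u$ of Lemma~\ref{lemma: red blue sequence}, or argue directly at the level of the category $\VecDn$: the object $\bluedot$ is isomorphic to $F(rbr)$, and $\bluecap$ corresponds, up to scalar, to an evaluation for $rbr\cdot rbr=e$. The general fact I would then invoke is this. In a pointed category, for an invertible object $X$ with $X\otimes X\cong 1$ and any isomorphism $c:X\otimes X\to 1$, the scalar relating $\id_X\otimes c$ and $c\otimes \id_X$ (transported along the associator) equals $\omega(x,x,x)$. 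This scalar is independent of both the choice of $c$ and the choice of isomorphism $X\cong F(x)$: rescaling $c$ by $\lambda$ multiplies both sides by $\lambda$, and a change of identification conjugates both sides. Applying this fact once with $X=\reddot$ recovers \ref{rel: redstrandcap} as a consistency check, and applying it with $X=\bluedot$, $x=rbr$, gives \ref{rel: bluestrandcap}. The delicate step is checking that our strict diagrammatic reading (with associators absorbed into $J$) really produces $\omega(rbr,rbr,rbr)$ and not its inverse. I would verify this by matching conventions against the red case, whose orientation was fixed explicitly above.
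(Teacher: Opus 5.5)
There is a genuine gap in your argument for \ref{rel: bluestrandcap}. Your treatment of \ref{rel: redstrandcap} and \ref{rel: redcircle}--\ref{rel: jellysmultiplied} is the paper's, except for one slip in \ref{rel: redbluejelly}.

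The slip first. There $|h|+|k|-2n=(2n-2)+2-2n=0$, so $J_{(rb)^{n-1},rb}$ is the bare jellyfish. Your version, with two extra strands, would not even have source $F(h)\otimes F(k)$. With that correction the right tree is $\id_{F(rb)}\otimes J_{h,k}$, and the relation follows as you say. Separately, \ref{rel: twojellys} needs no normalization, since the upward jellyfish is by definition the inverse of the downward one; Remark~\ref{rk:normalization} fixes a different closed diagram and plays no role here.

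Now the gap. You claim the alternating word of length $2n-1$ is the standard form of $rbr$, so that the blue generator is isomorphic to $F(rbr)$. This is false. Since $(rb)^{n-1}=(rb)^{-1}=br$, we get $(rb)^{n-1}r=b$. So that word is $F(b)$, the blue generator is isomorphic to $F(b)$, and $F(rbr)$ is the three-letter red--blue--red word. Because $rbr=b$ forces $(rb)^2=e$, the two coincide only for $n\le 2$; for $n=3$ they are distinct transpositions in $S_3$.

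Your invariance principle itself is correct. Your worry about an inverse is moot: the cocycle identity at $(x,x,x,x)$ gives $\omega(x,x,x)^2=1$ for any involution $x$. But applied to the right element, the principle proves the blue relation with scalar $\omega(b,b,b)$, not $\omega(rbr,rbr,rbr)$. The equality $\omega(b,b,b)=\omega(rbr,rbr,rbr)$ is true, but nothing you wrote gives it.

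There are two ways to supply the missing step.

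\emph{Conjugation.} We have $b=r(rbr)r^{-1}$, and inner automorphisms act trivially on $H^3(\Dn,\mathsf k^\times)$. Hence $\omega(rgr,rhr,rkr)=\omega(g,h,k)\,(d\beta)(g,h,k)$ for a normalized $\beta$. For an involution $x$ one has $(d\beta)(x,x,x)=1$, exactly as in the proof of Proposition~\ref{prop: cocycle parameters invariant}. Evaluating at $x=b$ gives the equality.

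\emph{The paper's route, inside the diagrammatics.}
\begin{enumerate}
\item Write $\id\otimes c_b=\lambda\,(c_b\otimes\id)$ for the blue cap $c_b$.
\item Apply Lemma~\ref{lemma: omega} to $(rbr,rbr,rbr)$. Here $F(rbr)$ is the red--blue--red word and $J_{rbr,rbr}$ is the nested red--blue--red cap (Case~3).
\item Pass the three strands across the three cap layers using \ref{rel: redstrandcap}, then $\lambda$, then \ref{rel: redstrandcap}. This gives $\omega(rbr,rbr,rbr)=\epsilon_r^2\lambda=\lambda$, where $\epsilon_r=\omega(r,r,r)=\pm1$.
\end{enumerate}
In your language, this is the invariance principle applied to $X=F(rbr)$ with the nested cap. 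By the interchange law alone, the nested snake for $X$ factors as the tensor product of the three single-colour snakes, so the scalar for $X$ is $\epsilon_r\lambda\epsilon_r$.

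Either repair closes the gap. The paper's version uses only \ref{rel: redstrandcap}; the conjugation argument is more conceptual and explains why the scalar could equally be written $\omega(b,b,b)$.
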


\begin{proof}
    Relations \ref{rel: redcircle}--\ref{rel: jellysmultiplied} follow immediately by construction. 
Relations \ref{rel: redstrandcap} and \ref{rel: redbluejelly} follow from monoidality of $(F,J)$ applied to the triples $(r,r,r)$ and $(rb, (rb)^{n-1}, rb)$, respectively.

To prove \ref{rel: bluestrandcap}, since Hom spaces between simple objects in $\KFGR$ are either zero or one dimensional, there exists a constant $\lambda$ such that $(\widetilde{N2}) \bluestrand \bluecap=\lambda\bluecap \bluestrand$. The monoidal coherence equation  applied to $(rbr,rbr,rbr)$ yields
$\begin{tikzpicture}
    \node at (-0.5, -0.2) {$\omega(rbr,rbr,rbr) $};
      \node at (2,0.2) {$\biggredcap$};
    \node at (2, 0.13) {$\bigbluecap$};
    \node at (1.9, 0.01) {$\redcap$};
     \node at (3.2, 0.01) {$\redstrand$};
       \node at (3.45, 0.01) {$\bluestrand$};
         \node at (3.7, 0.01) {$\redstrand$};
         \node at (4.3, -0.3) {$=$};
    \end{tikzpicture}
\begin{tikzpicture}
    \node at (0, 0.03) {$ \redstrand$};
    \node at (0.25, 0.03) {$\bluestrand$};
     \node at (0.5, 0.03) {$\redstrand$};
      \node at (2,0.2) {$\biggredcap$};
    \node at (2, 0.13) {$\bigbluecap$};
    \node at (1.9, 0.01) {$\redcap$};

    \end{tikzpicture}.$ 
Since we can move from the right-hand side to the left-hand side by applying the sequence of relations \ref{rel: redstrandcap}, $(\widetilde{N2})$ and \ref{rel: redstrandcap}, we obtain that $\lambda=\omega(r,r,r)^{-2}\omega(rbr,rbr,rbr)$. Relation \ref{rel: bluestrandcap} then follows from this and the fact that $\omega(r,r,r)\in \{\pm 1\}$ (see \cite[Lemmas 3.1 and 3.2]{Czenky}).
\end{proof}

We emphasize that at this stage we do not claim that
these relations exhaust the tensor ideal $\langle \mathsf R\rangle$.
Section \ref{sec: equiv} is devoted to proving that
the relations listed above generate $\langle \mathsf R\rangle$. 
From now on, to ease notation we 
let $\epsilon_r:=\omega(r,r,r), \epsilon_b:=\omega(rbr,rbr,rbr), \text{ and } \xi:=\omega(rb, (rb)^{n-1},rb).$ 
	By \cite[Lemmas 3.1 and 3.2]{Czenky}, $\epsilon_r, \epsilon_b\in \{1,-1\}$ and $\xi$ is an $n$-th root of unity.

    \section{Cocycle computation using diagrammatics}\label{sec: cocycles}

We use our diagrammatics to obtain an explicit formula for  normalized $3$-cocycles on $\Dn$. 

    \subsection{Cocycle formula} 
Throughout this section, \(\omega\) denotes the normalized \(3\)-cocycle
determining the associator of \(\VecDn\).

\begin{theorem}\label{thm: cocycle formula}
    For all $g,h,k\in \Dn$, we have  $\omega(g,h,k)=\epsilon_r^{\lceil x \rceil} \epsilon_b^{\lfloor x \rfloor} \xi^{(-1)^{|g|+|h|}\lfloor y \rfloor},$  where
   	{\small 
	\[
x=\begin{cases}
	\dfrac{|g|}{2}, &
\begin{aligned}[t]
		&|g|\text{ even},\ |h|\text{ odd},\ |gh|<|k|,\ |h|<|k|
	\end{aligned}
	\\[1.0em]
	
	\dfrac{|ghk|}{2}, &
	\begin{aligned}[t]
		&|g|\text{ even},\ |h|\text{ odd},\ |g|+|h|\ge 2n>|hk|+|g|,\ |gh|<|k|\le |h|\\
		\text{or }&|g|\text{ odd},\ |h|\text{ even},\ |g|\ge |h|,\ |gh|<|k|,\ |h|+|k|\ge 2n,\ |g|\ge |hk|\\
		\text{or }&|g|,|h|\text{ odd},\ |g|<|h|,\ |gh|+|k|\ge 2n,\ |h|\ge |k|,\ |g|\ge |hk|
	\end{aligned}
	\\[1.0em]
	
\dfrac{|hk|}{2}, &
\begin{aligned}[t]
	&|g|\text{ even},\ |h|\text{ odd},\ |g|+|h|<2n\le |hk|+|g|,\ |gh|\ge |k|>|h|\\
	\text{or }&|g|,|h|\text{ odd},\ |g|<|h|,\ |gh|+|k|\ge 2n,\ |h|<|k|,\ |g|<|hk|
\end{aligned}
	\\[1.0em]
	
\dfrac{|gh|}{2}, &
	\begin{aligned}[t]
		|g|\text{ odd},\ |h|\text{ even},\ |g|<|h|,\ |h|+|k|\ge 2n
	\end{aligned}
	\\[1.0em]
	
\dfrac{|k|}{2}, &
\begin{aligned}[t]
	&|g|\text{ odd},\ |h|\text{ even},\ |g|<|h|,\ |gh|\ge |k|,\ |h|+|k|<2n,\ |g|<|hk|\\
	\text{or }&|g|,|h|\text{ odd},\ |g|\ge |h|,\ |gh|+|k|<2n,\ |h|\ge |k|,\ |g|\ge |hk|
\end{aligned}
	\\[1.0em]
	
	\dfrac{|h|}{2}, &
	\begin{aligned}[t]	|g|,|h|\text{ odd},\ |g|\ge |h|,\ |gh|+|k|\ge 2n,\ |h|<|k|,\ |g|\ge |hk|
	\end{aligned}
	\\[1.0em]
	
	\dfrac{|kg|}{2}, &
		\begin{aligned}[t]
	|g|,|h|\text{ odd},\ |g|\ge |h|,\ |gh|+|k|<2n,\ |h|<|k|,\ |g|<|hk| 
	\end{aligned}
	\\[1.0em]
	
	0, & \text{otherwise}.
\end{cases}
\]}
    and 
   	{\small  \begin{align*}
        y = \begin{cases}
                0 , &\text{ if } \begin{cases}
                |h| \text{ even and } |h|+|k|<2n, \text{ or }\\
                |h| \text { odd and } |h|\geq |k|\end{cases}\\
                \frac{|g|}{2}, &\text{otherwise}.
        \end{cases}
    \end{align*}}
\end{theorem}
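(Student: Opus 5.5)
By Lemma~\ref{lemma: omega} and Lemma~\ref{choice of J}, computing $\omega(g,h,k)$ amounts to comparing two explicit diagrams built from $\mathsf S$:
\[
D_L:=J_{gh,k}\circ(J_{g,h}\otimes\id_{F(k)}),\qquad D_R:=J_{g,hk}\circ(\id_{F(g)}\otimes J_{h,k}).
\]
Both are morphisms $F(g)F(h)F(k)\to F(ghk)$, each $J$ being one of the four cases $f_{\cdot,\cdot}$ of Subsection~\ref{Choices for J}. The plan is to rewrite $D_R$ into $D_L$ using only the relations \ref{rel: redstrandcap}--\ref{rel: jellysmultiplied} of Proposition~\ref{prop:rels}, and to record the scalar picked up along the way. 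Each use of \ref{rel: redstrandcap} contributes $\epsilon_r$, each use of \ref{rel: bluestrandcap} contributes $\epsilon_b$, and each use of \ref{rel: redbluejelly} contributes $\xi^{\pm1}$. The remaining relations (circles, cup--cap cancellations, jelly--antijelly cancellations) contribute $1$. The resulting total scalar is then $\omega(g,h,k)$.

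\textbf{Step 1: red/blue factor.} The cups and caps in $D_L$ and $D_R$ form nested families of arcs pairing adjacent strands of opposite parity. I would show that $D_L$ and $D_R$ differ only by sliding a nested stack of caps of length $2x$ or $2x\pm1$ (alternating red and blue) past one strand. Sliding a nested alternating stack through one strand uses \ref{rel: redstrandcap} once per red cap and \ref{rel: bluestrandcap} once per blue cap. This yields $\epsilon_r^{\lceil x\rceil}\epsilon_b^{\lfloor x\rfloor}$, where $x$ counts half the number of caps that must move. So the first lemma to prove is this local slide formula: an $m$-fold nested cap stack with outer cap red passes a strand at cost $\epsilon_r^{\lceil m/2\rceil}\epsilon_b^{\lfloor m/2\rfloor}$, up to the conventions encoded by \ref{rel: bluestrandcap}.

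\textbf{Step 2: jelly factor.} Jellyfish $\rbjellydown$ occur precisely when a sum of lengths reaches $2n$ (Case~1) or when Case~4 applies. The jelly must be pulled past the $|g|$ strands of $F(g)$ in one of the two composites. I would first prove a pull-through lemma: moving $\rbjellydown$ past a pair $\redstrand\bluestrand$ costs $\xi$ by \ref{rel: redbluejelly}, so moving it past $|g|$ strands costs $\xi^{|g|/2}$. When the strands approach in the opposite parity, the cost is $\xi^{-|g|/2}$; this uses the blue jellies of Definition~\ref{def: blue jelly} and the normalization of Remark~\ref{rk:normalization}. The jelly appears only when $h$ together with $k$ overflows, in the sense of the condition defining $y\neq 0$. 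The sign $(-1)^{|g|+|h|}$ records the orientation, which gives $\xi^{(-1)^{|g|+|h|}\lfloor y\rfloor}$.

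\textbf{Step 3: case analysis.} I would split by the parities of $|g|$ and $|h|$ (four cases), and then by the comparisons determining which $f$-case each of the four $J$'s uses. These are $|g|$ versus $|h|$, $|gh|$ versus $|k|$, $|h|$ versus $|k|$, $|g|$ versus $|hk|$, and the $2n$ thresholds. In each subcase I would draw $D_L$ and $D_R$, cancel cup--cap pairs and jelly pairs with \ref{rel: rcupcap}--\ref{rel: jellysmultiplied}, and read off $x$ and $y$ to match the table.

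The main obstacle is this bookkeeping. The lengths $|gh|$, $|hk|$, $|kg|$ depend on parity in a piecewise way: for odd $|g|$, $|gh|$ is roughly $\pm(|g|-|h|)$ modulo $2n$. Verifying that every subcase lands on exactly the listed value of $x$, with the ``otherwise'' cases giving $0$, will require care. I would organize it by first tabulating $|gh|$ in terms of $|g|$ and $|h|$ for each parity, then checking that the caps displaced in Step~1 are exactly the caps that are created in one composite and annihilated in the other.
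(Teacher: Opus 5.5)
Your overall strategy matches the paper's. You set $J=f$, compare the left and right trees, rewrite one into the other using Proposition~\ref{prop:rels} and Lemma~\ref{lemma: extra rels}, and split by the parities of $|g|,|h|$; the paper finds six compatible tree pairs in each parity class. Your two local lemmas are correct when suitably stated, and the paper uses both: one iterates \ref{rel: redstrandcap}, \ref{rel: bluestrandcap} along a nested cap stack, the other iterates \ref{rel: redbluejelly} to pull a jellyfish through pairs of strands.

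The gap is the organizing claim of Steps~1 and~2. You assert that the trees always differ by one such cap slide plus one jellyfish pulled across $F(g)$, with the $\epsilon$'s and $\xi$'s arising independently. This is false in several subcases, so these lemmas cannot carry Step~3.
\begin{itemize}
\item In the even--odd pairs \ref{L2eo}/\ref{R3eo} and \ref{L3eo}/\ref{R3eo}, a bundle of $2n-|g|$ strands joins two jellyfish in the right tree but not in the left. The paper rotates these strands around a jellyfish via \ref{eq: blue up} and \ref{eq: red up}; each blue strand costs $\epsilon_b\xi$ and each red one $\epsilon_r$. It then reconnects with \ref{rel: rcupcap}, \ref{rel: bcupcap}. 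So here $x=|g|/2$ comes from rotations rather than a cap slide, and it is coupled to the $\xi$-factor.
\item Some trees contain different numbers of jellyfish, or jellyfish joined in different patterns: e.g.\ \ref{L2eo}/\ref{R2eo}, \ref{L2oo}/\ref{R4oo}, \ref{L3oo}/\ref{R1oo}, \ref{L3oo}/\ref{R4oo}. There a connected pair of downward jellyfish must be annihilated or created as a closed diagram. That step needs the normalization of Remark~\ref{rk:normalization} and Lemma~\ref{lem: brtwojellys}. These are not among the jelly--antijelly relations \ref{rel: twojellys}, \ref{rel: jellysmultiplied} to which your bookkeeping assigns scalar $1$. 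Without them, such steps carry an undetermined scalar.
\end{itemize}

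Two further points are not addressed by your plan.
\begin{itemize}
\item Several cases need two or more separate slides whose exponents must be recombined through length identities. Examples are $\lceil |g|/2\rceil+\lceil |ghk|/2\rceil$ in \ref{L3oo}/\ref{R4oo}, and the $|k|$- and $|ghk|$-contributions combining to $|gh|$ in the odd--even case. So $x$ is not simply half the size of one displaced stack.
\item Along the paper's route, the pair \ref{L2eo}/\ref{R3eo} gives $(\epsilon_r\epsilon_b)^{n-|g|/2}\xi^{-|g|/2}$. Matching this with the stated $\epsilon_r^{|g|/2}\epsilon_b^{|g|/2}\xi^{-|g|/2}$ requires $(\epsilon_r\epsilon_b)^n=1$, i.e.\ $\epsilon_r=\epsilon_b$ for odd $n$. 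The paper proves this separately as Proposition~\ref{prop:e_r=e_b}, using one-dimensionality of Hom spaces. You must either invoke it or arrange your rotations so that the factor $(\epsilon_r\epsilon_b)^n$ never appears.
\end{itemize}

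The framework is right, but the substance of the proof is the twenty-four explicit rewritings with these extra tools, and Steps~1 and~2 do not provide them.
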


\begin{remark} When $\epsilon_r=\epsilon_b=:\epsilon$ (which is automatic when $n$ is odd), the cocycle formula becomes:
\[
\omega(g,h,k)=\epsilon^{2x}\xi^{(-1)^{|g|+|h|}\lfloor y \rfloor}.
\]
Note that for the case $\epsilon=1,$ this formula matches the one in \cite{Dave}.
\end{remark}

For a fixed $n$,
	\(
	\mu_n:=\{\zeta\in \mathsf k^\times\mid \zeta^n=1\}
	\)
	denotes the set of $n$-th roots of unity in $\mathsf k$.
 We call a triple 
 $(\epsilon_r,\epsilon_b,\xi)$
\emph{admissible} when 
\[
\epsilon_r,\epsilon_b\in\{\pm1\},
\qquad
\xi\in\mu_n,
\]
and, when \(n\) is odd, \(\epsilon_r=\epsilon_b\). We denote the set of admissible triples by $\mathcal A_n$.

For each admissible triple $(\epsilon_r,\epsilon_b,\xi)$, let
$\omega_{\epsilon_r,\epsilon_b,\xi}$ 
denote the function given in
Theorem~\ref{thm: cocycle formula}.

\begin{proposition}\label{prop: cocycle parameters invariant}
	Suppose that
	\(\omega_{\epsilon_r,\epsilon_b,\xi}\) and
	\(\omega_{\epsilon'_r,\epsilon'_b,\xi'}\) are normalized
	\(3\)-cocycles. If they are cohomologous, then
$ 	(\epsilon_r,\epsilon_b,\xi)
	=
	(\epsilon'_r,\epsilon'_b,\xi').$ 
\end{proposition}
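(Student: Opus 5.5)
We will show that each of $\epsilon_r,\epsilon_b,\xi$ is recovered from a quantity that depends only on the cohomology class of a normalized $3$-cocycle. The guiding principle is that restricting $\omega$ to a cyclic subgroup $C\le \Dn$ gives an element of $H^3(C,\mathsf k^\times)$. For a cyclic group $\mathbb Z_m=\langle a\rangle$ this class is detected by the invariant
\[
\Phi_a(\omega)=\prod_{j=0}^{m-1}\omega(a,a^j,a),
\]
which is invariant under multiplication by coboundaries $db$: the factors $b$ telescope around the cycle. This is the invariant that underlies \cite[Lemmas 3.1 and 3.2]{Czenky}. For $a$ of order $2$ it reduces to $\omega(a,a,a)$, because normalization kills the $j=0$ term.

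We apply this to three cyclic subgroups.

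\emph{First}, $\langle r\rangle\cong\mathbb Z_2$. Here $\Phi_r(\omega)=\omega(r,r,r)$. From the formula of Theorem~\ref{thm: cocycle formula} with $g=h=k=r$, all lengths are $1$. We are then in the case ``$|g|,|h|$ odd, $|g|\ge|h|$'' with $|h|\ge|k|$ and $|gh|=0$, and we must check which branch of $x$ applies (we expect $x=|k|/2=1/2$). We also have $y=0$. Together these give $\omega(r,r,r)=\epsilon_r$.

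\emph{Second}, $\langle rbr\rangle\cong\mathbb Z_2$. Similarly $\omega(rbr,rbr,rbr)$ is a cohomology invariant. Since $rbr=(rb)r$ has length $3$, evaluating the formula should give $\epsilon_b$. As a consistency check, this must agree with the way $\epsilon_b$ was defined in Proposition~\ref{prop:rels}.

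\emph{Third}, $\langle rb\rangle\cong\mathbb Z_n$. Here
\[
\Phi_{rb}(\omega)=\prod_{j=0}^{n-1}\omega\big(rb,(rb)^j,rb\big).
\]
All arguments have even length. Since $|g|=2$ is even and $|h|$ is even, the $\epsilon$-exponents should vanish, so $x=0$ in the ``otherwise'' branch. The $y$ term vanishes unless $|h|+|k|\ge 2n$, that is unless $2j+2\ge 2n$, i.e. $j=n-1$. In that case $y=|g|/2=1$, and the exponent of $\xi$ is $(-1)^{2+2j}\cdot 1=1$. Hence $\Phi_{rb}(\omega)=\xi$.

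Given two cohomologous cocycles from the family, the three invariants must agree, so the triples coincide. The main obstacle is the bookkeeping in the first two evaluations: we must pin down carefully which case of the piecewise definition of $x$ applies, for example whether $|gh|+|k|<2n$ and whether $|g|\ge|hk|$ when $g=h=k=r$ or $g=h=k=rbr$. If some branch unexpectedly yields a different exponent, the fallback is to bypass the formula. By construction $\epsilon_r=\omega(r,r,r)$, $\epsilon_b=\omega(rbr,rbr,rbr)$ and $\xi=\omega(rb,(rb)^{n-1},rb)$, where $\omega$ is the cocycle realized through the choices $J=f$. So it suffices to check that $\omega_{\epsilon_r,\epsilon_b,\xi}$ takes these values on these triples. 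Invariance of $\Phi$ under coboundaries we treat as routine: $db(a,a^j,a)$ multiplied over $j$ telescopes to $1$.
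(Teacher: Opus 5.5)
Your proposal is correct and follows essentially the same route as the paper: invariance of $\omega(a,a,a)$ for $a\in\{r,rbr\}$ under normalized coboundaries, and for $rb$ your telescoping product $\prod_{j=0}^{n-1}(d\beta)(rb,(rb)^j,rb)=1$ repackages the paper's induction, which shows $\beta(rb,(rb)^j)=\beta((rb)^j,rb)$ and hence $(d\beta)(rb,(rb)^{n-1},rb)=1$. The bookkeeping you flagged works out: for $g=h=k=r$ and for $g=h=k=rbr$ the branch $x=|k|/2$ applies (for $rbr$ this uses $n\ge 2$, so that $|gh|+|k|=3<2n$), and $y=0$, giving $\epsilon_r$ and $\epsilon_r^{2}\epsilon_b=\epsilon_b$ respectively.
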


\begin{proof}
	 Suppose that $\omega' = \omega_{\epsilon'_r,\epsilon'_b,\xi'}$ and $\omega = \omega_{\epsilon_r,\epsilon_b,\xi} $ are cohomologous. Since both cocycles are normalized, we may write  $\omega'=\omega\cdot d\beta$ for a normalized \(2\)-cochain \(\beta\).
	If \(a\in\Dn\) has order two, then \[ (d\beta)(a,a,a) = \frac{\beta(a,a)\beta(a,e)} {\beta(e,a)\beta(a,a)} =1. \] Applying this first to \(a=r\) and then to \(a=rbr\), we obtain 
	$\epsilon'_r = \epsilon_r $ and $ \epsilon'_b = \epsilon_b. $  To  recover the remaining parameter, note that from Theorem~\ref{thm: cocycle formula} we know that
	$\omega(rb,(rb)^j,rb)=1=\omega'(rb,(rb)^j,rb)$ for $0\leq j\leq n-2$,
	$\omega(rb,(rb)^{n-1},rb)=\xi,$ and $\omega'(rb,(rb)^{n-1},rb)=\xi'$.
	Since $\omega'=\omega\cdot d\beta$, it follows that
	$(d\beta)(rb,(rb)^j,rb)=1$ for $0\leq j\leq n-2$.
	It can then be shown by induction that 
	\[
	\beta(rb,(rb)^j)=\beta((rb)^j,rb)
	\qquad
	\text{for }0\leq j\leq n-1.
	\]

Therefore,
	$\xi'= \omega'(rb,(rb)^{n-1},rb) = \omega(rb,(rb)^{n-1},rb)(d\beta)(rb,(rb)^{n-1},rb)=\xi, $ as desired. 
\end{proof}

\begin{proposition}\label{prop: admissible triples give cocycles}
	For every admissible triple
	\((\epsilon_r,\epsilon_b,\xi)\), 
	\(\omega_{\epsilon_r,\epsilon_b,\xi}\) is a normalized
	\(3\)-cocycle.
\end{proposition}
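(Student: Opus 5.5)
The plan is a counting argument that combines Theorem~\ref{thm: cocycle formula}, Proposition~\ref{prop: cocycle parameters invariant}, and the known order of $H^3(\Dn,\mathsf k^\times)$. This avoids checking the cocycle identity for the piecewise formula by hand. Fix a class $[\omega]\in H^3(\Dn,\mathsf k^\times)$ and choose a normalized representative $\omega$. Apply the whole construction of Section~\ref{sec: presentation} to $\VecDn$: the presentation $(F,J)$, and the replacement $J_{g,h}=f_{g,h}$ of Lemma~\ref{choice of J}, which changes $\omega$ within its class to $\omega_f$. Since $f_{e,h}$ and $f_{g,e}$ are identities, $\omega_f$ is again normalized. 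Theorem~\ref{thm: cocycle formula} then says that $\omega_f=\omega_{\epsilon_r,\epsilon_b,\xi}$, where $\epsilon_r=\omega_f(r,r,r)$, $\epsilon_b=\omega_f(rbr,rbr,rbr)$ and $\xi=\omega_f(rb,(rb)^{n-1},rb)$. Thus every class contains a cocycle of the form $\omega_{t}$ for some triple $t$.

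Next I would check that this triple is admissible. By \cite[Lemmas 3.1 and 3.2]{Czenky}, $\epsilon_r,\epsilon_b\in\{\pm1\}$ and $\xi\in\mu_n$. For $n$ odd I still need $\epsilon_r=\epsilon_b$. Here $r$ and $rbr$ are conjugate in $\Dn$, since all reflections are conjugate when $n$ is odd. Conjugation by a group element acts trivially on $H^3(\Dn,\mathsf k^\times)$, so the restrictions of $[\omega_f]$ to $\langle r\rangle$ and $\langle rbr\rangle$ correspond under the conjugation isomorphism $\mathbb Z_2\cong\mathbb Z_2$. For a normalized cocycle on $\mathbb Z_2=\langle a\rangle$, the value $\omega(a,a,a)$ is a cohomology invariant, by the same computation $(d\beta)(a,a,a)=1$ used in Proposition~\ref{prop: cocycle parameters invariant}. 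Hence $\epsilon_r=\epsilon_b$.

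This defines a map $\Phi:H^3(\Dn,\mathsf k^\times)\to\mathcal A_n$. It is well defined: two admissible triples arising from the same class give cohomologous normalized cocycles $\omega_t,\omega_{t'}$, so $t=t'$ by Proposition~\ref{prop: cocycle parameters invariant}. It is injective, because $\omega_{\Phi([\omega])}$ lies in $[\omega]$, so distinct classes yield distinct functions and therefore distinct triples. Now compare sizes. For $n$ odd, $|\mathcal A_n|=2n=|\mathbb Z_2\times\mathbb Z_n|$; for $n$ even, $|\mathcal A_n|=4n=|\mathbb Z_2\times\mathbb Z_2\times\mathbb Z_n|$. An injective map between finite sets of the same cardinality is a bijection. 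So every admissible triple $t$ equals $\Phi([\omega])$ for some class, and then $\omega_t$ coincides with a normalized $3$-cocycle, which proves the proposition.

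The step I expect to need the most care is the claim that Theorem~\ref{thm: cocycle formula} applies to an arbitrary class. That theorem is stated for the normalized cocycle $\omega$ attached to the chosen presentation of $\VecDn$, so I must make sure its hypotheses are met whenever $\omega$ has been replaced by $\omega_f$. This covers the normalization of Remark~\ref{rk:normalization} and the fact that rescaling generators does not alter $\epsilon_r,\epsilon_b,\xi$. The identification $\epsilon_r=\epsilon_b$ for odd $n$ via conjugation invariance is the other point to state carefully. As a sanity check on normalization, I would also verify directly from the formula that $x=y=0$ whenever one of $g,h,k$ is $e$.
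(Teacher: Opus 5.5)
Your proposal is correct and is essentially the paper's argument. Every class acquires a normalized representative $\omega_{\epsilon_r,\epsilon_b,\xi}$ via Lemma~\ref{choice of J} and Theorem~\ref{thm: cocycle formula}; the map $\Phi\colon H^3(\Dn,\mathsf k^\times)\to\mathcal A_n$ is well defined by Proposition~\ref{prop: cocycle parameters invariant} and injective; and equal cardinalities ($2n$ or $4n$) force surjectivity. The only deviation is the odd-$n$ admissibility check: you derive $\epsilon_r=\epsilon_b$ cohomologically from the conjugacy of $r$ and $rbr$, using that inner automorphisms act trivially on $H^3$ and that $\omega(a,a,a)$ is a class invariant for normalized cocycles on $\langle a\rangle\cong\mathbb Z_2$, whereas the paper cites its diagrammatic Proposition~\ref{prop:e_r=e_b}; both routes are valid.
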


\begin{proof}
	By Lemma~\ref{choice of J}, Proposition~\ref{prop:e_r=e_b}, and
	Theorem~\ref{thm: cocycle formula}, every class in
	\(H^3(\Dn,\mathsf k^{\times})\) has a normalized representative of the
	form \(\omega_{\epsilon_r,\epsilon_b,\xi}\) for some admissible triple. 
 We define a
	map
$ 	\Phi\colon H^3(\Dn,\mathsf{k}^{\times})\longrightarrow\mathcal A_n$ 
	sending each cohomology class to the parameter triple of such a
	representative.
		Proposition~\ref{prop: cocycle parameters invariant} shows that \(\Phi\) is
	well-defined. If two classes have the same parameter triple,
 Theorem~\ref{thm: cocycle formula} shows that their chosen
	representatives are equal, so \(\Phi\) is injective.
	
	Since $\mathsf k$ is algebraically closed of characteristic zero,
	\(\mu_n\) has \(n\) elements. Therefore,
	$ |\mathcal A_n|= 2n$ when $n$ is odd, and $4n$ when $n$ is even. 
	On the other hand, the known computation
	\[
	H^3(\Dn,\mathsf k^{\times})
	\cong
	\begin{cases}
		\mathbb Z_{2n}, & n\text{ odd},\\
		\mathbb Z_2\times\mathbb Z_2\times\mathbb Z_n,
		& n\text{ even},
	\end{cases}
	\]
	shows that \(H^3(\Dn,\mathsf k^{\times})\) has the same cardinality.
	Thus \(\Phi\) is bijective.
		Consequently, every admissible triple occurs as the parameter triple of
	a normalized \(3\)-cocycle. 
\end{proof}

\begin{theorem}\label{thm: cocycle reps}The family
\(
\bigl\{
\omega_{\epsilon_r,\epsilon_b,\xi}
\mid
(\epsilon_r,\epsilon_b,\xi)\text{ is admissible}
\bigr\}
\)
is a complete set of normalized \(3\)-cocycle representatives for
\(H^3(\Dn,\mathsf{k}^{\times})\).
\end{theorem}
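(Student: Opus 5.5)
The proof will assemble Propositions~\ref{prop: admissible triples give cocycles} and \ref{prop: cocycle parameters invariant} together with Lemma~\ref{choice of J} and Theorem~\ref{thm: cocycle formula}. A ``complete set of representatives'' requires three things. Each member of the family must be a normalized $3$-cocycle. Every class in $H^3(\Dn,\mathsf k^\times)$ must contain some member. Distinct members must lie in distinct classes.

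First, Proposition~\ref{prop: admissible triples give cocycles} states directly that each $\omega_{\epsilon_r,\epsilon_b,\xi}$ with admissible triple is a normalized $3$-cocycle. So the family lies in the normalized cocycles and defines classes in $H^3$.

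Second, for exhaustiveness, take an arbitrary class $[\omega]$ and a normalized representative $\omega$. Form $\VecDn$ and fix a presentation $(F,J)$ as in Section~\ref{sec: presentation}. By Lemma~\ref{choice of J}, after replacing $\omega$ by a cohomologous normalized cocycle, we may take $J_{g,h}=f_{g,h}$. Theorem~\ref{thm: cocycle formula} then says this representative equals $\omega_{\epsilon_r,\epsilon_b,\xi}$, where $\epsilon_r=\omega(r,r,r)$, $\epsilon_b=\omega(rbr,rbr,rbr)$, and $\xi=\omega(rb,(rb)^{n-1},rb)$. These lie in $\{\pm1\}$ and $\mu_n$ respectively, by the cited lemmas of \cite{Czenky}.

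The step needing care is admissibility when $n$ is odd, i.e.\ $\epsilon_r=\epsilon_b$. For this I would invoke Proposition~\ref{prop:e_r=e_b}, as the proof of Proposition~\ref{prop: admissible triples give cocycles} does. If one wanted it independently, note that for $n$ odd, $r$ and $rbr$ are conjugate in $\Dn$. Conjugation acts trivially on group cohomology, so the restrictions of $\omega$ to $\langle r\rangle$ and $\langle rbr\rangle$ correspond under the isomorphism induced by conjugation. Hence $\omega(r,r,r)=\omega(rbr,rbr,rbr)$, since the value at $(a,a,a)$ is a cohomology invariant of the restriction to $\langle a\rangle\cong\mathbb Z_2$.

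Third, distinctness is exactly Proposition~\ref{prop: cocycle parameters invariant}: cohomologous members have equal parameter triples. Alternatively, the bijection $\Phi$ constructed in Proposition~\ref{prop: admissible triples give cocycles} gives both surjectivity and injectivity at once, using the cardinality count $|\mathcal A_n|=|H^3(\Dn,\mathsf k^\times)|$. I expect no real obstacle beyond citing these results consistently; the only subtle point is the odd-$n$ admissibility handled above.
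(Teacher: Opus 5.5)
Your proposal is correct and follows essentially the same route as the paper. The paper likewise takes the cocycle property and exhaustiveness from Proposition~\ref{prop: admissible triples give cocycles}, whose proof combines Lemma~\ref{choice of J}, Theorem~\ref{thm: cocycle formula} and Proposition~\ref{prop:e_r=e_b}, and takes distinctness from Proposition~\ref{prop: cocycle parameters invariant}. Your optional conjugation argument for $\epsilon_r=\epsilon_b$ when $n$ is odd is also valid, since for $n$ odd $r$ and $rbr$ are conjugate and $\omega(a,a,a)$ is unchanged by normalized coboundaries whenever $a^2=e$; it gives a cohomological alternative to the paper's diagrammatic proof of Proposition~\ref{prop:e_r=e_b}.
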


	\begin{proof}
    By Proposition~\ref{prop: admissible triples give cocycles}, the family in the statement 
consists of normalized \(3\)-cocycles; its proof also shows that every cohomology class occurs. 
		Uniqueness follows by Proposition~\ref{prop: cocycle parameters invariant}.
\end{proof}

\subsection{Auxiliary diagrammatic relations}
We derive several
consequences of relations \ref{rel: redstrandcap}—\ref{rel: jellysmultiplied}. These
auxiliary relations will be used repeatedly.

\begin{lemma}\label{lemma: extra rels}
	The following relations hold in $\KFGR$
    \setcounter{savedequation}{\value{equation}}
\begingroup
\renewcommand{\theequation}{(\textnormal{N}\arabic{equation})}
\renewcommand{\theHequation}{N\arabic{equation}}
\setcounter{equation}{9}

\noindent
\refstepcounter{equation}\label{rel: redstrandcup}%
\begin{minipage}[t]{0.48\linewidth}
\makebox[3.5em][l]{$\theequation$}%
$\redstrand \redcup
=
\epsilon_r\redcup \redstrand$
\end{minipage}%
\hfill
\refstepcounter{equation}\label{rel: bluestrandcup}%
\begin{minipage}[t]{0.48\linewidth}
\makebox[3.5em][l]{$\theequation$}%
$\bluestrand \bluecup
=
\epsilon_b\bluecup \bluestrand$
\end{minipage}

\vspace{0.8em}

\noindent
\refstepcounter{equation}\label{rel: zigzag}%
\begin{minipage}[t]{0.48\linewidth}
\makebox[3.5em][l]{$\theequation$}%
$\gzigzagup
=
\epsilon_x\greenstrand
=
\gzigzagdown$
\end{minipage}%
\hfill
\refstepcounter{equation}\label{rel: brtwojellys}%
\begin{minipage}[t]{0.48\linewidth}
\makebox[3.5em][l]{$\theequation$}%
$\brtwojellys
=
\mathrm{id}_{\mathbbm 1}$
\end{minipage}

\vspace{0.8em}

\noindent
\refstepcounter{equation}\label{rel: blueredjelly}%
\begin{minipage}[t]{0.48\linewidth}
\makebox[3.5em][l]{$\theequation$}%
$\bluestrand \redstrand \brjellydown
=
\xi\brjellydown \bluestrand \redstrand$
\end{minipage}%
\hfill
\refstepcounter{equation}\label{rel: brtwojellysout}%
\begin{minipage}[t]{0.48\linewidth}
\makebox[3.5em][l]{$\theequation$}%
$\brtwojellysout
=
\bluestrand \redstrand \ \cdots
\bluestrand \redstrand$
\end{minipage}
\begin{minipage}[t]{0.48\linewidth}
\refstepcounter{equation}\label{eq: blue up}%
\makebox[\linewidth][l]{%
    \(\theequation\quad
    \rwithbup=\epsilon_b\xi\bwithbup\)
} 
\end{minipage}
\hfill
\begin{minipage}[t]{0.48\linewidth}
    \refstepcounter{equation}\label{eq: red up}%
\makebox[\linewidth][l]{%
    \(\theequation\quad
    \bwithrup=\epsilon_r\rwithrup\)
}
\end{minipage}
\endgroup
where \(x=r\) when $\greendot \ $ represents $\reddot\ $ and \(x=b\) when
it represents $\bluedot$ .
\end{lemma}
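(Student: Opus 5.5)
Each relation is derived from \ref{rel: redstrandcap}--\ref{rel: jellysmultiplied} by composing both sides with cups, caps or jellyfish and then cancelling with the invertibility relations \ref{rel: redcircle}--\ref{rel: jellysmultiplied}. Throughout we use that $\epsilon_r,\epsilon_b\in\{\pm1\}$ and $\xi\in\mu_n$, so inverses of these scalars are harmless.

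\textbf{Relations \ref{rel: redstrandcup} and \ref{rel: bluestrandcup}.} For \ref{rel: redstrandcup}, consider the morphism $\redstrand\redcup:\reddot\to\reddot\reddot\reddot$. Postcompose it with $\redstrand\redcap$, rewrite this cap using \ref{rel: redstrandcap} as $\epsilon_r\,\redcap\redstrand$, and simplify with \ref{rel: redcircle} and \ref{rel: rcupcap}. Since the relevant hom space is one-dimensional and $\redcap\redstrand$ is invertible with inverse $\redcup\redstrand$, this yields $\redstrand\redcup=\epsilon_r^{\pm1}\redcup\redstrand=\epsilon_r\redcup\redstrand$. The blue relation \ref{rel: bluestrandcup} follows by the same argument, using \ref{rel: bluestrandcap}, \ref{rel: bluecircle} and \ref{rel: bcupcap}.

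\textbf{Relation \ref{rel: zigzag}.} A zigzag built from $\redcap$ and $\redcup$ is a scalar multiple of the identity. To find the scalar, start from $\redstrand\redcup$ followed by $\redcap\redstrand$. Use \ref{rel: redstrandcup} to move the cup to the left, which produces $\epsilon_r\,(\redcap\redstrand)\circ(\redcup\redstrand)$. This equals $\epsilon_r$ times the identity by \ref{rel: redcircle}. The other zigzag is handled the same way using \ref{rel: redstrandcap}, and the blue case is identical.

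\textbf{Jellyfish relations \ref{rel: brtwojellys}, \ref{rel: blueredjelly} and \ref{rel: brtwojellysout}.} These concern the blue jellyfish of Definition~\ref{def: blue jelly}, which is the red jellyfish conjugated by red caps and cups. Substitute its definition. For \ref{rel: brtwojellys} and \ref{rel: brtwojellysout}, the inner red cup–cap pairs cancel by \ref{rel: redcircle} and \ref{rel: rcupcap} (straightening with \ref{rel: zigzag} where needed), leaving \ref{rel: twojellys} and \ref{rel: jellysmultiplied} respectively. Any sign $\epsilon_r$ that appears from zigzags occurs an even number of times, and the normalization in Remark~\ref{rk:normalization} fixes the remaining scalar.

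For \ref{rel: blueredjelly}, write $\bluestrand\redstrand$ next to the blue jellyfish. Expand the jellyfish and pull the red strand through its defining cup/cap using \ref{rel: redstrandcap} and \ref{rel: redstrandcup}, then apply \ref{rel: redbluejelly} to the inner red jellyfish. Count the resulting powers of $\epsilon_r$: they should occur an even number of times, leaving only $\xi$.

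\textbf{Relations \ref{eq: blue up} and \ref{eq: red up}.} These compare cups of one colour placed with a jellyfish against cups of the other colour. For each, attach a jellyfish and simplify both sides to a common normal form, namely a multiple of identity strands, using \ref{rel: jellysmultiplied}/\ref{rel: brtwojellysout} and \ref{rel: zigzag}. The scalar $\epsilon_b\xi$ should come from one blue zigzag together with one application of \ref{rel: blueredjelly}, and $\epsilon_r$ from one red zigzag.

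\textbf{Main obstacle.} I expect the bookkeeping of scalars in \ref{rel: blueredjelly} and \ref{eq: blue up} to be the hardest part. In particular, one must confirm that the $\epsilon_r$ factors coming from the red cups in Definition~\ref{def: blue jelly} cancel exactly. I would first check this explicitly for $n=2$ and then observe that the argument is uniform in $n$, since only the outermost strands are moved.
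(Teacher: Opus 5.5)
Your arguments for \ref{rel: redstrandcup}, \ref{rel: bluestrandcup}, \ref{rel: zigzag}, \ref{rel: brtwojellys}, \ref{rel: brtwojellysout} and \ref{eq: red up} follow the paper's route. The argument for \ref{rel: blueredjelly}, however, has a genuine gap.

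Suppose you expand $\brjellydown$ by Definition~\ref{def: blue jelly}, slide the adjacent red strand past the red cup with \ref{rel: redstrandcup}, and straighten the resulting snake with \ref{rel: zigzag} (total sign $\epsilon_r^2=1$). The left-hand side then becomes $\bluestrand\otimes\rbjellydown\otimes\redstrand$. The red jellyfish now has a single blue strand on its left and a single red strand on its right. So \ref{rel: redbluejelly}, which needs an adjacent red--blue pair, cannot be applied. You also never say how to reduce the right-hand side $\brjellydown\,\bluestrand\redstrand$. There the red jellyfish is separated from the pair by its own wrapped red strand.

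The missing idea, which the paper uses, is to tensor both sides on the left with an extra red strand. The same manipulation then gives $\redstrand\bluestrand\otimes\rbjellydown\otimes\redstrand$ on the left and $\rbjellydown\otimes\redstrand\bluestrand\redstrand$ on the right, each with sign $\epsilon_r^2=1$. These differ by exactly $\xi$ by \ref{rel: redbluejelly}. Finally you remove the extra strand. This is legitimate because $f\mapsto\redstrand\otimes f$ is injective: you recover $f$ by sandwiching with $\redcup$ and $\redcap$ and applying \ref{rel: redcircle}.

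There are also some smaller points.

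In \ref{rel: brtwojellys} and \ref{rel: brtwojellysout} no zigzag signs arise. \ref{rel: rcupcap} cancels the cup--cap pair at the interface, and then \ref{rel: twojellys} or \ref{rel: jellysmultiplied}, together with \ref{rel: redcircle}, finish the computation. Also, Remark~\ref{rk:normalization} is a normalization already imposed on $\rbjellydown$, so it cannot be invoked to absorb a leftover scalar.

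For \ref{eq: blue up}, your sketch is only a heuristic. To make it concrete, tensor both sides with $\mathrm{id}_b$ on the right, post-compose with the blue cap, and pre-compose with $\rbjellyup$, so that both sides become scalars.
\begin{itemize}
\item One side is a closed pair of jellyfish with a single blue zigzag, which gives $\epsilon_b$ by \ref{rel: zigzag}.
\item On the other side, expand the blue jellyfish. The underlying red jellyfish then has its first two legs wrapped over its head. One application of \ref{rel: redbluejelly} unwinds them at the cost of $\xi^{-1}$, leaving a red bubble, a blue bubble and the closed pair \ref{rel: twojellys}.
\end{itemize}
Hence $c=\epsilon_b\xi$. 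Note that the $\xi$ is supplied by \ref{rel: redbluejelly}, not by \ref{rel: blueredjelly}; the latter moves a straight pair past a blue jellyfish and does not act on a single wrapped leg.
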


\begin{proof}
Relations \ref{rel: redstrandcup} and \ref{rel: bluestrandcup} follow from \ref{rel: redstrandcap} and  \ref{rel: rcupcap}, and 
\ref{rel: bluestrandcap} and  \ref{rel: bcupcap}, respectively. 
We prove relation \ref{rel: zigzag} for red dots; the blue dots case is analogous. First compose \ref{rel: redstrandcap} below by 
$
    \redcup \redstrand \text{ or } \redstrand \redcup,
$ 
then reduce the red bubble using \ref{rel: redcircle}. 
For \ref{rel: brtwojellys}, first use \ref{rel: rcupcap} to recognize the left-hand side of the equality as 
$     \rbjellysinredcircle$.
Then using \ref{rel: twojellys} followed by \ref{rel: redcircle} gives the relation. 

 For \ref{rel: blueredjelly}, we may add a red strand to the left of diagrams on both sides, then show the equality. By Definition \ref{def: blue jelly} and \ref{rel: redstrandcup}, \ref{rel: zigzag}, the left diagram with the additional strand is
\[
 \vcenter{\hbox{\begin{tikzpicture}[scale=1]
    \draw[thick] (0,0) -- (0,.6);
    \draw[thick] (0.3,0) -- (0.3,.6);
    \draw[thick] (.6,0) -- (.6,.6);
    \draw[thick,name path=arch] (.9,0) arc (180:0:.55cm);
    \path[name path=probe1] (1.2,0) -- (1.2,.6);
    \path[name intersections={of=arch and probe1, by=Q1}];
    \draw[thick] (1.2,0) -- (Q1); 
    \path[name path=probe2] (1.7,0) -- (1.7,.6);
    \path[name intersections={of=arch and probe2, by=Q2}];
    \draw[thick] (1.7,0) -- (Q2); 
    \foreach \x in {0, .6,1.2,2}
        \filldraw[r] (\x,0) circle (1.5pt);
    \foreach \x in {0.3,.9,1.7}
        \filldraw[b] (\x,0) circle (1.5pt);
    \foreach \x in {0, .6}
        \filldraw[r] (\x,0.6) circle (1.5pt);
    \foreach \x in {0.3}
        \filldraw[b] (\x,0.6) circle (1.5pt);
    \node at (1.45,.1) {$\scriptstyle\cdots$};
\end{tikzpicture}
}}
=
\vcenter{\hbox{\begin{tikzpicture}[scale=1]
    \draw[thick] (-.3,-.15) -- (-.3,.8);
    \draw[thick] (0,-.15) -- (0,.8);
    \draw[thick] (0.3,-.15) -- (0.3,.8);
    \draw[thick,name path=arch] (.9,0) arc (180:0:.55cm) -- (2,-.15);
    \draw[thick] (.9,0) arc (0:-180:.15cm);
    \draw[thick] (.6,0) arc (180:0:.85cm) -- (2.3,-.15);
    \path[name path=probe1] (1.2,-.15) -- (1.2,.6);
    \path[name intersections={of=arch and probe1, by=Q1}];
    \draw[thick] (1.2,-.15) -- (Q1); 
    \path[name path=probe2] (1.7,-.15) -- (1.7,.6);
    \path[name intersections={of=arch and probe2, by=Q2}];
    \draw[thick] (1.7,-.15) -- (Q2); 
    \foreach \x in {-.3,.3,1.7,2.3}
        \filldraw[r] (\x,-.15) circle (1.5pt);
    \foreach \x in {0,1.2,2}
        \filldraw[b] (\x,-.15) circle (1.5pt);
    \foreach \x in {-.3,.3}
        \filldraw[r] (\x,0.8) circle (1.5pt);
    \foreach \x in {.9}
        \filldraw[r] (\x,0) circle (1.5pt);
    \foreach \x in {0}
        \filldraw[b] (\x,0.8) circle (1.5pt);
    \node at (1.45,.1) {$\scriptstyle\cdots$};
\end{tikzpicture}
}}
=
\vcenter{\hbox{\begin{tikzpicture}[scale=1]
    \draw[thick] (2.3,0) -- (2.3,.6);
    \draw[thick] (0.3,0) -- (0.3,.6);
    \draw[thick] (.6,0) -- (.6,.6);
    \draw[thick,name path=arch] (.9,0) arc (180:0:.55cm);
    \path[name path=probe1] (1.2,0) -- (1.2,.6);
    \path[name intersections={of=arch and probe1, by=Q1}];
    \draw[thick] (1.2,0) -- (Q1); 
    \path[name path=probe2] (1.7,0) -- (1.7,.6);
    \path[name intersections={of=arch and probe2, by=Q2}];
    \draw[thick] (1.7,0) -- (Q2); 
    \foreach \x in {0.3,.9,1.7,2.3}
        \filldraw[r] (\x,0) circle (1.5pt);
    \foreach \x in {0.6,1.2,2}
        \filldraw[b] (\x,0) circle (1.5pt);
    \foreach \x in {0.3,2.3}
        \filldraw[r] (\x,0.6) circle (1.5pt);
    \foreach \x in {0.6}
        \filldraw[b] (\x,0.6) circle (1.5pt);
    \node at (1.45,.1) {$\scriptstyle\cdots$};
\end{tikzpicture}
}},
\]
which is equal to
\(\xi \hspace{.2cm}
\vcenter{\hbox{\begin{tikzpicture}[scale=1]
    \draw[thick] (2.3,0) -- (2.3,.6);
    \draw[thick] (2.6,0) -- (2.6,.6);
    \draw[thick] (2.9,0) -- (2.9,.6);
    \draw[thick,name path=arch] (.9,0) arc (180:0:.55cm);
    \path[name path=probe1] (1.2,0) -- (1.2,.6);
    \path[name intersections={of=arch and probe1, by=Q1}];
    \draw[thick] (1.2,0) -- (Q1); 
    \path[name path=probe2] (1.7,0) -- (1.7,.6);
    \path[name intersections={of=arch and probe2, by=Q2}];
    \draw[thick] (1.7,0) -- (Q2); 
    \foreach \x in {0.9,1.7,2.3,2.9}
        \filldraw[r] (\x,0) circle (1.5pt);
    \foreach \x in {1.2,2,2.6}
        \filldraw[b] (\x,0) circle (1.5pt);
    \foreach \x in {2.3,2.9}
        \filldraw[r] (\x,0.6) circle (1.5pt);
    \foreach \x in {2.6}
        \filldraw[b] (\x,0.6) circle (1.5pt);
    \node at (1.45,.1) {$\scriptstyle\cdots$};
\end{tikzpicture}
}}
\)
by \ref{rel: redbluejelly}.
The right diagram with the additional strand is
\[
\vcenter{\hbox{\begin{tikzpicture}[scale=1]
    \draw[thick] (.6,0) -- (.6,.6);
    \draw[thick] (2.6,0) -- (2.6,.6);
    \draw[thick] (2.3,0) -- (2.3,.6);
    \draw[thick,name path=arch] (.9,0) arc (180:0:.55cm);
    \path[name path=probe1] (1.2,0) -- (1.2,.6);
    \path[name intersections={of=arch and probe1, by=Q1}];
    \draw[thick] (1.2,0) -- (Q1); 
    \path[name path=probe2] (1.7,0) -- (1.7,.6);
    \path[name intersections={of=arch and probe2, by=Q2}];
    \draw[thick] (1.7,0) -- (Q2); 
    \foreach \x in {0.6,1.2,2,2.6}
        \filldraw[r] (\x,0) circle (1.5pt);
    \foreach \x in {.9,1.7,2.3}
        \filldraw[b] (\x,0) circle (1.5pt);
    \foreach \x in {.6,2.6}
        \filldraw[r] (\x,0.6) circle (1.5pt);
    \foreach \x in {2.3}
        \filldraw[b] (\x,0.6) circle (1.5pt);
    \node at (1.45,.1) {$\scriptstyle\cdots$};
\end{tikzpicture}
}}
=
\vcenter{\hbox{\begin{tikzpicture}[scale=1]
    \draw[thick] (.3,-.15) -- (.3,.8);
    \draw[thick] (2.6,-.15) -- (2.6,.8);
    \draw[thick] (2.9,-.15) -- (2.9,.8);
    \draw[thick,name path=arch] (.9,0) arc (180:0:.55cm) -- (2,-.15);
    \draw[thick] (.9,0) arc (0:-180:.15cm);
    \draw[thick] (.6,0) arc (180:0:.85cm) -- (2.3,-.15);
    \path[name path=probe1] (1.2,-.15) -- (1.2,.6);
    \path[name intersections={of=arch and probe1, by=Q1}];
    \draw[thick] (1.2,-.15) -- (Q1); 
    \path[name path=probe2] (1.7,-.15) -- (1.7,.6);
    \path[name intersections={of=arch and probe2, by=Q2}];
    \draw[thick] (1.7,-.15) -- (Q2); 
    \foreach \x in {.3,1.7,2.3,2.9}
        \filldraw[r] (\x,-.15) circle (1.5pt);
    \foreach \x in {1.2,2,2.6}
        \filldraw[b] (\x,-.15) circle (1.5pt);
    \foreach \x in {.3,2.9}
        \filldraw[r] (\x,0.8) circle (1.5pt);
    \foreach \x in {.9}
        \filldraw[r] (\x,0) circle (1.5pt);
    \foreach \x in {2.6}
        \filldraw[b] (\x,0.8) circle (1.5pt);
    \node at (1.45,.1) {$\scriptstyle\cdots$};
\end{tikzpicture}
}}
=
\vcenter{\hbox{\begin{tikzpicture}[scale=1]
    \draw[thick] (2.3,0) -- (2.3,.6);
    \draw[thick] (2.6,0) -- (2.6,.6);
    \draw[thick] (2.9,0) -- (2.9,.6);
    \draw[thick,name path=arch] (.9,0) arc (180:0:.55cm);
    \path[name path=probe1] (1.2,0) -- (1.2,.6);
    \path[name intersections={of=arch and probe1, by=Q1}];
    \draw[thick] (1.2,0) -- (Q1); 
    \path[name path=probe2] (1.7,0) -- (1.7,.6);
    \path[name intersections={of=arch and probe2, by=Q2}];
    \draw[thick] (1.7,0) -- (Q2); 
    \foreach \x in {0.9,1.7,2.3,2.9}
        \filldraw[r] (\x,0) circle (1.5pt);
    \foreach \x in {1.2,2,2.6}
        \filldraw[b] (\x,0) circle (1.5pt);
    \foreach \x in {2.3,2.9}
        \filldraw[r] (\x,0.6) circle (1.5pt);
    \foreach \x in {2.6}
        \filldraw[b] (\x,0.6) circle (1.5pt);
    \node at (1.45,.1) {$\scriptstyle\cdots$};
\end{tikzpicture}
}}
\]
by the same relations. This proves \ref{rel: blueredjelly}.

For \ref{rel: brtwojellysout}, first use \ref{rel: rcupcap} on the left side of \ref{rel: brtwojellysout} to get 
$     \brjellysoutRseven \longredstrand$.  Then use \ref{rel: jellysmultiplied} to obtain
$ 
    \redcircle \bluestrand \redstrand \  \cdots \bluestrand \redstrand.
$
Finally, popping the red circle with \ref{rel: redcircle} gives the result.
 For \ref{eq: red up}, we compute
$    \bwithrup=\updownupred= \epsilon_r \rwithrup,$ 
    where in the first equality we are using Definition \ref{def: blue jelly}, and in the second \ref{rel: zigzag}.

 For $\ref{eq: blue up}$, 
it suffices to show that after tensoring on the right by $\text{id}_b$, and composing both sides with $\bluecap$ followed by $\rbjellyup$, $c=\epsilon_b\xi$ satisfies
$\vcenter{\hbox{\scalebox{0.5}{
   \tikzset{every picture/.style={line width=0.75pt}} 
\begin{tikzpicture}[x=0.75pt,y=0.75pt,yscale=-1,xscale=1]

\draw [line width=1.5]    (372.17,617) -- (372.33,636.5) ;
\draw [line width=1.5]    (278.33,635.83) .. controls (277.33,582.83) and (357.33,583.83) .. (357.33,635.83) ;
\draw [line width=1.5]    (297.33,601.83) -- (297.33,634.83) ;
\draw [line width=1.5]    (338.33,602.83) -- (338.33,635.83) ;
\draw [line width=1.5]    (357.33,635.83) .. controls (357.33,646.5) and (372.33,647.5) .. (372.33,636.5) ;
\draw [line width=1.5]    (372.17,617) .. controls (372,596.5) and (390.33,596.17) .. (390,615.83) ;
\draw [line width=1.5]    (390,615.83) -- (390.01,633) ;
\draw [line width=1.5]    (278.67,632.82) .. controls (277.67,685.82) and (371,688.5) .. (389.67,635.17) ;
\draw [line width=1.5]    (297.67,666.82) -- (297.67,633.82) ;
\draw [line width=1.5]    (338.67,672.83) -- (338.67,632.82) ;
\draw  [draw opacity=0][fill={rgb, 255:red, 208; green, 2; blue, 27 }  ,fill opacity=1 ] (273.32,633.82) .. controls (273.33,631.06) and (275.57,628.83) .. (278.33,628.83) .. controls (281.09,628.84) and (283.32,631.08) .. (283.32,633.84) .. controls (283.31,636.6) and (281.07,638.83) .. (278.31,638.82) .. controls (275.55,638.82) and (273.32,636.58) .. (273.32,633.82) -- cycle ;
\draw  [draw opacity=0][fill={rgb, 255:red, 74; green, 144; blue, 226 }  ,fill opacity=1 ] (292.34,633.82) .. controls (292.34,631.06) and (294.59,628.83) .. (297.34,628.84) .. controls (300.1,628.84) and (302.34,631.09) .. (302.33,633.85) .. controls (302.32,636.6) and (300.08,638.84) .. (297.32,638.83) .. controls (294.56,638.82) and (292.33,636.58) .. (292.34,633.82) -- cycle ;
\draw  [draw opacity=0][fill={rgb, 255:red, 208; green, 2; blue, 27 }  ,fill opacity=1 ] (333.32,633.82) .. controls (333.33,631.06) and (335.57,628.83) .. (338.33,628.83) .. controls (341.09,628.84) and (343.32,631.08) .. (343.32,633.84) .. controls (343.31,636.6) and (341.07,638.83) .. (338.31,638.82) .. controls (335.55,638.82) and (333.32,636.58) .. (333.32,633.82) -- cycle ;
\draw  [draw opacity=0][fill={rgb, 255:red, 74; green, 144; blue, 226 }  ,fill opacity=1 ] (384.83,635.15) .. controls (384.84,632.49) and (387.01,630.33) .. (389.68,630.33) .. controls (392.35,630.34) and (394.51,632.51) .. (394.5,635.18) .. controls (394.49,637.85) and (392.32,640.01) .. (389.65,640) .. controls (386.99,639.99) and (384.83,637.82) .. (384.83,635.15) -- cycle ;
\draw [line width=1.5]    (460.67,636.83) .. controls (459.67,583.83) and (539.67,584.83) .. (539.67,636.83) ;
\draw [line width=1.5]    (479.67,602.83) -- (479.67,635.83) ;
\draw [line width=1.5]    (520.67,603.83) -- (520.67,636.83) ;
\draw [line width=1.5]    (445.65,634.16) .. controls (445.65,644.83) and (460.65,645.83) .. (460.65,634.83) ;
\draw  [draw opacity=0][fill={rgb, 255:red, 208; green, 2; blue, 27 }  ,fill opacity=1 ] (534.67,636.82) .. controls (534.68,634.06) and (536.92,631.83) .. (539.68,631.84) .. controls (542.44,631.84) and (544.67,634.09) .. (544.66,636.85) .. controls (544.66,639.6) and (542.41,641.84) .. (539.65,641.83) .. controls (536.9,641.82) and (534.66,639.58) .. (534.67,636.82) -- cycle ;
\draw [line width=1.5]    (445.65,634.16) .. controls (445.65,564.16) and (558.33,566.67) .. (558.33,636.67) ;
\draw [line width=1.5]    (479.33,636.67) .. controls (478.33,689.67) and (558.33,688.67) .. (558.33,636.67) ;
\draw [line width=1.5]    (520.65,674.83) -- (520.67,638.67) ;
\draw [line width=1.5]    (539.33,669.67) -- (539.33,636.67) ;
\draw  [draw opacity=0][fill={rgb, 255:red, 74; green, 144; blue, 226 }  ,fill opacity=1 ] (553.34,636.65) .. controls (553.34,633.9) and (555.59,631.66) .. (558.35,631.67) .. controls (561.1,631.68) and (563.34,633.92) .. (563.33,636.68) .. controls (563.32,639.44) and (561.08,641.67) .. (558.32,641.66) .. controls (555.56,641.66) and (553.33,639.41) .. (553.34,636.65) -- cycle ;
\draw  [draw opacity=0][fill={rgb, 255:red, 208; green, 2; blue, 27 }  ,fill opacity=1 ] (534.67,636.82) .. controls (534.68,634.06) and (536.92,631.83) .. (539.68,631.84) .. controls (542.44,631.84) and (544.67,634.09) .. (544.66,636.85) .. controls (544.66,639.6) and (542.41,641.84) .. (539.65,641.83) .. controls (536.9,641.82) and (534.66,639.58) .. (534.67,636.82) -- cycle ;
\draw  [draw opacity=0][fill={rgb, 255:red, 208; green, 2; blue, 27 }  ,fill opacity=1 ] (474.67,635.82) .. controls (474.68,633.06) and (476.92,630.83) .. (479.68,630.84) .. controls (482.44,630.84) and (484.67,633.09) .. (484.66,635.85) .. controls (484.66,638.6) and (482.41,640.84) .. (479.65,640.83) .. controls (476.9,640.82) and (474.66,638.58) .. (474.67,635.82) -- cycle ;
\draw  [draw opacity=0][fill={rgb, 255:red, 74; green, 144; blue, 226 }  ,fill opacity=1 ] (515.67,636.82) .. controls (515.68,634.06) and (517.92,631.83) .. (520.68,631.84) .. controls (523.44,631.84) and (525.67,634.09) .. (525.66,636.85) .. controls (525.66,639.6) and (523.41,641.84) .. (520.65,641.83) .. controls (517.9,641.82) and (515.66,639.58) .. (515.67,636.82) -- cycle ;

\draw (395,621.67) node [anchor=north west][inner sep=0.75pt]  [font=\huge]  {$=c$};
\draw (305,628) node [anchor=north west][inner sep=0.75pt]    {$\dotsc \ \ \ \ \ \ \ \ \ \ \ \ \ \ \ \ \ \ \ \ \ \ \ \ \ \ \ \ \ \ \ \ \ \ \dotsc $};
\end{tikzpicture}}}}.$ 
Applying  \ref{rel: zigzag} and Remark \ref{rk:normalization},  the left hand side equals $\epsilon_b.$ For the right hand side, we compute
\begin{equation*}
    \relation
\end{equation*}
   Therefore $c=\epsilon_b\xi$, as desired. 
\end{proof}

\setcounter{equation}{\value{savedequation}}

\begin{proposition}\label{prop:e_r=e_b}
    When $n$ is odd, $\epsilon_r=\epsilon_b$.
\end{proposition}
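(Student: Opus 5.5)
The plan is to avoid heavy diagrammatics and use the fact that, for $n$ odd, the two reflections $r$ and $rbr$ are conjugate in $\Dn$. Inner automorphisms act trivially on group cohomology, and the quantity $\omega(a,a,a)$ for an involution $a$ is a cohomological invariant.

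\textbf{Step 1 (conjugacy).} For $0\le k<n$ we have $(rb)^k\, r\,(rb)^{-k}=(rb)^{2k}r$, using $r(rb)^{-k}r=(rb)^{k}$. Since $rbr=(rb)r$, it suffices to solve $2k\equiv 1 \pmod n$. This is possible exactly because $n$ is odd. Let $g=(rb)^k$ be the solution, so that $g r g^{-1}=rbr$.

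\textbf{Step 2 (invariance of $\omega(a,a,a)$).} For an involution $a$, the value $\omega(a,a,a)$ depends only on the class of $\omega$. Indeed, the computation in the proof of Proposition~\ref{prop: cocycle parameters invariant} shows $(d\beta)(a,a,a)=1$ for every normalized $2$-cochain $\beta$. Now let $c_g$ denote conjugation by $g$. The pullback $c_g^*\omega$, given by $(x,y,z)\mapsto\omega(gxg^{-1},gyg^{-1},gzg^{-1})$, is a normalized $3$-cocycle. Inner automorphisms induce the identity on $H^3(\Dn,\mathsf k^\times)$, so $c_g^*\omega$ is cohomologous to $\omega$, and we may take the difference to be $d\beta$ with $\beta$ normalized. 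Evaluating at $(r,r,r)$ gives
\[
\epsilon_b=\omega(rbr,rbr,rbr)=(c_g^*\omega)(r,r,r)=\omega(r,r,r)\,(d\beta)(r,r,r)=\epsilon_r .
\]

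\textbf{Main obstacle.} The only nonformal input is the standard fact that conjugation acts trivially on $H^\ast(G,M)$ for a trivial module $M$. If a self-contained argument is preferred, one can write down the explicit $2$-cochain $\beta_g(x,y)=\omega(g,x,y)^{\pm1}\cdots$ from the standard homotopy. Alternatively, in keeping with the paper's methods, one can argue diagrammatically. Conjugating by $g$ corresponds to the braiding-free operation of placing $F(g)$ on the left and $F(g^{-1})$ on the right, and then using the monoidal isomorphisms $J$. The equality of the two scalars then follows from naturality of the associator applied to $F(g)\otimes F(r)^{\otimes 3}\otimes F(g)^{-1}$.

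I expect this verification to be the fiddliest part if done by hand, and I would rely on the cohomological fact instead.
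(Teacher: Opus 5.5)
Your proof is correct, and it takes a genuinely different route from the paper.

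\textbf{The paper's route.} The paper argues entirely inside the graphical calculus. It puts a zigzag on each of the $2n$ legs of the downward jellyfish. Straightening these with the zigzag relation gives the factor $\epsilon_r^n\epsilon_b^n=\epsilon_r\epsilon_b$, and this is the only place oddness of $n$ is used. It then evaluates the same diagram a second way. Using that Hom spaces are one-dimensional, it writes the intermediate bent diagrams as scalars $\lambda,\tilde\lambda$ times the upward and downward jellyfish. Composing and applying the jellyfish-pairing and circle relations then shows $\lambda\tilde\lambda=1$.

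\textbf{Your route.} You observe three things:
\begin{enumerate}
\item For odd $n$ the reflections $r$ and $rbr=(rb)r$ are conjugate via $(rb)^{(n+1)/2}$.
\item Pulling back along an inner automorphism is the identity on $H^3(\Dn,\mathsf k^\times)$ with trivial coefficients.
\item $(d\beta)(a,a,a)=1$ for an involution $a$ and normalized $\beta$, by the same computation as in Proposition~\ref{prop: cocycle parameters invariant}.
\end{enumerate}
Each step checks out. In particular, the identity $(rb)^k r(rb)^{-k}=(rb)^{2k}r$ is right, and $c_g^*\omega$ is again normalized, so a normalized $\beta$ can be chosen.

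\textbf{Comparison.} Your argument is shorter and explains conceptually why parity matters: all reflections form one conjugacy class exactly when $n$ is odd. It also proves more: for any finite group and any normalized $3$-cocycle, $\omega(a,a,a)$ is constant on conjugacy classes of involutions. Its cost is importing a standard cohomological fact from outside the paper. The paper's proof uses nothing beyond its own relations and the one-dimensionality of Hom spaces, in keeping with its aim of reading off cocycle data diagrammatically.

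Your fallback sketch via the autoequivalence $X\mapsto F(g)\otimes X\otimes F(g^{-1})$ is essentially the categorical proof of that cohomological fact. It would also work if written out, but it is not needed.
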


\begin{proof}
    Assume $n$ is odd. Note first that 
\begin{align}\label{eq:er=eb}
    \adjustedoctopus = \epsilon_r \epsilon_b \rbjellydown, 
\end{align}
    where we are using relation \ref{rel: zigzag} $n$ times for each color, and that $\epsilon_r^n\epsilon_b^n=\epsilon_r\epsilon_b$ since $n$ is odd. 
    
  Since Hom spaces are 1-dimensional, there exist $\lambda,\tilde\lambda\in k^{\times}$ such that $
        \octopusr  = \lambda \rbjellyup \qquad$  and $\octopusb = \tilde \lambda \rbjellydown.$ 
    Composing the left-hand sides of these two equations, we obtain 
    \begin{equation*}
        \octopuscomp = \circles =\text{id}_1
    \end{equation*}
    where we are using \ref{rel: twojellys} on the first equality and \ref{rel: redcircle} and \ref{rel: bluecircle} on the second. Composing the right-hand sides yields $\lambda \tilde \lambda \ \text{id}_1$ by \ref{rel: twojellys}. We then conclude $\lambda=\tilde\lambda^{-1}.$

Finally, note that 
\begin{equation*}
    \adjustedoctopus = \lambda \octopusb = \lambda \tilde \lambda \rbjellydown = \rbjellydown.
\end{equation*}
    Combining this with Equation \eqref{eq:er=eb}, we conclude $\epsilon_r=\epsilon_b$, as desired. 
\end{proof}

\begin{lemma}\label{lem: brtwojellys} 
    In $\KFGR$,
$\mujellybr=\operatorname{id}_1$.
\end{lemma}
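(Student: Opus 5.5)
The plan is to reduce the closed diagram $\mujellybr$ to the closed diagram $\mujelly$, which equals $\operatorname{id}_1$ by the normalization in Remark~\ref{rk:normalization}, while keeping track of every scalar that appears along the way. I am assuming that $\mujellybr$ is the blue--red analogue of $\mujelly$: the same closed diagram, with $\rbjellydown$ replaced by $\brjellydown$. Since $\mathsf{End}(\text{empty word})$ is one-dimensional, both diagrams are scalar multiples of $\operatorname{id}_1$. So it suffices to show that $\mujellybr = c\,\mujelly$ and then to check that $c=1$.

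First, I will substitute the definition of $\brjellydown$ from Definition~\ref{def: blue jelly}. This writes $\brjellydown$ as $\rbjellydown$ with its outermost red strand bent around by a red cup. After the substitution, $\mujellybr$ becomes a closed diagram built only from $\rbjellydown$, red cups and caps, and the cups and caps closing it up.

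Next, I will straighten the red strand that now winds around the jelly. This strand forms zigzags, each of which I remove with \ref{rel: zigzag}, picking up a factor $\epsilon_r$ per zigzag. Where a cup or cap has to slide past a neighbouring red strand, I will use \ref{rel: redstrandcup} and \ref{rel: redstrandcap}, each contributing a further factor $\epsilon_r$. Alternatively, the whole bent-strand move can be done in one step with \ref{eq: red up}, since $\bwithrup=\epsilon_r\rwithrup$ is exactly the statement that moving the red leg from one side of the jelly to the other costs $\epsilon_r$.

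After these moves the diagram should be $\epsilon_r^{k}\,\mujelly$ for some explicit $k$, and Remark~\ref{rk:normalization} then gives $\mujellybr=\epsilon_r^{k}\operatorname{id}_1$.

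The main obstacle is the bookkeeping of this sign, that is, showing $k$ is even. The cleanest way I see is to exploit the symmetry of the closed diagram. The bend introduced by Definition~\ref{def: blue jelly} is undone by the closure, so the zigzags should occur in cancelling pairs: one in each direction, contributing $\epsilon_r\cdot\epsilon_r=1$ because $\epsilon_r\in\{\pm1\}$.

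If the direct count turns out to be delicate, I will use a cross-check based on \ref{rel: brtwojellys}, which says that $\brjellydown$ composed with $\brjellyup$ is $\operatorname{id}_1$. I will cut $\mujellybr$ open so that it reads as $\brjellydown$ composed with a cup configuration, and compare that configuration with $\brjellyup$ using Definition~\ref{def: blue jelly} and \ref{rel: rcupcap}. This determines the scalar independently, in the same way the proof of Proposition~\ref{prop:e_r=e_b} determined $\lambda\tilde\lambda=1$.
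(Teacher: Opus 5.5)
You follow the paper's route: rewrite the blue-first jellyfish in terms of red-first ones via Definition~\ref{def: blue jelly} (equivalently \ref{eq: red up}), collect powers of $\epsilon_r$, and finish with Remark~\ref{rk:normalization}. The trouble is that your proposal stops exactly where the content of the lemma lies.

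Since $\epsilon_r=-1$ is admissible, reducing to $\mujellybr=\epsilon_r^{k}\,\mujelly$ proves nothing until you show that $k$ is even. Your only argument is that zigzags ``should occur in cancelling pairs,'' and that is not a mechanism:
\begin{itemize}
\item \ref{rel: zigzag} assigns the same $\epsilon_r$ to both zigzag directions, so ``one in each direction'' cancels nothing by itself. Only the total number of sign-producing moves matters.
\item That total depends on the concrete rearrangement: the zigzag you must straighten before \ref{eq: red up} applies, one use of \ref{eq: red up} per jellyfish, and whatever zigzag is left after cancelling cups against caps.
\item The wrapped leg from Definition~\ref{def: blue jelly} is not simply undone by the closure. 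Removing it is a relation that costs $\epsilon_r$ each time.
\end{itemize}
The paper actually does this count. Straightening the innermost connected cup with \ref{rel: zigzag} and applying \ref{eq: red up} twice leaves the scalar $\epsilon_r$. After a red cup--cap cancellation via \ref{rel: rcupcap}, the remaining diagram is evaluated by Remark~\ref{rk:normalization}, \ref{rel: zigzag} and \ref{rel: redcircle}. The leftover zigzag there supplies a second $\epsilon_r$, so the total is $\epsilon_r^2=1$.

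Your fallback is circular. Suppose you cut $\mujellybr$ as $\brjellydown\circ X$ and let $c$ be the unknown value of $\mujellybr$. Then \ref{rel: brtwojellysout} gives $X=\brjellyup\circ\brjellydown\circ X=c\,\brjellyup$, and \ref{rel: brtwojellys} just returns $c$ again.

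To learn anything new you would have to identify the bent downward jellyfish $X$ with an upward one independently. Definition~\ref{def: blue jelly} and \ref{rel: rcupcap} cannot do this, because neither relates downward jellyfish to upward ones. That identification is the orientation-reversal statement of Remark~\ref{jellys: orientation change}, which the paper deduces \emph{from} this lemma; proving it directly needs the normalization together with the same sign bookkeeping.

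The analogy with Proposition~\ref{prop:e_r=e_b} also fails. There, composing two bent diagrams pinned down only the product $\lambda\tilde\lambda$. Here you need a single scalar, and no composition identity supplies it.
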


\begin{proof}
    First, use \ref{rel: zigzag} on the innermost connected cup. Then use \ref{eq: red up} twice to obtain $\epsilon_r$ multiplied by 
  $  
        \twojellyspfpicones.
$
Using relation \ref{rel: rcupcap} on the innermost red strands we then obtain $\epsilon_r$ multiplied by
 $        \twojellyspfpictwos$.
The result then follows from \ref{rk:normalization}, \ref{rel: zigzag}, and \ref{rel: redcircle}. 
\end{proof}

\subsection{Diagrammatic computations}\label{sec:diag comp}
For $g,h,k\in \Dn$, call $J_{gh,k}\circ (J_{g,h}\otimes \text{id}_{F(k)})$ the \textit{left tree} and $J_{g,hk}\circ (\text{id}_{F(g)}\otimes J_{h,k})$ the \textit{right tree} (of $g,h,k$).
By Lemma \ref{lemma: omega}, the monoidal coherence equation says the right three is $\omega(g,h,k)$ times the left tree.  We apply the first steps of the \textit{jellyfish algorithm} of \cite{BMPS12} to transform the left tree into the right tree (or vice versa) using our relations. 

 For each choice of the parities of \(|g|\) and \(|h|\), we treat all possible cases. 
In spirit of the named jellyfish algorithm, we call $\rbjellydown $ a \textit{jellyfish}. We will also refer to $\redcap$ and $\bluecap$ as \textit{caps}.

\subsubsection{The even--even case}Suppose $|g|$ and $|h|$ are both even, and let  $k\in \Dn$. We label the possible left and right trees below.

\noindent
\begin{minipage}[t]{0.485\linewidth}
\vspace{0pt}
\begin{enumerate}[
    label=(L\arabic*$_{ee}$), start=1,
    leftmargin=*, topsep=0pt, partopsep=0pt,
    itemsep=0pt, parsep=0pt
]
    \item\label{L1ee}
    If $\begin{cases}
        |g|+|h|\geq 2n, \text{ and }\\
        |gh|+|k|\geq 2n
    \end{cases}$
    \par\nobreak\vspace{1pt}\nointerlineskip
    \noindent
    \begin{minipage}[c][2.1cm][t]{\linewidth}
    \begin{adjustbox}{max width=\linewidth}
    \begin{tikzpicture}
        \node at (0,0) {$\rbjellyspreadbr$};
        \node at (2.5,0) {$\rbjellyspreadbr$};
        \node at (3.8,0.03) {$\redstrand$};
        \node at (4.5,0.03) {$\greenstrand$};
        \node[scale=0.5] at (4.25,-0.3) {$\cdots$};

        \draw[decorate, decoration={brace,mirror}]
            (-1.2,-0.5) -- (-0.1,-0.5)
            node[midway, below=3pt] {$g$};
        \draw[decorate, decoration={brace,mirror}]
            (0,-0.5) -- (2.4,-0.5)
            node[midway, below=3pt] {$h$};
        \draw[decorate, decoration={brace,mirror}]
            (2.5,-0.5) -- (4.7,-0.5)
            node[midway, below=3pt] {$k$};
    \end{tikzpicture}
    \end{adjustbox}
    \end{minipage}
\end{enumerate}
\end{minipage}%
\hfill
\begin{minipage}[t]{0.485\linewidth}
\vspace{0pt}
\begin{enumerate}[
    label=(R\arabic*$_{ee}$), start=1,
    leftmargin=*, topsep=0pt, partopsep=0pt,
    itemsep=0pt, parsep=0pt
]
    \item\label{R1ee}
    If $\begin{cases}
        |h|+|k|\geq 2n, \text{ and }\\
        |hk|+|g|\geq 2n
    \end{cases}$
    \par\nobreak\vspace{1pt}\nointerlineskip
    \noindent
    \begin{minipage}[c][2.1cm][t]{\linewidth}
    \begin{adjustbox}{max width=\linewidth}
    \begin{tikzpicture}
        \node at (0,0) {$\rbbbigjellyspreadbr$};
        \node at (0,-0.18) {$\rbjellyspreadbr$};
        \node at (2.05,-0.15) {$\redstrand$};
        \node at (2.7,-0.15) {$\greenstrand$};
        \node[scale=0.5] at (2.5,-0.48) {$\cdots$};

        \draw[decorate, decoration={brace,mirror}]
            (-2,-0.7) -- (-1.25,-0.7)
            node[midway, below=3pt] {$g$};
        \draw[decorate, decoration={brace,mirror}]
            (-1.15,-0.7) -- (-0.1,-0.7)
            node[midway, below=3pt] {$h$};
        \draw[decorate, decoration={brace,mirror}]
            (0,-0.7) -- (2.9,-0.7)
            node[midway, below=3pt] {$k$};
    \end{tikzpicture}
    \end{adjustbox}
    \end{minipage}
\end{enumerate}
\end{minipage}
\par
\penalty0
\vspace{0.6em}

\noindent
\begin{minipage}[t]{0.485\linewidth}
\vspace{0pt}
\begin{enumerate}[
    label=(L\arabic*$_{ee}$), start=2,
    leftmargin=*, topsep=0pt, partopsep=0pt,
    itemsep=0pt, parsep=0pt
]
    \item\label{L2ee}
    If $\begin{cases}
        |g|+|h|\geq 2n, \text{ and }\\
        |gh|+|k|<2n
    \end{cases}$
    \par\nobreak\vspace{1pt}\nointerlineskip
    \noindent
    \begin{minipage}[c][2.1cm][t]{\linewidth}
    \begin{adjustbox}{max width=\linewidth}
    \begin{tikzpicture}
        \node at (0,0) {$\rbjellyspreadbr$};
        \node at (1.25,0.03) {$\redstrand$};
        \node at (1.95,0.03) {$\bluestrand$};
        \node at (2.2,0.03) {$\redstrand$};
        \node at (2.85,0.03) {$\greenstrand$};
        \node[scale=0.5] at (1.7,-0.3) {$\cdots$};
        \node[scale=0.5] at (2.65,-0.3) {$\cdots$};

        \draw[decorate, decoration={brace,mirror}]
            (-1.2,-0.5) -- (-0.1,-0.5)
            node[midway, below=3pt] {$g$};
        \draw[decorate, decoration={brace,mirror}]
            (0,-0.5) -- (2.2,-0.5)
            node[midway, below=3pt] {$h$};
        \draw[decorate, decoration={brace,mirror}]
            (2.3,-0.5) -- (3,-0.5)
            node[midway, below=3pt] {$k$};
    \end{tikzpicture}
    \end{adjustbox}
    \end{minipage}
\end{enumerate}
\end{minipage}%
\hfill
\begin{minipage}[t]{0.485\linewidth}
\vspace{0pt}
\begin{enumerate}[
    label=(R\arabic*$_{ee}$), start=2,
    leftmargin=*, topsep=0pt, partopsep=0pt,
    itemsep=0pt, parsep=0pt
]
    \item\label{R2ee}
    If $\begin{cases}
        |h|+|k|\geq 2n, \text{ and }\\
        |hk|+|g|<2n
    \end{cases}$
    \par\nobreak\vspace{1pt}\nointerlineskip
    \noindent
    \begin{minipage}[c][2.1cm][t]{\linewidth}
    \begin{adjustbox}{max width=\linewidth}
    \begin{tikzpicture}
        \node at (0,0.03) {$\redstrand$};
        \node at (0.75,0.03) {$\bluestrand$};
        \node at (2.25,0) {$\rbjellyspreadbr$};
        \node at (3.5,0.03) {$\redstrand$};
        \node at (4.15,0.03) {$\greenstrand$};
        \node[scale=0.5] at (0.5,-0.3) {$\cdots$};
        \node[scale=0.5] at (3.95,-0.3) {$\cdots$};

        \draw[decorate, decoration={brace,mirror}]
            (0,-0.5) -- (0.9,-0.5)
            node[midway, below=3pt] {$g$};
        \draw[decorate, decoration={brace,mirror}]
            (1,-0.5) -- (2.2,-0.5)
            node[midway, below=3pt] {$h$};
        \draw[decorate, decoration={brace,mirror}]
            (2.3,-0.5) -- (4.4,-0.5)
            node[midway, below=3pt] {$k$};
    \end{tikzpicture}
    \end{adjustbox}
    \end{minipage}
\end{enumerate}
\end{minipage}
\par
\penalty0
\vspace{0.6em}

\noindent
\begin{minipage}[t]{0.485\linewidth}
\vspace{0pt}
\begin{enumerate}[
    label=(L\arabic*$_{ee}$), start=3,
    leftmargin=*, topsep=0pt, partopsep=0pt,
    itemsep=0pt, parsep=0pt
]
    \item\label{L3ee}
    If $\begin{cases}
        |g|+|h|<2n, \text{ and }\\
        |gh|+|k|\geq 2n
    \end{cases}$
    \par\nobreak\vspace{1pt}\nointerlineskip
    \noindent
    \begin{minipage}[c][2.1cm][t]{\linewidth}
    \begin{adjustbox}{max width=\linewidth}
    \begin{tikzpicture}
        \node at (0,0) {$\rbjellyspreadbrbr$};
        \node at (1.3,0.03) {$\redstrand$};
        \node at (2,0.03) {$\greenstrand$};
        \node[scale=0.5] at (1.75,-0.3) {$\cdots$};

        \draw[decorate, decoration={brace,mirror}]
            (-1.2,-0.5) -- (-0.35,-0.5)
            node[midway, below=3pt] {$g$};
        \draw[decorate, decoration={brace,mirror}]
            (-0.25,-0.5) -- (0.3,-0.5)
            node[midway, below=3pt] {$h$};
        \draw[decorate, decoration={brace,mirror}]
            (0.4,-0.5) -- (2.2,-0.5)
            node[midway, below=3pt] {$k$};
    \end{tikzpicture}
    \end{adjustbox}
    \end{minipage}
\end{enumerate}
\end{minipage}%
\hfill
\begin{minipage}[t]{0.485\linewidth}
\vspace{0pt}
\begin{enumerate}[
    label=(R\arabic*$_{ee}$), start=3,
    leftmargin=*, topsep=0pt, partopsep=0pt,
    itemsep=0pt, parsep=0pt
]
    \item\label{R3ee}
    If $\begin{cases}
        |h|+|k|<2n, \text{ and }\\
        |hk|+|g|\geq 2n
    \end{cases}$
    \par\nobreak\vspace{1pt}\nointerlineskip
    \noindent
    \begin{minipage}[c][2.1cm][t]{\linewidth}
    \begin{adjustbox}{max width=\linewidth}
    \begin{tikzpicture}
        \node at (0,0) {$\rbjellyspreadbr$};
        \node at (1.25,0.03) {$\redstrand$};
        \node at (1.95,0.03) {$\bluestrand$};
        \node at (2.2,0.03) {$\redstrand$};
        \node at (2.85,0.03) {$\greenstrand$};
        \node[scale=0.5] at (1.7,-0.3) {$\cdots$};
        \node[scale=0.5] at (2.65,-0.3) {$\cdots$};

        \draw[decorate, decoration={brace,mirror}]
            (-1.2,-0.5) -- (-0.1,-0.5)
            node[midway, below=3pt] {$g$};
        \draw[decorate, decoration={brace,mirror}]
            (0,-0.5) -- (2.2,-0.5)
            node[midway, below=3pt] {$h$};
        \draw[decorate, decoration={brace,mirror}]
            (2.3,-0.5) -- (3,-0.5)
            node[midway, below=3pt] {$k$};
    \end{tikzpicture}
    \end{adjustbox}
    \end{minipage}
\end{enumerate}
\end{minipage}
\par
\penalty0
\vspace{0.6em}

\noindent
\begin{minipage}[t]{0.485\linewidth}
\vspace{0pt}
\begin{enumerate}[
    label=(L\arabic*$_{ee}$), start=4,
    leftmargin=*, topsep=0pt, partopsep=0pt,
    itemsep=0pt, parsep=0pt
]
    \item\label{L4ee}
    If $\begin{cases}
        |g|+|h|<2n, \text{ and }\\
        |gh|+|k|<2n
    \end{cases}$
    \par\nobreak\vspace{1pt}\nointerlineskip
    \noindent
    \begin{minipage}[c][2.1cm][t]{\linewidth}
    \begin{adjustbox}{max width=\linewidth}
    \begin{tikzpicture}
        \node at (0,0.03) {$\redstrand$};
        \node at (0.75,0.03) {$\bluestrand$};
        \node at (1.05,0.03) {$\redstrand$};
        \node at (1.7,0.03) {$\bluestrand$};
        \node at (2.05,0.03) {$\redstrand$};
        \node at (2.7,0.03) {$\greenstrand$};
        \node[scale=0.5] at (0.5,-0.3) {$\cdots$};
        \node[scale=0.5] at (1.5,-0.3) {$\cdots$};
        \node[scale=0.5] at (2.5,-0.3) {$\cdots$};

        \draw[decorate, decoration={brace,mirror}]
            (0,-0.5) -- (0.9,-0.5)
            node[midway, below=3pt] {$g$};
        \draw[decorate, decoration={brace,mirror}]
            (1,-0.5) -- (1.9,-0.5)
            node[midway, below=3pt] {$h$};
        \draw[decorate, decoration={brace,mirror}]
            (2,-0.5) -- (2.9,-0.5)
            node[midway, below=3pt] {$k$};
    \end{tikzpicture}
    \end{adjustbox}
    \end{minipage}
\end{enumerate}
\end{minipage}%
\hfill
\begin{minipage}[t]{0.485\linewidth}
\vspace{0pt}
\begin{enumerate}[
    label=(R\arabic*$_{ee}$), start=4,
    leftmargin=*, topsep=0pt, partopsep=0pt,
    itemsep=0pt, parsep=0pt
]
    \item\label{R4ee}
    If $\begin{cases}
        |h|+|k|<2n, \text{ and }\\
        |hk|+|g|<2n
    \end{cases}$
    \par\nobreak\vspace{1pt}\nointerlineskip
    \noindent
    \begin{minipage}[c][2.1cm][t]{\linewidth}
    \begin{adjustbox}{max width=\linewidth}
    \begin{tikzpicture}
        \node at (0,0.03) {$\redstrand$};
        \node at (0.75,0.03) {$\bluestrand$};
        \node at (1.05,0.03) {$\redstrand$};
        \node at (1.7,0.03) {$\bluestrand$};
        \node at (2.05,0.03) {$\redstrand$};
        \node at (2.7,0.03) {$\greenstrand$};
        \node[scale=0.5] at (0.5,-0.3) {$\cdots$};
        \node[scale=0.5] at (1.5,-0.3) {$\cdots$};
        \node[scale=0.5] at (2.5,-0.3) {$\cdots$};

        \draw[decorate, decoration={brace,mirror}]
            (0,-0.5) -- (0.9,-0.5)
            node[midway, below=3pt] {$g$};
        \draw[decorate, decoration={brace,mirror}]
            (1,-0.5) -- (1.9,-0.5)
            node[midway, below=3pt] {$h$};
        \draw[decorate, decoration={brace,mirror}]
            (2,-0.5) -- (2.9,-0.5)
            node[midway, below=3pt] {$k$};
    \end{tikzpicture}
    \end{adjustbox}
    \end{minipage}
\end{enumerate}
\end{minipage}
\par

\begin{lemma}
    When $|g|$ and $|h|$ are both even, then
    \begin{align*}
        \omega(g,h,k) &= \begin{cases}
        \xi^{\frac{|g|}{2}}, \text{ if } |h|+|k| \geq 2n,\\
        1, \text{ otherwise }.
    \end{cases}
    \end{align*}
\end{lemma}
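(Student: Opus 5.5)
Since $|g|$ and $|h|$ are even, Lemma \ref{lemma: omega} together with Lemma \ref{choice of J} tells us that $\omega(g,h,k)$ is the scalar relating the left tree to the right tree. Both trees are built only from jellyfish $\rbjellydown$ and identity strands: $J_{g,h}$ and $J_{gh,k}$ (and likewise $J_{h,k}$, $J_{g,hk}$) all fall into Case 1 or Case 2, because $|g|$, $|h|$, $|gh|$ are even. The plan is to pair each left tree \ref{L1ee}--\ref{L4ee} with the right tree that occurs under the same $g,h,k$. Then, by sliding jellyfish horizontally using relation \ref{rel: redbluejelly}, we turn one diagram into the other and count the factors of $\xi$ produced.

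First I would enumerate the compatible pairs. Write $|g|=2a$ and $|h|=2c$. Counting letters, the total number of jellyfish is the same on both sides: $|g|+|h|+|k|-|ghk|=2n\cdot(\#\text{jellyfish})$. Hence the pairs are exactly those with equal jellyfish counts, namely \ref{L4ee}/\ref{R4ee}, \ref{L1ee}/\ref{R1ee}, and the mixed one-jellyfish pairs among \ref{L2ee}, \ref{L3ee}, \ref{R2ee}, \ref{R3ee}. In each diagram, a jellyfish is attached to $2n$ consecutive bottom points starting at some even position. In the left tree the first jellyfish starts at position $0$ (Case 1 of $J_{g,h}$ or $J_{gh,k}$). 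In the right tree the jellyfish from $J_{h,k}$ starts at position $|g|$, and the jellyfish from $J_{g,hk}$ starts at $0$.

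The key local move is \ref{rel: redbluejelly}. It says that moving a jellyfish one step (a red--blue pair $\redstrand\bluestrand$) from its left to its right costs a factor $\xi$. Iterating it, a jellyfish placed after $a$ pairs $\redstrand\bluestrand$ equals $\xi^{a}$ times a jellyfish placed at the far left followed by those $a$ pairs. Applying this to the right tree:
\begin{itemize}
\item If $|h|+|k|\ge 2n$, the jellyfish of $J_{h,k}$ sits to the right of the $a=|g|/2$ pairs coming from $g$. Sliding it to the left end gives $\xi^{|g|/2}$ times a diagram whose first jellyfish starts at $0$.
\item For \ref{R1ee}, the outer jellyfish of $J_{g,hk}$ now lies to the right of this first one. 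Since both nested and side-by-side placements of two jellyfish reduce to the same form, I expect relation \ref{rel: jellysmultiplied} (or Remark \ref{rk:normalization}) to show that the nested pair equals the left-tree configuration of two adjacent jellyfish with no extra scalar. One should double check this by inserting $\rbjellyup$ and using \ref{rel: twojellys}.
\item If $|h|+|k|<2n$, the right tree already agrees with the left tree: either both have no jellyfish, or both have one jellyfish at position $0$. In that case $\omega=1$.
\end{itemize}

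The main obstacle is the two-jellyfish case \ref{L1ee}/\ref{R1ee}. There the right tree has one jellyfish nested above another (the outer one eats the top of the inner one together with further strands), while the left tree has two jellyfish side by side. I would handle it as follows.
\begin{enumerate}
\item Slide the lower jellyfish left using \ref{rel: redbluejelly}, picking up $\xi^{|g|/2}$.
\item Show that a jellyfish composed on top of the output of another, at position $0$, equals two adjacent jellyfish. Both are morphisms between the same words, so they are proportional because Hom spaces are at most one-dimensional.
\item Determine the constant by capping off with $\rbjellyup$'s. Relations \ref{rel: twojellys} and \ref{rel: jellysmultiplied} should reduce both sides to the identity, which forces the constant to be $1$.
\end{enumerate}
Collecting the cases gives $\omega(g,h,k)=\xi^{|g|/2}$ when $|h|+|k|\ge 2n$, and $1$ otherwise.
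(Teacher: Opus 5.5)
Your proposal is correct and is essentially the paper's argument: counting jellyfish recovers the same six compatible pairs, and sliding jellyfish with \ref{rel: redbluejelly} gives $\omega(g,h,k)=\xi^{|g|/2}$ when $|h|+|k|\ge 2n$, while in the remaining cases the two trees are the same diagram. The differences are cosmetic. In the case \ref{L1ee}/\ref{R1ee} you slide the inner jellyfish left past the $|g|$ strands, whereas the paper moves the outer jellyfish's last $2n-|g|$ strands to the left and then uses $\xi^n=1$; and your steps (2)--(3) are unnecessary, because a jellyfish has no outgoing strands, so after sliding the diagram is literally two adjacent jellyfish by the interchange law, with no scalar to determine.
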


\begin{proof}
Using the standard-form length conditions, one checks that only the
following six pairs of trees are compatible:

\begin{center}
\renewcommand{\arraystretch}{1.2}
\begin{tabular}{lll}
(1) \ref{L1ee} and \ref{R1ee}
&
(2) \ref{L2ee} and \ref{R2ee}
&
(3) \ref{L2ee} and \ref{R3ee}
\\
(4) \ref{L3ee} and \ref{R2ee}
&
(5) \ref{L3ee} and \ref{R3ee}
&
(6) \ref{L4ee} and \ref{R4ee}
\end{tabular}
\end{center}

\noindent \underline{Case (1)}. To transform the right tree \ref{R1ee} into the left tree \ref{L1ee}, we use relation \ref{rel: redbluejelly} to move over the last $2n-|g|$ strands of the outer jellyfish to the left of the inner jellyfish. Each pair of strands moved to the left results in a multiplication of a factor of $\xi^{-1}$. After moving over the $2n-|g|$ strands, \ref{R1ee} has been transformed into \ref{L1ee} multiplied by $\xi^{\frac{-(2n-|g|)}{2}}=\xi^{\frac{|g|}{2}}$. In this case, $\omega(g,h,k)=\xi^{\frac{|g|}{2}}$.

\noindent\underline{Case (2)}. 
To transform the right tree \ref{R2ee} into the left tree \ref{L2ee} we move the leftmost $|g|$ strands to the right of the jellyfish using \ref{rel: redbluejelly}. For each pair of strands, we multiply the diagram by $\xi$. After moving all of the $|g|$ strands, the diagram is $\xi^{\frac{|g|}{2}}$ times the left tree \ref{L2ee}. In this case, $\omega(g,h,k)=\xi^{\frac{|g|}{2}}$.

\noindent\underline{Case (3)}. 
The two trees are identical so $\omega(g,h,k)=1$.

\noindent\underline{Case (4)}. 
This is the same process as Case 2. Thus $\omega(g,h,k)=\xi^{\frac{|g|}{2}}$.

\noindent\underline{Case (5)}. 
These are the same diagrams so $\omega(g,h,k)=1$.

\noindent\underline{Case (6)}.
These are the same diagrams so $\omega(g,h,k)=1$.
\end{proof}

\subsubsection{The even--odd case} Suppose $|g|$ is even and $|h|$ is odd, and let $k\in \Dn$. 

\noindent
\begin{minipage}[t]{0.485\linewidth}
	\vspace{0pt}
	\begin{enumerate}[
		label=(L\arabic*$_{eo}$), start=1,
		leftmargin=*, topsep=0pt, partopsep=0pt,
		itemsep=0pt, parsep=0pt
		]
		\item\label{L1eo}
		If $\begin{cases}
			|g|+|h|\geq 2n, \text{ and }\\
			|gh|\geq |k|
		\end{cases}$
		\par\nobreak\vspace{1pt}\nointerlineskip
		\noindent
		\begin{adjustbox}{max width=\linewidth}
			\begin{tikzpicture}
				\node at (0,0) {$\rbjellyspreadbr$};
				\node at (1.25,0.03) {$\redstrand$};
				\node at (1.95,0.03) {$\purplestrand$};
				\node at (3.06,0.15) {$\biggreencap$};
				\node at (3,0.03) {$\redcap$};
				
				\node[scale=0.5] at (1.7,-0.3) {$\cdots$};
				\node[scale=0.5] at (2.6,-0.3) {$\cdots$};
				\node[scale=0.5] at (3.65,-0.3) {$\cdots$};
				
				\draw[decorate, decoration={brace,mirror}]
				(-1.2,-0.5) -- (-0.1,-0.5)
				node[midway, below=3pt] {$g$};
				\draw[decorate, decoration={brace,mirror}]
				(0,-0.5) -- (3,-0.5)
				node[midway, below=3pt] {$h$};
				\draw[decorate, decoration={brace,mirror}]
				(3.2,-0.5) -- (4,-0.5)
				node[midway, below=3pt] {$k$};
			\end{tikzpicture}
		\end{adjustbox}
	\end{enumerate}
\end{minipage}%
\hfill
\begin{minipage}[t]{0.485\linewidth}
	\vspace{0pt}
	\begin{enumerate}[
		label=(R\arabic*$_{eo}$), start=1,
		leftmargin=*, topsep=0pt, partopsep=0pt,
		itemsep=0pt, parsep=0pt
		]
		\item\label{R1eo}
		If $\begin{cases}
			|h|\geq |k|, \text{ and }\\
			|hk|+|g|\geq 2n
		\end{cases}$
		\par\nobreak\vspace{1pt}\nointerlineskip
		\noindent
		\begin{adjustbox}{max width=\linewidth}
			\begin{tikzpicture}
				\node at (0,0) {$\rbjellyspreadbr$};
				\node at (1.25,0.03) {$\redstrand$};
				\node at (1.95,0.03) {$\purplestrand$};
				\node at (3.06,0.15) {$\biggreencap$};
				\node at (3,0.03) {$\redcap$};
				
				\node[scale=0.5] at (1.7,-0.3) {$\cdots$};
				\node[scale=0.5] at (2.6,-0.3) {$\cdots$};
				\node[scale=0.5] at (3.65,-0.3) {$\cdots$};
				
				\draw[decorate, decoration={brace,mirror}]
				(-1.2,-0.5) -- (-0.1,-0.5)
				node[midway, below=3pt] {$g$};
				\draw[decorate, decoration={brace,mirror}]
				(0,-0.5) -- (3,-0.5)
				node[midway, below=3pt] {$h$};
				\draw[decorate, decoration={brace,mirror}]
				(3.2,-0.5) -- (4,-0.5)
				node[midway, below=3pt] {$k$};
			\end{tikzpicture}
		\end{adjustbox}
	\end{enumerate}
\end{minipage}
\par
\penalty0
\vspace{0.6em}

\noindent
\begin{minipage}[t]{0.485\linewidth}
	\vspace{0pt}
	\begin{enumerate}[
		label=(L\arabic*$_{eo}$), start=2,
		leftmargin=*, topsep=0pt, partopsep=0pt,
		itemsep=0pt, parsep=0pt
		]
		\item\label{L2eo}
		If $\begin{cases}
			|g|+|h|\geq 2n, \text{ and }\\
			|gh|<|k|
		\end{cases}$
		\par\nobreak\vspace{1pt}\nointerlineskip
		\noindent
		\begin{adjustbox}{max width=\linewidth}
			\begin{tikzpicture}
				\node at (0,0) {$\rbjellyspreadbr$};
				\node at (2.2,0.15) {$\bigredcap$};
				\node at (2.14,0.03) {$\redcap$};
				\node at (4.2,0.26) {$\brjellyswitchg$};
				
				\node[scale=0.5] at (1.73,-0.3) {$\cdots$};
				\node[scale=0.5] at (2.77,-0.3) {$\cdots$};
				
				\draw[decorate, decoration={brace,mirror}]
				(-1.2,-0.5) -- (-0.1,-0.5)
				node[midway, below=3pt] {$g$};
				\draw[decorate, decoration={brace,mirror}]
				(0,-0.5) -- (2.1,-0.5)
				node[midway, below=3pt] {$h$};
				\draw[decorate, decoration={brace,mirror}]
				(2.2,-0.5) -- (4,-0.5)
				node[midway, below=3pt] {$k$};
			\end{tikzpicture}
		\end{adjustbox}
	\end{enumerate}
\end{minipage}%
\hfill
\begin{minipage}[t]{0.485\linewidth}
	\vspace{0pt}
	\begin{enumerate}[
		label=(R\arabic*$_{eo}$), start=2,
		leftmargin=*, topsep=0pt, partopsep=0pt,
		itemsep=0pt, parsep=0pt
		]
		\item\label{R2eo}
		If $\begin{cases}
			|h|\geq |k|, \text{ and }\\
			|hk|+|g|<2n
		\end{cases}$
		\par\nobreak\vspace{1pt}\nointerlineskip
		\noindent
		\begin{adjustbox}{max width=\linewidth}
			\begin{tikzpicture}
				\node at (0,0.03) {$\redstrand$};
				\node at (0.7,0.03) {$\bluestrand$};
				\node at (0.95,0.03) {$\redstrand$};
				\node at (1.65,0.03) {$\purplestrand$};
				\node at (2.76,0.15) {$\biggreencap$};
				\node at (2.7,0.03) {$\redcap$};
				
				\node[scale=0.5] at (0.47,-0.3) {$\cdots$};
				\node[scale=0.5] at (1.42,-0.3) {$\cdots$};
				\node[scale=0.5] at (2.3,-0.3) {$\cdots$};
				\node[scale=0.5] at (3.35,-0.3) {$\cdots$};
				
				\draw[decorate, decoration={brace,mirror}]
				(0,-0.5) -- (0.9,-0.5)
				node[midway, below=3pt] {$g$};
				\draw[decorate, decoration={brace,mirror}]
				(1,-0.5) -- (2.7,-0.5)
				node[midway, below=3pt] {$h$};
				\draw[decorate, decoration={brace,mirror}]
				(2.9,-0.5) -- (3.75,-0.5)
				node[midway, below=3pt] {$k$};
			\end{tikzpicture}
		\end{adjustbox}
	\end{enumerate}
\end{minipage}
\par
\penalty0
\vspace{0.6em}

\begingroup
\renewcommand{\arraystretch}{0}
\noindent
\begin{tabular}{@{}l@{\hspace{0.03\linewidth}}l@{}}

\begin{minipage}[t]{0.485\linewidth}
\vspace{0pt}
\begin{enumerate}[
    label=(L\arabic*$_{eo}$), start=3,
    leftmargin=*, topsep=0pt, partopsep=0pt,
    itemsep=0pt, parsep=0pt
]
    \item\label{L3eo}
    If $\begin{cases}
        |g|+|h|<2n, \text{ and }\\
        |gh|\geq |k|
    \end{cases}$
\end{enumerate}
\end{minipage}
&
\begin{minipage}[t]{0.485\linewidth}
\vspace{0pt}
\begin{enumerate}[
    label=(R\arabic*$_{eo}$), start=3,
    leftmargin=*, topsep=0pt, partopsep=0pt,
    itemsep=0pt, parsep=0pt
]
    \item\label{R3eo}
    If $\begin{cases}
        |h|<|k|, \text{ and }\\
        |hk|+|g|\geq 2n
    \end{cases}$
\end{enumerate}
\end{minipage}
\\[-0.6cm]

\begin{adjustbox}{max width=0.485\linewidth}
\begin{tikzpicture}
    \node at (0.1,0.23) {$\mediumredstrand$};
    \node at (0.85,0.23) {$\mediumpurplestrand$};
    \node at (2.7,0.33) {$\biggggreencap$};
    \node at (2.7,0.2) {$\biggbluecap$};
    \node at (2.7,0.13) {$\bigredcap$};
    \node at (2.65,0.01) {$\redcap$};

    \node[scale=0.5] at (0.5,-0.3) {$\cdots$};
    \node[scale=0.5] at (1.4,-0.3) {$\cdots$};
    \node[scale=0.5] at (2.24,-0.3) {$\cdots$};
    \node[scale=0.5] at (3.27,-0.3) {$\cdots$};
    \node[scale=0.5] at (4.04,-0.3) {$\cdots$};

    \draw[decorate, decoration={brace,mirror}]
        (0,-0.5) -- (1.75,-0.5)
        node[midway, below=3pt] {$g$};
    \draw[decorate, decoration={brace,mirror}]
        (1.85,-0.5) -- (2.6,-0.5)
        node[midway, below=3pt] {$h$};
    \draw[decorate, decoration={brace,mirror}]
        (2.9,-0.5) -- (4.4,-0.5)
        node[midway, below=3pt] (EOLeftBase) {$k$};

    \pgfsetbaselinepointlater{%
        \pgfpointanchor{EOLeftBase}{base}%
    }
\end{tikzpicture}
\end{adjustbox}
&
\begin{adjustbox}{max width=0.485\linewidth}
\begin{tikzpicture}
    \node at (0,0) {$\twojellys$};
    \node at (-1.43,-0.53) {$\biggredcap$};
    \node at (-1.5,-0.7) {$\redcap$};

    \node[scale=0.5] at (-2.05,-1.04) {$\cdots$};
    \node[scale=0.5] at (-0.75,-1.04) {$\cdots$};

    \draw[decorate, decoration={brace,mirror}]
        (-3.4,-1.23) -- (-2.6,-1.23)
        node[midway, below=3pt] {$g$};
    \draw[decorate, decoration={brace,mirror}]
        (-2.55,-1.23) -- (-1.55,-1.23)
        node[midway, below=3pt] {$h$};
    \draw[decorate, decoration={brace,mirror}]
        (-1.3,-1.23) -- (1.5,-1.23)
        node[midway, below=3pt] (EORightBase) {$k$};

    \pgfsetbaselinepointlater{%
        \pgfpointanchor{EORightBase}{base}%
    }
\end{tikzpicture}
\end{adjustbox}

\end{tabular}
\par
\endgroup
\penalty0

\noindent
\begin{minipage}[t]{0.485\linewidth}
	\vspace{0pt}
	\begin{enumerate}[
		label=(L\arabic*$_{eo}$), start=4,
		leftmargin=*, topsep=0pt, partopsep=0pt,
		itemsep=0pt, parsep=0pt
		]
		\item\label{L4eo}
		If $\begin{cases}
			|g|+|h|<2n, \text{ and }\\
			|gh|<|k|
		\end{cases}$
		\par\nobreak\vspace{1pt}\nointerlineskip
		\noindent
		\begin{adjustbox}{max width=\linewidth}
			\begin{tikzpicture}
				\node at (0,0.33) {$\bigggredcap$};
				\node at (0,0.2) {$\biggbluecap$};
				\node at (0,0.13) {$\bigredcap$};
				\node at (-0.05,0.01) {$\redcap$};
				\node at (2.6,0.26) {$\brjellyswitchg$};
				
				\node[scale=0.5] at (-1.3,-0.3) {$\cdots$};
				\node[scale=0.5] at (-0.46,-0.3) {$\cdots$};
				\node[scale=0.5] at (0.57,-0.3) {$\cdots$};
				\node[scale=0.5] at (1.34,-0.3) {$\cdots$};
				
				\draw[decorate, decoration={brace,mirror}]
				(-1.7,-0.5) -- (-0.95,-0.5)
				node[midway, below=3pt] {$g$};
				\draw[decorate, decoration={brace,mirror}]
				(-0.85,-0.5) -- (-0.1,-0.5)
				node[midway, below=3pt] {$h$};
				\draw[decorate, decoration={brace,mirror}]
				(0.2,-0.5) -- (2.3,-0.5)
				node[midway, below=3pt] {$k$};
			\end{tikzpicture}
		\end{adjustbox}
	\end{enumerate}
\end{minipage}%
\hfill
\begin{minipage}[t]{0.485\linewidth}
	\vspace{0pt}
	\begin{enumerate}[
		label=(R\arabic*$_{eo}$), start=4,
		leftmargin=*, topsep=0pt, partopsep=0pt,
		itemsep=0pt, parsep=0pt
		]
		\item\label{R4eo}
		If $\begin{cases}
			|h|<|k|, \text{ and }\\
			|hk|+|g|<2n
		\end{cases}$
		\par\nobreak\vspace{1pt}\nointerlineskip
		\noindent
		\begin{adjustbox}{max width=\linewidth}
			\begin{tikzpicture}
				\node at (0,0.28) {$\mediumredstrand$};
				\node at (0.7,0.28) {$\mediumbluestrand$};
				\node at (1.77,0.15) {$\bigredcap$};
				\node at (1.71,0.03) {$\redcap$};
				\node at (3.7,0.26) {$\brjellyswitchg$};
				
				\node[scale=0.5] at (0.45,-0.3) {$\cdots$};
				\node[scale=0.5] at (1.3,-0.3) {$\cdots$};
				\node[scale=0.5] at (2.35,-0.3) {$\cdots$};
				
				\draw[decorate, decoration={brace,mirror}]
				(0,-0.5) -- (0.85,-0.5)
				node[midway, below=3pt] {$g$};
				\draw[decorate, decoration={brace,mirror}]
				(0.95,-0.5) -- (1.7,-0.5)
				node[midway, below=3pt] {$h$};
				\draw[decorate, decoration={brace,mirror}]
				(1.9,-0.5) -- (3.5,-0.5)
				node[midway, below=3pt] {$k$};
			\end{tikzpicture}
		\end{adjustbox}
	\end{enumerate}
\end{minipage}
\par

\begin{lemma}
    When $|g|$ is even and $|h|$ is odd, we have that
    \begin{align*}
        \omega(g,h,k) &= \begin{cases}
(\epsilon_r\epsilon_b\xi)^{-|g|/2}, &\text{ if }   |gh|, |h|<|k|,\\
\epsilon_r^{\lceil\frac{|ghk|}{2} \rceil}\epsilon_b^{\lfloor\frac{|ghk|}{2}\rfloor}, &\text{ if }  |g|+|h|\geq 2n> |hk|+|g| \text{ and } |gh|<|k|\leq |h|,\\
\epsilon_r^{\lceil\frac{|hk|}{2} \rceil}\epsilon_b^{\lfloor\frac{|hk|}{2}\rfloor}\xi^{-|g|/2}, &\text{ if }  |g|+|h|<2n \leq |hk|+|g|\text{ and } |gh|\geq |k| >|h|,\\
        1, &\text{ otherwise }.
    \end{cases}
    \end{align*}
\end{lemma}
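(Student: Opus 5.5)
We follow the template of the even--even lemma: by Lemma~\ref{lemma: omega} the right tree equals $\omega(g,h,k)$ times the left tree, so we transform one tree into the other with the relations of Proposition~\ref{prop:rels} and Lemma~\ref{lemma: extra rels}, recording the scalar picked up at each step.

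\textbf{Step 1: compatible pairs.} Since $|g|$ is even and $|h|$ is odd, $|gh|$ is odd. The left tree is one of \ref{L1eo}--\ref{L4eo}, according to whether $|g|+|h|\ge 2n$ and whether $|gh|\ge|k|$. The right tree is one of \ref{R1eo}--\ref{R4eo}, according to whether $|h|\ge|k|$ and whether $|hk|+|g|\ge 2n$. Using the standard-form length identities, we first list which pairs $(Li,Rj)$ can occur simultaneously. Here $|gh|$ equals $|g|+|h|$ or $|g|+|h|-2n$, and $|hk|$ equals $|h|-|k|$ or $2n+|h|-|k|$.

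\textbf{Step 2: trivial pairs.} We expect the pairs \ref{L1eo}/\ref{R1eo}, \ref{L3eo}/\ref{R2eo} and \ref{L1eo}/\ref{R2eo} to give diagrams that are literally equal, or equal up to planar isotopy handled by \ref{rel: rcupcap}, \ref{rel: bcupcap} and \ref{rel: zigzag}. These pairs give $\omega=1$.

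\textbf{Step 3: nontrivial pairs.} Each remaining pair corresponds to one of the three nontrivial cases of the statement.

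\begin{enumerate}[(a)]
\item \textbf{Pairs \ref{L2eo} or \ref{L4eo} against \ref{R3eo} or \ref{R4eo}, where $|gh|,|h|<|k|$.} Here the $g$ strands sit to the left of a blue--red jellyfish $\brjellyswitchg$ in one tree and to its right in the other. We slide the $|g|/2$ pairs across using \ref{rel: blueredjelly}. Because the jellyfish hanging off a cap requires converting between red-up and blue-up jellyfish, we also use \ref{eq: blue up} and \ref{eq: red up}. Each pair then contributes $(\epsilon_r\epsilon_b\xi)^{-1}$, giving the total $(\epsilon_r\epsilon_b\xi)^{-|g|/2}$.
\item \textbf{Pair \ref{L2eo}/\ref{R2eo}.} The nested caps must be straightened through a jellyfish. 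Each nested cap that is unwound costs one zigzag via \ref{rel: zigzag}, contributing $\epsilon_r$ or $\epsilon_b$ in alternation. The number of such caps is $|ghk|$, counted starting with red, which yields $\epsilon_r^{\lceil |ghk|/2\rceil}\epsilon_b^{\lfloor |ghk|/2\rfloor}$.
\item \textbf{Pair \ref{L3eo}/\ref{R3eo}.} The same zigzag count applies, now to $|hk|$ strands. This is combined with the $\xi^{-|g|/2}$ obtained by moving the $g$ strands across a jellyfish with \ref{rel: blueredjelly}, as in Case~(2) of the even--even lemma.
\end{enumerate}

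Lemma~\ref{lem: brtwojellys} and Remark~\ref{rk:normalization} remove any closed jellyfish pairs that appear.

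\textbf{Main obstacle.} The hard part is the bookkeeping in Step~3: determining exactly which colours the unwound zigzags carry and hence the ceiling/floor exponents, together with the sign of the $\xi$ exponent. A careless count could shift $\lceil\cdot\rceil$ and $\lfloor\cdot\rfloor$ or flip the direction of $\xi$. To control this, I would first verify each case on small instances, $n=2,3$ with $|g|=2$, and only then state the general count.
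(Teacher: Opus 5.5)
Your case split is sound. The pairs that actually occur are \ref{L1eo}/\ref{R1eo} and \ref{L3eo}/\ref{R2eo}, which are literally identical diagrams, then \ref{L2eo}/\ref{R3eo} and \ref{L4eo}/\ref{R4eo} (your (a)), then \ref{L2eo}/\ref{R2eo} and \ref{L3eo}/\ref{R3eo}. Your extra trivial pair \ref{L1eo}/\ref{R2eo} cannot occur: $|gh|=|g|+|h|-2n\ge|k|$ forces $|h|>|k|$, and then $|hk|+|g|=|g|+|h|-|k|\ge 2n$. The real gap is in how you obtain the scalars in (a) and (c). The mechanism you describe is not available in these diagrams, and the per-pair factor $(\epsilon_r\epsilon_b\xi)^{-1}$ is read off from the statement rather than derived.

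\textbf{Why (a) fails.} In \ref{L2eo} and \ref{R3eo}, the $g$ strands are the first $|g|$ legs of the downward jellyfish coming from $J_{g,h}$, resp.\ $J_{g,hk}$ (Case 1 of the choice of $J$). So there is nothing to slide across a jellyfish with \ref{rel: blueredjelly}. What differs between the two trees is how the other $2n-|g|$ legs of that jellyfish reach the bottom: directly from $h$ in \ref{L2eo}, and through the second jellyfish of $J_{h,k}$ in \ref{R3eo}. In \ref{L4eo}/\ref{R4eo}, the only jellyfish is the Case 4 one, with legs on both top and bottom, and no strand can be slid across it. Also, if you applied both \ref{rel: blueredjelly} and \ref{eq: blue up} to the same pair, each would contribute a power of $\xi$, so your bookkeeping does not produce a single $\xi$ per pair.

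\textbf{The missing tool is leg rotation.} The paper applies \ref{eq: blue up} and \ref{eq: red up} alternately to the $2n-|g|$ strands joining the two jellyfish of \ref{R3eo}. It then uses \ref{rel: rcupcap} and \ref{rel: bcupcap} to reach \ref{L2eo}, picking up $(\epsilon_r\epsilon_b)^{n-|g|/2}\xi^{-|g|/2}$. Rewriting this as $(\epsilon_r\epsilon_b\xi)^{-|g|/2}$ needs $(\epsilon_r\epsilon_b)^n=1$, which for odd $n$ is Proposition~\ref{prop:e_r=e_b}; your sketch never invokes it. For \ref{L4eo}/\ref{R4eo}, the paper first moves the outer $|g|$ caps with \ref{rel: redstrandcap} and \ref{rel: bluestrandcap}. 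It then applies \ref{eq: blue up} and \ref{eq: red up}, each followed by \ref{rel: zigzag}.

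\textbf{Why (c) fails.} It is not an independent zigzag count on $|hk|$ strands plus a $\xi$-slide of the $g$ strands: \ref{L3eo} has no jellyfish at all, while \ref{R3eo} has two. The paper proceeds in three steps:
\begin{enumerate}
\item Reduce \ref{R3eo} to the \ref{L2eo} configuration by the leg rotation above.
\item Run your computation (b). This matches the paper's count, which inserts cup--caps, uses \ref{rel: redstrandcup} and \ref{rel: bluestrandcup} $|ghk|$ times, and cancels the two jellyfish via Remark~\ref{rk:normalization}.
\item Observe that this lands on \ref{R2eo}, which is literally \ref{L3eo}.
\end{enumerate}
The product of the two scalars becomes the stated one via $|hk|=|ghk|+2n-|g|$, using that $2n-|g|$ is even.
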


\begin{proof}
Here are the 6 possible combinations of left trees with right trees to solve for: 
\begin{center}
\renewcommand{\arraystretch}{1.2}
\begin{tabular}{lll}
(1) \ref{L1eo} and \ref{R1eo}
&
(2) \ref{L2eo} and \ref{R2eo}
&
(3) \ref{L2eo} and \ref{R3eo}
\\
(4) \ref{L3eo} and \ref{R2eo}
&
(5) \ref{L3eo} and \ref{R3eo}
&
(6) \ref{L4eo} and \ref{R4eo}
\end{tabular}
\end{center}


\noindent \underline{Case (1)}. 
 The two trees are identical so  $\omega(g,h,k)=1$.

\noindent\underline{Case (2)}. 
Apply relations \ref{rel: rcupcap} and \ref{rel: bcupcap} alternately, for a total of $2n-|g|-|h|+|k|$  times. Then  \ref{L2eo} equals 
\begin{equation*}
    \case\lowperiod
\end{equation*}
 Now, starting with the outermost purple cup and innermost purple strand and ending with the innermost red cup and outermost red strand, we alternately apply relations \ref{rel: redstrandcup} and \ref{rel: bluestrandcup} to the diagram above a total of $|ghk|$ times to obtain that it equals
\begin{center}
\ \ \ \ \ \ \ \ \ \ \ \ \ \ \raisebox{-1cm}{  \begin{tikzpicture}
    \node at (0,0.03) {$\mujelly$};
    \node at (1.35, 0.2) {$\longredstrand$};
        \node at (1.57, 0.2) {$\longbluestrand$};
         \node at (2.2, 0.2) {$\longpurplestrand$};
    \node at (3.3, -0.5) {$\biggreencap$};
    \node at (3.2, -0.6) {$\redcap$};
         \node[scale=1] at (-1.6, 0) {$\lambda$};
     \node[scale=0.5] at (0.45, -0.3) {$\cdots$};
     \node[scale=0.5] at (2.8, -1) {$\cdots$};
     \node[scale=0.5] at (3.9, -1) {$\cdots$};
   \node[scale=0.5] at (2, -0.5) {$\cdots$};
\end{tikzpicture}}= $\lambda$
\raisebox{-0.5cm}{\begin{tikzpicture}
    \node at (1.35, 0.2) {$\redstrand$};
        \node at (1.60, 0.2) {$\bluestrand$};
         \node at (2.2, 0.2) {$\purplestrand$};
    \node at (3.3, 0.3) {$\biggreencap$};
    \node at (3.2, 0.20) {$\redcap$};
     \node[scale=0.5] at (2.8, -0.15) {$\cdots$};
     \node[scale=0.5] at (3.9, -0.15) {$\cdots$};
   \node[scale=0.5] at (2, -0.15) {$\cdots$};
\end{tikzpicture}}$\lowcomma$
\end{center}
where there are $|ghk|$ strands to the left and $|k|$ cups to the right, and 
$\lambda=
\epsilon_r^{\left\lceil |ghk|/2\right\rceil}
\epsilon_b^{\left\lfloor |ghk|/2\right\rfloor}$.
Since the last diagram is exactly \ref{R2eo}, it follows that in this case $\omega(g,h,k)= \epsilon_r^{\lceil\frac{|ghk|}{2} \rceil}\epsilon_b^{\lfloor\frac{|ghk|}{2}\rfloor}$.

\noindent\underline{Case (3)}. 
Starting with \ref{R3eo} and applying relations \ref{eq: blue up} and \ref{eq: red up} alternately to the $2n-|g|$ strands connecting the right jellyfish to the left jellyfish, we obtain that \ref{R3eo} equals 
 $(\epsilon_r\epsilon_b)^{n-|g|/2}\xi^{-|g|/2}$ times the diagram below:
\begin{equation*}
    \case\lowcomma
\end{equation*}
where there are $2n-|g|$ strands connecting the two jellyfish and $|h|\geq 2n-|g|$ caps on the bottom. We can then use relations \ref{rel: rcupcap} and \ref{rel: bcupcap} $2n-|g|$ times to get \ref{L2eo}. Thus we conclude in this case that $\omega(g,h,k)=(\epsilon_r\epsilon_b)^{n-|g|/2}\xi^{-|g|/2}=(\epsilon_r\epsilon_b\xi)^{-|g|/2}$, where in the last equality we are using that $\epsilon_r,\epsilon_b$ are 2-roots of unity, and that $\epsilon_r=\epsilon_b$ when $n$ is odd.

\noindent\underline{Case (4)}. 
The two trees are identical so $\omega(g,h,k)=1$.

\noindent\underline{Case (5)}. 
Following Case (3), we apply relations \ref{eq: blue up} and \ref{eq: red up}  to \ref{R3eo} followed by relations \ref{rel: rcupcap} and \ref{rel: bcupcap}, to get it equals $(\epsilon_r\epsilon_b)^{n-|g|/2}\xi^{-|g|/2}$ times the diagram
\begin{center}
 \raisebox{-1cm}{\begin{tikzpicture}
    \node at (0,0) {$\rbjellyspreadbr$};
    \node at (2.2, 0.15) {$\bigredcap$};
    \node at (2.14, 0.03) {$\redcap$};
    \node at (4.1, 0.26) {$\brjellyswitchg$};
     \node[scale=0.5] at (1.73, -0.3) {$\cdots$};
     \node[scale=0.5] at (2.77, -0.3) {$\cdots$};
   \node[scale=1] at (7, -0.3) {.};
  
\end{tikzpicture}}
\end{center}
That is, we get diagram \ref{L2eo}. So we can then proceed as in Case (2) applying relations \ref{rel: rcupcap} and \ref{rel: bcupcap} followed by   \ref{rel: redstrandcup} and \ref{rel: bluestrandcup}  to get that the diagram above equals $\epsilon_r^{\lceil\frac{|ghk|}{2} \rceil}\epsilon_b^{\lfloor\frac{|ghk|}{2}\rfloor}$ times \ref{R2eo}=\ref{L3eo}. Putting these two steps together, we conclude that in this case $\omega(g,h,k)=(\epsilon_r\epsilon_b)^{n-|g|/2}\xi^{-|g|/2}\epsilon_r^{\lceil\frac{|ghk|}{2} \rceil}\epsilon_b^{\lfloor\frac{|ghk|}{2}\rfloor}=\epsilon_r^{\lceil\frac{|hk|}{2} \rceil}\epsilon_b^{\lfloor\frac{|hk|}{2}\rfloor}\xi^{-|g|/2}.$

\noindent\underline{Case (6)}. 
 Starting with the outermost red cap to the left and the innermost red string to the right on \ref{L4eo} we use alternatingly relations 
\ref{rel: redstrandcap} and \ref{rel: bluestrandcap} a total of $|g|$ times to turn \ref{L4eo} into 
\begin{equation*}
    \lastcase \lowperiod
\end{equation*}
Now, using relations \ref{eq: blue up} and \ref{eq: red up} alternatingly, followed each time by \ref{rel: zigzag}, we obtain that the diagram above equals $(\epsilon_r\epsilon_b\xi)^{|g|/2}$ times \ref{R4eo}. Hence, in this case we conclude $\omega(g,h,k)=(\epsilon_r\epsilon_b\xi)^{-|g|/2}$.
\end{proof}

\subsubsection{The odd--even case}
Suppose $|g|$ is odd and $|h|$ is even, and let $k\in \Dn$. 

\noindent
\begin{minipage}[t]{0.58\linewidth}
	\vspace{0pt}
	\begin{enumerate}[
		label=(L\arabic*$_{oe}$), start=1,
		leftmargin=*, topsep=0pt, partopsep=0pt,
		itemsep=0pt, parsep=0pt
		]
		\item\label{L1oe}
		If $\begin{cases}
			|g|\geq |h|, \text{ and }\\
			|gh|\geq |k|
		\end{cases}$
		\par\nobreak\vspace{1pt}\nointerlineskip
\noindent\hspace*{-\leftmargin}%
\begin{adjustbox}{max width=\dimexpr\linewidth+\leftmargin\relax}
			\begin{tikzpicture}
				\node at (0.1,0.23) {$\mediumredstrand$};
				\node at (0.85,0.23) {$\mediumpurplestrand$};
				\node at (2.7,0.33) {$\biggggreencap$};
				\node at (2.7,0.2) {$\biggredcap$};
				\node at (2.7,0.13) {$\bigbluecap$};
				\node at (2.65,0.01) {$\redcap$};
				
				\node[scale=0.5] at (0.5,-0.3) {$\cdots$};
				\node[scale=0.5] at (1.4,-0.3) {$\cdots$};
				\node[scale=0.5] at (2.24,-0.3) {$\cdots$};
				\node[scale=0.5] at (3.27,-0.3) {$\cdots$};
				\node[scale=0.5] at (4.04,-0.3) {$\cdots$};
				
				\draw[decorate, decoration={brace,mirror}]
				(0,-0.5) -- (2.6,-0.5)
				node[midway, below=3pt] {$g$};
				\draw[decorate, decoration={brace,mirror}]
				(2.9,-0.5) -- (3.65,-0.5)
				node[midway, below=3pt] {$h$};
				\draw[decorate, decoration={brace,mirror}]
				(3.7,-0.5) -- (4.4,-0.5)
				node[midway, below=3pt] {$k$};
			\end{tikzpicture}
		\end{adjustbox}
	\end{enumerate}
\end{minipage}%
\hfill
\begin{minipage}[t]{0.39\linewidth}
	\vspace{0pt}
	\begin{enumerate}[
		label=(R\arabic*$_{oe}$), start=1,
		leftmargin=*, topsep=0pt, partopsep=0pt,
		itemsep=0pt, parsep=0pt
		]
		\item\label{R1oe}
		If $\begin{cases}
			|h|+|k|\geq 2n, \text{ and }\\
			|g|\geq |hk|
		\end{cases}$
		\par\nobreak\vspace{1pt}\nointerlineskip
\noindent\hspace*{-\leftmargin}%
\begin{adjustbox}{max width=\dimexpr\linewidth+\leftmargin\relax}
			\begin{tikzpicture}
				\node at (0.1,0.23) {$\mediumredstrand$};
				\node at (0.85,0.23) {$\mediumpurplestrand$};
				\node at (3,0.33) {$\widdegcap$};
				\node at (3,0.2) {$\widercap$};
				\node at (3,0) {$\rbjellyspreadbr$};
				
				\node[scale=0.5] at (0.5,-0.3) {$\cdots$};
				\node[scale=0.5] at (1.42,-0.3) {$\cdots$};
				\node[scale=0.5] at (4.66,-0.3) {$\cdots$};
				
				\draw[decorate, decoration={brace,mirror}]
				(0,-0.5) -- (1.75,-0.5)
				node[midway, below=3pt] {$g$};
				\draw[decorate, decoration={brace,mirror}]
				(1.85,-0.5) -- (2.9,-0.5)
				node[midway, below=3pt] {$h$};
				\draw[decorate, decoration={brace,mirror}]
				(3,-0.5) -- (5,-0.5)
				node[midway, below=3pt] {$k$};
			\end{tikzpicture}
		\end{adjustbox}
	\end{enumerate}
\end{minipage}
\par
\penalty0
\vspace{0.6em}

\noindent
\begin{minipage}[t]{0.58\linewidth}
	\vspace{0pt}
	\begin{enumerate}[
		label=(L\arabic*$_{oe}$), start=2,
		leftmargin=*, topsep=0pt, partopsep=0pt,
		itemsep=0pt, parsep=0pt
		]
		\item\label{L2oe}
		If $\begin{cases}
			|g|\geq |h|, \text{ and }\\
			|gh|<|k|
		\end{cases}$
		\par\nobreak\vspace{1pt}\nointerlineskip
\noindent\hspace*{-\leftmargin}%
\begin{adjustbox}{max width=\dimexpr\linewidth+\leftmargin\relax}
			\begin{tikzpicture}
				\node at (0,0.33) {$\bigggredcap$};
				\node at (0,0.2) {$\biggredcap$};
				\node at (0,0.13) {$\bigbluecap$};
				\node at (-0.05,0.01) {$\redcap$};
				\node at (2.7,0.26) {$\brjellyswitchg$};
				
				\node[scale=0.5] at (-1.3,-0.3) {$\cdots$};
				\node[scale=0.5] at (-0.46,-0.3) {$\cdots$};
				\node[scale=0.5] at (0.57,-0.3) {$\cdots$};
				\node[scale=0.5] at (1.34,-0.3) {$\cdots$};
				
				\draw[decorate, decoration={brace,mirror}]
				(-1.7,-0.5) -- (-0.1,-0.5)
				node[midway, below=3pt] {$g$};
				\draw[decorate, decoration={brace,mirror}]
				(0.2,-0.5) -- (0.9,-0.5)
				node[midway, below=3pt] {$h$};
				\draw[decorate, decoration={brace,mirror}]
				(1,-0.5) -- (2.5,-0.5)
				node[midway, below=3pt] {$k$};
			\end{tikzpicture}
		\end{adjustbox}
	\end{enumerate}
\end{minipage}%
\hfill
\begin{minipage}[t]{0.39\linewidth}
	\vspace{0pt}
	\begin{enumerate}[
		label=(R\arabic*$_{oe}$), start=2,
		leftmargin=*, topsep=0pt, partopsep=0pt,
		itemsep=0pt, parsep=0pt
		]
		\item\label{R2oe}
		If $\begin{cases}
			|h|+|k|\geq 2n, \text{ and }\\
			|g|<|hk|
		\end{cases}$
		\par\nobreak\vspace{1pt}\nointerlineskip
\noindent\hspace*{-\leftmargin}%
\begin{adjustbox}{max width=\dimexpr\linewidth+\leftmargin\relax}
			\begin{tikzpicture}
				\node at (0,0.33) {$\widdercap$};
				\node at (0,0.2) {$\widercap$};
				\node at (0,0) {$\rbjellyspreadbr$};
				\node at (3,0.25) {$\brjellyswitchg$};
				
				\node[scale=0.5] at (-1.58,-0.3) {$\cdots$};
				\node[scale=0.5] at (1.66,-0.3) {$\cdots$};
				
				\draw[decorate, decoration={brace,mirror}]
				(-1.9,-0.5) -- (-1.25,-0.5)
				node[midway, below=3pt] {$g$};
				\draw[decorate, decoration={brace,mirror}]
				(-1.1,-0.5) -- (-0.1,-0.5)
				node[midway, below=3pt] {$h$};
				\draw[decorate, decoration={brace,mirror}]
				(0,-0.5) -- (2.8,-0.5)
				node[midway, below=3pt] {$k$};
			\end{tikzpicture}
		\end{adjustbox}
	\end{enumerate}
\end{minipage}
\par
\penalty0
\vspace{0.6em}

\noindent
\begin{minipage}[t]{0.58\linewidth}
	\vspace{0pt}
	\begin{enumerate}[
		label=(L\arabic*$_{oe}$), start=3,
		leftmargin=*, topsep=0pt, partopsep=0pt,
		itemsep=0pt, parsep=0pt
		]
		\item\label{L3oe}
		If $\begin{cases}
			|g|<|h|, \text{ and }\\
			|gh|\geq |k|
		\end{cases}$
		\par\nobreak\vspace{1pt}\nointerlineskip
\noindent\hspace*{-\leftmargin}%
\begin{adjustbox}{max width=\dimexpr\linewidth+\leftmargin\relax}
			\begin{tikzpicture}
				\node at (0,0.15) {$\bigredcap$};
				\node at (-0.06,0.03) {$\redcap$};
				\node at (3.35,0.6) {$\brjellyflip$};
				
				\node[scale=0.5] at (-0.47,-0.3) {$\cdots$};
				\node[scale=0.5] at (0.58,-0.3) {$\cdots$};
				
				\draw[decorate, decoration={brace,mirror}]
				(-0.9,-0.5) -- (-0.1,-0.5)
				node[midway, below=3pt] {$g$};
				\draw[decorate, decoration={brace,mirror}]
				(0.2,-0.5) -- (2.7,-0.5)
				node[midway, below=3pt] {$h$};
				\draw[decorate, decoration={brace,mirror}]
				(5,-0.5) -- (5.8,-0.5)
				node[midway, below=3pt] {$k$};
			\end{tikzpicture}
		\end{adjustbox}
	\end{enumerate}
\end{minipage}%
\hfill
\begin{minipage}[t]{0.39\linewidth}
	\vspace{0pt}
	\begin{enumerate}[
		label=(R\arabic*$_{oe}$), start=3,
		leftmargin=*, topsep=0pt, partopsep=0pt,
		itemsep=0pt, parsep=0pt
		]
		\item\label{R3oe}
		If $\begin{cases}
			|h|+|k|<2n, \text{ and }\\
			|g|\geq |hk|
		\end{cases}$
		\par\nobreak\vspace{1pt}\nointerlineskip
\noindent\hspace*{-\leftmargin}%
\begin{adjustbox}{max width=\dimexpr\linewidth+\leftmargin\relax}
			\begin{tikzpicture}
				\node at (0.1,0.23) {$\mediumredstrand$};
				\node at (0.85,0.23) {$\mediumpurplestrand$};
				\node at (2.7,0.33) {$\biggggreencap$};
				\node at (2.7,0.2) {$\biggredcap$};
				\node at (2.7,0.13) {$\bigbluecap$};
				\node at (2.65,0.01) {$\redcap$};
				
				\node[scale=0.5] at (0.5,-0.3) {$\cdots$};
				\node[scale=0.5] at (1.4,-0.3) {$\cdots$};
				\node[scale=0.5] at (2.24,-0.3) {$\cdots$};
				\node[scale=0.5] at (3.27,-0.3) {$\cdots$};
				\node[scale=0.5] at (4.04,-0.3) {$\cdots$};
				
				\draw[decorate, decoration={brace,mirror}]
				(0,-0.5) -- (2.6,-0.5)
				node[midway, below=3pt] {$g$};
				\draw[decorate, decoration={brace,mirror}]
				(2.9,-0.5) -- (3.65,-0.5)
				node[midway, below=3pt] {$h$};
				\draw[decorate, decoration={brace,mirror}]
				(3.7,-0.5) -- (4.4,-0.5)
				node[midway, below=3pt] {$k$};
			\end{tikzpicture}
		\end{adjustbox}
	\end{enumerate}
\end{minipage}
\par
\penalty0
\vspace{0.6em}

\noindent
\begin{minipage}[t]{0.58\linewidth}
	\vspace{0pt}
	\begin{enumerate}[
		label=(L\arabic*$_{oe}$), start=4,
		leftmargin=*, topsep=0pt, partopsep=0pt,
		itemsep=0pt, parsep=0pt
		]
		\item\label{L4oe}
		If $\begin{cases}
			|g|<|h|, \text{ and }\\
			|gh|<|k|
		\end{cases}$
	\par\nobreak\vspace{1pt}\nointerlineskip
\noindent\hspace*{-\leftmargin}%
\begin{adjustbox}{max width=\dimexpr\linewidth+\leftmargin\relax}
			\begin{tikzpicture}
				\node at (0,0.15) {$\bigredcap$};
				\node at (-0.06,0.03) {$\redcap$};
				\node at (3.15,0.72) {$\brjellyfliprr$};
				\node at (7.1,0.54) {$\brjellyswitchgx$};
				
				\node[scale=0.5] at (-0.47,-0.3) {$\cdots$};
				\node[scale=0.5] at (0.58,-0.3) {$\cdots$};
				
				\draw[decorate, decoration={brace,mirror}]
				(-0.9,-0.5) -- (-0.1,-0.5)
				node[midway, below=3pt] {$g$};
				\draw[decorate, decoration={brace,mirror}]
				(0.2,-0.5) -- (2.7,-0.5)
				node[midway, below=3pt] {$h$};
				\draw[decorate, decoration={brace,mirror}]
				(4.6,-0.5) -- (7.2,-0.5)
				node[midway, below=3pt] {$k$};
			\end{tikzpicture}
		\end{adjustbox}
	\end{enumerate}
\end{minipage}%
\hfill
\begin{minipage}[t]{0.39\linewidth}
	
	\vspace{0pt}
	\begin{enumerate}[
		label=(R\arabic*$_{oe}$), start=4,
		leftmargin=*, topsep=0pt, partopsep=0pt,
		itemsep=0pt, parsep=0pt
		]
		\item\label{R4oe}
		If $\begin{cases}
			|h|+|k|<2n, \text{ and }\\
			|g|<|hk|
		\end{cases}$
	\par\nobreak\vspace{1pt}\nointerlineskip
\noindent\hspace*{-\leftmargin}%
\begin{adjustbox}{max width=\dimexpr\linewidth+\leftmargin\relax}
			\begin{tikzpicture}
				\node at (0,0.33) {$\bigggredcap$};
				\node at (0,0.2) {$\biggredcap$};
				\node at (0,0.13) {$\bigbluecap$};
				\node at (-0.05,0.01) {$\redcap$};
				\node at (2.8,0.26) {$\brjellyswitchg$};
				
				\node[scale=0.5] at (-1.3,-0.3) {$\cdots$};
				\node[scale=0.5] at (-0.46,-0.3) {$\cdots$};
				\node[scale=0.5] at (0.57,-0.3) {$\cdots$};
				\node[scale=0.5] at (1.34,-0.3) {$\cdots$};
				
				\draw[decorate, decoration={brace,mirror}]
				(-1.7,-0.5) -- (-0.1,-0.5)
				node[midway, below=3pt] {$g$};
				\draw[decorate, decoration={brace,mirror}]
				(0.2,-0.5) -- (0.9,-0.5)
				node[midway, below=3pt] {$h$};
				\draw[decorate, decoration={brace,mirror}]
				(1,-0.5) -- (2.6,-0.5)
				node[midway, below=3pt] {$k$};
			\end{tikzpicture}
		\end{adjustbox}
	\end{enumerate}
\end{minipage}
\par

\begin{lemma}
    When $|g|$ is odd and $|h|$ is even, then for all $k$
    \begin{align*}
        \omega(g,h,k) = \begin{cases}
            \epsilon_r^{\lceil |ghk|/2\rceil}\epsilon_b^{\lfloor |ghk|/2\rfloor} \xi^{-\lfloor|g|/2\rfloor}, \text{ if } |g| \geq |h|, |gh|<|k|, |h|+|k|\geq 2n, \text{ and }|g|\geq |hk|,\\
            \epsilon_r^{\lceil |gh|/2\rceil} \epsilon_b^{\lfloor |gh|/2\rfloor} \xi^{-\lfloor |g|/2\rfloor}, \text{ if } |g|<|h| \text{ and } |h|+|k|\geq 2n,\\
            \epsilon_r^{\lceil |k|/2\rceil} \epsilon_b^{\lfloor |k|/2 \rfloor}, \text{ if } |g|<|h|, |gh|\geq |k|, |h|+|k|<2n, \text{ and } |g|<|hk|,\\
            1 \text{ otherwise}.
        \end{cases}
    \end{align*}
\end{lemma}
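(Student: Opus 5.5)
I will follow the same scheme as the even--even and even--odd lemmas. By Lemma~\ref{lemma: omega}, $\omega(g,h,k)$ is the scalar by which the right tree differs from the left tree. With $|g|$ odd and $|h|$ even, $gh$ has odd length, so $J_{g,h}$ is given by Case 3 or Case 4 of the choice of $J$ (according as $|g|\ge|h|$ or $|g|<|h|$), and $J_{gh,k}$ again by Case 3 or 4. On the right, $J_{h,k}$ is Case 1 or 2 (according as $|h|+|k|\ge 2n$), and $J_{g,hk}$ is Case 3 or 4 (according as $|g|\ge|hk|$). This yields the four left trees \ref{L1oe}--\ref{L4oe} and four right trees \ref{R1oe}--\ref{R4oe}.

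First I will determine which pairs are compatible, using the length arithmetic of standard forms. For instance, if $|g|\ge|h|$ then $|gh|=|g|-|h|$, and if $|g|<|h|$ then $|gh|=2n+|g|-|h|$. I expect these six pairs:
\begin{center}
\ref{L1oe}/\ref{R3oe},\ \ref{L1oe}/\ref{R1oe},\ \ref{L2oe}/\ref{R1oe},\ \ref{L2oe}/\ref{R4oe},\ \ref{L3oe}/\ref{R2oe} or \ref{R4oe},\ \ref{L4oe}/\ref{R2oe}.
\end{center}
The precise list must be checked against the inequalities in the statement.

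When both trees consist only of nested caps and identical strands (for example \ref{L1oe} and \ref{R3oe}, or \ref{L2oe} and \ref{R4oe}), the diagrams coincide and $\omega=1$. The remaining pairs each involve a jellyfish on one side only, or jellyfish in different positions, and I will handle them with three moves:
\begin{itemize}
\item Create or annihilate cap--cup pairs using \ref{rel: rcupcap} and \ref{rel: bcupcap}.
\item Slide cups and caps past strands using \ref{rel: redstrandcap}, \ref{rel: bluestrandcap}, \ref{rel: redstrandcup} and \ref{rel: bluestrandcup}. Each slide contributes a factor $\epsilon_r$ or $\epsilon_b$, alternately, so $m$ slides starting on red give $\epsilon_r^{\lceil m/2\rceil}\epsilon_b^{\lfloor m/2\rfloor}$.
\item Move strands through a jellyfish using \ref{rel: redbluejelly} and \ref{rel: blueredjelly}, at a factor $\xi^{\pm1}$ per pair, or turn strands at a jellyfish using \ref{eq: blue up} and \ref{eq: red up}, at a factor $\epsilon_b\xi$ or $\epsilon_r$. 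Finally, close loops with \ref{rel: redcircle}, \ref{rel: bluecircle}, \ref{rk:normalization} and Lemma~\ref{lem: brtwojellys}.
\end{itemize}

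I expect the pairs to produce the stated values as follows.
\begin{itemize}
\item \ref{L2oe} vs \ref{R1oe}: this mirrors Case (2) of the even--odd lemma. Insert cups, slide $|ghk|$ caps to get $\epsilon_r^{\lceil |ghk|/2\rceil}\epsilon_b^{\lfloor |ghk|/2\rfloor}$, and move the $\lfloor |g|/2\rfloor$ pairs of $g$-strands across the jellyfish to get $\xi^{-\lfloor|g|/2\rfloor}$.
\item The pairs with $|g|<|h|$ and $|h|+|k|\ge 2n$ (\ref{L3oe} or \ref{L4oe} vs \ref{R2oe}): similarly, these give $\epsilon_r^{\lceil |gh|/2\rceil}\epsilon_b^{\lfloor |gh|/2\rfloor}\xi^{-\lfloor|g|/2\rfloor}$. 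Here the $|gh|$ strands emanating from the flipped blue--red jellyfish must be straightened. The needed combination is \ref{eq: red up} and \ref{eq: blue up} (the latter contributing the extra $\xi$'s), followed by cancelling the two jellyfish with \ref{rel: brtwojellys} or \ref{rel: jellysmultiplied}.
\item \ref{L3oe} vs \ref{R4oe}: this gives only cap slides, $\epsilon_r^{\lceil |k|/2\rceil}\epsilon_b^{\lfloor |k|/2\rfloor}$, with no net $\xi$ because the jellyfish is merely relabeled.
\end{itemize}

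The main obstacle will be the cases involving the flipped jellyfish $\brjellyflip$ from Case 4 of $J$, where the blue--red jellyfish has strands reversed. Keeping exact track of the powers of $\xi$ and the parity of the $\epsilon$ exponents is delicate there, since $|g|$ is odd and the floor/ceiling bookkeeping depends on whether the first slid strand is red or blue. I would first handle it by tensoring with an extra red strand on the left, as in the proof of \ref{rel: blueredjelly}, to reduce to the red--blue jellyfish. At the end I would simplify exponents using $\epsilon_r^2=\epsilon_b^2=1$, $\xi^n=1$, and $\epsilon_r=\epsilon_b$ for odd $n$ (Proposition~\ref{prop:e_r=e_b}).
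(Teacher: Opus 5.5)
Your overall strategy is the paper's: read off $\omega(g,h,k)$ from Lemma~\ref{lemma: omega} by transforming one tree into the other with the relations. The gap is in the case enumeration, which you left to be checked. It is wrong in a way that leaves a genuine case untreated.

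If $|g|\ge|h|$ and $|h|+|k|\ge 2n$:
\begin{itemize}
\item $|hk|=|h|+|k|-2n<|h|\le|g|$;
\item $|k|\ge 2n-|h|>|g|-|h|=|gh|$.
\end{itemize}
So only \ref{L2oe}/\ref{R1oe} occurs, and \ref{L1oe}/\ref{R1oe} is impossible.

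If $|g|<|h|$ and $|h|+|k|\ge 2n$, then $|gh|=2n+|g|-|h|\ge|k|$ holds if and only if $|g|\ge|hk|=|h|+|k|-2n$. So the possible pairs are \ref{L3oe}/\ref{R1oe} and \ref{L4oe}/\ref{R2oe}, and \ref{L3oe}/\ref{R2oe} never occurs.

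The remaining regimes give \ref{L1oe}/\ref{R3oe}, \ref{L2oe}/\ref{R4oe} and \ref{L3oe}/\ref{R4oe}. Hence the second branch of the formula is realized by \ref{L3oe}/\ref{R1oe} and \ref{L4oe}/\ref{R2oe}. Your plan never computes \ref{L3oe}/\ref{R1oe}. Its right tree, caps over a full downward jellyfish, is not the one you planned to compare \ref{L3oe} against.

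The recipe you sketch for that branch also does not match the diagrams. In every odd--even pair the two trees contain the same number of jellyfish: one in \ref{L3oe}/\ref{R1oe}, two in \ref{L4oe}/\ref{R2oe}. So nothing is cancelled via \ref{rel: brtwojellys} or \ref{rel: jellysmultiplied}. Nor are \ref{eq: blue up} and \ref{eq: red up} the source of the $\xi$'s.

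The argument that works has three steps:
\begin{itemize}
\item Straighten the capped strands with \ref{rel: zigzag}, which contributes only $\epsilon$'s.
\item Slide $|g|-1$ strands across the jellyfish with \ref{rel: blueredjelly}. This is where $\xi^{\lfloor |g|/2\rfloor}$ comes from.
\item Finish with Definition~\ref{def: blue jelly} and \ref{rel: rcupcap}.
\end{itemize}

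For \ref{L4oe}/\ref{R2oe}, you straighten the $|gh|$ strands on top of the first jellyfish. For \ref{L3oe}/\ref{R1oe}, you first straighten the $|k|$ zigzags, picking up $\epsilon_r^{\lceil |k|/2\rceil}\epsilon_b^{\lfloor |k|/2\rfloor}$. Since $|g|\ge|ghk|=|gh|-|k|$, the \ref{L2oe}/\ref{R1oe} computation then applies and contributes $\epsilon_r^{\lceil |ghk|/2\rceil}\epsilon_b^{\lfloor |ghk|/2\rfloor}\xi^{\lfloor |g|/2\rfloor}$. Because $|gh|=|k|+|ghk|$ is odd, exactly one of $|k|,|ghk|$ is odd. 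The two $\epsilon$-factors therefore combine to $\epsilon_r^{\lceil |gh|/2\rceil}\epsilon_b^{\lfloor |gh|/2\rfloor}$, which gives the stated value after inverting.
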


\begin{proof}
Here are the 6 possible combinations of left trees and right trees.
\begin{center}
\renewcommand{\arraystretch}{1.2}
\begin{tabular}{lll}
(1) \ref{L1oe} and \ref{R3oe}
&
(2) \ref{L2oe} and \ref{R1oe}
&
(3) \ref{L2oe} and \ref{R4oe}
\\
(4) \ref{L3oe} and \ref{R1oe}
&
(5) \ref{L3oe} and \ref{R4oe}
&
(6) \ref{L4oe} and \ref{R2oe}
\end{tabular}
\end{center}

\noindent\underline{Case (1)}. 
The trees are identical so $\omega(g,h,k) =1$. 

\noindent\underline{Case (2)}. 
We start with the left tree. Notice that in this case: $|g|\geq |ghk|=|g|-|h|-|k|+2n$. There are $|g|$ caps to the left of the jellyfish and $|ghk|$ dots at the top of the diagram. For each of the $|ghk|$ dots, we use relations \ref{rel: redstrandcap} and \ref{rel: bluestrandcap}. The resulting diagram is $\epsilon_r^{\lceil |ghk|/2 \rceil} \epsilon_b^{\lfloor |ghk|/2\rfloor}$ multiplied by
\begin{center}
   \raisebox{-1cm}{\begin{tikzpicture}
    \node at (0.1, 0.83) {$\longredstrand$};
    \node at (0.75, 0.83) {$\longpurplestrand$};
    \node at (2.7, 0.33) {$\biggggreencap$};
    \node at (2.7,0.2) {$\biggredcap$};
    \node at (2.7, 0.13) {$\bigbluecap$};
    \node at (2.65, 0.01) {$\redcap$};
    \node at (6.5, 0.81) {$\brjellyflippedover$};

    \node[scale=0.5] at (0.5, -0.3) {$\cdots$};
    \node[scale=0.5] at (1.4, -0.3) {$\cdots$};
    \node[scale=0.5] at (2.24, -0.3) {$\cdots$};
    \node[scale=0.5] at (3.27, -0.3) {$\cdots$};
   \node[scale=0.5] at (4.04, -0.3) {$\cdots$};

    \draw[decorate, decoration={brace,mirror}] (0,-0.5) -- (2.6,-0.5) node[midway, below=3pt] {$g$};
    \draw[decorate, decoration={brace,mirror}] (2.9,-0.5) -- (3.65,-0.5) node[midway, below=3pt] {$h$};
    \draw[decorate, decoration={brace,mirror}] (3.7, -0.5) -- (7, -0.5) node[midway, below=3pt] {$k$};    
\end{tikzpicture}$\lowperiod$}
\end{center}

Next, we move over the rightmost $|g|-1$ strands directly to the left of the jellyfish using relation \ref{rel: blueredjelly}. This results in a factor of $\xi^{\frac{|g|-1}{2}}=\xi^{\lfloor |g|/2\rfloor}$ multiplied by the previous factor of $\epsilon_r^{\lceil |ghk|/2 \rceil} \epsilon_b^{\lfloor |ghk|/2\rfloor}$ times
\begin{center}
    \raisebox{-1cm}{\begin{tikzpicture}
    \node at (0.1, 0.3) {$\mediumxredstrand$};
    \node at (0.85, 0.3) {$\mediumxpurplestrand$};
    \node at (3.7, 0.6) {$\widdddegcap$};
    \node at (3.7, 0.42) {$\widddercap$};
    \node at (3.7, 0.33) {$\widdebcap$};
    \node at (2.3,0.02) {$\redcap$};
    \node at (4.05, -0.01) {$\brjellyspreadbr$};

    \node[scale=0.5] at (0.5, -0.3) {$\cdots$};
    \node[scale=0.5] at (1.38, -0.3) {$\cdots$};
   \node[scale=0.5] at (6.12, -0.3) {$\cdots$};

    \draw[decorate, decoration={brace,mirror}] (0,-0.5) -- (2.25,-0.5) node[midway, below=3pt] {$g$};
    \draw[decorate, decoration={brace,mirror}] (2.5,-0.5) -- (3.7,-0.5) node[midway, below=3pt] {$h$};
    \draw[decorate, decoration={brace,mirror}] (3.8, -0.5) -- (6.5, -0.5) node[midway, below=3pt] {$k$};    
\end{tikzpicture}}.
\end{center}
Rewriting the blue-red jellyfish as a red-blue jellyfish using Definition \ref{def: blue jelly}, then using \ref{rel: rcupcap} with the red cap to the left of the jellyfish, we get $\epsilon_r^{\lceil |ghk|/2 \rceil} \epsilon_b^{\lfloor |ghk|/2\rfloor}\xi^{\lfloor |g|/2\rfloor}$ multiplied by the right tree. Therefore, $\omega(g,h,k)=\epsilon_r^{\lceil |ghk|/2 \rceil} \epsilon_b^{\lfloor |ghk|/2\rfloor}\xi^{-\lfloor |g|/2\rfloor}$.

\noindent\underline{Case (3)}. 
The trees are identical so $\omega(g,h,k) =1$. 

\noindent\underline{Case (4)}. 
We start with the left tree. First use \ref{rel: zigzag} on the leftmost $|k|$ strands to straighten them. This will result in the below diagram multiplied by $\epsilon_r^{\lceil |k|/2\rceil}\epsilon_b^{\lfloor |k|/2\rfloor}$:
\begin{center}
    \raisebox{-1cm}{\begin{tikzpicture}
    \node at (0, 0.15) {$\bigredcap$};
    \node at (-0.06, 0.03) {$\redcap$};
    \node at (1.9, 0.26) {$\brjellyswitchg$};
    
     \node[scale=0.5] at (-0.47, -0.3) {$\cdots$};
     \node[scale=0.5] at (0.58, -0.3) {$\cdots$};   
\end{tikzpicture}$\lowperiod$}
\end{center}
Since $|g|\geq |ghk|=|g|-|h|-|k|+2n$, we can follow a nearly identical argument to Case (2). As a result, we get that the left tree is  $\epsilon_r^{\lceil |k|/2\rceil}\epsilon_b^{\lfloor |k|/2\rfloor}\epsilon_r^{\lceil |ghk|/2\rceil}\epsilon_b^{\lfloor |ghk|/2\rfloor}\xi^{\lfloor|g|/2\rfloor}=\epsilon_r^{\lceil |gh|/2\rceil}\epsilon_b^{\lfloor |gh|/2 \rfloor}\xi^{\lfloor |g|/2\rfloor}$ times the right tree. Therefore, $\omega(g,h,k)=\epsilon_r^{\lceil |gh|/2\rceil}\epsilon_b^{\lfloor |gh|/2 \rfloor}\xi^{-\lfloor |g|/2\rfloor}$.

\noindent\underline{Case (5)}. 
We start with the left tree. Using \ref{rel: zigzag} to straighten the leftmost $|k|$ strands of the jellyfish. This results in $\epsilon_r^{\lceil |k|/2\rceil}\epsilon_b^{\lfloor |k|/2 \rfloor}$ times the right tree. Therefore $\omega(g,h,k) = \epsilon_r^{\lceil |k|/2\rceil}\epsilon_b^{\lfloor |k|/2 \rfloor}$. 

\noindent\underline{Case (6)}.
 We start with the left tree. For the leftmost jellyfish, use relation \ref{rel: zigzag} to straighten the last $|gh|$ strands. This results in $\epsilon_r^{\lceil |gh|/2 \rceil} \epsilon_b^{\lfloor |gh|/2\rfloor}$ multiplied by 
\begin{center}
    \raisebox{-1cm}{\begin{tikzpicture}
    \node at (0, 0.15) {$\bigredcap$};
    \node at (-0.06, 0.03) {$\redcap$};
    \node at (1.5, 0) {$\brjellydown$};
    \node at (3.2, 0.26) {$\brjellyswitchg$};
     \node[scale=0.5] at (-0.47, -0.3) {$\cdots$};
     \node[scale=0.5] at (0.58, -0.3) {$\cdots$};   
\end{tikzpicture}$\lowperiod$}
\end{center}
Then use relation \ref{rel: blueredjelly} to move over the $|g|-1$ strands on the left side of the leftmost jellyfish. We then obtain $\epsilon_r^{\lceil |gh|/2 \rceil} \epsilon_b^{\lfloor |gh|/2\rfloor}\xi^{\lfloor |g|/2\rfloor}$ multiplied by

\begin{center}
    \raisebox{-1cm}{\begin{tikzpicture}
    \node at (3.7, 0.6) {$\widdddercap$};
    \node at (3.7, 0.42) {$\widddercap$};
    \node at (3.7, 0.33) {$\widdebcap$};
    \node at (2.3,0.02) {$\redcap$};
    \node at (4.05, -0.01) {$\brjellyspreadbr$};
    \node at (7.6, 0.26) {$\brjellyswitchg$};

    \node[scale=0.5] at (1.38, -0.3) {$\cdots$};
   \node[scale=0.5] at (6.12, -0.3) {$\cdots$};
      
\end{tikzpicture}$\lowperiod$}
\end{center}
Rewriting the blue-red jellyfish using definition \ref{def: blue jelly}, then using \ref{rel: rcupcap} with the red cap to the left of the jellyfish, we get $\epsilon_r^{\lceil |gh|/2 \rceil} \epsilon_b^{\lfloor |gh|/2\rfloor}\xi^{\lfloor |g|/2\rfloor}$ times the right tree. Therefore, $\omega(g,h,k)=\epsilon_r^{\lceil |gh|/2 \rceil} \epsilon_b^{\lfloor |gh|/2\rfloor}\xi^{-\lfloor |g|/2\rfloor}$.
\end{proof}

\subsubsection{The odd--odd case}    Suppose $|g|$ and $|h|$ are both odd, and let $k\in \Dn$.  

\noindent
\begin{minipage}[t]{0.58\linewidth}
\vspace{0pt}
\begin{enumerate}[
    label=(L\arabic*$_{oo}$), start=1,
    leftmargin=*, topsep=0pt, partopsep=0pt,
    itemsep=0pt, parsep=0pt
]
    \item\label{L1oo}
    If $\begin{cases}
        |g|\geq |h|, \text{ and }\\
        |gh|+|k|\geq 2n
    \end{cases}$
    \par\nobreak\vspace{1pt}\nointerlineskip
    \noindent\hspace*{-\leftmargin}%
    \begin{adjustbox}{max width=\dimexpr\linewidth+\leftmargin\relax}
    \begin{tikzpicture}
        \node at (0,0.16) {$\rbbbigjellyspreadbr$};
        \node at (0,0.13) {$\bigredcap$};
        \node at (-0.05,0.01) {$\redcap$};
        \node at (2,0) {$\redstrand$};
        \node at (2.4,0) {$\greenstrand$};

        \node[scale=0.5] at (-0.46,-0.3) {$\cdots$};
        \node[scale=0.5] at (0.57,-0.3) {$\cdots$};
        \node[scale=0.5] at (2.31,-0.3) {$\cdots$};

        \draw[decorate, decoration={brace,mirror}]
            (-2,-0.5) -- (-0.1,-0.5)
            node[midway, below=3pt] {$g$};
        \draw[decorate, decoration={brace,mirror}]
            (0.2,-0.5) -- (0.91,-0.5)
            node[midway, below=3pt] {$h$};
        \draw[decorate, decoration={brace,mirror}]
            (1.3,-0.5) -- (2.6,-0.5)
            node[midway, below=3pt] {$k$};
    \end{tikzpicture}
    \end{adjustbox}
\end{enumerate}
\end{minipage}%
\hfill
\begin{minipage}[t]{0.39\linewidth}
\vspace{0pt}
\begin{enumerate}[
    label=(R\arabic*$_{oo}$), start=1,
    leftmargin=*, topsep=0pt, partopsep=0pt,
    itemsep=0pt, parsep=0pt
]
    \item\label{R1oo}
    If $\begin{cases}
        |h|\geq |k|, \text{ and }\\
        |g|\geq |hk|
    \end{cases}$
    \par\nobreak\vspace{1pt}\nointerlineskip
    \noindent\hspace*{-\leftmargin}%
    \begin{adjustbox}{max width=\dimexpr\linewidth+\leftmargin\relax}
    \begin{tikzpicture}
        \node at (0,0.03) {$\redstrand$};
        \node at (0.75,0.03) {$\greenstrand$};
        \node at (1.85,0.03) {$\redcap$};
        \node at (1.9,0.15) {$\bigpurplecap$};
        \node at (1.85,0.03) {$\redcap$};
        \node at (3.7,0.03) {$\redcap$};
        \node at (3.75,0.15) {$\biggreencap$};

        \node[scale=0.5] at (0.5,-0.3) {$\cdots$};
        \node[scale=0.5] at (1.45,-0.3) {$\cdots$};
        \node[scale=0.5] at (2.47,-0.3) {$\cdots$};
        \node[scale=0.5] at (3.26,-0.3) {$\cdots$};
        \node[scale=0.5] at (4.35,-0.3) {$\cdots$};

        \draw[decorate, decoration={brace,mirror}]
            (0,-0.5) -- (1.8,-0.5)
            node[midway, below=3pt] {$g$};
        \draw[decorate, decoration={brace,mirror}]
            (2.1,-0.5) -- (3.6,-0.5)
            node[midway, below=3pt] {$h$};
        \draw[decorate, decoration={brace,mirror}]
            (3.9,-0.5) -- (4.8,-0.5)
            node[midway, below=3pt] {$k$};
    \end{tikzpicture}
    \end{adjustbox}
\end{enumerate}
\end{minipage}
\par
\penalty0
\vspace{0.6em}

\noindent
\begin{minipage}[t]{0.58\linewidth}
\vspace{0pt}
\begin{enumerate}[
    label=(L\arabic*$_{oo}$), start=2,
    leftmargin=*, topsep=0pt, partopsep=0pt,
    itemsep=0pt, parsep=0pt
]
    \item\label{L2oo}
    If $\begin{cases}
        |g|\geq |h|, \text{ and }\\
        |gh|+|k|<2n
    \end{cases}$
    \par\nobreak\vspace{1pt}\nointerlineskip
    \noindent\hspace*{-\leftmargin}%
    \begin{adjustbox}{max width=\dimexpr\linewidth+\leftmargin\relax}
    \begin{tikzpicture}
        \node at (0,0.03) {$\redstrand$};
        \node at (0.75,0.03) {$\bluestrand$};
        \node at (1.85,0.03) {$\redcap$};
        \node at (1.9,0.15) {$\bigredcap$};
        \node at (2.9,0.04) {$\redstrand$};
        \node at (3.4,0.04) {$\greenstrand$};

        \node[scale=0.5] at (0.5,-0.3) {$\cdots$};
        \node[scale=0.5] at (1.45,-0.3) {$\cdots$};
        \node[scale=0.5] at (2.47,-0.3) {$\cdots$};
        \node[scale=0.5] at (3.26,-0.3) {$\cdots$};

        \draw[decorate, decoration={brace,mirror}]
            (0,-0.5) -- (1.8,-0.5)
            node[midway, below=3pt] {$g$};
        \draw[decorate, decoration={brace,mirror}]
            (2.1,-0.5) -- (2.85,-0.5)
            node[midway, below=3pt] {$h$};
        \draw[decorate, decoration={brace,mirror}]
            (2.95,-0.5) -- (3.6,-0.5)
            node[midway, below=3pt] {$k$};
    \end{tikzpicture}
    \end{adjustbox}
\end{enumerate}
\end{minipage}%
\hfill
\begin{minipage}[t]{0.39\linewidth}
\vspace{0pt}
\begin{enumerate}[
    label=(R\arabic*$_{oo}$), start=2,
    leftmargin=*, topsep=0pt, partopsep=0pt,
    itemsep=0pt, parsep=0pt
]
    \item\label{R2oo}
    If $\begin{cases}
        |h|\geq |k|, \text{ and }\\
        |g|<|hk|
    \end{cases}$
    \par\nobreak\vspace{1pt}\nointerlineskip
    \noindent\hspace*{-\leftmargin}%
    \begin{adjustbox}{max width=\dimexpr\linewidth+\leftmargin\relax}
    \begin{tikzpicture}
        \node at (0,0.15) {$\bigredcap$};
        \node at (-0.06,0.03) {$\redcap$};
        \node at (2.7,0.3) {$\brjellyswitchp$};
        \node at (5,0.1) {$\biggreencap$};
        \node at (4.9,-0.01) {$\redcap$};

        \node[scale=0.5] at (-0.47,-0.3) {$\cdots$};
        \node[scale=0.5] at (0.58,-0.3) {$\cdots$};
        \node[scale=0.5] at (4.51,-0.3) {$\cdots$};
        \node[scale=0.5] at (5.55,-0.3) {$\cdots$};

        \draw[decorate, decoration={brace,mirror}]
            (-0.9,-0.5) -- (-0.1,-0.5)
            node[midway, below=3pt] {$g$};
        \draw[decorate, decoration={brace,mirror}]
            (0.2,-0.5) -- (4.8,-0.5)
            node[midway, below=3pt] {$h$};
        \draw[decorate, decoration={brace,mirror}]
            (5.2,-0.5) -- (6,-0.5)
            node[midway, below=3pt] {$k$};
    \end{tikzpicture}
    \end{adjustbox}
\end{enumerate}
\end{minipage}
\par
\penalty0
\vspace{0.6em}

\noindent
\begin{minipage}[t]{0.58\linewidth}
\vspace{0pt}
\begin{enumerate}[
    label=(L\arabic*$_{oo}$), start=3,
    leftmargin=*, topsep=0pt, partopsep=0pt,
    itemsep=0pt, parsep=0pt
]
    \item\label{L3oo}
    If $\begin{cases}
        |g|<|h|, \text{ and }\\
        |gh|+|k|\geq 2n
    \end{cases}$
    \par\nobreak\vspace{1pt}\nointerlineskip
    \noindent\hspace*{-\leftmargin}%
    \begin{adjustbox}{max width=\dimexpr\linewidth+\leftmargin\relax}
    \begin{tikzpicture}
        \node at (0,0.15) {$\bigredcap$};
        \node at (-0.06,0.03) {$\redcap$};
        \node at (3.27,0.7) {$\jellyontopofjelly$};
        \node at (5.7,0.84) {$\longredstrand$};
        \node at (6.4,0.84) {$\longgreenstrand$};

        \node[scale=0.5] at (-0.47,-0.3) {$\cdots$};
        \node[scale=0.5] at (0.58,-0.3) {$\cdots$};
        \node[scale=0.5] at (6.06,-0.3) {$\cdots$};

        \draw[decorate, decoration={brace,mirror}]
            (-0.9,-0.5) -- (-0.1,-0.5)
            node[midway, below=3pt] {$g$};
        \draw[decorate, decoration={brace,mirror}]
            (0.2,-0.5) -- (2.7,-0.5)
            node[midway, below=3pt] {$h$};
        \draw[decorate, decoration={brace,mirror}]
            (4.6,-0.5) -- (6.5,-0.5)
            node[midway, below=3pt] {$k$};
    \end{tikzpicture}
    \end{adjustbox}
\end{enumerate}
\end{minipage}%
\hfill
\begin{minipage}[t]{0.39\linewidth}
\vspace{0pt}
\begin{enumerate}[
    label=(R\arabic*$_{oo}$), start=3,
    leftmargin=*, topsep=0pt, partopsep=0pt,
    itemsep=0pt, parsep=0pt
]
    \item\label{R3oo}
    If $\begin{cases}
        |h|<|k|, \text{ and }\\
        |g|\geq |hk|
    \end{cases}$
    \par\nobreak\vspace{1pt}\nointerlineskip
    \noindent\hspace*{-\leftmargin}%
    \begin{adjustbox}{max width=\dimexpr\linewidth+\leftmargin\relax}
    \begin{tikzpicture}[scale=.8]
        \draw[thick] (0,-.3) -- (0,2);
        \draw[thick] (0.5,-.3) -- (0.5,2);
        \draw[thick] (.8,-.3) -- (.8,.5);
        \draw[thick] (1.3,-.3) -- (1.3,.3);
        \draw[thick, bend left=90] (1.8,-.3) to (3.3,-.3);
        \draw[thick, bend left=90] (2.3,-.3) to (2.8,-.3);

        \draw[thick, name path=arch]
            (3.6,-.3) -- (3.6,.3)
            to[out=90,in=90] (4.9,.3);

        \path[name path=probe1] (4.1,0) -- (4.1,1.2);
        \path[name intersections={of=arch and probe1,by=Q1}];
        \draw[thick] (4.1,-.3) -- (Q1);

        \path[name path=probe2] (4.4,0) -- (4.4,1.2);
        \path[name intersections={of=arch and probe2,by=Q2}];
        \draw[thick] (4.4,0) -- (Q2);

        \draw[thick] (4.9,.3) arc (-180:0:.2cm);
        \draw[thick, bend left=90] (1.3,.3) to (5.3,.3);
        \draw[thick, out=-90,in=-90] (4.4,0) to (5.8,0);
        \draw[thick] (5.8,0) -- (5.8,.5);
        \draw[thick, out=90,in=90] (.8,.5) to (5.8,.5);

        \foreach \x in {0,1.3,1.8,2.3,2.8,3.3}
            \filldraw[r] (\x,-.3) circle (1.5pt);
        \foreach \x in {0.5,4.1}
            \filldraw[g] (\x,-.3) circle (1.5pt);

        \filldraw[p] (.8,-.3) circle (1.5pt);
        \filldraw[b] (3.6,-.3) circle (1.5pt);
        \filldraw[r] (0,2) circle (1.5pt);
        \filldraw[g] (.5,2) circle (1.5pt);

        \foreach \x in {0.25,1.05,2.05,3.85}
            \node at (\x,-.35) {\scriptsize{$\cdots$}};
        \node at (4.65,.2) {\scriptsize{$\cdots$}};

        \draw[decorate, decoration={brace,mirror}]
            (-.1,-.5) -- (1.4,-.5)
            node[midway, below=3pt] {$g$};
        \draw[decorate, decoration={brace,mirror}]
            (1.7,-.5) -- (2.4,-.5)
            node[midway, below=3pt] {$h$};
        \draw[decorate, decoration={brace,mirror}]
            (2.7,-.5) -- (4.2,-.5)
            node[midway, below=3pt] {$k$};
    \end{tikzpicture}
    \end{adjustbox}
\end{enumerate}
\end{minipage}
\par
\penalty0
\vspace{0.6em}

\noindent
\begin{minipage}[t]{0.58\linewidth}
\vspace{0pt}
\begin{enumerate}[
    label=(L\arabic*$_{oo}$), start=4,
    leftmargin=*, topsep=0pt, partopsep=0pt,
    itemsep=0pt, parsep=0pt
]
    \item\label{L4oo}
    If $\begin{cases}
        |g|<|h|, \text{ and }\\
        |gh|+|k|<2n
    \end{cases}$
    \par\nobreak\vspace{1pt}\nointerlineskip
    \noindent\hspace*{-\leftmargin}%
    \begin{adjustbox}{max width=\dimexpr\linewidth+\leftmargin\relax}
    \begin{tikzpicture}
        \node at (0,0.15) {$\bigredcap$};
        \node at (-0.06,0.03) {$\redcap$};
        \node at (2.7,.3) {$\brjellyswitchr$};
        \node at (4.6,.3) {$\mediumxredstrand$};
        \node at (5.1,.3) {$\mediumxgreenstrand$};

        \node[scale=0.5] at (-0.47,-.3) {$\cdots$};
        \node[scale=0.5] at (0.58,-.3) {$\cdots$};
        \node[scale=0.5] at (4.85,-.3) {$\cdots$};

        \draw[decorate, decoration={brace,mirror}]
            (-.9,-.5) -- (-.1,-.5)
            node[midway, below=3pt] {$g$};
        \draw[decorate, decoration={brace,mirror}]
            (.2,-.5) -- (2.7,-.5)
            node[midway, below=3pt] {$h$};
        \draw[decorate, decoration={brace,mirror}]
            (4.45,-.5) -- (5.2,-.5)
            node[midway, below=3pt] {$k$};
    \end{tikzpicture}
    \end{adjustbox}
\end{enumerate}
\end{minipage}%
\hfill
\begin{minipage}[t]{0.39\linewidth}
\vspace{0pt}
\begin{enumerate}[
    label=(R\arabic*$_{oo}$), start=4,
    leftmargin=*, topsep=0pt, partopsep=0pt,
    itemsep=0pt, parsep=0pt
]
    \item\label{R4oo}
    If $\begin{cases}
        |h|<|k|, \text{ and }\\
        |g|<|hk|
    \end{cases}$
    \par\nobreak\vspace{1pt}\nointerlineskip
    \noindent\hspace*{-\leftmargin}%
    \begin{adjustbox}{max width=\dimexpr\linewidth+\leftmargin\relax}
    \begin{tikzpicture}[scale=.8]
        \draw[thick] (0,.1) -- (0,1);
        \draw[thick] (.5,.1) -- (.5,1);
        \draw[thick, bend left=90] (.8,.1) to (2.2,.1);
        \draw[thick, bend left=90] (1.3,.1) to (1.7,.1);
        \draw[thick, bend left=90] (0,1) to (5.5,1);
        \draw[thick, bend left=90] (.5,1) to (5,1);

        \draw[thick, name path=arch1]
            (2.5,.1) -- (2.5,1)
            to[out=90,in=90] (4.6,1);

        \path[name path=probe1] (3,0) -- (3,2.7);
        \path[name intersections={of=arch1 and probe1,by=Q1}];
        \draw[thick] (3,.1) -- (Q1);

        \path[name path=probe2] (3.3,1) -- (3.3,2.7);
        \path[name intersections={of=arch1 and probe2,by=Q2}];
        \draw[thick] (3.3,1) -- (Q2);

        \path[name path=probe3] (3.8,1) -- (3.8,2.7);
        \path[name intersections={of=arch1 and probe3,by=Q3}];
        \draw[thick] (3.8,1) -- (Q3);

        \path[name path=probe4] (4.1,1) -- (4.1,2.7);
        \path[name intersections={of=arch1 and probe4,by=Q4}];
        \draw[thick] (4.1,1) -- (Q4);

        \draw[thick, bend right=90] (3.3,1) to (6.3,1);
        \draw[thick, bend right=90] (3.8,1) to (5.8,1);
        \draw[thick, bend right=90] (4.1,1) to (5.5,1);
        \draw[thick] (4.6,1) arc (-180:0:.2cm);

        \draw[thick, name path=arch2]
            (5.8,1) -- (5.8,1.5)
            to[out=90,in=90] (7.1,1.5);

        \path[name path=probe5] (6.3,1) -- (6.3,2.7);
        \path[name intersections={of=arch2 and probe5,by=Q5}];
        \draw[thick] (6.3,1) -- (Q5);

        \path[name path=probe6] (6.6,1) -- (6.6,2.7);
        \path[name intersections={of=arch2 and probe6,by=Q6}];
        \draw[thick] (6.6,1) -- (Q6);

        \draw[thick, bend right=90]
            (6.6,1) to (8,1) -- (8,2.5);
        \draw[thick]
            (7.1,1.5) -- (7.1,1.2)
            arc (-180:0:.2cm) -- (7.5,2.5);

        \foreach \x in {0,.5,.8,1.3,1.7,2.2}
            \filldraw[r] (\x,.1) circle (1.5pt);
        \foreach \x in {2.5}
            \filldraw[b] (\x,.1) circle (1.5pt);
        \foreach \x in {3}
            \filldraw[g] (\x,.1) circle (1.5pt);

        \filldraw[r] (7.5,2.5) circle (1.5pt);
        \filldraw[g] (8,2.5) circle (1.5pt);

        \foreach \x in {4.1,4.6}
            \filldraw[r] (\x,1) circle (1.5pt);
        \foreach \x in {3.8,5.8}
            \filldraw[b] (\x,1) circle (1.5pt);

        \filldraw[p] (3.3,1) circle (1.5pt);
        \filldraw[p] (6.3,1) circle (1.5pt);

        \foreach \x in {.25,1.05,2.75}
            \node at (\x,.2) {$\scriptstyle\cdots$};
        \foreach \x in {3.55,4.35,6.05}
            \node at (\x,1.1) {$\scriptstyle\cdots$};
        \node at (7.75,2.4) {$\scriptstyle\cdots$};

        \draw[decorate, decoration={brace,mirror}]
            (-.1,-.1) -- (.6,-.1)
            node[midway, below=3pt] {$g$};
        \draw[decorate, decoration={brace,mirror}]
            (.7,-.1) -- (1.4,-.1)
            node[midway, below=3pt] {$h$};
        \draw[decorate, decoration={brace,mirror}]
            (1.6,-.1) -- (3.1,-.1)
            node[midway, below=3pt] {$k$};
    \end{tikzpicture}
    \end{adjustbox}
\end{enumerate}
\end{minipage}
\par

\begin{lemma}
    When $|g|$ and $|h|$  are odd, we have that
    \begin{align*}
        \omega(g,h,k) = \begin{cases}
            \epsilon_r^{\lceil |h|/2 \rceil}\epsilon_b^{\lfloor |h|/2 \rfloor} \xi^{\lfloor |g|/2\rfloor}, \text{ if } |g|\geq |h|, |gh|+|k|\geq 2n, |h|< |k|, \text{ and } |g|\geq |hk|,\\ 
           \epsilon_r^{\lceil |k|/2\rceil} \epsilon_b^{\lfloor |k|/2\rfloor}, \text{ if } |g| \geq |h|, |gh|+|k|<2n, |h|\geq |k|, \text{ and }|g|\geq |hk|,\\\
            \epsilon_r^{\lceil |kg|/2 \rceil} \epsilon_b^{\lfloor |kg|/2 \rfloor} \xi^{\lfloor |g|/2\rfloor},\text{ if } |g| \geq |h|, |gh|+|k|<2n, |h|<|k|, \text{ and } |g|<|hk|,\\
            \epsilon_r^{\lceil |ghk|/2 \rceil} \epsilon_b^{\lfloor |ghk|/2\rfloor}, \text{ if } |g|<|h|, |gh|+|k|\geq 2n, |h|\geq|k|, \text{ and } |g|\geq |hk|,\\
            \epsilon_r^{\lceil |kh|/2\rceil}\epsilon_b^{\lfloor  |kh|/2\rfloor}\xi^{\lfloor |g|/2\rfloor} \text{ if } |g| < |h|, |gh|+|k|\geq 2n, |h|< |k|, \text{ and } |g|<|hk|, \text{ and }\\
            1 \text{ otherwise}.
        \end{cases}
    \end{align*}
\end{lemma}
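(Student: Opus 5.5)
We follow the same strategy as in the three preceding parity cases. Lemma~\ref{lemma: omega} together with Lemma~\ref{choice of J} says that $\omega(g,h,k)$ is the unique scalar with right tree $=\omega(g,h,k)\cdot$ left tree. So it suffices to rewrite one tree into the other using \ref{rel: redstrandcap}--\ref{eq: red up} and track the scalars.

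\textbf{Step 1: list the compatible pairs.} Since $|g|$ and $|h|$ are odd, $gh$ is a rotation, and $|gh|=2n+|g|-|h|$ or $|g|-|h|$ according to whether $|g|<|h|$ or not. The parity of $|k|$ is free, and $|hk|$ is computed from the standard forms. Using these length identities, we determine which of \ref{L1oo}--\ref{L4oo} can occur together with which of \ref{R1oo}--\ref{R4oo}. We expect the compatible pairs to be exactly the five nontrivial cases listed in the statement, plus the pairs where the two trees coincide. For example, \ref{L2oo} with \ref{R1oo} under $|h|\ge|k|$ should give the identity, and the remaining coinciding pairs fall into the ``$1$ otherwise'' clause.

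\textbf{Step 2: treat each nontrivial pair with the same moves as before.}
\begin{itemize}
\item Nested caps sitting next to a through-strand region are slid across using \ref{rel: redstrandcap}/\ref{rel: bluestrandcap} (or their cup versions \ref{rel: redstrandcup}/\ref{rel: bluestrandcup}). A strip of alternately colored strands of total length $m$, starting in red, contributes $\epsilon_r^{\lceil m/2\rceil}\epsilon_b^{\lfloor m/2\rfloor}$. This is the source of the $\epsilon$ factors with $m\in\{|h|,|k|,|kg|,|ghk|,|kh|\}$.
\item Strands wrapping around a jellyfish are straightened with \ref{rel: zigzag}. Strands are moved past a jellyfish with \ref{rel: redbluejelly}/\ref{rel: blueredjelly}, each pair contributing $\xi^{\pm1}$. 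Since $|g|$ is odd, the $|g|-1$ strands to the left of the red cap move in $\lfloor |g|/2\rfloor$ pairs, exactly as in Cases (2), (4), (6) of the odd--even lemma. This gives the factor $\xi^{\lfloor |g|/2\rfloor}$, whose sign must be checked against the direction of the rewrite.
\item After the move, the blue--red jellyfish is rewritten via Definition~\ref{def: blue jelly}, and the leftover cup-cap pair is cancelled with \ref{rel: rcupcap}/\ref{rel: bcupcap}.
\item Where two jellyfish face each other (\ref{L3oo}, \ref{R4oo}), we reduce them with \ref{rel: jellysmultiplied}/\ref{rel: brtwojellysout}, Lemma~\ref{lem: brtwojellys}, and the ``up'' relations \ref{eq: blue up}, \ref{eq: red up}. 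Each pair of up-moves gives $\epsilon_r\epsilon_b\xi$, and these factors are simplified using $\epsilon_r^2=\epsilon_b^2=1$ and, for $n$ odd, Proposition~\ref{prop:e_r=e_b}.
\end{itemize}

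\textbf{Step 3: simplify the exponents.} In each case the raw exponent will be an expression such as $|k|+|ghk|$ or $2n-|g|+\dots$. We rewrite it into the stated form using the relevant length identity (e.g.\ $|kg|$ or $|kh|$ expressed through $|g|,|h|,|k|,n$), mirroring the simplification $(\epsilon_r\epsilon_b)^{n-|g|/2}=(\epsilon_r\epsilon_b)^{-|g|/2}$ used earlier.

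\textbf{Main obstacle.} The hardest pairs are those involving \ref{R3oo} and \ref{R4oo}. These trees contain a jellyfish attached to a cup of the $h$-block, interleaved with nested caps from $k$. Here we must choose carefully in which order to straighten zigzags and slide caps, so that the $\epsilon$ exponents collapse to a single $\lceil m/2\rceil,\lfloor m/2\rfloor$ pattern. Our first attempt will be: straighten all zigzags to reduce to a single jellyfish, then apply the Case (2) odd--even routine. Afterwards we verify parity bookkeeping in small examples, e.g.\ $n=2,3$, against the known values $\omega(r,r,r)=\epsilon_r$ and $\omega(rbr,rbr,rbr)=\epsilon_b$.
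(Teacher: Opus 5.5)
\textbf{There is a genuine gap: the proposal is a template that never derives any of the five scalars, and its one concrete claim is false.}

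Your overall strategy matches the paper's: draw both trees, rewrite one into the other with the local moves, and read off the scalar. The false claim is that the pair \ref{L2oo}--\ref{R1oo} gives the identity. Its conditions $|g|\ge|h|$, $|gh|+|k|<2n$, $|h|\ge|k|$, $|g|\ge|hk|$ are exactly the second line of the statement. The two trees are different pairings:
\begin{itemize}
\item \ref{L2oo} caps all of $h$ against the end of $g$ and lets $k$ pass through;
\item \ref{R1oo} caps the last $|k|$ strands of $h$ against $k$ and lets the corresponding $|k|$ strands of $g$ pass through.
\end{itemize}
Sliding those strands across the caps with \ref{rel: redstrandcap} and \ref{rel: bluestrandcap} costs $\epsilon_r^{\lceil |k|/2\rceil}\epsilon_b^{\lfloor |k|/2\rfloor}$. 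Your own proposed sanity check refutes the claim: $g=h=k=r$ lies in this pair, and $\omega(r,r,r)=\epsilon_r$ by definition.

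The compatible pairs are \ref{L1oo}--\ref{R3oo}, \ref{L2oo}--\ref{R1oo}, \ref{L2oo}--\ref{R4oo}, \ref{L3oo}--\ref{R1oo}, \ref{L3oo}--\ref{R4oo} and \ref{L4oo}--\ref{R2oo}. Only the last gives $1$. Even there the trees are not identical: they differ by $|k|$ cup--cap cancellations via \ref{rel: rcupcap} and \ref{rel: bcupcap}. There are no pairs of literally coinciding trees in the odd--odd case.

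The plan for the hard pairs also does not work as stated. Straightening zigzags, like the ``up'' relations, never changes the number of jellyfish. Yet \ref{R4oo} has two jellyfish while \ref{L2oo} has none, and \ref{L3oo} has two while \ref{R1oo} has none. So ``straighten zigzags down to a single jellyfish and run the odd--even routine'' cannot connect these trees. You list \ref{rel: jellysmultiplied} and Lemma~\ref{lem: brtwojellys} among your tools, but the argument hinges on how a pair of jellyfish is created or annihilated.

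In the paper, one first slides the $|g|-1$ strands past a jellyfish with \ref{rel: redbluejelly}. This is the source of $\xi^{\lfloor |g|/2\rfloor}$, whose exponent is positive here, opposite to the odd--even lemma; you left this sign unchecked. Then one of two things happens:
\begin{itemize}
\item For pair \ref{L2oo}--\ref{R4oo}, the two jellyfish become joined into a closed diagram that evaluates to $1$.
\item For pairs \ref{L3oo}--\ref{R1oo} and \ref{L3oo}--\ref{R4oo}, a closed jellyfish bubble is inserted via Lemma~\ref{lem: brtwojellys} and cancelled against the caps with \ref{rel: rcupcap} and \ref{rel: bcupcap} to build \ref{L3oo}.
\end{itemize}
Without this step, and without actually tracking the $\epsilon$- and $\xi$-exponents in each of the five nontrivial pairs, the proposal does not establish the stated values.
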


\begin{proof}
 Here are the 6 possible combinations of left trees and right trees.
\begin{center}
\renewcommand{\arraystretch}{1.2}
\begin{tabular}{lll}
(1) \ref{L1oo} and \ref{R3oo}
&
(2) \ref{L2oo} and \ref{R1oo}
&
(3) \ref{L2oo} and \ref{R4oo}
\\
(4) \ref{L3oo} and \ref{R1oo}
&
(5) \ref{L3oo} and \ref{R4oo}
&
(6) \ref{L4oo} and \ref{R2oo}
\end{tabular}
\end{center}
\noindent \underline{Case (1).}  Starting with \ref{R3oo}, we first apply $\text{Definition \ref{def: blue jelly}}$ to the jellyfish. Up to straightening 2 zigzags using  $\ref{rel: zigzag}$, we obtain

\[\vcenter{\hbox{\begin{tikzpicture}[scale=.8]
    \draw[thick] (0,-.3) -- (0,2);
    \draw[thick] (0.5,-.3) -- (0.5,2);
    \draw[thick] (.8,-.3) -- (.8,.5);
    \draw[thick] (1.3,-.3) -- (1.3,.3);
    \draw[thick,name path=arch] (1.6,-.3) -- (1.6, .3) to [out=90, in=90] (4.9,.3);
    \draw[thick, bend left =90] (1.9,-.3) to (3.3,-.3) ;
    \draw[thick, bend left =90] (2.4,-.3) to (2.8,-.3) ;
    \path[name path=probe0] (3.6,0) -- (3.6,2);
    \path[name intersections={of=arch and probe0, by=Q}];
    \draw[thick] (3.6,-.3) -- (Q); 
    \path[name path=probe1] (4.1,0) -- (4.1,1.2);
    \path[name intersections={of=arch and probe1, by=Q1}];
    \draw[thick] (4.1,-.3) -- (Q1); 
    \path[name path=probe2] (4.4,0) -- (4.4,1.2);
    \path[name intersections={of=arch and probe2, by=Q2}];
    \draw[thick] (4.4,0) -- (Q2); 
    \draw[thick] (4.9,.3) arc (-180:0:.2cm);
    \draw[thick, bend left =90] (1.3,0.3) to (5.3,0.3) ;
    \draw[thick, out = -90,in=-90] (4.4,0) to (5.8,0) ;
    \draw[thick] (5.8,0) -- (5.8,.5);
    \draw[thick, out=90,in=90] (.8,0.5) to (5.8,0.5) ;
    \foreach \x in {0, 1.6,1.9,2.4,2.8,3.3}
        \filldraw[r] (\x,-.3) circle (1.5pt);
    \foreach \x in {0.5, 4.1}
        \filldraw[g] (\x,-.3) circle (1.5pt);
    \foreach \x in {1.3, 3.6}
        \filldraw[b] (\x,-.3) circle (1.5pt);
    \foreach \x in {.8}
        \filldraw[p] (\x,-.3) circle (1.5pt);
    \filldraw[r] (0,2) circle (1.5pt);
    \filldraw[g] (.5,2) circle (1.5pt);
    \foreach \x in {0.25, 1.05, 2.15,3.85}
    \node at (\x,-.2) {$\scriptstyle\cdots$};
    \node at (4.65,.2) {$\scriptstyle\cdots$};
    \draw[decorate, decoration={brace,mirror}] (-.1,-0.5) -- (1.7,-0.5) node[midway, below=3pt] {$g$};
    \draw[decorate, decoration={brace,mirror}] (1.8,-0.5) -- (2.5,-0.5) node[midway, below=3pt] {$h$};
    \draw[decorate, decoration={brace,mirror}] (2.7, -0.5) -- (4.2, -0.5) node[midway, below=3pt] {$k$};
\end{tikzpicture}}}\ \ . \]

Apply $\text{\ref{rel:redbluejelly}}$ to the jellyfish and the $|g|-1$ vertical strands to the left of it for $\frac{|g|-1}{2}$ times to obtain

\[
\xi^{\frac{|g|-1}{2}}
\vcenter{\hbox{\begin{tikzpicture}
\draw[thick](0,0) -- (0,.3);
\draw[thick](3.1,0) -- (3.1,.3);
\draw[thick,name path=arch] (0,.3) to [out=90, in=90] (3.1,.3);
\draw[thick, bend left =90] (.8,0) to (2.3,0) ;
\draw[thick, bend left =90] (1.3,0) to (1.8,0);
\draw[thick](3.4,0) -- (3.4,1.2);
\draw[thick](3.9,0) -- (3.9,1.2);
\path[name path=probe1] (.5,0) -- (.5,1.5);
    \path[name intersections={of=arch and probe1, by=Q1}];
    \draw[thick] (.5,0) -- (Q1); 
    \path[name path=probe2] (2.6,0) -- (2.6,1.5);
    \path[name intersections={of=arch and probe2, by=Q2}];
    \draw[thick] (2.6,0) -- (Q2); 
    \foreach \x in {0, .5,.8, 1.3,1.8,2.3,3.4}
        \filldraw[r] (\x,0) circle (1.5pt);
    \foreach \x in {2.6, 3.1}
        \filldraw[b] (\x,0) circle (1.5pt);
    \filldraw[g] (3.9,0) circle (1.5pt); 
    \filldraw[r] (3.4,1.2) circle (1.5pt);
    \filldraw[g] (3.9,1.2) circle (1.5pt);
    \foreach \x in {0.25, 1.05, 2.85,3.65}
    \node at (\x,.1) {$\scriptstyle\cdots$};
\draw[decorate, decoration={brace,mirror}] (-.1,-0.2) -- (.6,-0.2) node[midway, below=3pt] {$g$};
    \draw[decorate, decoration={brace,mirror}] (.7,-0.2) -- (1.4,-0.2) node[midway, below=3pt] {$h$};
    \draw[decorate, decoration={brace,mirror}] (1.7, -0.2) -- (4, -0.2) node[midway, below=3pt] {$k$};
\end{tikzpicture}}}\]
Finally, apply $\text{\ref{rel:redstrandcap}}$ and $\text{\ref{rel:bluestrandcap}}$ to the red and blue caps together with vertical strands to the left of them for a total of $|h|$ times, to obtain $\xi^{\frac{|g|-1}{2}} \epsilon_r^{\lceil \frac{|h|}{2} \rceil}\epsilon_b^{\lfloor \frac{|h|}{2} \rfloor} $ times \ref{L1oo}.


\noindent\underline{Case (2)}.
We start with \ref{L2oo}. Using \ref{rel: redstrandcap} and \ref{rel: bluestrandcap} on the rightmost $|k|$ strands, we obtain $\epsilon_r^{\lceil |k|/2\rceil} \epsilon_b^{\lfloor |k|/2\rfloor}$ times the right tree. Thus $\omega(g,h,k)=\epsilon_r^{\lceil |k|/2\rceil} \epsilon_b^{\lfloor |k|/2\rfloor}$.

\noindent\underline{Case (3)}. We start with \ref{R4oo}. First, use relation \ref{rel: redbluejelly} to move the rightmost $|g|-1$ strands of the left hand jellyfish to the left. Using then Definition \ref{def: blue jelly} this results in $\xi^{(|g|-1)/2}=\xi^{\lfloor |g|/2\rfloor}$ multiplied by 
\begin{align*}
    \twojellyswithredcaps
\end{align*}
Next, move the leftmost $|ghk|$ strands using relations \ref{rel: bluestrandcap} and \ref{rel: redstrandcap} to obtain $\xi^{\lfloor|g|/2\rfloor}\epsilon_r^{\lceil|ghk|/2\rceil}\epsilon_b^{\lfloor|ghk|/2\rfloor}$ times 
\begin{align*}
    \twojellysoverredcupswithstrands \lowcomma
\end{align*}
where there are $|g|$ straight strands on the left and $|h|$ caps in the middle. Next, evaluate the two connected jellyfish to 1. Finally, to obtain the left tree, use relations \ref{rel: bluestrandcap} and \ref{rel: redstrandcap} on the $|h|$ caps in the middle to get $\xi^{\lfloor |g|/2\rfloor}\epsilon_r^{\lceil|ghk|/2\rceil}\epsilon_b^{\lfloor |ghk|/2\rfloor} \epsilon_r^{\lceil |h|/2\rceil}\epsilon_b^{\lfloor |h|/2\rfloor}=\epsilon_r^{\lceil |kg|/2 \rceil} \epsilon_b^{\lfloor |kg|/2 \rfloor} \xi^{\lfloor |g|/2\rfloor}$ times the left tree. Therefore, $\omega(g,h,k)=\epsilon_r^{\lceil |kg|/2 \rceil} \epsilon_b^{\lfloor |kg|/2 \rfloor} \xi^{\lfloor |g|/2\rfloor}$. 

\noindent\underline{Case (4)}.
Starting with \ref{R1oo}, apply $\ref{rel: redstrandcap}, \ref{rel: bluestrandcap}$ to the straight strands on the left and the group of archs on the far right for a total number of $|ghk|$ times, to get

\[
\epsilon_r^{\lceil \frac{|ghk|}{2} \rceil}\epsilon_b^{\lfloor \frac{|ghk|}{2} \rfloor}
\vcenter{\hbox{\begin{tikzpicture}
\draw[thick] (3.6,0) -- (3.6,.5);
    \draw[thick] (4.1,0) -- (4.1,.5);
    \draw[thick, bend left =90] (0,0) to (1.5,0) ;
    \draw[thick, bend left =90] (.5,0) to (1,0) ;
    \draw[thick, bend left =90] (1.8,0) to (3.3,0) ;
    \draw[thick, bend left =90] (2.3,0) to (2.8,0) ;
    \foreach \x in {0, .5,1,1.5,2.3,2.8,3.6}
        \filldraw[r] (\x,0) circle (1.5pt);
    \foreach \x in {1.8,3.3}
        \filldraw[b] (\x,0) circle (1.5pt);
    \filldraw[r] (3.6,0.5) circle (1.5pt);
    \filldraw[g] (4.1,0) circle (1.5pt);
    \filldraw[g] (4.1,.5) circle (1.5pt);
    \foreach \x in {0.25, 2.05,3.85}
        \node at (\x,.1) {$\scriptstyle\cdots$};
    \draw[decorate, decoration={brace,mirror}] (-.1,-0.2) -- (.6,-0.2) node[midway, below=3pt] {$g$};
    \draw[decorate, decoration={brace,mirror}] (.9,-0.2) -- (2.4,-0.2) node[midway, below=3pt] {$h$};
    \draw[decorate, decoration={brace,mirror}] (2.7, -0.2) -- (4.2, -0.2) node[midway, below=3pt] {$k$};
\end{tikzpicture}}} 
\]
By Lemma \ref{lem: brtwojellys}, we may add a closed diagram without changing the scalar:
\[
\epsilon_r^{\lceil \frac{|ghk|}{2} \rceil}\epsilon_b^{\lfloor \frac{|ghk|}{2} \rfloor}
\vcenter{\hbox{\begin{tikzpicture}
    \draw[thick](5.2,0) -- (5.2,2.5);
    \draw[thick](5.7,0) -- (5.7,2.5);
    \draw[thick, bend left =90] (1.8,0) to (4.9,0) ;
    \draw[thick, bend left =90] (2.3,0) to (4.4,0);
    \draw[thick, bend left =90] (0,0) to (1.5,0) ;
    \draw[thick, bend left =90] (.5,0) to (1,0);
\draw[thick, bend right =90] (1.8,2) to (4.9,2) ;
\draw[thick, bend right =90] (2.3,2) to (4.4,2);
\draw[thick, bend right =90] (2.6,2) to (4.1,2) ;
\draw[thick] (3.1,2) arc (-180:0:.25cm);
\draw[thick] (1.8,2) -- (1.8,2.2);
\draw[thick] (3.1,2) -- (3.1,2.2);
\draw[thick] (3.6,2) -- (3.6,2.2);
\draw[thick] (4.9,2) -- (4.9,2.2);
\draw[thick,name path=arch1] (1.8,2.2) to [out=90, in=90] (3.1,2.2);
\draw[thick,name path=arch2] (3.6,2.2) to [out=90, in=90] (4.9,2.2);
\path[name path=probe1] (2.3,2) -- (2.3,2.7);
    \path[name intersections={of=arch1 and probe1, by=Q1}];
    \draw[thick] (2.3,2) -- (Q1); 
    \path[name path=probe2] (2.6,2) -- (2.6,2.7);
    \path[name intersections={of=arch1 and probe2, by=Q2}];
    \draw[thick] (2.6,2) -- (Q2); 
\path[name path=probe3] (4.1,2) -- (4.1,2.7);
    \path[name intersections={of=arch2 and probe3, by=Q3}];
    \draw[thick] (4.1,2) -- (Q3); 
    \path[name path=probe4] (4.4,2) -- (4.4,2.7);
    \path[name intersections={of=arch2 and probe4, by=Q4}];
    \draw[thick] (4.4,2) -- (Q4); 
    \foreach \x in {0, .5,1,1.5,2.3,4.4,5.2}
        \filldraw[r] (\x,0) circle (1.5pt);
    \foreach \x in {1.8,4.9}
        \filldraw[b] (\x,0) circle (1.5pt);
    \foreach \x in {2.3,3.1,3.6, 4.4}
        \filldraw[r] (\x,2.1) circle (1.5pt);
    \foreach \x in {1.8,2.6,4.1,4.9}
        \filldraw[b] (\x,2.1) circle (1.5pt);
    \filldraw[r] (5.2,2.5) circle (1.5pt);
    \filldraw[g] (5.7,0) circle (1.5pt);
    \filldraw[g] (5.7,2.5) circle (1.5pt);
    \foreach \x in {0.25, 2.05,5.45}
        \node at (\x,.1) {$\scriptstyle\cdots$};
    \foreach \x in {2.05,2.85}
        \node at (\x,2) {$\scriptstyle\cdots$};
    \draw[decorate, decoration={brace,mirror}] (-.1,-0.2) -- (.6,-0.2) node[midway, below=3pt] {$g$};
    \draw[decorate, decoration={brace,mirror}] (.9,-0.2) -- (2.4,-0.2) node[midway, below=3pt] {$h$};
    \draw[decorate, decoration={brace,mirror}] (4.3, -0.2) -- (5.8, -0.2) node[midway, below=3pt] {$k$};
\end{tikzpicture}}}
\]
Apply \ref{rel: rcupcap} and \ref{rel: bcupcap} to the closed diagram and the archs below it, to get $\epsilon_r^{\lceil \frac{|ghk|}{2} \rceil}\epsilon_b^{\lfloor \frac{|ghk|}{2} \rfloor} $ times \ref{L3oo}.

\noindent\underline{Case (5)}.  We start with \ref{R4oo}. Apply \ref{rel: blueredjelly} for $\frac{|g|-1}{2}$ times to move the left jellyfish to the far left, and use \ref{eq: red up} to make this jellyfish start with a red strand; we get
\[
\xi^{\frac{|g|-1}{2}}
\vcenter{\hbox{\begin{tikzpicture}
\draw[thick,name path=arch1] (0,0) -- (0,.5) to [out=90, in=90] (3.8,.5);
\draw[thick, bend left =90] (.8,0) to (2.2,0) ;
\draw[thick, bend left =90] (1.3,0) to (1.7,0);
    \path[name path=probe1] (.5,0) -- (.5,1.5);
    \path[name intersections={of=arch1 and probe1, by=Q1}];
    \draw[thick] (.5,0) -- (Q1); 
    \path[name path=probe2] (2.5,0) -- (2.5,1.8);
    \path[name intersections={of=arch1 and probe2, by=Q2}];
    \draw[thick] (2.5,0) -- (Q2); 
    \path[name path=probe3] (3,0) -- (3,1.5);
    \path[name intersections={of=arch1 and probe3, by=Q3}];
    \draw[thick] (3,0) -- (Q3); 
    \path[name path=probe4] (3.3,0.5) -- (3.3,1.5);
    \path[name intersections={of=arch1 and probe4, by=Q4}];
    \draw[thick] (3.3,0.5) -- (Q4); 
\draw[thick, bend right =90] (3.3,.5) to (4.7,.5);
\draw[thick, bend right =90] (3.8,.5) to (4.2,.5);
    \draw[thick] (4.2,.5) -- (4.2,1);
    \draw[thick] (5.5,.5) -- (5.5,1);
\draw[thick,name path=arch2] (4.2,1) to [out=90, in=90] (5.5,1);
    \path[name path=probe5] (4.7,.5) -- (4.7,2);
    \path[name intersections={of=arch2 and probe5, by=Q5}];
    \draw[thick] (4.7,.5) -- (Q5); 
    \path[name path=probe6] (5,.5) -- (5,2);
    \path[name intersections={of=arch2 and probe6, by=Q6}];
    \draw[thick] (5,.5) -- (Q6); 
\draw[thick, bend right =90] (5,.5) to (6.4,.5) -- (6.4,1.6);
\draw[thick, bend right =90] (5.5,.5) to (5.9,.5) -- (5.9,1.6);
    \foreach \x in {0, .5,.8, 1.3,1.7,2.2}
        \filldraw[r] (\x,0) circle (1.5pt);
    \foreach \x in {2.5}
        \filldraw[b] (\x,0) circle (1.5pt);
    \filldraw[g] (3,0) circle (1.5pt); 
    \filldraw[r] (5.9,1.6) circle (1.5pt);
    \filldraw[g] (6.4,1.6) circle (1.5pt);
    \foreach \x in {3.8,4.2}
        \filldraw[b] (\x,0.5) circle (1.5pt);
    \foreach \x in {3.3,4.7}
        \filldraw[p] (\x,0.5) circle (1.5pt);
    \foreach \x in {0.25,1.05, 2.75}
    \node at (\x,.1) {$\scriptstyle\cdots$};
    \node at (3.55,.6) {$\scriptstyle\cdots$};
    \node at (4.45,.6) {$\scriptstyle\cdots$};
    \node at (6.15,1.5) {$\scriptstyle\cdots$};
\draw[decorate, decoration={brace,mirror}] (-.1,-0.2) -- (.6,-0.2) node[midway, below=3pt] {$g$};
    \draw[decorate, decoration={brace,mirror}] (.7,-0.2) -- (1.4,-0.2) node[midway, below=3pt] {$h$};
    \draw[decorate, decoration={brace,mirror}] (1.6, -0.2) -- (3.1, -0.2) node[midway, below=3pt] {$k$};
\end{tikzpicture}}}\]
Next, apply \ref{rel: redstrandcap} and \ref{rel: bluestrandcap} for a total number of $|g|$ times to the archs nested with the left jellyfish and strands to the left of them, we get
\[
\xi^{\frac{|g|-1}{2}} \epsilon_r^{\lceil \frac{|g|}{2} \rceil}\epsilon_b^{\lfloor \frac{|g|}{2} \rfloor}
\vcenter{\hbox{\begin{tikzpicture}
    \draw[thick](3.4,0) -- (3.4,.5);
    \draw[thick, bend left =90] (0,0) to (1.4,0) ;
    \draw[thick, bend left =90] (.5,0) to (.9,0);
    \draw[thick, bend left =90] (1.7,0) to (3.1,0) ;
    \draw[thick, bend left =90] (2.2,0) to (2.6,0);
\draw[thick, bend right =90] (4.2,.5) to (5.6,.5) ;
\draw[thick] (4.7,.5) arc (-180:0:.2cm);
\draw[thick,name path=arch1] (3.4,.5) to [out=90, in=90] (4.7,.5);
\draw[thick,name path=arch2] (5.1,.5) to [out=90, in=90] (6.4,.5);
    \path[name path=probe1] (3.9,0) -- (3.9,1.2);
    \path[name intersections={of=arch1 and probe1, by=Q1}];
    \draw[thick] (3.9,0) -- (Q1);
    \path[name path=probe2] (4.2,.5) -- (4.2,1.2);
    \path[name intersections={of=arch1 and probe2, by=Q2}];
    \draw[thick] (4.2,.5) -- (Q2); 
    \path[name path=probe3] (5.6,.5) -- (5.6,1.2);
    \path[name intersections={of=arch2 and probe3, by=Q3}];
    \draw[thick] (5.6,.5) -- (Q3); 
    \path[name path=probe4] (5.9,.5) -- (5.9,1.2);
    \path[name intersections={of=arch2 and probe4, by=Q4}];
    \draw[thick] (5.9,.5) -- (Q4); 
\draw[thick, bend right =90] (5.9,.5) to (7.3,.5) -- (7.3,1) ;
\draw[thick] (6.4,.5) arc (-180:0:.2cm) -- (6.8,1);
    \foreach \x in {0, .5,.9,1.4,2.2,2.6,3.4}
        \filldraw[r] (\x,0) circle (1.5pt);
    \foreach \x in {1.7,3.1}
        \filldraw[b] (\x,0) circle (1.5pt);
    \filldraw[r] (6.8,1) circle (1.5pt);
    \filldraw[g] (7.3,1) circle (1.5pt);
    \filldraw[g] (3.9,0) circle (1.5pt);
    \foreach \x in {4.7,5.1}
        \filldraw[b] (\x,0.5) circle (1.5pt);
    \foreach \x in {4.2,5.6}
        \filldraw[p] (\x,0.5) circle (1.5pt);
    \foreach \x in {0.25, 1.95,3.65}
        \node at (\x,.1) {$\scriptstyle\cdots$};
    \foreach \x in {4.45,5.35,6.15}
        \node at (\x,.4) {$\scriptstyle\cdots$};
    \node at (7.05,.9) {$\scriptstyle\cdots$};
    \draw[decorate, decoration={brace,mirror}] (-.1,-0.2) -- (.6,-0.2) node[midway, below=3pt] {$g$};
    \draw[decorate, decoration={brace,mirror}] (.8,-0.2) -- (2.3,-0.2) node[midway, below=3pt] {$h$};
    \draw[decorate, decoration={brace,mirror}] (2.5, -0.2) -- (4, -0.2) node[midway, below=3pt] {$k$};
\end{tikzpicture}
}}
\]
Apply \ref{rel: redstrandcup} and \ref{rel: bluestrandcup} to the right-most group of cups and the unbent strands in the left jellyfish for a total number of $|ghk|$ times, we get
\[
\xi^{\frac{|g|-1}{2}} \epsilon_r^{\lceil \frac{|g|}{2} \rceil+ \lceil \frac{|ghk|}{2} \rceil}\epsilon_b^{\lfloor \frac{|g|}{2} \rfloor+\lfloor \frac{|ghk|}{2} \rfloor}
\vcenter{\hbox{\begin{tikzpicture}
    \draw[thick, bend left =90] (-1.6,1.1) to (-.2,1.1) ;
    \draw[thick, bend left =90] (-1.1,1.1) to (-.7,1.1) ;
    \draw[thick, bend left =90] (0.1,1.1) to (1.5,1.1) ;
    \draw[thick, bend left =90] (.6,1.1) to (1,1.1) ;
\draw[thick] (5.2,1.1) -- (5.2,2.6);
    \draw[thick] (5.7,1.1) -- (5.7,2.6);
\draw[thick, bend right =90] (1.8,2) to (4.9,2) ;
\draw[thick, bend right =90] (2.3,2) to (4.4,2);
\draw[thick, bend right =90] (2.6,2) to (4.1,2) ;
\draw[thick] (3.1,2) arc (-180:0:.25cm);
\draw[thick] (1.8,2) -- (1.8,2.2);
\draw[thick] (3.1,2) -- (3.1,2.2);
\draw[thick] (3.6,2) -- (3.6,2.2);
\draw[thick] (4.9,2) -- (4.9,2.2);
\draw[thick,name path=arch1] (1.8,2.2) to [out=90, in=90] (3.1,2.2);
\draw[thick,name path=arch2] (3.6,2.2) to [out=90, in=90] (4.9,2.2);
\path[name path=probe1] (2.3,2) -- (2.3,2.7);
    \path[name intersections={of=arch1 and probe1, by=Q1}];
    \draw[thick] (2.3,2) -- (Q1); 
    \path[name path=probe2] (2.6,2) -- (2.6,2.7);
    \path[name intersections={of=arch1 and probe2, by=Q2}];
    \draw[thick] (2.6,2) -- (Q2); 
\path[name path=probe3] (4.1,2) -- (4.1,2.7);
    \path[name intersections={of=arch2 and probe3, by=Q3}];
    \draw[thick] (4.1,2) -- (Q3); 
    \path[name path=probe4] (4.4,2) -- (4.4,2.7);
    \path[name intersections={of=arch2 and probe4, by=Q4}];
    \draw[thick] (4.4,2) -- (Q4); 
    \foreach \x in {-1.6,-1.1, -.7,-.2,.6,1,5.2}
        \filldraw[r] (\x,1.1) circle (1.5pt);
    \foreach \x in {.1,1.5}
        \filldraw[b] (\x,1.1) circle (1.5pt);
    \foreach \x in {2.3,3.1,3.6, 4.4}
        \filldraw[b] (\x,2.1) circle (1.5pt);
    \foreach \x in {1.8,2.6,4.1,4.9}
        \filldraw[r] (\x,2.1) circle (1.5pt);
    \filldraw[r] (5.2,2.6) circle (1.5pt);
    \filldraw[g] (5.7,2.6) circle (1.5pt);
    \filldraw[g] (5.7,1.1) circle (1.5pt);
    \foreach \x in {-1.35,.35,5.45}
        \node at (\x,1.2) {$\scriptstyle\cdots$};
    \foreach \x in {2.05,2.85}
        \node at (\x,2) {$\scriptstyle\cdots$};
    \draw[decorate, decoration={brace,mirror}] (-1.7,.9) -- (-1,.9) node[midway, below=3pt] {$g$};
    \draw[decorate, decoration={brace,mirror}] (-.8,.9) -- (.7,.9) node[midway, below=3pt] {$h$};
    \draw[decorate, decoration={brace,mirror}] (.9,.9) -- (5.8,.9) node[midway, below=3pt] {$k$};
\end{tikzpicture}}}\]
By Remark \ref{rk:normalization} and Lemma \ref{lem: brtwojellys}, we can change the closed diagram into the one where the left jellyfish starts with blue, and stack it on top of the second group of caps. We get:
\[
\xi^{\frac{|g|-1}{2}} \epsilon_r^{\lceil \frac{|g|}{2} \rceil+ \lceil \frac{|ghk|}{2} \rceil}\epsilon_b^{\lfloor \frac{|g|}{2} \rfloor+\lfloor \frac{|ghk|}{2} \rfloor}
\vcenter{\hbox{\begin{tikzpicture}
    \draw[thick](5.2,0) -- (5.2,2.5);
    \draw[thick](5.7,0) -- (5.7,2.5);
    \draw[thick, bend left =90] (1.8,0) to (4.9,0) ;
    \draw[thick, bend left =90] (2.3,0) to (4.4,0);
    \draw[thick, bend left =90] (0,0) to (1.5,0) ;
    \draw[thick, bend left =90] (.5,0) to (1,0);
\draw[thick, bend right =90] (1.8,2) to (4.9,2) ;
\draw[thick, bend right =90] (2.3,2) to (4.4,2);
\draw[thick, bend right =90] (2.6,2) to (4.1,2) ;
\draw[thick] (3.1,2) arc (-180:0:.25cm);
\draw[thick] (1.8,2) -- (1.8,2.2);
\draw[thick] (3.1,2) -- (3.1,2.2);
\draw[thick] (3.6,2) -- (3.6,2.2);
\draw[thick] (4.9,2) -- (4.9,2.2);
\draw[thick,name path=arch1] (1.8,2.2) to [out=90, in=90] (3.1,2.2);
\draw[thick,name path=arch2] (3.6,2.2) to [out=90, in=90] (4.9,2.2);
\path[name path=probe1] (2.3,2) -- (2.3,2.7);
    \path[name intersections={of=arch1 and probe1, by=Q1}];
    \draw[thick] (2.3,2) -- (Q1); 
    \path[name path=probe2] (2.6,2) -- (2.6,2.7);
    \path[name intersections={of=arch1 and probe2, by=Q2}];
    \draw[thick] (2.6,2) -- (Q2); 
\path[name path=probe3] (4.1,2) -- (4.1,2.7);
    \path[name intersections={of=arch2 and probe3, by=Q3}];
    \draw[thick] (4.1,2) -- (Q3); 
    \path[name path=probe4] (4.4,2) -- (4.4,2.7);
    \path[name intersections={of=arch2 and probe4, by=Q4}];
    \draw[thick] (4.4,2) -- (Q4); 
    \foreach \x in {0, .5,1,1.5,2.3,4.4,5.2}
        \filldraw[r] (\x,0) circle (1.5pt);
    \foreach \x in {1.8,4.9}
        \filldraw[b] (\x,0) circle (1.5pt);
    \foreach \x in {2.3,3.1,3.6, 4.4}
        \filldraw[r] (\x,2.1) circle (1.5pt);
    \foreach \x in {1.8,2.6,4.1,4.9}
        \filldraw[b] (\x,2.1) circle (1.5pt);
    \filldraw[r] (5.2,2.5) circle (1.5pt);
    \filldraw[g] (5.7,0) circle (1.5pt);
    \filldraw[g] (5.7,2.5) circle (1.5pt);
    \foreach \x in {0.25, 2.05,5.45}
        \node at (\x,.1) {$\scriptstyle\cdots$};
    \foreach \x in {2.05,2.85}
        \node at (\x,2) {$\scriptstyle\cdots$};
    \draw[decorate, decoration={brace,mirror}] (-.1,-0.2) -- (.6,-0.2) node[midway, below=3pt] {$g$};
    \draw[decorate, decoration={brace,mirror}] (.9,-0.2) -- (2.4,-0.2) node[midway, below=3pt] {$h$};
    \draw[decorate, decoration={brace,mirror}] (4.3, -0.2) -- (5.8, -0.2) node[midway, below=3pt] {$k$};
\end{tikzpicture}}}\]
Finally, apply \ref{rel: rcupcap} and \ref{rel: bcupcap} to the stacked part to get $\xi^{\frac{|g|-1}{2}} \epsilon_r^{\lceil \frac{|g|}{2} \rceil+ \lceil \frac{|ghk|}{2} \rceil}\epsilon_b^{\lfloor \frac{|g|}{2} \rfloor+\lfloor \frac{|ghk|}{2} \rfloor} $ times \ref{L3oo}.

\noindent\underline{Case (6)}. We start with \ref{R2oo}. Use relation \ref{rel: bcupcap} and \ref{rel: rcupcap} on the rightmost $|k|$ cups and caps to obtain the left tree. Thus, $\omega(g,h,k)=1$.
\end{proof}

\section{Equivalence}\label{sec: equiv}

We denote by
$\VecDnx$
the pointed fusion category corresponding to the cocycle
$\omega_{\epsilon_r,\epsilon_b,\xi}$. 
In this Section we prove a presentation for $\VecDnx$.

\subsection{\texorpdfstring{Diagrammatics for $\VecDnx$}{Diagrammatics for $\VecDnx$}}\label{sec: diagrammatics}

Let $(\epsilon_r,\epsilon_b,\xi)$ be an admissible triple. 

\begin{definition}\label{tensor presentaion}
   Let $\Dx$ denote the $\mathsf{k}$-linear monoidal category generated by
 \begin{itemize}
     \item Object-generators:
	\(
	\reddot
	\qquad \text{and} \qquad
	 \bluedot.
	\)
    \item Morphism-generators: 
      \begin{align*}\
	\redstrand\lowcomma\;
	\bluestrand\lowcomma\;
	\redcap\lowcomma\;
	\bluecap\lowcomma\;
	\redcup\lowcomma\;
	\bluecup\lowcomma\;
	\rbjellyup\lowcomma\;
	\rbjellydown\lowperiod
\end{align*}
subject to the relations
\setcounter{savedequation}{\value{equation}}

\begingroup
\renewcommand{\theequation}{(\textnormal{R}\arabic{equation})}
\renewcommand{\theHequation}{R\arabic{equation}}
\setcounter{equation}{0}

\noindent\begin{tabular}{@{}p{0.29\linewidth}p{0.29\linewidth}p{0.40\linewidth}@{}}
\refstepcounter{equation}\label{rel:redstrandcap}%
$\theequation\quad \redstrand \redcap = \epsilon_r\redcap \redstrand$
&
\refstepcounter{equation}\label{rel:bluestrandcap}%
$\theequation\quad \bluestrand \bluecap = \epsilon_b\bluecap \bluestrand$
&
\refstepcounter{equation}\label{rel:redbluejelly}%
$\theequation\quad \redstrand \bluestrand \rbjellydown = \xi\rbjellydown \redstrand \bluestrand$
\\[0.8em]

\refstepcounter{equation}\label{rel:redcircle}%
$\theequation\quad \redcircle = \mathrm{id}_{\mathbbm 1}$
&
\refstepcounter{equation}\label{rel:bluecircle}%
$\theequation\quad \bluecircle = \mathrm{id}_{\mathbbm 1}$
&
\refstepcounter{equation}\label{rel:twojellys}%
$\theequation\quad \rbtwojellys = \mathrm{id}_{\mathbbm 1}$
\\[0.8em]

\refstepcounter{equation}\label{rel:rcupcap}%
$\theequation\quad \rcupcap = \redstrand\;\redstrand$
&
\refstepcounter{equation}\label{rel:bcupcap}%
$\theequation\quad \bcupcap = \bluestrand\;\bluestrand$
&
\refstepcounter{equation}\label{rel:jellysmultiplied}%
$\theequation\quad \rbtwojellysout = \redstrand\;\bluestrand\cdots\redstrand\;\bluestrand$
\\[0.8em]

&
\refstepcounter{equation}\label{rel:closedjellys}%
\makebox[\linewidth][c]{%
  $\theequation\quad \redredbubble=\mathrm{id}_{\mathbbm 1}$%
}
&
\end{tabular}
\endgroup
\setcounter{equation}{\value{savedequation}}
 \end{itemize}  
\end{definition}

As in Definition \ref{def: blue jelly}, we will also use the jellyfish beginning in blue. The proof of Lemma \ref{lem: brtwojellys} applies verbatim and gives the corresponding blue analogue of \ref{rel:closedjellys}.

We now state the main theorem of this section.

\begin{theorem}\label{thm diag category}
 The category $\Dx$ is a diagrammatic presentation of $\VecDnx.$
\end{theorem}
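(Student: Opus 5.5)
We will show that the Karoubi envelope $\mathcal K(\Dx)$ is monoidally equivalent to $\VecDnx$. We do this by building a functor in each direction on the relevant pieces and checking that it is full, faithful and essentially surjective.

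\emph{The functor from the diagrams.} Define a strict-to-skeletal monoidal functor $\Phi\colon \Dx\to \VecDnx$. It sends $\reddot\mapsto r$ and $\bluedot\mapsto b$. The generating morphisms go to the nonzero morphisms forced by Section~\ref{sec: presentation}:
\begin{itemize}
\item $\redcap,\redcup$ go to the identities of $\delta_e$, transported along the monoidal structure;
\item $\bluecap$ goes to the composite through $rbr$-words prescribed by its construction;
\item $\rbjellydown,\rbjellyup$ go to the canonical isomorphisms $(rb)^n\cong e$.
\end{itemize}
All of these are normalized as in Remark~\ref{rk:normalization}. To see that $\Phi$ is well defined we must check relations (R1)--(R10) in $\VecDnx$, and most of this has already been done. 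Proposition~\ref{prop:rels} shows that (N1)--(N9) hold in any presentation, and Lemma~\ref{lem: brtwojellys} gives (R10). The only remaining point is that the scalars $\omega(r,r,r)$, $\omega(rbr,rbr,rbr)$ and $\omega(rb,(rb)^{n-1},rb)$ for $\omega=\omega_{\epsilon_r,\epsilon_b,\xi}$ equal $\epsilon_r$, $\epsilon_b$ and $\xi$. This follows by evaluating the formula of Theorem~\ref{thm: cocycle formula} at these triples. A cleaner alternative is to apply Theorem~\ref{thm: cocycle reps}: fix some presentation of $\VecDnx$ as in \eqref{eq: monoidal equiv}, and observe that its parameter triple must be $(\epsilon_r,\epsilon_b,\xi)$, since by Proposition~\ref{prop: cocycle parameters invariant} cohomologous representatives have the same parameters. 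In that presentation the relations hold by Proposition~\ref{prop:rels} and Lemma~\ref{lem: brtwojellys}, which gives the functor.

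\emph{Fullness and essential surjectivity.} Every simple $g$ of $\VecDnx$ is $\Phi$ of its standard-form word, so $\Phi$ extends to an essentially surjective functor $\mathcal K(\Dx)\to\VecDnx$. Target Hom spaces between simples are at most one-dimensional. The images of the isomorphisms $u_{\vec x}$ of Lemma~\ref{lemma: red blue sequence} are nonzero, so every nonzero Hom space is hit and $\Phi$ is full.

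\emph{Faithfulness: the main step.} We need $\dim\Hom_{\Dx}(\vec x,\vec y)\le 1$, with equality only when $\overline{\vec x}=\overline{\vec y}$. By composing with the isomorphisms $u_{\vec x}$, $u_{\vec y}$ (whose inverses are built from cups, caps and jellyfish, and are inverse by (R7)--(R9)), it suffices to show the following: every closed diagram, i.e.\ every element of $\End(\text{empty})$, reduces to a scalar, and every diagram $\overline{\vec x}\to\overline{\vec y}$ between standard words is a multiple of the identity, or is zero when the words differ.

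To prove this we run a jellyfish algorithm in the style of \cite{BMPS12}. Let $D$ be a diagram.
\begin{enumerate}
\item Using (R1)--(R3), (N10)--(N16) and the zigzag relations, pull every jellyfish head and every cap up to the top boundary. Each move costs a scalar.
\item Then $D$ becomes a scalar times (top layer of caps and jellyfish) $\circ$ (a planar diagram built only from cups, caps and strands) $\circ$ (bottom layer of cups and inverse jellyfish).
\item Remove every adjacent cap/cup pair and every opposite-facing jellyfish pair using (R4)--(R9), (N12), (N14) and (R10).
\end{enumerate}
We expect this to be the main obstacle. Two reductions are genuinely needed here. The first is Lemma~\ref{lemma: twooppositefacingjellysreduce}: two jellyfish joined by some of their strands reduce to a scalar times a jellyfish or strands. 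The second is that a jellyfish whose strands are capped back onto itself collapses, which uses (R10) and its blue analogue. After these reductions each remaining connected piece with no boundary is a circle or a closed jellyfish pair and evaluates to $1$. What remains is a reduced diagram between standard words, and it must be the identity, since a reduced planar diagram whose source and target are alternating words of length less than $2n$ cannot contain caps or jellyfish. A nonzero reduced diagram therefore exists only when the two words coincide, and this gives the dimension bound. Hence $\Phi$ is faithful, and $\mathcal K(\Dx)\simeq\VecDnx$ as monoidal categories.
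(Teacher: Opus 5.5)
Your construction of the comparison functor, in the form of your ``cleaner alternative'', is the paper's $\Phi\colon\Dx\to\FGR$, and your essential surjectivity and fullness arguments are fine. The gap is in faithfulness, which is where the content of the theorem lies.

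\emph{First gap: partially joined jellyfish pairs.} Your step (3) only deletes configurations that literally are left-hand sides of (R4)--(R10), (N12), (N14). So the only jellyfish it can remove are pairs already joined along all $2n$ legs.

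Lemma~\ref{lemma: twooppositefacingjellysreduce} does not give the reduction you attribute to it. It only removes the bending of a single connecting strand $L$.

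Removing two jellyfish that share only some strands needs the zipping argument of Lemma~\ref{jellyfish: eliminated}:
\begin{enumerate}
\item Once $L$ is straight, the next legs of the two jellyfish are forced into a cup-over-cap configuration, which (R7)/(R8) straighten. This lets you glue the jellyfish together leg by leg.
\item The $\ell-k$ leftover legs on either side are then carried across with (R3) (or its colour-swapped version). When the two jellyfish start in different colours, an extra correction via Definition~\ref{def: blue jelly} is needed.
\item Only after this does (R6) apply.
\end{enumerate}
None of this appears in your sketch; step (3) does not even invoke (R3).

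\emph{Second gap: jellyfish with no partner.} Your closing claim, that a reduced diagram between standard words ``cannot contain caps or jellyfish'', is exactly what has to be shown. The bound $|w|<2n$ only excludes a jellyfish with all legs on one boundary. A jellyfish with no partner could a priori have legs running to both the top and the bottom of $D$, and then no pairwise reduction applies.

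The paper rules this out in Lemma~\ref{lemma: jellysareconnected}. Such a jellyfish cuts $D$ into a left part that is a morphism between words with different standard forms, and this vanishes by Lemma~\ref{lemma:zero hom}, since every generator preserves standard form. This is what guarantees that the elimination algorithm always finds a partner $J_2$ and ends with no jellyfish.

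Your ``second reduction'' targets a configuration that does not arise on its own. Adjacent legs alternate in colour, so an innermost strand joining two legs of the same jellyfish would join a red leg to a blue one. A jellyfish can therefore only be ``capped back onto itself'' around other jellyfish, which puts you back in the first gap. Likewise, closed components with more than two jellyfish become circles or closed pairs only after repeated pairwise elimination.

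As written, the bound $\dim\End_{\Dx}(w)\le 1$ is asserted rather than proved.
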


\subsection{Proof of Theorem \ref{thm diag category}}\label{sec: proof of thm}

By Section \ref{sec: presentation}, all the defining relations
of \(\Dx\) hold in \(\FGR\). 
Hence there is a canonical monoidal functor
$\Phi:
\Dx
\longrightarrow
\FGR$
sending each generating diagram in $\Dx$ to the
corresponding generator of
$\FGR$. We will show this is a monoidal equivalence.

The proof of Lemma \ref{lemma: red blue sequence} uses only the defining relations of $\Dx$. Hence every object $x$ in $\Dx$ is isomorphic to $\bar x$ in standard form.  We get the following preliminary results.

\begin{lemma}\label{lemma:zero hom} 
If $\bar x\ne \bar y,$ then $\Hom_{\Dx}(x,y)=0$ .
\end{lemma}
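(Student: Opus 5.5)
The approach is to define a grading on diagrams and show that every generating morphism is homogeneous of trivial degree. Concretely, to each word $x$ in $\{\reddot\ ,\bluedot\ \}$ I assign the group element $\bar x\in\Dn$ obtained by multiplying $r$ for each red dot and $b$ for each blue dot, in order. Equivalently, $\bar x$ is the element whose standard form is the word $\overline{\vec x}$ of Lemma~\ref{lemma: red blue sequence}. This assignment respects tensor products: $\overline{x\otimes y}=\bar x\,\bar y$.

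The next step is to check that every morphism-generator in Definition~\ref{tensor presentaion} goes between words with the same group value. For the strands this is trivial. For $\redcap$ and $\redcup$ we use $r^2=e$, for $\bluecap$ and $\bluecup$ we use $b^2=e$, and for $\rbjellydown$ and $\rbjellyup$ we use $(rb)^n=e$.

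Now I consider the free strict monoidal category $\mathcal F(\mathsf G)$. A diagram is a composite of tensor products of generators and identities. Tensoring with identities preserves the property ``source and target have equal value'', since $\bar a\,\bar s\,\bar c=\bar a\,\bar t\,\bar c$ whenever $\bar s=\bar t$. Composition preserves it by transitivity. Hence every diagram $D\colon x\to y$ in $\mathcal F(\mathsf G)$ satisfies $\bar x=\bar y$. Therefore, if $\bar x\neq\bar y$, then $\Hom_{\mathcal F(\mathsf G)}(x,y)$, which is spanned by such diagrams, is zero.

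Finally, $\Dx$ is the quotient of $\mathcal F(\mathsf G)$ by the tensor ideal generated by the relations. The Hom space $\Hom_{\Dx}(x,y)$ is a quotient of $\Hom_{\mathcal F(\mathsf G)}(x,y)=0$, so it is also zero. The relations play no role in this argument. Note also that the relations are homogeneous, because both sides of each relation have the same source and target.

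There is no real obstacle here. The only point needing care is that the notation $\bar x$ in the statement denotes the standard form of Lemma~\ref{lemma: red blue sequence}. Two words have the same standard form if and only if they represent the same element of $\Dn$, since the standard form is unique. So the invariant I use agrees with the hypothesis of the lemma.
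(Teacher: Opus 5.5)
Your proof is correct and follows the same route as the paper, which observes that each generating morphism has source and target with the same standard form (equivalently, the same element of $\mathrm{D}_{2n}$), that this property is preserved under tensor products and composition, and concludes immediately. You spell out details the paper leaves implicit: Hom spaces in the free category are spanned by diagrams, and passing to the quotient by the relations cannot create nonzero morphisms.
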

\begin{proof}
The source and target of each generating morphism of \(\Dx\) have the
same standard form.  This property is preserved under tensor products
and composition, hence every morphism in $\Dx$ preserves standard form, and the statement follows. 
\end{proof}

We show that $\dim(\Hom_{\Dx}(x,y))=1$ whenever $\bar x = \bar y.$ 

\begin{remark}\label{rk: std form}
Since
$\dim(\Hom_{\Dx}(x,y))=\dim(\Hom_{\Dx}(\bar x, \bar y))$, we may assume objects are in standard form. 
\end{remark}

Note that the proof of Lemma \ref{lemma: extra rels} uses only the defining relations of $\Dx$. Hence the auxiliary relations \ref{rel: redstrandcup}-\ref{eq: red up} also hold in $\Dx$.

\begin{lemma}\label{lemma: jellysareconnected}
    Let $w$ be a word in standard form and $D$ be a nonzero diagram in $\text{End}_{\Dx}(w)$. Then every jellyfish in $D$ has at least one strand connecting it to another jellyfish.
\end{lemma}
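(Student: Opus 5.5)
The plan is to argue by contradiction using the planarity of $D$ together with the color and parity constraints on the boundary of a jellyfish. Fix a jellyfish $j$ in $D$ (up to Definition \ref{def: blue jelly} we may take it to be $\rbjellydown$ or $\rbjellyup$), and suppose none of its $2n$ strands ends on another jellyfish. Each of its strands then ends either on the boundary of $D$ (top or bottom, i.e.\ on $w$) or on a cup or cap. Since cups and caps join two points of the same color and preserve the strand, we will contract every strand through the cups and caps it passes (using \ref{rel: zigzag}, \ref{rel: rcupcap}, \ref{rel: bcupcap} and \ref{rel: redcircle}, \ref{rel: bluecircle} to remove closed loops). This reduces to the case where each of the $2n$ endpoints of $j$ joins either the boundary or another endpoint of $j$ itself through an arc.

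Next we treat the two subcases. If two endpoints of $j$ are joined to each other by an arc, then by planarity there is an innermost such arc, and it joins two adjacent endpoints of $j$. These have opposite colors (red and blue alternate along a jellyfish), so they cannot be joined, since arcs preserve color. Hence every one of the $2n$ strands of $j$ reaches the boundary of $D$. The strands leave $j$ in the cyclic alternating pattern $(\reddot\bluedot)^n$, and the region they cut out of the square shows that the boundary word $w$ (top followed by reversed bottom, read around the region) contains a consecutive subword whose reduction is $(rb)^{\pm n}$ interlaced with the other parts of $D$, which have standard-form trivial contributions. More concretely, I would cut $D$ horizontally just above and below $j$. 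The slice through $j$ then exhibits a morphism between words $u$ and $u'$ where $u$ contains $(\reddot\bluedot)^n$ consecutively in the region under $j$, and the parts of $D$ above and below $j$ connect these $2n$ points only to the boundary $w$. Since those parts contain no jellyfish touching these strands, they are built from cups, caps and strands, which can only cancel adjacent equal colors. Hence the word $w$ in standard form would have to contain an alternating segment of length at least $2n$, contradicting $|w|<2n$.

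The main obstacle is making this last counting rigorous, because other jellyfish not connected to $j$ may sit between $j$'s strands and the boundary. For this I would use that the region of $D$ bounded by two consecutive strands of $j$ and the boundary is a sub-diagram. Its own boundary word must reduce to the identity by Lemma \ref{lemma:zero hom} (since $D\neq 0$), so it contributes a trivial element. Multiplying these contributions around $j$ shows that the element read off $w$ between the outermost strands of $j$ equals $(rb)^n$ as a word without cancellation across $j$'s strands. This would force $w$, or its reversal, to contain a standard-form word of length $\ge 2n$, a contradiction. I would use Lemma \ref{lemma:zero hom} for nonvanishing, and Remark \ref{rk: std form} to assume $w$ is in standard form.
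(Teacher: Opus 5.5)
There is a genuine gap in your final step: the case where the strands of $j$ land on \emph{both} the top and the bottom copy of $w$.

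Two parts of your argument are fine. Your exclusion of self-connections via an innermost arc works; it is the paper's ``alternating coloring'' remark. When all $2n$ strands land on one copy of $w$, the contradiction is pure counting, since $|w|<2n$.

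The mixed case is where it breaks. If $k$ strands go up and $2n-k$ go down, each copy of $w$ only has to hold $k$ or $2n-k$ landing points. For instance, $k=n$ is compatible with $|w|<2n$ as soon as $|w|\geq n$. A concrete case is $n=2$, $w=rbr$, with two legs of a downward jellyfish landing on the bottom and two wrapping around to the top. So ``$w$ must contain an alternating segment of length at least $2n$'' does not follow.

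Your proposed repair does not close this. Multiplying the trivial boundary words of the regions between consecutive strands around $j$ just reproduces $(rb)^n=e$. That is consistent and yields no contradiction. ``Equals $(rb)^n$ as a word without cancellation'' is not something Lemma~\ref{lemma:zero hom} can certify, since that lemma only sees group elements.

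The missing idea is to look at where the block of strands going up meets the block going down.
\begin{itemize}
\item These blocks are cyclically contiguous on $j$. So the leftmost strand landing on top and the leftmost strand landing on the bottom come from cyclically adjacent endpoints of $j$, and hence have opposite colors.
\item The part of $D$ to the left of $j$ is then a morphism between the prefixes of $w$ that precede these two landing points.
\item Since $w$ alternates starting in red, a prefix followed by a red letter has even length, and one followed by a blue letter has odd length.
\item So these prefixes are distinct standard forms. This piece of $D$ vanishes by Lemma~\ref{lemma:zero hom}, and hence $D=0$.
\end{itemize}
This is the paper's argument. Your region-by-region use of Lemma~\ref{lemma:zero hom} was the right tool: applied to the region to the left of $j$, it yields exactly this clash. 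The conclusion you drew from multiplying all the regions together is what fails.
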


\begin{proof}
    Let $J$ be a jellyfish in $D$. Each of the $2n$ strands has four options of where they end: on $J$, on the top boundary, on the bottom boundary, or on another jellyfish. It is clear due to the alternating coloring of the strands of the jellyfish that the strands of $J$ cannot end on itself. Further, not all of the strands of $J$ can be on the top boundary (respectively, the bottom boundary) since $|w|<2n$. 

    Thus, the only case to consider is when there is at least one strand of $J$ ending at the top boundary and at least one strand ending at the bottom boundary. In this case, $J$ partitions $D$ with a left-hand side and a right-hand side. If the leftmost strand ending on the top boundary is red, then the leftmost strand ending on the bottom boundary is blue. Therefore, the left-hand side of $D$ can be considered as a morphism between two words $w_1, w_2$ in $\Dn$ in standard form, so by Lemma \ref{lemma:zero hom}, the morphism is zero. Hence, the entire diagram, $D$, is zero.
\end{proof}

For the rest of this section, each orange, green or purple dot in a diagram may represent either a red or a blue dot. Purple and green dots appearing in the same diagram always represent different color. 
We say that a jellyfish has \emph{winding} if one or more strands wind around it, as pictured below:
\begin{align*}
    \cwjellywind \qquad \ccwjellywind \lowperiod
\end{align*}

\begin{lemma}\label{jellys: no winding}
 Every diagram in $\Dx$ is, up to scalar, a diagram in which
no jellyfish has winding. 
\end{lemma}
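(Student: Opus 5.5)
We would prove the statement by a local move that removes one winding strand at a time, using only defining relations of \(\Dx\) and the auxiliary relations of Lemma~\ref{lemma: extra rels}, which all hold in \(\Dx\) (the proof of that lemma uses only the defining relations). The key observation is that a strand winding around a jellyfish is, locally, a cap or cup attached to the jellyfish that passes over its head and comes back down on the other side. Such a winding is the same thing as rotating the jellyfish by one click, i.e. moving one of its strands from the far left of the fan to the far right, or conversely. Each such one-click rotation is expressed, up to an explicit scalar, by one of the relations \ref{eq: blue up}, \ref{eq: red up} (rotation by one strand, with factors \(\epsilon_b\xi\) or \(\epsilon_r\)), together with Definition~\ref{def: blue jelly}, \ref{rel: zigzag} and \ref{rel:redbluejelly}/\ref{rel: blueredjelly}.

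We would argue by induction on the total winding number, namely the number of strand crossings over jellyfish heads counted over all jellyfish of the diagram. Fix a jellyfish \(J\) with winding, in either the clockwise or the counterclockwise picture, and take the outermost wrapping strand nearest to \(J\). Using \ref{rel: zigzag} (zigzags straighten at the cost of \(\epsilon_r\) or \(\epsilon_b\)), we isotope that strand so that it sits as a cup or cap adjacent to the leftmost or rightmost tentacle of \(J\), in the exact shape appearing on the left-hand side of \ref{eq: blue up} or \ref{eq: red up}. If \(J\) points up rather than down, we first use \ref{rel: twojellys}, \ref{rel:jellysmultiplied} and the normalization in Remark~\ref{rk:normalization}, or their blue analogues \ref{rel: brtwojellys}, \ref{rel: brtwojellysout} and Lemma~\ref{lem: brtwojellys}, to express the upward jellyfish through the downward one with bent strands. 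Applying the relevant relation replaces the winding configuration by a jellyfish in which that strand no longer wraps, times a nonzero scalar that is a product of \(\epsilon_r,\epsilon_b,\xi\). This strictly decreases the winding number, and the induction finishes.

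The main obstacle is making "winding" precise enough that each single unwinding visibly matches a relation. The difficulty is that a winding strand may itself be a tentacle of \(J\), or may connect to another jellyfish (as Lemma~\ref{lemma: jellysareconnected} says can happen). In those cases the local picture still looks like \ref{eq: blue up} or \ref{eq: red up}, but only after isotopy through other parts of the diagram.

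The first approach we would try is to work with planar isotopy classes. We would note that the relations \ref{rel: zigzag} and \ref{rel:rcupcap}/\ref{rel:bcupcap} make isotopy hold up to scalars, because the cups and caps are invertible and the zigzags differ from identities only by signs. It would then be enough to check that a single full turn of a tentacle around \(J\) equals a scalar times \(J\): a full turn amounts to \(2n\) one-click rotations, or an honest \(2\pi\) twist. This scalar can be computed by iterating \ref{eq: blue up} and \ref{eq: red up}, and it is nonzero. With that in hand, any winding can be undone regardless of where the far end of the strand goes.
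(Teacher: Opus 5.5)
Your argument is essentially the paper's: unwind each jellyfish locally with the rotation relations, picking up nonzero scalars. The paper unwinds the wound strands in pairs with \ref{rel:redbluejelly} (a factor $\xi^{\pm1}$ per pair) and uses \ref{eq: blue up} or \ref{eq: red up} only for a single leftover strand when the number is odd; you handle every strand with \ref{eq: blue up}/\ref{eq: red up} plus \ref{rel: zigzag}, which works equally well. The ``obstacle'' in your last two paragraphs is not real: these moves only alter a neighborhood of the jellyfish and do not depend on where the far ends of the strands go. You can therefore drop the full-turn detour, which on its own would not handle partial windings anyway.
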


\begin{proof}
    Suppose that \(k\) strands wind around a given jellyfish, where $k$ is even. By using relation \ref{rel:redbluejelly} $k/2$ times, we have the following equalities:

    \begin{align*}
        \ccwjellywind = \xi^{k/2} \pjellydown \qquad \text{ and } \qquad
        \cwjellywind = \xi^{-k/2} \pjellydown.
    \end{align*}
    When $k$ is odd, one can unwind the first $k-1$ strands using the same strategy as the $k$ even case. The remaining strand can then be unwound using \ref{eq: blue up} or \ref{eq: red up}, according to its color. Repeating this procedure for every jellyfish proves the result. \end{proof}

\begin{remark}\label{jellys: orientation change}
    It follows from Relation \ref{rel:closedjellys} and Lemma \ref{lem: brtwojellys} that $$\downtoup=\pjellyup \qquad \text{and} \qquad \uptodown = \pjellydown.$$
   Consequently, by adding the appropriate  caps or cups via Relations \ref{rel:rcupcap} and \ref{rel:bcupcap}, we can reverse the orientation of any jellyfish from upward-facing to downward-facing, or vice-versa.
\end{remark}

We thank Chumeng Di for the proof to the next lemma. 

    \begin{lemma}\label{lemma: twooppositefacingjellysreduce}
        Let $D$ be a diagram in $\Dx$. Let $J_1,\ J_2$ be any pair of jellyfish in $D$ that are connected by a strand, $L.$ Then $D$ is equal, up to a scalar, to a diagram where $L$ does not have any bending (i.e., not a composition of cup and caps), as shown in the picture below,
    \begin{equation}\label{eq:connectedjelly}\vcenter{\hbox{\begin{tikzpicture}[scale=1]
    \draw[thick,name path=arch] (0,0) arc (180:0:.5cm);
    \path[name path=probe1] (.4,0) -- (.4,.7);
    \path[name intersections={of=arch and probe1, by=Q1}];
    \draw[very thick,blue] (.4,-.2) -- (Q1);
    \draw[thick,name path=arch2] (.8,-.2) arc (0:-180:.5cm);
    \path[name path=probe2] (.4,-.2) -- (.4,-.7);
    \path[name intersections={of=arch2 and probe2, by=Q2}];
    \draw[very thick,blue] (Q2)-- (.4,-.2);
    \foreach \z in {.2,.7}
    \node at (\z,0.1) {$\scriptstyle\cdots$};
    \foreach \z in {.1,.6}
    \node at (\z,-.3) {$\scriptstyle\cdots$};
\end{tikzpicture}
}} \end{equation}
    where $L$ is denoted by the blue strand. We omit red and blue dots in our diagram because each jellyfish could start with either color.
    \end{lemma}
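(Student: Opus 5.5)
We argue by a local planar isotopy modulo relations: all the bends of $L$ are removed one at a time, and each step costs only a scalar. First we normalize. By Lemma~\ref{jellys: no winding}, and at the cost of a scalar, we may assume neither $J_1$ nor $J_2$ has winding. By Remark~\ref{jellys: orientation change}, we may assume $J_1$ is downward-facing and $J_2$ is upward-facing. This flip only attaches caps or cups to the remaining legs, so it does not affect the conclusion. We then use Definition~\ref{def: blue jelly} together with \ref{eq: blue up} and \ref{eq: red up} to choose, for each jellyfish, which leg counts as its first, so that $L$ sits in a convenient position. Reindexing the starting color this way also costs only a scalar. With these choices made, $L$ leaves the bottom of $J_1$ and enters the top of $J_2$. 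Its shape is then a word in cups and caps, in the sense of a sequence of local maxima and minima.

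Next we remove the bends. We induct on the number of local extrema of $L$. If $L$ contains an adjacent cup--cap pair forming a zigzag, we straighten it with \ref{rel: zigzag}, which costs a factor $\epsilon_r$ or $\epsilon_b$. Otherwise, $L$ may wrap around one of the jellyfish, making a turn that encircles part of $J_i$ together with the legs of $J_i$ lying on one side of $L$. In that situation we slide legs across the head of $J_i$ using \ref{rel:redbluejelly} and \ref{rel: blueredjelly}, each pair contributing a factor $\xi^{\pm1}$. A single leftover leg is handled with \ref{eq: blue up} or \ref{eq: red up}. This rotates the point where $L$ attaches, so that the bend turns into a zigzag that can then be straightened. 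Each such step strictly lowers the number of extrema of $L$. Any closed loops created along the way are removed using \ref{rel:redcircle} and \ref{rel:bluecircle}.

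The main obstacle is showing that every bend of $L$ falls into one of these two local configurations. Since the diagram is planar, one has to check that no other strand or jellyfish is trapped inside the region bounded by a bend of $L$. Such a trapped piece would block the zigzag move. To handle this, we first look at the innermost bend. Any component trapped inside it must be attached only to the legs of $J_1$ or $J_2$. This is because $L$ together with the two jellyfish heads separates the plane, and strands cannot cross. Those legs can then be rotated out of the way, using the same jellyfish-rotation relations as above, before straightening. We will have to track the colors carefully to make sure the right relation applies at each step. We will not track the scalar exactly, since the lemma only asserts equality up to a scalar.
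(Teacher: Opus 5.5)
This is essentially the paper's argument: normalize orientations with Remark~\ref{jellys: orientation change}, straighten zigzags with \ref{rel: zigzag}, and remove each loop of $L$ around $J_1$ or $J_2$ by rotating that jellyfish via \ref{rel:redbluejelly}, \ref{rel: blueredjelly}, \ref{eq: blue up}, \ref{eq: red up} (the mechanism of Lemma~\ref{jellys: no winding}), iterating with a scalar cost at each step. One correction to your obstacle paragraph: an arc joining two jellyfish heads does not separate the plane, and the region inside a loop of $L$ around $J_i$ is connected, over the head of $J_i$, to the rest of $D$, so the enclosed legs can lead anywhere (this is the paper's orange region, which ``could be connected to anything''); the error is harmless, because the rotation at $J_i$ is a local move and works no matter what those legs are attached to.
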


    \begin{proof}
        Due to Remark~\ref{jellys: orientation change}, we may assume that $J_1$ is the top one facing down and $J_2$ is the bottom one facing up. We may also assume that $L$ does not have any zigzags. Then the following (or its reflection across the vertical axis) is the only possible class of configurations of the local neighborhood that exactly contains both jellyfish and $L$:
    
    \[\vcenter{\hbox{\begin{tikzpicture}[scale=0.8]
    \draw[thick,name path=arch] (0,0) arc (180:0:.5cm);
    \path[name path=probe1] (.4,0) -- (.4,.6);
    \path[name intersections={of=arch and probe1, by=Q1}];
    \draw[very thick,blue] (.4,0) -- (Q1); 
    \foreach \z in {.2, .7}
    \node at (\z,.1) {$\scriptstyle\cdots$};
    \draw[very thick,blue] (.4,0) arc(0:-180:.4cm) arc(180:0:.9cm) arc(0:-180:1.1cm) arc(180:0:1.3cm) -- (1.8,-2) arc(0:-180:.9cm) arc(180:0:.5cm);
    \draw[thick,name path=arch] (0.4,-2) arc (-180:0:.5cm);
    \path[name path=probe2] (1,-2) -- (1,-2.6);
    \path[name intersections={of=arch and probe2, by=Q2}];
    \draw[very thick,blue] (1,-2) -- (Q2); 
    \foreach \z in {.7,1.2}
    \node at (\z,-2.1) {$\scriptstyle\cdots$};
    \node at (1.6,0) {\color{blue}$\cdots$};
    \node at (.8,-.2) {$\scriptstyle J_1$};
    \node at (1.4,-1.8) {$\scriptstyle J_2$};
    \node at (2,-1) {$\scriptstyle \textcolor{blue}{L}$};
    \draw[->, thick] (-1.5,0) node[left]{$\scriptstyle m$ loops around $J_1$} -- (-1.2,0);
    \draw[->, thick] (-.7,-2) node[left]{$\scriptstyle n$ loops around $J_2$} -- (-.4,-2);
\end{tikzpicture}
}} .\]
Thus, we have
\[\vcenter{\hbox{\begin{tikzpicture}[scale=0.6]
    \draw[thick,name path=arch] (0,0) arc (180:0:.65cm) -- (1.3,-.6);
    \path[name path=probe1] (.5,0) -- (.5,.7);
    \path[name intersections={of=arch and probe1, by=Q1}];
    \draw[thick] (.5,0) -- (Q1);
    \path[name path=probe2] (.8,0) -- (.8,.7);
    \path[name intersections={of=arch and probe2, by=Q2}];
    \draw[very thick,blue] (Q2)-- (.8,0) arc(0:-180:.6cm) arc(180:0:1.05cm) --  (1.7,-.6); 
    \foreach \z in {.25,1.05}
    \node at (\z,0) {$\scriptstyle\cdots$};
    \fill[orange, fill opacity=0.15, rounded corners=3pt]
    (-.2,-.2) rectangle (.6,.2);
\end{tikzpicture}
}} 
=
\vcenter{\hbox{
\begin{tikzpicture}[scale=.5]
    \fill[orange, fill opacity=0.15, rounded corners=3pt]
        (-1.6,-.2) rectangle (-.9,.2);
    \draw[thick,name path=arch] (0,0) arc (180:0:.65cm) -- (1.3,-1.1);
    \draw[thick] (0,0) arc (0:-180:.225cm) arc (180:0:1.1cm) arc (180:360:.475cm) arc (0:180:2.05cm);
    \path[name path=probe1] (.25,0) -- (.25,.7);
    \path[name intersections={of=arch and probe1, by=Q1}];
    \draw[thick] (.25,0) -- (Q1);
    \draw[thick] (.25,0) arc (0:-180:.475cm) arc (180:0:1.35cm) arc (180:360:.225cm) arc (0:180:1.8cm);
    \path[name path=probe2] (.5,0) -- (.5,.7);
    \path[name intersections={of=arch and probe2, by=Q2}];
    \draw[very thick,blue] (Q2) -- (.5,0) arc (0:-180:1.15cm) arc (180:0:2.45cm) -- (3.1,-1.1);
    \foreach \z in {-1.275,-.575,.125,.9}
        \node at (\z,0) {$\scriptscriptstyle\cdots$};
\end{tikzpicture}%
}}
=
\vcenter{\hbox{%
\begin{tikzpicture}[scale=.5]
    \fill[orange, fill opacity=0.15, rounded corners=3pt]
        (2.55,-.2) rectangle (3.2,.2);
    \draw[thick,name path=arch] (0,0) arc (180:0:.65cm) -- (1.3,-1);
    \draw[thick] (0,0) arc (0:-180:.25cm) arc (180:0:1.15cm) arc (180:360:1cm) arc (0:180:.55cm);
    \path[name path=probe1] (.35,0) -- (.35,.7);
    \path[name intersections={of=arch and probe1, by=Q1}];
    \draw[thick] (.35,0) -- (Q1);
    \draw[thick] (.35,0) arc (0:-180:.6cm) arc (180:0:1.5cm) arc (180:360:.65cm) arc (0:180:.2cm);
    \path[name path=probe2] (.6,0) -- (.6,.7);
    \path[name intersections={of=arch and probe2, by=Q2}];
    \draw[very thick,blue] (Q2) -- (.6,0) arc (0:-180:.85cm) arc (180:0:2.6cm) -- (4.1,-1);
    \foreach \z in {-.68,.18,.95}
        \node at (\z,0) {$\scriptscriptstyle\cdots$};
    \node at (2.88,0) {$\scriptscriptstyle\cdots$};
\end{tikzpicture}%
}}
= \lambda\
\vcenter{\hbox{\begin{tikzpicture}[scale=0.7]
    \draw[thick,name path=arch](0,-.8)-- (0,0) arc (180:0:1.3cm);
    \path[name path=probe1] (.4,0) -- (.4,1.5);
    \path[name intersections={of=arch and probe1, by=Q1}];
    \draw[thick] (.4,-.8) -- (Q1);
    \path[name path=probe2] (.65,0) -- (.65,1.5);
    \path[name intersections={of=arch and probe2, by=Q2}];
    \draw[thick] (Q2)-- (.65,0) arc(-180:0:.8cm) arc(0:180:.55cm); 
    \path[name path=probe3] (.95,0) -- (.95,1.5);
    \path[name intersections={of=arch and probe3, by=Q3}];
    \draw[thick] (Q3)-- (.95,0) arc(-180:0:.5cm) arc(0:180:.25cm);
    \foreach \z in {.2,.8,1.3}
    \node at (\z,0) {$\scriptstyle\cdots$};
    \fill[orange, fill opacity=0.15, rounded corners=3pt]
    (1.05,-.15) rectangle (1.55,.15);
    \draw[very thick,blue] (Q3) arc ({acos((-1.3+.95)/1.3)}:0:1.3cm) -- (2.6,-.8);
\end{tikzpicture}
}} 
\]
for some $\lambda$, where the orange shaded region could be connected to anything that is part of the diagram $D$, and the final equality comes from Lemma \ref{jellys: no winding}. We may repeatedly apply this to our diagram until we obtain the desired configuration.
\end{proof}

\begin{lemma}\label{jellyfish: eliminated}
Let $D$ be a diagram with two jellyfish connected by a strand inside  a  local neighborhood $N$ with no other jellyfish inside $N$. Then $D$ is equal, up to a nonzero scalar, to a diagram $D'$ such that $D$ and $D'$ agree  outside $N$, while $D'$ contains no jellyfish inside $N$.
\end{lemma}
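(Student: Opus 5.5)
Inside $N$, I will first use Remark~\ref{jellys: orientation change} to arrange that one jellyfish $J_1$ faces down and lies above the other jellyfish $J_2$, which faces up. Any orientation reversal only introduces cups and caps via \ref{rel:rcupcap}, \ref{rel:bcupcap}, and these stay inside $N$. Next I apply Lemma~\ref{jellys: no winding} to remove all winding around $J_1$ and $J_2$, at the cost of a nonzero power of $\xi$ and factors $\epsilon_r,\epsilon_b$. All the relations used there are local to a neighborhood of the jellyfish, so they can be performed inside $N$. Then Lemma~\ref{lemma: twooppositefacingjellysreduce} lets me assume, again up to a nonzero scalar, that the connecting strand $L$ is a straight vertical strand from the bottom of $J_1$ to the top of $J_2$, as in \eqref{eq:connectedjelly}. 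Since $J_1$ is downward facing and $J_2$ upward facing with no winding, the strands on either side of $L$ are ordered consistently. They alternate in color, and the color of $L$ forces the colors of adjacent strands on both jellyfish to agree.

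The key step is to increase the number of strands directly joining $J_1$ to $J_2$. I take the strand of $J_1$ immediately to the left (or right) of $L$ and the corresponding strand of $J_2$. These have the same color, so I insert a cup--cap pair using \ref{rel:rcupcap} or \ref{rel:bcupcap} in reverse. This rewrites the diagram so that these two strands are joined directly, producing a second connecting strand adjacent to $L$, while the former endpoints are reconnected by an arc outside the jellyfish. This arc lies inside $N$, possibly after enlarging the picture within $N$. Iterating along both sides of $L$, I connect all $2n$ strands of $J_1$ to all $2n$ strands of $J_2$, apart from boundary strands reattached via zigzags (\ref{rel: zigzag}), which contribute only signs.

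The resulting two fully connected jellyfish are then removed. If the pair is in the configuration of \ref{rel:jellysmultiplied} (or its blue analogue \ref{rel: brtwojellysout}), I replace it by parallel identity strands. If instead they close up, I use \ref{rel:twojellys}, \ref{rel:closedjellys}, or Lemma~\ref{lem: brtwojellys} to evaluate them to $\mathrm{id}_{\mathbbm 1}$. Every scalar that appears is a product of $\xi^{\pm1}$, $\epsilon_r$, and $\epsilon_b$, so it is nonzero. Nothing outside $N$ was touched, so $D'$ agrees with $D$ outside $N$ and has no jellyfish in $N$.

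The main obstacle is the middle step. I need to check that after $L$ is straightened, the remaining strands of $J_1$ and $J_2$ can really be fused pairwise into the \ref{rel:jellysmultiplied} shape without reintroducing winding. The first thing I would try is induction on the number of strands directly joining $J_1$ and $J_2$. At each step I reapply Lemma~\ref{jellys: no winding} and Lemma~\ref{lemma: twooppositefacingjellysreduce}, and I use the color matching forced by the alternating coloring to justify each cup--cap insertion.
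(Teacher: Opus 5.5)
Your reduction to a straight connecting strand, and the inductive fusing of strands on both sides of $L$, match the paper's proof. (The two blocked strands form a cap below a cup, so \ref{rel:rcupcap} or \ref{rel:bcupcap} is applied forwards, not in reverse.) The gap is your claim that this fusing connects all $2n$ strands of $J_1$ to all $2n$ strands of $J_2$.

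In general $L$ joins the $k$th strand of $J_1$ to the $\ell$th strand of $J_2$ with $k\neq\ell$, say $k<\ell$. Fusing then pairs only $2n-(\ell-k)$ strands and stops. $J_1$ keeps $\ell-k$ strands on its right running down past $J_2$, and $J_2$ keeps $\ell-k$ strands on its left running up past $J_1$; all of these must leave $N$. No cup--cap or zigzag move pairs them. Lemma~\ref{jellys: no winding} does not apply either, since nothing winds around either jellyfish.

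The smallest example is $n=2$, with $L$ joining strand $1$ of $J_1$ to strand $3$ of $J_2$. The local diagram is $(\mathrm{id}_{rb}\otimes J_1)\circ(J_2\otimes\mathrm{id}_{rb})$, and your preliminary steps do nothing to it. By \ref{rel:redbluejelly} and \ref{rel:twojellys} it equals $\xi\,\mathrm{id}_{rb}$. Every move your procedure uses after straightening $L$ has coefficient $\pm1$: the cup--cap moves, \ref{rel: zigzag}, \ref{rel:twojellys}, \ref{rel:jellysmultiplied}, and \ref{rel:closedjellys}. So no sequence of them can reduce this diagram when $\xi\neq\pm1$.

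The missing step, which is the core of the paper's argument, is to slide $J_1$ across the leftover strands using \ref{rel:redbluejelly} (or \ref{rel: blueredjelly}). This moves two strands at a time, at the cost of a power of $\xi$. It works directly only when $\ell-k$ is even, which is exactly when $J_1$ and $J_2$ start with the same color.

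When $\ell-k$ is odd, one strand remains unpaired. You must then rewrite the blue-first jellyfish via Definition~\ref{def: blue jelly} and absorb that strand using \ref{rel:rcupcap}, \ref{rel:redbluejelly} and \ref{rel:redcircle}. Only then are $J_1$ and $J_2$ joined along all $2n$ strands, with $\ell-k$ through-strands beside them. The resulting closed bubble is removed by \ref{rel:twojellys}.

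Note also that a fully connected pair with $J_1$ facing down above $J_2$ is always such a closed bubble. It is never in the configuration of \ref{rel:jellysmultiplied}, where the two jellyfish share no strand.
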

\begin{proof}
  By Lemma \ref{lemma: twooppositefacingjellysreduce}, it is enough to consider jellyfish $J_1$ and $J_2$ in the form of \eqref{eq:connectedjelly} in $D$, where $J_1$ is the top jellyfish and $J_2$ is the bottom jellyfish. Assume that the $k$th strand of $J_1$ is connected to the $\ell$th strand of $J_2$, where we count from left to right. Without loss of generality, let $k \leq \ell$. All equivalences of diagrams in this proof are understood up to scalar. 
    
   We first transform 	\eqref{eq:connectedjelly} to 
    \begin{equation}\label{eq:connectedjellytwo}
      \twojellysonestrandstep\lowperiod
\end{equation}
Without loss of generality, suppose that the $k$th strand of $J_1$ and the $\ell$th strand of $J_2$ are the rightmost connected strands of $J_1$ and $J_2$. We show by induction that for all $\ell'\in \mathbb{Z}$ such that $\ell\leq \ell'\leq 2n$, $D$ is equivalent to a diagram $D'$ where outside $N$, $D$ and $D'$ are identical, and inside $N$,  the $\ell$ through $\ell'$th strands of $J_2$ are connected to the $k$ through $(\ell'+1-\ell+k)$th strands of $J_1$. The base case is true by hypothesis. Suppose the claim is true up to some $\ell'$. The $(\ell'+1)$th strand of $J_2$ is blocked on the left by the connected strands of $J_1$ and $J_2$. Similarly, the $(\ell'+1-\ell+k)$th strand of $J_1$ is blocked on its left. Thus, in a local neighborhood of $N$, the $(\ell'+1)$st strand of $J_2$ and the $(\ell'-\ell+k)$th strand of $J_1$ looks like 
$        \gcupcap\lowcomma$
   \noindent where the green dot represents either red or blue. Using either relation \ref{rel:rcupcap} or \ref{rel:bcupcap}, $D$ will be equivalent to $D'$ where outside $N$ they are equal, and inside $N$, the $\ell$ through $\ell'+1$ strands of $J_2$ are connected to the $k$ through $(\ell'+1-\ell+k)$th strands of $J_1$. 

    A nearly identical argument to the above proves that $D'$ is equivalent to a diagram $D''$ where in $N$ all the $k-1$ strands to the left of the $k$th strand of $J_1$ are connected to the $(\ell-k+1)$th to $(\ell-1)$th strands of $J_2$, and outside $N$, $D'$ and $D''$ are identical.
   So inside $N$, $D''$ looks like figure \eqref{eq:connectedjellytwo}, where there are $\ell-k$ unconnected strands on both the left and right, light blue and pink dots are opposite colors in red or blue, and the orange dots are either red or blue depending on the parity of $k$. 
    
    Notice that $J_1$ and $J_2$ both begin in the same color dot if and only if $\ell-k$ is even. Using either relation \ref{rel:redbluejelly} or the same relation with opposite colors, we can move any even number of strands from the left of $J_1$ to the right of $J_1$ for a power of $\xi$. Thus, when $J_1$ and $J_2$ begin in the same color dot, $D''$ will equal a diagram $D'''$, where outside $N$ $D''$ and $D'''$ are identical, and inside $N$, $D'''$ has $J_1$ and $J_2$ connected by all their strands, with $\ell-k$ straight strands on the right.  
    
   In the case where $J_1$ and $J_2$ begin in different color dots, $D''$ equals a diagram $D'''$, where outside $N$, $D''$ and $D'''$ are equal, and inside $N$, $D'''$ has $J_1$ and $J_2$ connected by all their strands except the leftmost of $J_2$ and the rightmost of $J_1$ with $\ell-k-1$ straight strands on the left, as
$        \twojellysallconnectedstep \lowperiod$
 
    Then, using Definition \ref{def: blue jelly}, convert the jellyfish beginning in blue to a jellyfish beginning in red. When $J_1$ began in blue, use relation \ref{rel:rcupcap} then \ref{rel:redbluejelly}. When $J_2$ began in blue, use relation \ref{rel:redbluejelly} then \ref{rel:redcircle}. In either case, we obtain that $D'''$ is a scalar multiple of a diagram $D''''$ where outside $N$, $D'''$ and $D''''$ are identical, and inside $N$, $D''''$ has $J_1$ and $J_2$ connected by all their strands with  $\ell-k$ to the right. 
 By using relation \ref{rel:twojellys}, we obtain the desired result.
\end{proof}

We now combine the preceding results into a ``Jellyfish elimination algorithm". It shows that every nonzero endomorphism of a standard form
object is a scalar multiple of the identity.

\begin{algorithm}[Jellyfish Elimination Algorithm]\label{algorithm} Let \(w\) be a word in standard form and let
\(
D\in\End_{\Dx}(w)
\)
be a nonzero diagram between two objects in standard form. 
{\leavevmode
     \begin{enumerate}
        \item If there are no jellyfish in $D$, skip to step (6).
        \item If there is a jellyfish, pick one and label it $J_1$.
        \item By Lemma \ref{lemma: jellysareconnected}, $J_1$ must have at least one of its strands connecting it to another jellyfish in $D$. Pick one such strand and label the other jellyfish $J_2$.
        \item Apply Lemma \ref{jellyfish: eliminated} to eliminate $J_1$ and $J_2$ from $D$.
        \item Repeat steps (2)--(4) until zero jellyfish remain.
        \item Remove all closed red and blue circles using
 \textnormal{ \ref{rel:redcircle} } and \textnormal{ \ref{rel:bluecircle}}. Since $w$ is in standard form, there can be no cups and caps, as adjacent points have different
    colors. The remaining diagram is therefore a  scalar multiple of $\text{id}_w$. 
    \end{enumerate}}
\end{algorithm}

\begin{corollary}\label{cor: dim of Homs}
Let $x,y$ be any objects in $\Dx$. Then
\[
\dim \Hom_{\Dx}(x,y)
=
\begin{cases}
1 & \overline{x}=\overline{y},\\
0 & \overline{x}\neq\overline{y}.
\end{cases}
\] Equivalently, \(\Hom_{\Dx}(x,y)\) is one-dimensional if \(x\cong y\)
and is zero otherwise.
\end{corollary}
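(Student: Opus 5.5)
The plan is to combine Lemma~\ref{lemma:zero hom}, Remark~\ref{rk: std form}, and Algorithm~\ref{algorithm} for the upper bound, and then use the functor $\Phi$ to get the lower bound.

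\textbf{The case $\overline{x}\neq\overline{y}$.} Here Lemma~\ref{lemma:zero hom} gives $\Hom_{\Dx}(x,y)=0$ directly.

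\textbf{The case $\overline{x}=\overline{y}=:w$, upper bound.} By Lemma~\ref{lemma: red blue sequence}, applied inside $\Dx$ (its proof uses only defining relations), there are isomorphisms $u_x\colon x\to w$ and $u_y\colon y\to w$. Conjugating by these gives a linear isomorphism
\[
\Hom_{\Dx}(x,y)\cong \End_{\Dx}(w),
\]
so it suffices to show $\dim\End_{\Dx}(w)=1$. Every morphism in $\Dx$ is a linear combination of diagrams, so $\End_{\Dx}(w)$ is spanned by diagrams $D\colon w\to w$. If such a $D$ is zero there is nothing to do. Otherwise, Algorithm~\ref{algorithm} applies to $D$. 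It uses Lemma~\ref{lemma: jellysareconnected} to find a connected pair of jellyfish, Lemma~\ref{lemma: twooppositefacingjellysreduce} to straighten the connecting strand, and Lemma~\ref{jellyfish: eliminated} to remove the pair. Each step removes two jellyfish, so the procedure terminates. What remains is a jellyfish-free diagram between standard-form words, which is $\id_w$ together with closed circles. Circles evaluate to $1$ by \ref{rel:redcircle} and \ref{rel:bluecircle}. So $D$ is a scalar multiple of $\id_w$, and $\dim\End_{\Dx}(w)\le 1$.

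\textbf{The case $\overline{x}=\overline{y}=:w$, lower bound.} It remains to see that $\id_w\neq 0$. For this I use the canonical functor $\Phi\colon\Dx\to\FGR$ together with the equivalence $\FGR\hookrightarrow\KFGR\simeq\VecDnx$. Under these, $w$ goes to the nonzero simple object $\delta_g$, where $g$ is the group element with standard form $w$. Hence $\Phi(\id_w)=\id_{\Phi(w)}\neq 0$, which forces $\id_w\neq0$. So $\dim\End_{\Dx}(w)=1$, and the same holds for $\Hom_{\Dx}(x,y)$.

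\textbf{Equivalent formulation.} If $x\cong y$, then $\overline{x}=\overline{y}$ by Lemma~\ref{lemma:zero hom}: a nonzero isomorphism cannot exist between words with different standard forms. Conversely, if $\overline{x}=\overline{y}$, then $u_y^{-1}u_x$ is an isomorphism $x\to y$. Hence "$\overline{x}=\overline{y}$" and "$x\cong y$" are equivalent, which gives the second phrasing of the corollary.

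\textbf{Main obstacle.} The delicate point is the scalar bookkeeping in the algorithm. Lemma~\ref{jellyfish: eliminated} is stated "up to a nonzero scalar", and Lemma~\ref{lemma: jellysareconnected} may instead show that $D=0$. Both outcomes are harmless for the upper bound, since in either case $D$ lies in $\mathsf k\,\id_w$. I also need the rewriting steps to preserve the diagram outside the chosen local neighborhood, so that the remaining jellyfish are untouched and the induction on the number of jellyfish goes through.
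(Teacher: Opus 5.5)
Your proposal is correct and follows essentially the same route as the paper. Both arguments reduce to standard-form words, use Lemma~\ref{lemma:zero hom} when the standard forms differ, apply the jellyfish elimination algorithm to show that $\End_{\Dx}(w)$ is spanned by $\id_w$, and use $\Phi(\id_w)=\id_{\Phi(w)}\neq 0$ for the lower bound. Your extra paragraph justifying the ``equivalently'' clause, which the paper leaves implicit, is a correct addition, since $\id_x\neq 0$ forces any isomorphism to be nonzero.
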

\begin{proof}
By Remark \ref{rk: std form}, it is enough to assume that $x$ and $y$ are in standard form. If \(x\neq y\), then Lemma~\ref{lemma:zero hom}  gives
\(
\Hom_{\Dx}(x,y)=0.
\)
Now suppose that \(x=y=w\). By Algorithm~\ref{algorithm}, every
diagram in \(\End_{\Dx}(w)\) is a scalar multiple of
\(\mathrm{id}_w\), and so \(\End_{\Dx}(w)\) is spanned by \(\mathrm{id}_w\).
Moreover, \(\mathrm{id}_w\neq0\), since
\(
\Phi(\mathrm{id}_w)=\mathrm{id}_{\Phi(w)}\neq0.
\)
Therefore,
\(
\dim_k\End_{\Dx}(w)=1.
\)
\end{proof}

\begin{proof}[Proof of Theorem \ref{thm diag category}]
Every object of \(\FGR\) is represented by a word in the generating
objects, so \(\Phi\) is essentially surjective. Moreover, every
morphism of \(\FGR\) is a linear combination of diagrams built from the
generating morphisms. Since all these generators lie in the image of
\(\Phi\), the functor \(\Phi\) is full. 
It remains to prove faithfulness, which follows from Corollary \ref{cor: dim of Homs}. Hence $\Phi$ is an equivalence. 
Applying the additive Karoubi envelope gives
\(
\mathcal K(\Dx)
\simeq
\KFGR
\simeq
\VecDn,
\)
as desired. 
\end{proof}

\section{Applications}\label{sec: applications}

 We give three applications of our diagrammatics. First, we establish diagrammatic presentations for all pivotal structures on $\VecDnx$. Second, we characterize tensor
functors from $\VecDnx $ to an arbitrary tensor category. Third, when $ \epsilon_r=\epsilon_b=1$, we identify $\mathrm{Vec}_{\mathrm{D}_{2n}}^{1,1,\xi}$  with the fusion category
arising from an affine  $A$ subfactor
planar algebra.

\subsection{Pivotal Structures of $\VecDnx$}
\label{sec: pivotal}

Fix an admissible triple
$(\epsilon_r,\epsilon_b,\xi)$.
Recall that every group homomorphism
$\pi\colon \Dn\longrightarrow\mathsf{k}^{\times}$
determines a pivotal structure on $\VecDnx$. We denote the resulting pivotal category by  $(\VecDnx,\pi)$.

\begin{remark}
    Every pivotal structure on $\VecDn$ is spherical. In fact, let $d_R, d_L:\Dn \to \mathsf k^{\times}$ denote the right and left pivotal dimensions, respectively. Then $d_R(r)^2=d_R(r^2)=1$, and $d_R(b)^2=d_R(b^2)=1$, so $d_R(g)=\pm 1$ for all $g\in \Dn$. Hence
$d_L(g)=d_R(g^{\vee})=d_R(g^{-1})=d_R(g),$ 
    for all $g\in \Dn.$ 
\end{remark}

We extend our diagrammatic presentation to record the pivotal structure determined by $\pi$, using the general pivotal presentation of pointed fusion categories established in \cite[Theorem 1.1]{dp}.

Below,  $ \vcenter{\hbox{\begin{tikzpicture}
    \filldraw[r] (0,0) circle (.05cm);
\end{tikzpicture}}}$ 
represents $r \in (\VecDnx,\pi)$ and 
$\vcenter{\hbox{\begin{tikzpicture}
    \draw[thick,r] (0,0) circle (.05cm);
\end{tikzpicture}}}$ represents $r^\vee$; $\vcenter{\hbox{\begin{tikzpicture}
    \filldraw[b] (0,0) circle (.05cm);
\end{tikzpicture}}}$
represents $b$ and
$\vcenter{\hbox{\begin{tikzpicture}
    \draw[thick,b] (0,0) circle (.05cm);
\end{tikzpicture}}}$ 
represents $b^\vee.$

\begin{theorem}\label{thm: pivotal presentation}
    The following is a presentation for $(\VecDnx,\pi)$:

    Generating objects: 
\begin{enumerate}[label=\textup{(O\arabic*)}]
\item
\label{Obj:1}
$ \vcenter{\hbox{\begin{tikzpicture}
    \filldraw[r] (0,0) circle (.05cm);
\end{tikzpicture}}}$,
\hspace{1cm}
$\vcenter{\hbox{\begin{tikzpicture}
    \draw[thick,r] (0,0) circle (.05cm);
\end{tikzpicture}}}$,
\hspace{1cm}
$\vcenter{\hbox{\begin{tikzpicture}
    \filldraw[b] (0,0) circle (.05cm);
\end{tikzpicture}}}$, 
\hspace{1cm}
$\vcenter{\hbox{\begin{tikzpicture}
    \draw[thick,b] (0,0) circle (.05cm);
\end{tikzpicture}}}$
\end{enumerate}

Generating morphisms:

\begin{enumerate}[label=\textup{(M\arabic*)}]
\item
\label{Mor:1}
$\vcenter{\hbox{\begin{tikzpicture}
    \draw[thick] (0,0) -- (0,.6);
    \filldraw[r] (0,0) circle (.05cm) ;
    \filldraw[r] (0,.6) circle (.05cm) ;
\end{tikzpicture}}}$\ ,\hspace{.3cm}
$\vcenter{\hbox{\begin{tikzpicture}
    \draw[thick] (0,0) -- (0,.6);
    \filldraw[b] (0,0) circle (.05cm) ;
    \filldraw[b] (0,.6) circle (.05cm) ;
\end{tikzpicture}}}$\ ,\hspace{.3cm}
$\vcenter{\hbox{\begin{tikzpicture}
    \draw[thick] (0,0) arc (180:0:.4cm);
    \filldraw[r] (0,0) circle (.05cm) ;
    \filldraw[r] (.8,0) circle (.05cm) ;
\end{tikzpicture}}}$\ ,\hspace{.3cm}
$\vcenter{\hbox{\begin{tikzpicture}
    \draw[thick] (0,0) arc (180:0:.4cm);
    \filldraw[b] (0,0) circle (.05cm) ;
    \filldraw[b] (.8,0) circle (.05cm) ;
\end{tikzpicture}}}$\ ,\hspace{.3cm}
$\vcenter{\hbox{\begin{tikzpicture}
    \draw[thick] (0,0) arc (-180:0:.4cm);
    \filldraw[r] (0,0) circle (.05cm) ;
    \filldraw[r] (.8,0) circle (.05cm) ;
\end{tikzpicture}}}$\ ,\hspace{.3cm}
$\vcenter{\hbox{\begin{tikzpicture}
    \draw[thick] (0,0) arc (-180:0:.4cm);
    \filldraw[b] (0,0) circle (.05cm) ;
    \filldraw[b] (.8,0) circle (.05cm) ;
\end{tikzpicture}}}$\ ,\hspace{.3cm}
$\vcenter{\hbox{\begin{tikzpicture}
    \draw[thick,name path = arch] (0,0) arc (180:0:.55cm);
    \path[name path=probe1] (.3,0) -- (.3,.5);
    \path[name intersections={of=arch and probe1, by=Q1}];
    \draw[thick] (.3,0) -- (Q1); 
    \path[name path=probe2] (.8,0) -- (.8,.5);
    \path[name intersections={of=arch and probe2, by=Q2}];
    \draw[thick] (.8,0) -- (Q2); 
    \filldraw[r] (0,0) circle (.05cm) ;
    \filldraw[r] (.8,0) circle (.05cm) ;
    \filldraw[b] (0.3,0) circle (.05cm) ;
    \filldraw[b] (1.1,0) circle (.05cm) ;
    \node at (.55,.1) {$\scriptstyle\cdots$};
\end{tikzpicture}}}$\ ,\hspace{.3cm}
$\vcenter{\hbox{\begin{tikzpicture}
    \draw[thick,name path = arch] (0,0) arc (-180:0:.55cm);
    \path[name path=probe1] (.3,0) -- (.3,-.5);
    \path[name intersections={of=arch and probe1, by=Q1}];
    \draw[thick] (.3,0) -- (Q1); 
    \path[name path=probe2] (.8,0) -- (.8,-.5);
    \path[name intersections={of=arch and probe2, by=Q2}];
    \draw[thick] (.8,0) -- (Q2); 
    \filldraw[r] (0,0) circle (.05cm) ;
    \filldraw[r] (.8,0) circle (.05cm) ;
    \filldraw[b] (0.3,0) circle (.05cm) ;
    \filldraw[b] (1.1,0) circle (.05cm) ;
    \node at (.55,-.1) {$\scriptstyle\cdots$};
\end{tikzpicture}}}$.
\vspace{.2cm}

\item 
\label{Mor:3}
$
 \vcenter{\hbox{\begin{tikzpicture}
    \draw[thick] (0,0) arc (180:0:.4cm);
    \filldraw[thick,r,fill=white] (0,0) circle (.05cm) ;
    \filldraw[r] (.8,0) circle (.05cm) ;
\end{tikzpicture}}}\ , \hspace{.2cm}
\vcenter{\hbox{\begin{tikzpicture}
    \draw[thick] (0,0) arc (180:0:.4cm);
    \filldraw[thick,r,fill=white] (0.8,0) circle (.05cm) ;
    \filldraw[r] (0,0) circle (.05cm) ;
\end{tikzpicture}}}\ , \hspace{.2cm}
\vcenter{\hbox{\begin{tikzpicture}
    \draw[thick] (0,0) arc (-180:0:.4cm);
    \filldraw[r] (0,0) circle (.05cm) ;
    \filldraw[thick,r,fill=white] (.8,0) circle (.05cm) ;
\end{tikzpicture}}}\ , \hspace{.2cm}
\vcenter{\hbox{\begin{tikzpicture}
    \draw[thick] (0,0) arc (-180:0:.4cm);
    \filldraw[thick,r,fill=white] (0,0) circle (.05cm) ;
    \filldraw[r] (.8,0) circle (.05cm) ;
\end{tikzpicture}}},\
$ and
$\vcenter{\hbox{\begin{tikzpicture}
    \draw[thick] (0,0) arc (180:0:.4cm);
    \filldraw[thick,b,fill=white] (0,0) circle (.05cm) ;
    \filldraw[b] (.8,0) circle (.05cm) ;
\end{tikzpicture}}}\ , \hspace{.2cm}
\vcenter{\hbox{\begin{tikzpicture}
    \draw[thick] (0,0) arc (180:0:.4cm);
    \filldraw[thick,b,fill=white] (0.8,0) circle (.05cm) ;
    \filldraw[b] (0,0) circle (.05cm) ;
\end{tikzpicture}}}\ , \hspace{.2cm}
\vcenter{\hbox{\begin{tikzpicture}
    \draw[thick] (0,0) arc (-180:0:.4cm);
    \filldraw[b] (0,0) circle (.05cm) ;
    \filldraw[thick,b,fill=white] (.8,0) circle (.05cm) ;
\end{tikzpicture}}}\ , \hspace{.2cm}
\vcenter{\hbox{\begin{tikzpicture}
    \draw[thick] (0,0) arc (-180:0:.4cm);
    \filldraw[thick,b,fill=white] (0,0) circle (.05cm) ;
    \filldraw[b] (.8,0) circle (.05cm) ;
\end{tikzpicture}}}
$
\end{enumerate}
\vspace{.2cm}

Relations:

\vspace{.2cm}
\noindent \  \textnormal{\ref{rel:redstrandcap} -- \ref{rel:closedjellys}} from Definition \ref{tensor presentaion}, the relations in the presentation of $\VecDnx$.

\begin{enumerate}[label=\textup{(R\arabic*)}]\addtocounter{enumi}{+10}
    \item 
    \label{piv zigzag}
$\vcenter{\hbox{\begin{tikzpicture}
     \draw[thick] (0,-.3) -- (0,0) arc (180:0:.3cm) -- (.6,0) arc(-180:0:.3cm) -- (1.2,.3);
    \filldraw[thick,r,fill=white] (0,-.3) circle (.05cm) ;
    \filldraw[thick,r,fill=white] (1.2,.3) circle (.05cm) ;
    \filldraw[r] (.6,0) circle (.05cm) ;
\end{tikzpicture}}} = 
\vcenter{\hbox{\begin{tikzpicture}
     \draw[thick] (0,-.3) -- (0,.3);
    \filldraw[thick,r,fill=white] (0,-.3) circle (.05cm) ;
    \filldraw[thick,r,fill=white] (0,.3) circle (.05cm) ;
\end{tikzpicture}}}\ , \hspace{.2cm}
$
$\vcenter{\hbox{\begin{tikzpicture}
     \draw[thick] (0,.3) -- (0,0) arc (-180:0:.3cm) -- (.6,0) arc(180:0:.3cm) -- (1.2,-.3);
    \filldraw[r] (1.2,-.3) circle (.05cm) ;
    \filldraw[r] (0,.3) circle (.05cm) ;
    \filldraw[thick,r,fill=white] (.6,0) circle (.05cm) ;
\end{tikzpicture}}} = 
\vcenter{\hbox{\begin{tikzpicture}
     \draw[thick] (0,-.3) -- (0,.3);
    \filldraw[r] (0,-.3) circle (.05cm) ;
    \filldraw[r] (0,.3) circle (.05cm) ;
\end{tikzpicture}}}\ , \hspace{.2cm}
$
$\vcenter{\hbox{\begin{tikzpicture}
     \draw[thick] (0,-.3) -- (0,0) arc (180:0:.3cm) -- (.6,0) arc(-180:0:.3cm) -- (1.2,.3);
    \filldraw[thick,r] (0,-.3) circle (.05cm) ;
    \filldraw[thick,r] (1.2,.3) circle (.05cm) ;
    \filldraw[thick,r,fill=white] (.6,0) circle (.05cm) ;
\end{tikzpicture}}} = 
\vcenter{\hbox{\begin{tikzpicture}
     \draw[thick] (0,-.3) -- (0,.3);
    \filldraw[r] (0,-.3) circle (.05cm) ;
    \filldraw[r] (0,.3) circle (.05cm) ;
\end{tikzpicture}}}\ , \hspace{.2cm}
$
$\vcenter{\hbox{\begin{tikzpicture}
     \draw[thick] (0,.3) -- (0,0) arc (-180:0:.3cm) -- (.6,0) arc(180:0:.3cm) -- (1.2,-.3);
    \filldraw[thick,r,fill=white] (1.2,-.3) circle (.05cm) ;
    \filldraw[thick,r,fill=white] (0,.3) circle (.05cm) ;
    \filldraw[r] (.6,0) circle (.05cm) ;
\end{tikzpicture}}} = 
\vcenter{\hbox{\begin{tikzpicture}
     \draw[thick] (0,-.3) -- (0,.3);
    \filldraw[thick,r,fill=white] (0,-.3) circle (.05cm) ;
    \filldraw[thick,r,fill=white] (0,.3) circle (.05cm) ;
\end{tikzpicture}}}\ , 
$
\vspace{.2cm}

and
$\vcenter{\hbox{\begin{tikzpicture}
     \draw[thick] (0,-.3) -- (0,0) arc (180:0:.3cm) -- (.6,0) arc(-180:0:.3cm) -- (1.2,.3);
    \filldraw[thick,b,fill=white] (0,-.3) circle (.05cm) ;
    \filldraw[thick,b,fill=white] (1.2,.3) circle (.05cm) ;
    \filldraw[b] (.6,0) circle (.05cm) ;
\end{tikzpicture}}} = 
\vcenter{\hbox{\begin{tikzpicture}
     \draw[thick] (0,-.3) -- (0,.3);
    \filldraw[thick,b,fill=white] (0,-.3) circle (.05cm) ;
    \filldraw[thick,b,fill=white] (0,.3) circle (.05cm) ;
\end{tikzpicture}}}\ , \hspace{.2cm}
$
$\vcenter{\hbox{\begin{tikzpicture}
     \draw[thick] (0,.3) -- (0,0) arc (-180:0:.3cm) -- (.6,0) arc(180:0:.3cm) -- (1.2,-.3);
    \filldraw[b] (1.2,-.3) circle (.05cm) ;
    \filldraw[b] (0,.3) circle (.05cm) ;
    \filldraw[thick,b,fill=white] (.6,0) circle (.05cm) ;
\end{tikzpicture}}} = 
\vcenter{\hbox{\begin{tikzpicture}
     \draw[thick] (0,-.3) -- (0,.3);
    \filldraw[b] (0,-.3) circle (.05cm) ;
    \filldraw[b] (0,.3) circle (.05cm) ;
\end{tikzpicture}}}\ , \hspace{.2cm}
$
$\vcenter{\hbox{\begin{tikzpicture}
     \draw[thick] (0,-.3) -- (0,0) arc (180:0:.3cm) -- (.6,0) arc(-180:0:.3cm) -- (1.2,.3);
    \filldraw[thick,b] (0,-.3) circle (.05cm) ;
    \filldraw[thick,b] (1.2,.3) circle (.05cm) ;
    \filldraw[thick,b,fill=white] (.6,0) circle (.05cm) ;
\end{tikzpicture}}} = 
\vcenter{\hbox{\begin{tikzpicture}
     \draw[thick] (0,-.3) -- (0,.3);
    \filldraw[b] (0,-.3) circle (.05cm) ;
    \filldraw[b] (0,.3) circle (.05cm) ;
\end{tikzpicture}}}\ , \hspace{.2cm}
$
$\vcenter{\hbox{\begin{tikzpicture}
     \draw[thick] (0,.3) -- (0,0) arc (-180:0:.3cm) -- (.6,0) arc(180:0:.3cm) -- (1.2,-.3);
    \filldraw[thick,b,fill=white] (1.2,-.3) circle (.05cm) ;
    \filldraw[thick,b,fill=white] (0,.3) circle (.05cm) ;
    \filldraw[b] (.6,0) circle (.05cm) ;
\end{tikzpicture}}} = 
\vcenter{\hbox{\begin{tikzpicture}
     \draw[thick] (0,-.3) -- (0,.3);
    \filldraw[thick,b,fill=white] (0,-.3) circle (.05cm) ;
    \filldraw[thick,b,fill=white] (0,.3) circle (.05cm) ;
\end{tikzpicture}}}\ .
$
\vspace{.2cm}

\item 
\label{quantum dims}
$\vcenter{\hbox{\begin{tikzpicture}
    \draw[thick] (0,0) circle (.4cm);
    \filldraw[thick,r] (-.4,0) circle (.05cm) ;
    \filldraw[thick,r,fill=white] (.4,0) circle (.05cm) ;
\end{tikzpicture}}} = \pi(r)$\ ,  \hspace{.2cm}
$\vcenter{\hbox{\begin{tikzpicture}
    \draw[thick] (0,0) circle (.4cm);
    \filldraw[thick,b] (-.4,0) circle (.05cm) ;
    \filldraw[thick,b,fill=white] (.4,0) circle (.05cm) ;
\end{tikzpicture}}} = \pi(b)$\ .

\end{enumerate}
\end{theorem}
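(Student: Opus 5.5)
The plan is to deduce the statement from the general pivotal presentation of pointed fusion categories in \cite[Theorem 1.1]{dp}, combined with Theorem~\ref{thm diag category}. Roughly, that theorem says the following. Suppose a pointed fusion category has a (non-pivotal) presentation by generating objects and morphisms. Then a presentation of the pivotal category for a character $\pi$ is obtained as follows. Adjoin a formal dual object $x^\vee$ for each generating object $x$. Adjoin left and right evaluation and coevaluation morphisms pairing $x$ with $x^\vee$. Impose the zigzag relations. Finally, set the two closed loops on each generator to $\pi(x)$ and $\pi(x)^{-1}$; these coincide here because $\pi(x)=\pm1$. We will apply this with the generating set of Definition~\ref{tensor presentaion}.

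The first step checks the hypotheses. By Theorem~\ref{thm diag category}, $\mathcal K(\Dx)\simeq \VecDnx$, with object generators $r,b$ and the morphism generators \ref{Mor:1}. So the old relations \ref{rel:redstrandcap}--\ref{rel:closedjellys} are retained. The new generators \ref{Mor:3} are exactly the four (co)evaluations pairing a filled dot with a hollow dot of each color. The relations \ref{piv zigzag} are the corresponding zigzag identities. The relations \ref{quantum dims} record the pivotal dimensions $d(r)=\pi(r)$ and $d(b)=\pi(b)$, which by the sphericality remark agree for left and right traces. This gives the functor from the diagrammatic category to $(\VecDnx,\pi)$. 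It sends the hollow dots to $r^\vee, b^\vee$ and the new cups and caps to the pivotal (co)evaluations, rescaled so that the zigzags hold on the nose.

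The second step shows that this functor is an equivalence. Essential surjectivity and fullness follow from Theorem~\ref{thm diag category}. For faithfulness, the hollow dots can be eliminated: the morphism $\reddot\to\text{hollow red}$, built by tensoring the old red cap with a new coevaluation, is invertible by \ref{piv zigzag} and \ref{rel:rcupcap}. Hence every word is isomorphic to a word in filled dots. Any closed diagram involving new cups and caps can then be converted, using \ref{piv zigzag}, into old diagrams times bubbles of the form in \ref{quantum dims}. After that, Corollary~\ref{cor: dim of Homs} gives one-dimensional Hom spaces.

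The main obstacle is consistency: the scalars must be forced uniquely and must not contradict the old relations. The key test is the identity obtained by rewriting $\redcap$ via the pivotal structure, $\redcap = $ (hollow conversion) $\circ$ ev. Comparing the two ways of closing a red loop, \ref{rel:redcircle} versus \ref{quantum dims}, should produce exactly $\pi(r)$, with $\epsilon_r$ absorbed into the choice of the conversion isomorphism. I would settle this by citing \cite{dp} instead of recomputing. If a direct check is needed, I would verify it on the red and blue loops and on a jellyfish closed into a bubble, using \ref{rel:closedjellys}, and confirm that it gives $\pi((rb)^n)=1$.
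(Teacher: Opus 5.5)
Your plan has a genuine gap in how it invokes \cite[Theorem~1.1]{dp}. As the paper uses that theorem, it does not adjoin duals only for the generating objects of a given presentation. Its generators are $g$ and $g^\vee$ for \emph{every} $g\in\Dn$, the multiplication isomorphisms $\mu_{g,h}^{\pm1}$, and evaluation/coevaluation maps for every $g$. Its relations (P1)--(P4) are imposed for every $g$. The generator-level version you attribute to \cite{dp} is essentially the statement being proved, so citing it is circular. The missing work is the reduction the paper carries out:
\begin{itemize}
\item Identify $\mu_{g,h}^{\pm1}$ with $f_{g,h}^{\mp1}$, built from \ref{Mor:1}. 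Then (P1) is Theorem~\ref{thm diag category} and (P2) is the choice of $J$.
\item Realize the (co)evaluations for $g=(rb)^m r^\ell$ as nested stacks of the red and blue maps in \ref{Mor:3}, paired against $g^\vee=(r^\vee)^\ell(b^\vee r^\vee)^m$.
\item Obtain the snake identities (P3) for each $g$ by straightening the nested strands one at a time with \ref{piv zigzag}.
\item Obtain (P4) by splitting the loop for $g$ into $m+\ell$ red and $m$ blue loops. This gives $(\pi(r)\pi(b))^m\pi(r)^\ell=\pi(g)$ by \ref{quantum dims}, since $\pi$ is a homomorphism.
\end{itemize}
Note also that \ref{quantum dims} fixes only one orientation of the loop per colour. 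The fact that $\pi(x)=\pi(x)^{-1}$ does not by itself give the relation for the opposite bubble $\operatorname{ev}\circ\operatorname{coev}'$; that relation has to be derived.

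Your fallback direct argument does not close the gap either. You assert that every closed diagram with new cups and caps can be rewritten, using \ref{piv zigzag}, as old diagrams times bubbles from \ref{quantum dims}. That assertion is exactly the faithfulness step, and \ref{piv zigzag} alone cannot do it. It removes a hollow segment only when that segment joins a cup and a cap of the \emph{same} duality. A hollow segment from $\operatorname{coev}\colon \mathbbm 1\to r\otimes r^\vee$ up to $\operatorname{ev}'\colon r\otimes r^\vee\to \mathbbm 1$ (or from $\operatorname{coev}'$ to $\operatorname{ev}$) encloses a partial trace. The only relation linking the left and right dualities is \ref{quantum dims}.

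One way to finish is as follows.
\begin{itemize}
\item Use your invertible $\varphi\colon r\to r^\vee$, together with the old red cup and cap, to build a second right duality $(\operatorname{ev}'_0,\operatorname{coev}'_0)$ out of $\varphi^{\pm1}$.
\item Uniqueness of duals gives $\operatorname{ev}'=\operatorname{ev}'_0\circ(\mathrm{id}_r\otimes u)$ for some $u\in\End(r^\vee)$.
\item By \ref{rel:rcupcap} and \ref{rel: zigzag}, $\End(r^\vee)\cong\End(r)=\mathrm{id}_r\otimes\End(\mathbbm 1)$. Hence $\operatorname{ev}'=c\,\operatorname{ev}'_0$ for some closed diagram $c$.
\item Relation \ref{quantum dims} then forces $c$ to be a scalar (and similarly for $\operatorname{coev}'$).
\item After this substitution, every hollow segment is a same-duality zigzag, and Corollary~\ref{cor: dim of Homs} applies.
\end{itemize}
Your ``main obstacle'' paragraph, which compares \ref{rel:redcircle} with \ref{quantum dims}, does not address this point and defers it back to \cite{dp}. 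As written, neither of your two routes is complete.
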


\begin{proof}
We obtain the presentation above from the general pivotal presentation in \cite[Theorem~1.1]{dp}. To distinguish the relations in that presentation from ours, we denote them by \textup{(P1)}--\textup{(P4)}.

We first consider the object-generators. 
Every element in $\Dn$ has a unique standard form $ (
\vcenter{\hbox{\begin{tikzpicture}
    \filldraw[thick,r] (0,0) circle (.05cm);
    \filldraw[thick,b] (.3,0) circle (.05cm);
\end{tikzpicture}}})^m\
\vcenter{\hbox{\begin{tikzpicture}
    \filldraw[thick,r] (0,0) circle (.05cm);
\end{tikzpicture}}}^\ell
$, where $0\leq m \leq n-1$ and $\ell \in \{0,1\};$ its dual is $ 
\vcenter{\hbox{\begin{tikzpicture}
    \draw[thick,r] (0,0) circle (.05cm);
\end{tikzpicture}}}^\ell\ (
\vcenter{\hbox{\begin{tikzpicture}
 \draw[thick,r] (.3,0) circle (.05cm);
    \draw[thick,b] (0,0) circle (.05cm);
\end{tikzpicture}}})^m
$. Thus every object-generator in the general presentation of \cite{dp} is a word in the four objects listed in \ref{Obj:1}.

We next consider the morphism-generators.  
The multiplication isomorphisms $\mu_{g,h}$ and $\mu_{g,h}^{-1}$ in \cite{dp} are represented by our $f_{g,h}^{-1}$ and $f_{g,h}$, respectively, which were constructed in Section \ref{Choices for J}
 from the morphisms in \ref{Mor:1}. The remaining morphism-generators in \cite{dp} are evaluation and coevaluation morphisms. Let  $g= (
\vcenter{\hbox{\begin{tikzpicture}
    \filldraw[thick,r] (0,0) circle (.05cm);
    \filldraw[thick,b] (.3,0) circle (.05cm);
\end{tikzpicture}}})^m\
\vcenter{\hbox{\begin{tikzpicture}
    \filldraw[thick,r] (0,0) circle (.05cm);
\end{tikzpicture}}}^\ell
$ and 
$g^{\vee}=\vcenter{\hbox{\begin{tikzpicture}
    \draw[thick,r] (0,0) circle (.05cm);
\end{tikzpicture}}}^\ell\
(\vcenter{\hbox{\begin{tikzpicture}
    \draw[thick,b] (0,0) circle (.05cm);
    \draw[thick,r] (.3,0) circle (.05cm);
\end{tikzpicture}}})^m
$. Then these are obtained by stacking the corresponding morphisms for the pairs $\{
\vcenter{\hbox{\begin{tikzpicture}
    \filldraw[thick,r] (0,0) circle (.05cm);
\end{tikzpicture}}}\ ,\
\vcenter{\hbox{\begin{tikzpicture}
    \draw[thick,r] (0,0) circle (.05cm);
\end{tikzpicture}}}
\}$ 
and
$\{
\vcenter{\hbox{\begin{tikzpicture}
    \filldraw[thick,b] (0,0) circle (.05cm);
\end{tikzpicture}}}\ ,\
\vcenter{\hbox{\begin{tikzpicture}
    \draw[thick,b] (0,0) circle (.05cm);
\end{tikzpicture}}}
\}$ listed in \textup{(M2)}.

It remains to verify the relations \textup{(P1)}--\textup{(P4)}. In Theorem \ref{thm diag category}, we showed that \textup{(P1)} is generated by relations \ref{rel:redstrandcap} -- \ref{rel:closedjellys}.
Relation \textup{(P2)} is already built into the choice of $J$ made in Section \ref{Choices for J}.

We next consider relation \textup{(P3)}, which consists of the four snake identities for each $g\in\Dn$. We illustrate the argument for the first snake identity when $|g|$ is odd; all other three snake identities follow analogously. After replacing the evaluation and coevaluation morphisms for $g$ by the stacks described above, \textup{(P3)} becomes
\[\vcenter{\hbox{\begin{tikzpicture}[scale =0.8]
   \draw[thick](.1,-1)-- (0.1,0) arc (180:0:.3cm) arc(-180:0:1cm) -- (2.7,1);
   \draw[thick] (-.2,-1)-- (-.2,0) arc (180:0:.6cm) arc(-180:0:.7cm) -- (2.4,1);
   \draw[thick] (-.6,-1) -- (-.6,0) arc (180:0:1cm) arc(-180:0:.3cm) -- (2,1);
   \node at (3.2,0) {$=$};
   \draw[thick] (3.7,-1) -- (3.7,1);
   \draw[thick] (4.1,-1) -- (4.1,1);
   \draw[thick] (4.4,-1) -- (4.4,1);
   \foreach \x in {-.6,.1,3.7,4.4}
       \filldraw[thick,r,fill=white] (\x,-1) circle (.05cm) ;
    \foreach \x in {2,2.7,3.7,4.4}
       \filldraw[thick,r,fill=white] (\x,1) circle (.05cm) ;   
    \foreach \x in {-.2,4.1}
       \filldraw[thick,b,fill=white] (\x,-1) circle (.05cm) ;
    \foreach \x in {2.4,4.1}
       \filldraw[thick,b,fill=white] (\x,1) circle (.05cm) ;  
    \filldraw[r] (.7,0) circle (.05cm) ;
   \filldraw[r] (1.4,0) circle (.05cm) ;
   \filldraw[b] (1,0) circle (.05cm) ;
   \foreach \p in {(-.4,-.8),(2.2,.8),(3.9,-.8)}
       \node at \p {$\scriptstyle\cdots$};
   \draw[decorate, decoration={brace,mirror}] (-.7,-1.1) -- (-.1,-1.1) node[midway, below=3pt] {$2m\ \text{strands}$};
\end{tikzpicture}}}\]
Then \textup{(P3)} holds by \ref{piv zigzag}. 



Finally, for $g= (
\vcenter{\hbox{\begin{tikzpicture}
    \filldraw[thick,r] (0,0) circle (.05cm);
    \filldraw[thick,b] (.3,0) circle (.05cm);
\end{tikzpicture}}})^m\
\vcenter{\hbox{\begin{tikzpicture}
    \filldraw[thick,r] (0,0) circle (.05cm);
\end{tikzpicture}}}^\ell
$, the closed loop corresponding to $g$ separates into $m+\ell$ red loops and $m$ blue loops. By \textup{(R12)}, it therefore evaluates to
$\bigl(\pi(r)\pi(b)\bigr)^m\pi(r)^\ell=\pi(g).$ 
The other closed loop evaluates to $\pi(g^{-1})$. Thus \textup{(P4)} follows from \ref{quantum dims}, completing the proof. 
\end{proof}

\subsection{Universal Property}
Let \(\mathcal C\) be a tensor category; we may assume
that \(\mathcal C\) is strict, see~\cite[Chapter~VII, Theorem~1]{MacLane}.

Let $\operatorname{MonFun}_{\mathsf k}(\Dx,\mathcal C)$ 
denote the category of \(\mathsf k\)-linear monoidal functors
\(\Dx\to\mathcal C\) and monoidal natural transformations. Let $\operatorname{Fun}_{\otimes}(\VecDnx, \mathcal C)$ denote the category  with objects tensor functors $\VecDnx \to \mathcal C$ and morphisms monoidal natural transformations. 
Restriction along
$\Dx\longrightarrow\mathcal K(\Dx)\simeq\VecDnx$
induces an equivalence
\[
\operatorname{Fun}_{\otimes}(\VecDnx,\mathcal C)
\simeq
\operatorname{MonFun}_{\mathsf k}(\Dx,\mathcal C).
\]
A strict \(\mathsf k\)-linear monoidal functor
$\Dx\longrightarrow\mathcal C$ is determined by the images of the 
generating objects and  morphisms.
Hence there is a bijection
\begin{equation}\label{bijection}
\left\{
\begin{array}{c}
\text{strict \(\mathsf k\)-linear monoidal functors}\\
F\colon\Dx\to\mathcal C
\end{array}
\right\}
\longleftrightarrow
\left\{
\text{tuples }(X,Y,\lambda_X,\lambda_Y,\rho)
\right\},
\end{equation}
where
\begin{enumerate}
    \item\label{(1)} $X,Y$ are objects in $\mathcal C$, and
    \item\label{(2)} $\lambda_X:X^{\otimes 2} \to 1_{\mathcal C}$, $\lambda_Y:Y^{\otimes 2} \to 1_{\mathcal C}$ and $\rho :(X\otimes Y)^{\otimes n} \to 1_{\mathcal C}$ are isomorphisms satisfying
    {\small
    \begin{align*}
    \text{id}_X\otimes \lambda_X= \ \epsilon_r \  (\lambda_X\otimes \text{id}_X), \ \ \ \ \ \ 
    \text{id}_Y\otimes \lambda_Y= \ \epsilon_b \ (\lambda_Y\otimes \text{id}_Y), \ \ \ \ \ \ 
    \text{id}_{X\otimes Y}\otimes \rho=\ \xi \ (\rho\otimes \text{id}_{X\otimes Y}),\\
   \lambda_X  (\text{id}_X\otimes \lambda_Y\otimes \text{id}_X)\dots 
   (\text{id}_X\otimes \dots \otimes \text{id}_Y \otimes \lambda_X \otimes \text{id}_Y\otimes \dots \otimes \text{id}_X)( \rho^{-1} \otimes \text{id}_X\otimes \rho^{-1} \otimes \text{id}_X)\lambda_X^{-1}=\text{id}_{1_{\mathcal C}}.
    \end{align*}}
\end{enumerate}
Under this bijection, \(F\) sends the red and blue generating objects
to \(X\) and \(Y\), respectively, and sends the generating morphisms as
follows:
   \begin{align*}
&	\redstrand \longmapsto \text{id}_X,\;
	\bluestrand \longmapsto \text{id}_Y,\;
     \redcap \longmapsto \lambda_X,\;
	\bluecap \longmapsto \lambda_Y,\;
	\redcup \longmapsto \lambda_X^{-1},\;
	\bluecup \longmapsto \lambda_Y^{-1},\;
   \\
	& \  \ \ \ \ \ \ \ \ \ \ \ 
    \ \ \ \ \ \ \ \ \ \ \ 
    \ \ \ \ \ \ \
\rbjellydown \longmapsto \rho,    \;
	\ \ \ \ \ \ \ \ \rbjellyup  \longmapsto \rho^{-1}\;.
\end{align*}
Given such a tuple $(X, Y, \lambda_X, \lambda_Y, \rho),$ we will refer to the corresponding functor by $F_{(X, Y, \lambda_X, \lambda_Y, \rho)}$.  We use throughout the convention that  $F(1)=1_{\mathcal C}$.

We will now upgrade this bijection to an equivalence of categories. 
\begin{definition}
  Let $\mathcal F_{\mathcal C}$ be the category with
  \begin{itemize}
      \item Objects are tuples $(X,Y, \lambda_X, \lambda_Y, \rho)$ as in \eqref{(1)} and \eqref{(2)}.
      \item A morphism $(X, Y, \lambda_X, \lambda_Y, \rho)\to (X', Y', \lambda_{X'}, \lambda_{Y'}, \rho')$ is a pair $(f,g)$ where  $f: X\to X'$ and $g:Y \to Y'$ are morphisms in $\mathcal C$ such that 
      \begin{align*}
          &\lambda_{X'}f^{\otimes 2} = \lambda_X, &&\lambda_{Y'}g^{\otimes 2}= \lambda_Y &&\text{and} &&\rho'(f\otimes g)^{\otimes n} =\rho.
      \end{align*}
  \end{itemize}
\end{definition}

\begin{theorem}[Universal Property for $\VecDnx$]\label{thm: univ prop} The categories $\operatorname{Fun}_{\otimes}(\VecDnx, \mathcal C)$ and $\mathcal F_{\mathcal C}$ are equivalent.     
\end{theorem}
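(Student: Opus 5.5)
We will prove the equivalence by composing two equivalences:
\[
\operatorname{Fun}_{\otimes}(\VecDnx,\mathcal C)\simeq\operatorname{MonFun}_{\mathsf k}(\Dx,\mathcal C)\simeq\mathcal F_{\mathcal C}.
\]
The first was recorded just before the statement. It comes from Theorem~\ref{thm diag category} together with the universal property of the additive Karoubi envelope: $\mathcal C$ is additive and idempotent complete, so $\mathsf k$-linear monoidal functors out of $\Dx$ extend essentially uniquely to $\mathcal K(\Dx)$. The work therefore lies in the second equivalence.

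\textbf{Step 1: strictification.} We may restrict to strict monoidal functors. Every $\mathsf k$-linear monoidal functor $(G,J)\colon \Dx\to\mathcal C$ is monoidally isomorphic to a strict one. Since objects of $\Dx$ are words in the two generating colors, set $G'(\text{word})=G(\reddot)^{\otimes}\cdots$, the tensor product of the images of the letters, and transport morphisms along the composite structure maps $J$. On objects, the bijection \eqref{bijection} then gives essential surjectivity. It remains to check that the tuple conditions are exactly the defining relations. Relations \ref{rel:redstrandcap}, \ref{rel:bluestrandcap} and \ref{rel:redbluejelly} are the three scalar identities. Relations \ref{rel:redcircle}--\ref{rel:jellysmultiplied} are encoded by sending cups to $\lambda_X^{-1}$, $\lambda_Y^{-1}$ and the upward jellyfish to $\rho^{-1}$, so they hold automatically for isomorphisms. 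Relation \ref{rel:closedjellys} is the final displayed identity.

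\textbf{Step 2: morphisms.} Given a monoidal natural transformation $\eta\colon F\Rightarrow F'$ between strict functors, define $(f,g)=(\eta_{\reddot},\eta_{\bluedot})$. Monoidality forces $\eta$ on a word to be the tensor product of these components. Naturality of $\eta$ with respect to the caps and the downward jellyfish (where $\eta_{1}=\mathrm{id}$) gives exactly
\[
\lambda_{X'}f^{\otimes2}=\lambda_X,\qquad \lambda_{Y'}g^{\otimes 2}=\lambda_Y,\qquad \rho'(f\otimes g)^{\otimes n}=\rho.
\]
Conversely, given such a pair $(f,g)$, define $\eta$ on each word as the corresponding tensor product of copies of $f$ and $g$. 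Naturality needs checking only on generators, because naturality squares compose vertically and tensor horizontally. For caps and the downward jellyfish this is the hypothesis.

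\textbf{Main obstacle: cups and the upward jellyfish.} For the cup we need $f^{\otimes 2}\lambda_X^{-1}=\lambda_{X'}^{-1}$. We get this by observing that $\lambda_{X'}f^{\otimes2}=\lambda_X$ makes $f^{\otimes 2}$ split-mono, and using the rigidity/fusion structure to show $f$ is an isomorphism. Indeed, $X\otimes X\cong 1$ makes $X$ invertible with inverse $X$, and $f\otimes f$ has a left inverse $\lambda_X^{-1}\lambda_{X'}$. Tensoring with the inverse objects then gives that $f$ has a left inverse, and similarly a right inverse via the unit isomorphism. Once $f$ (and likewise $g$) is invertible, the cup identity follows by inverting the cap identity, and the same argument handles $\rho^{-1}$. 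With this in hand, the two constructions are mutually inverse on morphisms and functorial under composition, so the functor $\operatorname{MonFun}_{\mathsf k}(\Dx,\mathcal C)\to\mathcal F_{\mathcal C}$ is fully faithful. Combined with Step~1, this completes the equivalence.
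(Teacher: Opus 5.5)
Your proposal is correct and takes essentially the same route as the paper. Both reduce to strict monoidal functors, get essential surjectivity from the bijection between functors and tuples, and identify a monoidal natural transformation with its generator components $(\tau_r,\tau_b)$, checking naturality only on generating morphisms. The step you flag as the main obstacle is naturality at the cups and the upward jellyfish, which the paper leaves as a quick check, and it needs no argument that $f$ is invertible. Composing $\lambda_{X'}f^{\otimes 2}=\lambda_X$ on the left with $\lambda_{X'}^{-1}$ and on the right with $\lambda_X^{-1}$ gives $f^{\otimes 2}\lambda_X^{-1}=\lambda_{X'}^{-1}$ directly, and likewise $(f\otimes g)^{\otimes n}\rho^{-1}=(\rho')^{-1}$.
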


\begin{proof}
Every $\mathsf k$-linear 
monoidal functor \(\Dx\to\mathcal C\) is monoidally naturally isomorphic to a
strict monoidal functor. Let
$\operatorname{MonFun}^{\mathrm{str}}_{\mathsf k}(\Dx,\mathcal C)$ 
denote the full subcategory of
 strict monoidal
functors.  It suffices to prove
that
$\operatorname{MonFun}^{\mathrm{str}}_{\mathsf k}(\Dx,\mathcal C)
\simeq
\mathcal F_{\mathcal C}.$

    We define  $\Psi\colon
\operatorname{MonFun}^{\mathrm{str}}_{\mathsf k}(\Dx,\mathcal C)
\longrightarrow
\mathcal F_{\mathcal C}$ by sending each functor to its tuple under~\eqref{bijection}, and each monoidal natural transformation $\tau$ to $(\tau_r, \tau_b)$. For two functors corresponding to  $(X,Y,\lambda_X,\lambda_Y,\rho)$ and $(X',Y',\lambda_{X'},\lambda_{Y'},\rho')$, 
naturality and monoidality at the caps and downward jellyfish gives
\[
\lambda_{X'}\tau_r^{\otimes2}=\lambda_X,\qquad
\lambda_{Y'}\tau_b^{\otimes2}=\lambda_Y,\qquad
\rho'(\tau_r\otimes \tau_b)^{\otimes n}=\rho.
\]Hence $\Psi$ is well-defined. We show now that $\Psi$ is an equivalence. 
It is essentially surjective by \eqref{bijection}, so all we need to show is that it is fully faithful. 
    For a morphism $(f,g): (X,Y,\lambda_X, \lambda_Y,\rho) \to (X',Y',\lambda_{X'}, \lambda_{Y'},\rho')$ in $\mathcal{F_C}$, let $\tau: F_{(X,Y,\lambda_X, \lambda_Y,\rho)}\to F_{(X',Y',\lambda_{X'}, \lambda_{Y'},\rho')}$ be the natural transformation obtained by setting $\tau_r=f,$ $\tau_b=g,$ and extending monoidally everywhere else. It is a quick check that this gives an inverse, and the statement follows.  
\end{proof}

\subsection{\texorpdfstring{Equivalence to Affine $A$ Planar Algebras}{Equivalence to Affine $A$ Planar Algebras}}

The goal of this section is to discuss how the $\VecDnx$ diagrammatics recover categories induced by affine $A$ unshaded subfactor planar algebras when $\epsilon_r=\epsilon_b=1$. While the monoidal equivalence proven in this section was known to experts, the diagrammatic proof is novel and was the impetus of this paper.

\begin{definition}[From \cite{Mol24}, Theorem 8.1]\label{PA presentaion}
   Fix $n \in \mathbb{N}$. Let $\mathcal{P}^\xi$ denote the subfactor planar algebra generated by   
$$ 	p_1:=\paredstrand \ , \qquad 
	q_1:=\pabluestrand \ , \qquad
	V:= \paV \ , \qquad 
	V^*:=\paVad \ ,$$   
where $V$ and $V^*$ have $n$ strands alternating colors on both top and bottom, subject to the relations
\setcounter{savedequation}{\value{equation}}

\begingroup
\renewcommand{\theequation}{(\textnormal{PA}\arabic{equation})}
\renewcommand{\theHequation}{PA\arabic{equation}}
\setcounter{equation}{0}

\noindent
\begin{tabular}{@{}p{0.333\linewidth}@{}p{0.333\linewidth}@{}p{0.333\linewidth}@{}}

\makebox[\linewidth][c]{%
  \refstepcounter{equation}\label{rel:paredcircle}%
  $\theequation\quad \paredcircle=\emptyset$%
}
&
\makebox[\linewidth][c]{%
  \refstepcounter{equation}\label{rel:pabluecircle}%
  $\theequation\quad \pabluecircle=\emptyset$%
}
&
\makebox[\linewidth][c]{%
  \refstepcounter{equation}\label{rel:paredbluestrand}%
  $\theequation\quad \paredbluestrand=\emptyset$%
}
\\[0.2em]

\makebox[\linewidth][c]{%
  \refstepcounter{equation}\label{rel:paredcupcap}%
  $\theequation\quad \paredcupcap=\patwored$%
}
&
\makebox[\linewidth][c]{%
  \refstepcounter{equation}\label{rel:pabluecupcap}%
  $\theequation\quad \pabluecupcap=\patwoblue$%
}
&
\makebox[\linewidth][c]{%
  \refstepcounter{equation}\label{rel:pablackstrand}%
  $\theequation\quad
  \pablackstrand=\paredstrand+\pabluestrand$%
}
\\[0.2em]

\multicolumn{3}{@{}c@{}}{%
  \makebox[0.5\linewidth][c]{%
    \refstepcounter{equation}\label{rel:paVadV}%
    $\theequation\quad \paVadV=\emptyset$%
  }%
  \makebox[0.5\linewidth][c]{%
    \refstepcounter{equation}\label{rel:paVVad}%
    $\theequation\quad \paVVad=\patworedblue$%
  }%
}
\\[0.2em]

\multicolumn{3}{@{}c@{}}{%
  \refstepcounter{equation}\label{rel:paclick}%
  $\theequation\quad
  \Vinverseclick=\Vadinclickrel=\xi\paVoneclick\lowperiod$%
}
\end{tabular}
\endgroup
\setcounter{equation}{\value{savedequation}}

\end{definition}

 In the diagrams above, green and purple strands indicate strands colored red or blue, depending on the parity of $n$, and $\emptyset$ indicates the empty diagram.

From a finite index unshaded subfactor planar algebra, $\mathcal{P}$, one can create a fusion category. First, create a strict monoidal category, $\mathcal{C}_\mathcal{P}$, whose objects are \textit{projections} in $\mathcal{P}$, i.e., elements $p\in \mathcal{P}$ where $p=p^2=p^*$. A morphism  $f\in \text{Hom}_{\mathcal{C}_\mathcal{P}}(p,q)$ is any diagram in $\mathcal{P}$ whose bottom boundary is $p$ and top boundary is $q$. The tensor product is horizontal concatenation, composition is vertical stacking, and the unit object is the empty diagram, $\emptyset$. Taking the additive envelope, $\text{Add}(\mathcal{C}_\mathcal{P})$, we obtain a fusion category. In this category, every projection is a direct sum of \textit{minimal projections}, i.e., the simple objects.

Fix $n\in \mathbb{N}$ and an $n$th root of unity $\xi$. Then there is an affine $A_{2n-1}$ unshaded subfactor planar algebra, $\mathcal{P}^\xi$, with the diagrammatic presentation given in Definition \ref{PA presentaion} (\cite{Mol24}, Theorem 8.1). The adjoint, $*$, is compatible with flipping over an $x$-axis and switching any $V \leftrightarrow V^*$. The fusion category obtained from this planar algebra, which we will denote $\text{Add}({\mathcal{C}_{\mathcal{P}^\xi}})$, has simple objects consisting of alternating words in $p_1$ and $q_1$ (\cite{Mol24}, Theorem 8.1). Therefore, $\text{Add}({\mathcal{C}_{\mathcal{P}^\xi}})$ is equivalently the additive envelope of the monoidal subcategory $\langle p_1,q_1\rangle^{\xi}$ generated by $p_1$ and $q_1$. 

We can now  define a functor $F_{\xi}$ between $\mathcal{D}^{1,1,\xi}$ and $\langle p_1,q_1\rangle^{\xi}$ as shown in Definition \ref{def: pa-functor}. The goal of this subsection is to show that $F_{\xi}$ is a monoidal equivalence.


\begin{remark}
    In the category $\langle p_1, q_1 \rangle^\xi$, 
    we can view $p_1$ as a morphism in $\text{Hom}_{\langle p_1, q_1 \rangle^\xi}(p_1, p_1)$ since its top and bottom boundary are both red strands. Similarly, $q_1$ can also be viewed as a morphism.
\end{remark}

Choose a square root \(\xi^{1/2}\in\mathsf k^\times\).
\begin{definition}\label{def: pa-functor}
           Define a functor $F_\xi:\mathcal{D}^{1,1,\xi}\to \langle p_1,q_1 \rangle^\xi$ on generating objects and morphisms by
\begingroup
\renewcommand{\arraystretch}{1}
\begin{center}
\begin{adjustbox}{max width=\linewidth}
$ 
\begin{array}{
@{}r@{\;}c@{\;}l@{\quad}
r@{\;}c@{\;}l@{\quad}
r@{\;}c@{\;}l@{\quad}
r@{\;}c@{\;}l@{}
}

\multicolumn{3}{c}{}
&
\reddot & \longmapsto & p_1\ \text{(as an object)}
&
\bluedot & \longmapsto & q_1\ \text{(as an object)}
&
\multicolumn{3}{c}{}
\\[0.8em]

\redstrand & \longmapsto & p_1\ \text{(as a morphism)},
&
\redcup & \longmapsto & \paredcup\lowcomma
&
\redcap & \longmapsto & \paredcap\lowcomma
&
\rbjellydown & \longmapsto & \xi^{1/2}V,
\\[0.8em]

\bluestrand & \longmapsto & q_1\ \text{(as a morphism)},
&
\bluecup & \longmapsto & \pabluecup\lowcomma
&
\bluecap & \longmapsto & \pabluecap\lowcomma
&
\rbjellyup & \longmapsto & \xi^{-1/2}V^*.
\end{array}$ 
\end{adjustbox}
\end{center}
\endgroup
\end{definition}

\begin{lemma}\label{lem: pa-functor-well-defined}
    $F_{\xi}$ is well-defined.
\end{lemma}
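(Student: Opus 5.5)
To prove that $F_\xi$ is well-defined, I will check that each defining relation \ref{rel:redstrandcap}--\ref{rel:closedjellys} of $\mathcal{D}^{1,1,\xi}$ is sent to a valid identity in $\langle p_1,q_1\rangle^\xi$. Since $\mathcal{D}^{1,1,\xi}$ is the free strict $\mathsf k$-linear monoidal category on the generators modulo the tensor ideal generated by these relations, this suffices. I also need to check that the images of the generators have the right sources and targets. The cups and caps are colored arcs with matching boundary projections $p_1\otimes p_1$ or $q_1\otimes q_1$. The box $V$, with $n$ alternating strands on top and on bottom, is read as a morphism $(p_1q_1)^{n}\to\emptyset$ by bending its top strands down, which is allowed in a planar algebra. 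Similarly, $V^*$ is read as a morphism $\emptyset\to (p_1q_1)^n$. I will choose the bending so that $F_\xi$ of the downward jellyfish is precisely the rotated $V$ drawn in the planar algebra.

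The relations split into three groups.

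\textbf{Relations with no jellyfish.} The circles \ref{rel:redcircle}, \ref{rel:bluecircle} follow from \ref{rel:paredcircle}, \ref{rel:pabluecircle}, and the cup--cap relations \ref{rel:rcupcap}, \ref{rel:bcupcap} follow from \ref{rel:paredcupcap}, \ref{rel:pabluecupcap}. Relations \ref{rel:redstrandcap} and \ref{rel:bluestrandcap} with $\epsilon_r=\epsilon_b=1$ are planar isotopies of a single arc (the zigzag), so they hold automatically in a planar algebra.

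\textbf{Relations involving two jellyfish.} Relation \ref{rel:twojellys} becomes $\xi^{1/2}\xi^{-1/2}$ times $V$ capped against $V^*$, which is $\emptyset$ by \ref{rel:paVadV}. Relation \ref{rel:jellysmultiplied} is the reverse composite and becomes \ref{rel:paVVad}, the identity on $n$ alternating strands (after straightening, that is, identity on $(p_1q_1)^n$). Relation \ref{rel:closedjellys}, the closed diagram in which a jellyfish is rotated and closed with red caps, reduces to \ref{rel:paVadV} after using isotopy together with \ref{rel:paredcircle}; the scalars $\xi^{\pm1/2}$ again cancel.

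\textbf{The rotation relation.} The main point is \ref{rel:redbluejelly}: $\redstrand\bluestrand\rbjellydown=\xi\,\rbjellydown\redstrand\bluestrand$. Both sides are the same box $V$ with two boundary points moved from one side of the disk to the other. In planar-algebra terms, they differ by a rotation of $V$ by one click in each color, that is, by two strands. So the identity should be exactly what \ref{rel:paclick} encodes. My plan is to bend both sides so that they become $V$ precomposed with rotations, and then apply \ref{rel:paclick} twice, or once, depending on how a "click" is normalized: each click contributes a factor of $\xi$ or of $\xi^{1/2}$ relative to $V^*$ or $V^{-1}$. The common factor $\xi^{1/2}$ appears on both sides and cancels.

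I expect the main obstacle to be this bookkeeping: matching the paper's rotation conventions and the precise meaning of \ref{rel:paclick} so that the net scalar is exactly $\xi$ and not $\xi^{-1}$ or $\xi^{1/2}$. If it comes out inverted, I will fix it by using the opposite reading of $V$ (top strands bent left versus right), which changes the sign of the rotation.
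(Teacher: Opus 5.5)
Your overall strategy of checking each defining relation of $\mathcal{D}^{1,1,\xi}$ inside $\mathcal{P}^\xi$ is the same as the paper's. However, the two checks with real content, \ref{rel:redbluejelly} and \ref{rel:closedjellys}, do not work as you describe them, and your justification for \ref{rel:redstrandcap} and \ref{rel:bluestrandcap} is wrong.

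\textbf{Relation \ref{rel:redbluejelly}.} The two sides are not the same box with boundary points moved around. On the left, $V$ is attached to bottom points $3,\dots,2n+2$, and points $1,2$ run straight to the top. On the right, $V$ is attached to bottom points $1,\dots,2n$. Isotopy and \ref{rel:paclick} only ever replace a box by a box attached to the same boundary points. So no amount of bending and clicking can relate the two sides.

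The missing ingredient is the reconnection relations \ref{rel:paredcupcap} and \ref{rel:pabluecupcap}: two parallel strands of one color equal a cap--cup pair. The paper proceeds in three steps:
\begin{enumerate}
\item it uses \ref{rel:pabluecupcap} to reconnect the blue through-strand with $V$;
\item it applies the second equality of \ref{rel:paclick} once, which turns $\xi^{1/2}$ into $\xi^{3/2}$;
\item it applies \ref{rel:paredcupcap} to the two red strands.
\end{enumerate}
Your intuition that the factor $\xi$ comes from \ref{rel:paclick} is right. But your plan leaves the scalar undetermined, and ``re-reading $V$'' to fix an inverted scalar is not available, because the bending of $V$ is part of the given definition of $F_\xi$.

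\textbf{Relations \ref{rel:redstrandcap} and \ref{rel:bluestrandcap}.} The same connectivity problem applies here, so these are not isotopies. $\mathrm{id}\otimes\mathrm{cap}$ joins the top point to the leftmost bottom point, while $\mathrm{cap}\otimes\mathrm{id}$ joins it to the rightmost. These relations need \ref{rel:paredcupcap} and \ref{rel:pabluecupcap} plus isotopy, which is how the paper argues.

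\textbf{Relation \ref{rel:closedjellys}.} Your treatment is incorrect. This is the normalization relation of Remark \ref{rk:normalization}. It contains two copies of the \emph{downward} jellyfish, one of them turned around via cups and caps, and no upward jellyfish. A closed diagram containing both $\rho$ and $\rho^{-1}$ would be invariant under rescaling $\rho$, so it could not normalize anything.

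Its image is therefore $(\xi^{1/2})^2=\xi$ times a closed diagram with two copies of $V$ and no $V^*$. So the scalars do not cancel, and \ref{rel:paVadV} does not apply directly. You are implicitly treating the turned-around downward jellyfish as the upward generator, whose image is $\xi^{-1/2}V^*$. But in $\mathcal{D}^{1,1,\xi}$ that identification is itself a consequence of \ref{rel:closedjellys} (Remark \ref{jellys: orientation change}). In $\mathcal{P}^\xi$, a rotated $V$ agrees with $V^*$ only up to a power of $\xi$.

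The needed steps are:
\begin{enumerate}
\item apply the second equality of \ref{rel:paclick}, which trades the rotated $V$ for $V^*$ at a cost of exactly $\xi^{-1}$, cancelling the $\xi$;
\item use \ref{rel:paVVad}, then \ref{rel:paredcircle} and \ref{rel:pabluecircle}, to reduce to $\emptyset$.
\end{enumerate}

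This is the only relation that detects the normalization $V\mapsto \xi^{1/2}V$. Every other relation contains either one $V$, or one $V$ and one $V^*$. So an argument in which the scalars cancel here would equally prove well-definedness of $V\mapsto cV$, $V^*\mapsto c^{-1}V^*$ for every $c$, which cannot be right.
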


\begin{proof}
    It is enough to prove that $F_{\xi}$ respects the defining relations of $\mathcal{D}^{1,1,\xi}$. 
 We show $F_\xi$ respects relation \ref{rel:redstrandcap}; \ref{rel:bluestrandcap} is analogous. The left hand side of \ref{rel:redstrandcap} maps to 
$         \paredstrand \raisebox{-1pt}{\makebox{$\paredcap$}}\lowperiod$
    Then, applying the relation \ref{rel:paredcupcap}, we obtain the image of the right hand side of \ref{rel:redstrandcap}.

    Next, we prove that $F_\xi$ respects relation \ref{rel:redbluejelly}. The functor $F_\xi$ maps the left hand side to $\xi^{1/2} (p_1 \otimes q_1 \otimes V)$. Using relation \ref{rel:pabluecupcap}, this diagram is equivalent to $\xi^{1/2}$ times 
$         \paFwelldefRthreea \lowperiod$ 
    Now we can use the second equality in relation \ref{rel:paclick} to obtain $\xi^{3/2}$ times
   $ 
       \paFwelldefRthreeb \lowperiod$
    Applying \ref{rel:paredcupcap} on the left two red strands, we obtain the image of the right hand side of \ref{rel:redbluejelly}.

    It can be immediately seen that relations \ref{rel:redcircle}, \ref{rel:bluecircle}, \ref{rel:twojellys}, \ref{rel:rcupcap}, \ref{rel:bcupcap}, and 
\ref{rel:jellysmultiplied} are respected by $F_\xi$ since each side of the relations map to the corresponding sides of relations \ref{rel:paredcircle}, \ref{rel:pabluecircle}, \ref{rel:paVadV}, \ref{rel:paredcupcap}, \ref{rel:pabluecupcap}, and \ref{rel:paVVad}, respectively.
    
It remains to show that $F_\xi$ respects relation \ref{rel:closedjellys}. The left hand side of \ref{rel:closedjellys} maps to $\xi$ times              
\vspace{-1.1cm}
\begin{align*}
    \patwoVs \lowperiod
\end{align*}
   Using the second equality in relation \ref{rel:paclick}, the above is equal to 
   \vspace*{-1.2cm}
\begin{align*}
    \paVVadclosed \lowperiod
\end{align*}   
Now we can planar isotope the $V^*$ on top of the $V$. By using relation \ref{rel:paVVad}, this diagram is equivalent to $2n$ circles alternating in red and blue. Relations \ref{rel:paredcircle}, \ref{rel:pabluecircle} give that this diagram evaluates to $\emptyset$, which is the image of the right hand side of relation \ref{rel:closedjellys}. Thus, $F_\xi$ respects all the defining relations of $\mathcal{D}^{1,1,\xi}$, so it is well-defined.
\end{proof}

\begin{lemma}\label{lem: pa-functor-onto}
    $F_{\xi}$ is essentially surjective on objects.
\end{lemma}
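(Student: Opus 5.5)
The plan is to reduce to the simple objects of the target. The target category is $\langle p_1,q_1\rangle^{\xi}$, the monoidal subcategory of $\mathcal C_{\mathcal P^\xi}$ generated by the projections $p_1$ and $q_1$. Its objects are therefore finite tensor words in $p_1$ and $q_1$, meaning horizontal concatenations of red and blue through-strands.

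The first step is to show that every object is hit, up to isomorphism. By construction $F_\xi$ sends the generating objects $\reddot$ and $\bluedot$ to $p_1$ and $q_1$. Since $F_\xi$ is a strict monoidal functor defined on generators, a word $\vec x$ in red and blue dots maps to the corresponding word in $p_1$ and $q_1$. Every object of $\langle p_1,q_1\rangle^{\xi}$ is such a word, so it is literally equal to $F_\xi(\vec x)$ for the matching word $\vec x$. This already gives essential surjectivity onto $\langle p_1,q_1\rangle^{\xi}$.

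The second step handles the case where the statement is read with target the additive envelope $\operatorname{Add}(\mathcal C_{\mathcal P^\xi})$. Here I would invoke \cite[Theorem 8.1]{Mol24}: the simple objects of $\operatorname{Add}(\mathcal C_{\mathcal P^\xi})$ are the alternating words in $p_1$ and $q_1$. Each of these is the image of the corresponding alternating word of dots, for instance a standard-form word $(\reddot\bluedot\,)^m\reddot^{\,\ell}$, or one beginning in blue. Every object of $\operatorname{Add}(\mathcal C_{\mathcal P^\xi})$ is a direct sum of simples. Consequently $\operatorname{Add}(F_\xi)$, and likewise its extension to Karoubi envelopes, is essentially surjective.

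The only potential obstacle is bookkeeping rather than mathematics. One must make sure that the objects of $\langle p_1,q_1\rangle^{\xi}$ are exactly the words in $p_1,q_1$, including the empty word, which is the image of the empty word. One must also make sure that no isomorphism is needed beyond identity, because $F_\xi$ is strict on objects. If the alternating-word description from \cite{Mol24} includes words beginning with $q_1$, these are still images of dot words, so no case is missed.
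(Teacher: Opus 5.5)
Your first step is exactly the paper's argument: the generating objects go to $p_1$ and $q_1$, so every tensor word in $p_1,q_1$ is literally the image of the matching dot word. Your second step about the additive envelope is not needed for this lemma, but it is correct and harmless; the paper handles that passage separately in Corollary~\ref{thm: spa-is-VecD2n}, via $\text{Add}(\mathcal{C}_{\mathcal{P}^\xi})\simeq \text{Add}(\langle p_1,q_1\rangle^{\xi})$.
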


\begin{proof}
    The generating objects of $\mathcal{D}^{1,1,\xi}$ map to the generating objects of $\langle p_1,q_1\rangle^{\xi}$ so the proof is immediate.
\end{proof}

To show that $F_{\xi}$ is full and faithful, we need to check that the induced map, $\tilde{F_{\xi}}: \text{Hom}_{\mathcal{D}^{1,1,\xi}}(g,h)\to \text{Hom}_{\langle p_1,q_1\rangle^{\xi}}(w,w')$ is bijective, where $w, w'$ are words in $p,q$ and $g,h$ are words in $r,b$. Duality allows a diagram in $\text{Hom}_{\mathcal{D}^{1,1,\xi}}(g,h)$ to be turned into a diagram $\text{Hom}_{\mathcal{D}^{1,1,\xi}}(gh^{-1},e)$. Further, bending diagrams is linear and bijective, so equivalently, we can prove that the induced map, $\tilde{F_{\xi}}:\text{Hom}_{\mathcal{D}^{1,1,\xi}}(g,e)\to \text{Hom}_{\langle p_1,q_1\rangle^{\xi}}(w,\emptyset)$ is bijective for all words $g$ in $r,b$. 

  In \cite{Mol24} Theorem 8.1, the author shows that $p_1\otimes p_1\cong  q_1 \otimes q_1 \cong (p_1 \otimes q_1)^{n}\cong \emptyset$. Therefore, just like objects in $\mathcal{D}^{1,1,\xi}$, every object in $\langle p_1,q_1\rangle^{\xi}$ is isomorphic to a unique one in standard form, $w\cong (p\otimes q)^m \otimes p^\ell$, $0\leq m <n$, $\ell\in \{0,1\}$. We write $|w|:= 2m+\ell$ for the length of $w$ in its standard form. It is clear that $|g|=|F_{\xi}(g)|$. 
  
  Notice that Corollary \ref{cor: dim of Homs} implies that 
$\text{dim Hom}_{\mathcal{D}^{1,1,\xi}}(g,e)=\begin{cases}1, & g = \overline{e}\\ 0 &\text{ otherwise}\end{cases}$. If these match the dimensions of the Hom spaces in the image of $\tilde{F_{\xi}}$, then we only need to show $\tilde{F_{\xi}}$ is full.
  
\begin{lemma}
    Let $w$ be a word in $p_1,q_1$. Then $\text{dim Hom}_{\langle p_1,q_1 \rangle^{\xi}}(w,\emptyset)=\begin{cases} 1, & |w|=0 \\ 0 & \text{ otherwise.}\end{cases}$
\end{lemma}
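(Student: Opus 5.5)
We reduce to words in standard form and then split into the cases $|w|=0$ and $|w|\neq 0$. Since $p_1\otimes p_1\cong q_1\otimes q_1\cong (p_1\otimes q_1)^{n}\cong\emptyset$ in $\langle p_1,q_1\rangle^{\xi}$ (\cite{Mol24}, Theorem 8.1), every word $w$ is isomorphic to its standard form $\overline w=(p_1\otimes q_1)^m\otimes p_1^{\ell}$. Composing with such an isomorphism gives a linear bijection $\text{Hom}(w,\emptyset)\cong \text{Hom}(\overline w,\emptyset)$. It therefore suffices to treat $w$ in standard form, where $|w|=0$ exactly when $w=\emptyset$.

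For $|w|=0$ we need $\dim\text{End}(\emptyset)=1$. The planar algebra $\mathcal P^\xi$ is a subfactor planar algebra, so it is evaluable, i.e.\ $\mathcal P^\xi_0\cong\mathsf k$. Closed diagrams in $\langle p_1,q_1\rangle^\xi$ are elements of $\mathcal P_0^\xi$, so the space is at most one-dimensional. It is nonzero because it contains the empty diagram, which is nonzero since $\mathcal P^\xi$ is positive definite, e.g.\ $\mathrm{tr}(\emptyset)=1$.

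For $0<|w|<2n$ in standard form we show $\text{Hom}(w,\emptyset)=0$. The objects $p_1$ and $q_1$ are minimal projections, and \cite{Mol24}, Theorem 8.1, lists the simple objects as the alternating words in $p_1,q_1$, pairwise non-isomorphic, with the unit $\emptyset$ among them. A standard-form $w$ with $|w|>0$ is a simple object distinct from $\emptyset$, so Schur's lemma in the semisimple category $\text{Add}(\mathcal C_{\mathcal P^\xi})$ gives $\text{Hom}(w,\emptyset)=0$. Everything here is already contained in the classification of simples in \cite{Mol24}.

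The main obstacle is confirming that the cited classification really gives pairwise non-isomorphic simples indexed exactly by the $2n$ standard forms, with only the empty word isomorphic to $\emptyset$. If the citation is not explicit on this, the first fallback is a grading argument: assign $p_1\mapsto r$, $q_1\mapsto b$, and check that each generator ($p_1,q_1$, cups, caps, $V,V^*$) and each relation \ref{rel:paredcircle}--\ref{rel:paclick} is homogeneous for the $\Dn$-valued boundary grading. This parallels Lemma~\ref{lemma:zero hom}. A nonzero morphism $w\to\emptyset$ would then force $\overline w=e$, i.e.\ $|w|=0$. One must make sure the grading is well defined given the black-strand relation \ref{rel:pablackstrand}, which we handle by working only inside the subcategory generated by $p_1,q_1$, where no black strands appear on the boundary.
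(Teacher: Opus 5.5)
Your argument is correct and is essentially the paper's: you reduce $w$ using $p_1\otimes p_1\cong q_1\otimes q_1\cong (p_1\otimes q_1)^n\cong\emptyset$, then apply the classification of simple objects from \cite{Mol24} and Schur's lemma, and your evaluability argument for $|w|=0$ is an explicit version of the paper's remark that $\emptyset$ is simple. The one adjustment is that \cite{Mol24} lists the simples as alternating words of length $0$ to $n$ (the two of length $n$ being isomorphic), so a standard form with $n<2m+\ell<2n$ is not literally on that list; the same three isomorphisms show that $(p_1\otimes q_1)^m\otimes p_1^{\ell}$ is isomorphic to the alternating word of length $2n-2m-\ell$ beginning with $q_1$, which is exactly the paper's reduction to lengths between $1$ and $n$, and after that you only need ``simple and not isomorphic to $\emptyset$'' rather than pairwise non-isomorphism, so the grading fallback is unnecessary.
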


\begin{proof}
    In \cite{Mol24}, the author proved that the isomorphism classes of simple objects in $\langle p_1,q_1\rangle^{\xi}$ are the words alternating in $p_1$ and $q_1$, starting in $p_1$ or $q_1$, of length 0 to $n$. (This is a result of Theorem 8.1 in \cite{Mol24} which states the principal graph is given by (19). The vertices of the principal graph give the isomorphism classes of the simple objects in the category).
    
    Since $p_1\otimes p_1\cong q_1\otimes q_1 \cong (p_1\otimes q_1)^{n}\cong \emptyset$, any word $w$ with $|w|>0$ must be isomorphic to a word alternating in $p_1$ and $q_1$, starting in $p_1$ or $q_1$, of length between 1 and $n$. Since these words are simple objects and not isomorphic to $\emptyset$, $\text{dim Hom}_{\langle p_1,q_1 \rangle^{\xi}}(w,\emptyset)=0$. On the other hand, if $w$ is a word with $|w|=0$, then $w\cong \emptyset$, which is a simple object, so $\text{dim Hom}_{\langle p_1,q_1\rangle^{\xi}}(w,\emptyset)=1$. 
\end{proof}

\begin{lemma}\label{lem: pa-functor-fully-faithful}
    $F_{\xi}$ is full and faithful.
\end{lemma}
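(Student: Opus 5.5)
We reduce, as set up just before the statement, to the maps $\tilde F_\xi\colon \Hom_{\mathcal D^{1,1,\xi}}(g,e)\to\Hom_{\langle p_1,q_1\rangle^{\xi}}(F_\xi(g),\emptyset)$ for words $g$ in $r,b$. Bending strands with cups and caps is a linear bijection on both sides, and $F_\xi$ commutes with bending because it sends cups and caps to cups and caps. Hence full faithfulness on all Hom spaces follows from bijectivity of these maps. By Corollary \ref{cor: dim of Homs} and the preceding lemma, both source and target are one-dimensional when $|g|=0$ and zero otherwise, using $|g|=|F_\xi(g)|$. So in the case $|g|\neq 0$ there is nothing to prove. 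In the case $|g|=0$ it suffices to show $\tilde F_\xi$ is nonzero, since a nonzero linear map between one-dimensional spaces is bijective.

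For $|g|=0$, we build an explicit nonzero element. By Lemma \ref{lemma: red blue sequence} there is an isomorphism $u_g\colon g\to \overline g=\text{empty word}$ in $\mathcal D^{1,1,\xi}$, composed from caps, cups, jellyfish and their inverses; it spans $\Hom(g,e)$. Since $F_\xi$ is a monoidal functor, $F_\xi(u_g)$ is an isomorphism in $\langle p_1,q_1\rangle^\xi$, with inverse $F_\xi(u_g^{-1})$. Concretely, the generators map to $p_1,q_1$, caps and cups, and $\xi^{\pm1/2}V,V^*$, and these are invertible by \ref{rel:paredcupcap}, \ref{rel:pabluecupcap}, \ref{rel:paVadV}, \ref{rel:paVVad} together with \ref{rel:paredcircle}, \ref{rel:pabluecircle}. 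An isomorphism from $F_\xi(g)$ to $\emptyset$ is nonzero, because $\emptyset$ is a nonzero object: its identity, the empty diagram, is nonzero in a subfactor planar algebra. Hence $\tilde F_\xi(u_g)\neq 0$, and $\tilde F_\xi$ is bijective.

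Finally, faithfulness and fullness on general Hom spaces $\Hom(x,y)$ follow by the bending argument. Alternatively, we can conjugate by the isomorphisms $u_x,u_y$ to standard forms and use that $F_\xi$ preserves lengths, so both Hom spaces vanish or are one-dimensional simultaneously, and $\id$ maps to $\id\neq0$.

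The step needing the most care is the reduction via bending. There we must check that the bending maps on both sides are compatible with $F_\xi$, and that the object $gh^{-1}$ obtained after bending a morphism in $\Hom(g,h)$ corresponds to $F_\xi(g)\otimes F_\xi(h)^{\vee}$. I would handle this by noting that every generator of $\mathcal D^{1,1,\xi}$ is self-dual up to the cup/cap isomorphisms, which $F_\xi$ preserves strictly. If this becomes messy, I would instead use the conjugation argument directly: $\Hom(x,y)$ is spanned by $u_y^{-1}u_x$ when $\bar x=\bar y$, and its image $F_\xi(u_y)^{-1}F_\xi(u_x)$ is an isomorphism, hence nonzero.
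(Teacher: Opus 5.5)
Your proposal is correct and follows essentially the same route as the paper. You reduce by bending to $\Hom(g,e)$, match dimensions via Corollary~\ref{cor: dim of Homs} and the preceding lemma, and in the one-dimensional case observe that the image of an isomorphism $u_g\colon g\to e$ is an isomorphism onto $\emptyset$, hence nonzero. The paper treats $g=e$ separately (the unit maps to $\emptyset$, which has nonzero trace), while you fold it into the isomorphism argument; that is a harmless simplification.
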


\begin{proof}
    By the preceding lemma, we only need to show that $\tilde{F_{\xi}}$ is full. When $|w|>0$ the result is clear since $\text{dim Hom}_{\langle p_1,q_1 \rangle^{\xi}}(w,\emptyset)=0$. 

    Now let $w$ be a word in $p_1$ and $q_1$ with $|w|=0$. Let $g$ be the object of $\mathcal{D}^{1,1,\xi}$ that maps to $w$. Since $\text{dim Hom}_{\langle p_1,q_1 \rangle^{\xi}}(w,\emptyset)=1$, it is enough to check that the image of the induced map $\tilde{F_{\xi}}:\text{Hom}_{\mathcal{D}^{1,1,\xi}}(g,e)\to \text{Hom}_{\langle p_1,q_1\rangle^{\xi}}(w,\emptyset)$ is nontrivial.
    
    Since $|w|=0$, we also know that $|g|=0$. When $g=e$, the tensor unit maps to $\emptyset$, which has nonzero trace, so is nontrival. If $g\neq e$, then $g$ is isomorphic to $e$ by reduction to standard form. Choose an isomorphism $u_g:g\to e$ in $\mathscr D^{1,1,\xi}$. Then $F_{\xi}(u):w\to\emptyset$ is an isomorphism,  
    and hence is nonzero because
    $\mathrm{id}_{\emptyset}\neq 0$. 
\end{proof}

\begin{theorem}\label{thm: pa-functor-monoidal-equivalence}
    The functor $F_{\xi}:\mathcal{D}^{1,1,\xi}\to \langle p_1,q_1\rangle^{\xi}$ is a monoidal equivalence.
\end{theorem}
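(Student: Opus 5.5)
The plan is to assemble the three preceding lemmas. By Lemma~\ref{lem: pa-functor-well-defined}, $F_\xi$ is a well-defined $\mathsf k$-linear functor. It is defined on generating objects and generating morphisms, and it is extended to tensor products of diagrams by horizontal juxtaposition and to compositions by vertical stacking.

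The first step is monoidality. Both $\mathcal{D}^{1,1,\xi}$ and $\langle p_1,q_1\rangle^{\xi}$ are strict monoidal categories in which the tensor product is horizontal concatenation, and $F_\xi$ sends the empty word to the empty diagram $\emptyset$. So $F_\xi$ is a strict monoidal functor with identity structure maps $J$. The coherence condition \eqref{eq:monoidal-functor-coherence} then holds trivially, because both associators are identities. The only point needing care is that $F_\xi$ is determined on a diagram by its values on generators, independently of how the diagram is decomposed. This follows because the defining relations, together with the interchange law, are respected; that is exactly the content of Lemma~\ref{lem: pa-functor-well-defined}.

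The second step is the equivalence. By Lemma~\ref{lem: pa-functor-onto}, $F_\xi$ is essentially surjective, and by Lemma~\ref{lem: pa-functor-fully-faithful} it is full and faithful. A fully faithful, essentially surjective strict monoidal functor is a monoidal equivalence. Its quasi-inverse inherits a monoidal structure by the standard argument, for instance \cite[Remark~2.4.10]{EGNO}.

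The main thing to double-check is the reduction in Lemma~\ref{lem: pa-functor-fully-faithful} from arbitrary $\Hom(g,h)$ to $\Hom(gh^{-1},e)$. Concretely, one must verify that bending diagrams with the red and blue cups and caps is compatible with $F_\xi$, so that the bending bijections commute with $\tilde F_\xi$. This holds because $F_\xi$ sends cups and caps to cups and caps, and the zigzag relations \ref{rel: zigzag} with $\epsilon_r=\epsilon_b=1$ correspond to planar isotopy in $\mathcal{P}^\xi$. The square formed by the two bending maps and $\tilde F_\xi$ therefore commutes, so bijectivity on $\Hom(g,e)$ transfers to all Hom spaces. Finally, applying the additive envelope gives $\mathrm{Vec}_{\mathrm{D}_{2n}}^{1,1,\xi}\simeq \mathcal K(\mathcal D^{1,1,\xi})\simeq \text{Add}(\mathcal C_{\mathcal P^\xi})$.
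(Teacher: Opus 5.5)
Your proposal is correct and matches the paper, which proves the theorem in one line: both categories are strict, and the conclusion follows from Lemmas~\ref{lem: pa-functor-well-defined}, \ref{lem: pa-functor-onto} and \ref{lem: pa-functor-fully-faithful}. Your extra remarks are not needed for this theorem. The bending compatibility is handled in the discussion preceding those lemmas, and the passage to additive envelopes is the content of Corollary~\ref{thm: spa-is-VecD2n}.
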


\begin{proof}
    As both categories are strict, this follows from Lemmas \ref{lem: pa-functor-well-defined}, \ref{lem: pa-functor-onto} and \ref{lem: pa-functor-fully-faithful}.
\end{proof}

\begin{corollary}
    \label{thm: spa-is-VecD2n}
    The category $\text{Add}(\mathcal{C}_{\mathcal{P}^\xi})$ is monoidally equivalent to $\text{Vec}_{\Dn}^{1,1,\xi}$. 
\end{corollary}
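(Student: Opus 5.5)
The plan is to chain the monoidal equivalences already established. Theorem~\ref{thm: pa-functor-monoidal-equivalence} gives a monoidal equivalence $F_\xi:\mathcal{D}^{1,1,\xi}\to\langle p_1,q_1\rangle^\xi$. Theorem~\ref{thm diag category} (with $\epsilon_r=\epsilon_b=1$, an admissible triple for every $n$ and every $\xi\in\mu_n$) gives $\mathcal K(\mathcal{D}^{1,1,\xi})\simeq \mathrm{Vec}_{\Dn}^{1,1,\xi}$. So the corollary reduces to comparing $\operatorname{Add}(\mathcal C_{\mathcal P^\xi})$ with a completion of $\langle p_1,q_1\rangle^\xi$.

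First, additive and Karoubi completions are functorial for $\mathsf k$-linear monoidal equivalences. Hence $F_\xi$ induces a monoidal equivalence $\mathcal K(\mathcal{D}^{1,1,\xi})\simeq \mathcal K(\langle p_1,q_1\rangle^\xi)$.

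Second, I will identify $\mathcal K(\langle p_1,q_1\rangle^\xi)$ with $\operatorname{Add}(\langle p_1,q_1\rangle^\xi)$, and this with $\operatorname{Add}(\mathcal C_{\mathcal P^\xi})$. By the preceding lemma and Corollary~\ref{cor: dim of Homs} transported along $F_\xi$, every object of $\langle p_1,q_1\rangle^\xi$ is a word that is isomorphic to a standard-form word with one-dimensional endomorphism algebra. Therefore every object is simple, and every idempotent is $0$ or the identity. It follows that $\operatorname{Add}(\langle p_1,q_1\rangle^\xi)$ is already idempotent complete, so the Karoubi step adds nothing.

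For the last identification I use the remark before Definition~\ref{def: pa-functor}, citing \cite[Theorem 8.1]{Mol24}. The simple objects of $\operatorname{Add}(\mathcal C_{\mathcal P^\xi})$ are, up to isomorphism, the alternating words in $p_1,q_1$. Every projection is a direct sum of minimal projections. Hence the inclusion $\langle p_1,q_1\rangle^\xi\hookrightarrow \mathcal C_{\mathcal P^\xi}$ is fully faithful, because it is a full monoidal subcategory. After taking additive envelopes it becomes essentially surjective, so it is a monoidal equivalence.

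Composing these equivalences gives the claim. The only delicate point is the last step: one must check that every projection in $\mathcal P^\xi$ decomposes into minimal projections isomorphic to words in $p_1,q_1$. I would take this directly from the principal graph computation in \cite{Mol24} rather than reprove it. Everything else is formal.
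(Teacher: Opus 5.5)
Your proposal is correct and follows the same route as the paper. You identify $\operatorname{Add}(\mathcal C_{\mathcal P^\xi})$ with $\operatorname{Add}(\langle p_1,q_1\rangle^{\xi})$ via the classification of simple objects in \cite{Mol24}, pass the equivalence of Theorem~\ref{thm: pa-functor-monoidal-equivalence} to envelopes, and invoke Theorem~\ref{thm diag category}; your additional check that $\operatorname{Add}(\langle p_1,q_1\rangle^{\xi})$ is already idempotent complete (so the additive envelope in the paper's proof agrees with the Karoubi envelope $\mathcal K$ in Theorem~\ref{thm diag category}) makes explicit a step the paper leaves implicit.
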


\begin{proof}
  Since $\text{Add}(\mathcal{C}_{\mathcal{P}^\xi})\simeq \text{Add}(\langle p_1,q_1\rangle^{\xi})$, this follows from Theorem \ref{thm: pa-functor-monoidal-equivalence} by taking the additive envelopes of both $\mathcal{D}^{1,1,\xi}$ and $\langle p_1,q_1\rangle^\xi$ .
\end{proof}

Through this equivalence, it can now be seen that the jellyfish elimination algorithm (Algorithm \ref{algorithm}) is the same evaluation algorithm outlined on page 35 of \cite{Mol24}.

\bibliographystyle{alpha}
\bibliography{bib}

\begin{thebibliography}{EGNO15}

\bibitem[BK01]{BK}
Bojko Bakalov and Alexander Kirillov.
\newblock {\em Lectures on Tensor Categories and Modular Functors}, volume~21 of {\em University Lecture Series}.
\newblock American Mathematical Society, 2001.

\bibitem[BPMS12]{BMPS12}
Stephen Bigelow, Emily Peters, Scott Morrison, and Noah Snyder.
\newblock Constructing the extended {H}aagerup planar algebra.
\newblock {\em Acta Math.}, 209(1):29--82, 2012.

\bibitem[Cze24]{Czenky}
Agustina Czenky.
\newblock Diagramatics for cyclic pointed fusion categories.
\newblock {\em Journal of Pure and Applied Algebra}, 228(12):107752, 2024.

\bibitem[DP26]{dp}
Chumeng Di and Anup Poudel.
\newblock A diagrammatic presentation for every pivotal pointed fusion category.
\newblock {\em \texttt{arXiv:2609.27295}}, 2026.

\bibitem[dWP95]{deWildPropitius1995}
Mark de~Wild~Propitius.
\newblock {\em Topological Interactions in Broken Gauge Theories}.
\newblock PhD thesis, Universiteit van Amsterdam, 1995.

\bibitem[EGNO15]{EGNO}
Pavel Etingof, Shlomo Gelaki, Dmitri Nikshych, and Victor Ostrik.
\newblock {\em Tensor categories}, volume 205 of {\em Mathematical Surveys and Monographs}.
\newblock American Mathematical Society, Providence, RI, 2015.

\bibitem[EK98]{EvansKawahigashi}
David~E. Evans and Yasuyuki Kawahigashi.
\newblock {\em Quantum Symmetries on Operator Algebras}.
\newblock Oxford Mathematical Monographs. Oxford University Press, Oxford, 1998.

\bibitem[ENO05]{ENO1}
Pavel Etingof, Dmitri Nikshych, and Victor Ostrik.
\newblock On fusion categories.
\newblock {\em Ann. of Math. (2)}, 162(2):581--642, 2005.

\bibitem[Hua05]{Huang05}
Yi-Zhi Huang.
\newblock Vertex operator algebras, the verlinde conjecture, and modular tensor categories.
\newblock {\em Proceedings of the National Academy of Sciences}, 102(15):5352--5356, 2005.

\bibitem[IMP13]{IMP}
Masaki Izumi, Scott Morrison, and David Penneys.
\newblock Fusion categories between {$\mathcal{C} \boxtimes \mathcal{D}$} and {$\mathcal{C} * \mathcal{D}$}.
\newblock {\em \texttt{arXiv:1308.5723}}, 2013.

\bibitem[Kas95]{Kassel}
Christian Kassel.
\newblock {\em Quantum Groups}, volume 155 of {\em Graduate Texts in Mathematics}.
\newblock Springer-Verlag, New York, 1995.

\bibitem[Lan98]{MacLane}
Saunders~Mac Lane.
\newblock {\em Categories for the Working Mathematician}, volume~5 of {\em Graduate Texts in Mathematics}.
\newblock Springer, New York, 2 edition, 1998.

\bibitem[Mol24]{Mol24}
Melody Molander.
\newblock Skein theory for affine {$A$} subfactor planar algebras, 2024.
\newblock arxiv: 2410.05519.

\bibitem[MPS10]{MPS10}
Scott Morrison, Emily Peters, and Noah Snyder.
\newblock Skein theory for the {$D_{2n}$} planar algebras.
\newblock {\em J. Pure Appl. Algebra}, 214(2):117--139, 2010.

\bibitem[MS89]{Moore}
Gregory~W. Moore and Nathan Seiberg.
\newblock Classical and quantum conformal field theory.
\newblock {\em Communications in Mathematical Physics}, 123:177--254, 1989.

\bibitem[Pen]{Dave}
David Penneys.
\newblock 3-cocycle representatives for the dihedral group?
\newblock MathOverflow.
\newblock URL: https://mathoverflow.net/questions/156985/3-cocycle-representatives-for-the-dihedral-group-d-2n.

\bibitem[Pet10]{Pet10}
Emily Peters.
\newblock A planar algebra construction of the {H}aagerup subfactor.
\newblock {\em Internat. J. Math.}, 21(8):987--1045, 2010.

\bibitem[TH23]{TeoHu2023}
Jeffrey C.~Y. Teo and Yichen Hu.
\newblock Dihedral twist liquid models from emergent {M}ajorana fermions.
\newblock {\em Quantum}, 7:967, 2023.

\bibitem[Tur94]{Turaev}
Vladimir~G. Turaev.
\newblock {\em {Quantum invariants of knots and three manifolds}}, volume~18.
\newblock 1994.

\end{thebibliography}
\end{document}